\DeclareMathOperator*{\argmin}{argmin}
\DeclareMathOperator{\diag}{diag}
\DeclareMathOperator{\Var}{Var}
\DeclareMathOperator{\trace}{{\mathsf{Tr}}}
\def\aaa{a}
\def\P{\mathbb P}
\def\debias{\hat{\beta}^{(d)}}
\def\Rem{\text{Rem}}
\def\sign{\text{sign}}
\def\defas{\stackrel{\text{\tiny def}}{=}}
\def\sign{\text{sign}}
\def\E{\mathbb{E}}
\def\R{\mathbb{R}}
\def\by{\bm{y}}
\def\bX{\bm{X}}
\def\hbeta{\hat\beta}
\def\hbbeta{\bm{\hat\beta}}
\def\dbeta{\dot\beta}
\def\htheta{\hat\theta}
\def\prox{\mathsf{prox}}
\def\hpsi{\hat\psi}
\def\tpsi{\tilde\psi}
\def\dX{\dot{X}}
\def\eps{\varepsilon}
\def\tbeta{\tilde\beta}
\def\hbA{\bm{\hat{A}}}
\def\df{{\mathsf{\hat{df}}}}
\newtheorem{theorem}{Theorem}
\newtheorem{proposition}[theorem]{Proposition}
\newtheorem{lemma}[theorem]{Lemma}
\newtheorem{corollary}[theorem]{Corollary}
\newtheorem{assumption}{Assumption}
\crefname{assumption}{assumption}{assumptions}
\numberwithin{equation}{section}
\numberwithin{theorem}{section}
\numberwithin{corollary}{section}
\numberwithin{lemma}{section}
\numberwithin{proposition}{section}
\def\green{{\color{teal} \bm{w}^T\bm\hbeta}}
\def\purple{{\color{violet} \bm{\hpsi}^T\bm X\bm w}}
\def\Aunreg{ \bigl(\sum_{i=1}^n\bm x_i \ell_{y_i}''(\bm x_i^T\bm \hbeta)\bm x_i^T\bigr)^{-1}}
\def\myred{\tfrac{\bm{\hpsi}^T \bm G \bm P_1^\perp \bm \htheta}{n\hat r^2}}
\def\mybro{\tfrac{\|\bm{G} \bm P_1^\perp \bm \htheta\|^2}{n \hat r^2}}
\begin{document}

\title{
Observable adjustments in single-index models for regularized M-estimators 
with bounded p/n 
}
\author{Pierre C Bellec}
\runauthor{P.C. Bellec}
\runtitle{Observable adjustments for regularized M-estimators}
\address{Rutgers University}

\begin{abstract}
    We consider observations $(\bm X,\bm y)$ from single index models with unknown link function, Gaussian covariates
    and a regularized M-estimator $\bm\hbeta$ constructed from convex loss function
    and regularizer.
    In the regime where sample size $n$ and dimension $p$ are both
    increasing such that $p/n$ has a finite limit,
    the behavior of the empirical distribution of $\bm\hbeta$ and
    the predicted values $\bm X\bm \hbeta$ has been previously characterized
    in a number of models: The empirical distributions are known
    to converge to proximal operators of the loss and penalty
    in a related Gaussian sequence model, which captures the interplay between ratio $\frac pn$, loss, regularization and the data generating process. This connection between
    $(\bm\hbeta,\bm X\bm\hbeta)$ and the corresponding proximal operators
    involves mean-field parameters defined as solutions 
    to a nonlinear system of equations. This system
    typically involve unobservable quantities such as the prior distribution on the index or the link function, so the mean-field parameters
    need to be estimated.
    Although estimators for the mean-field parameters have been proposed
        in specific cases, a general framework that applies
        simultaneously to a broad class of loss and penalty has so
        far been missing.

    This paper develops a different theory to describe the empirical
    distribution of $\bm\hbeta$ and $\bm X\bm\hbeta$: Approximations
    of $(\bm\hbeta,\bm X\bm\hbeta)$ in terms of proximal operators are provided
    that only involve observable adjustments
    in place of the mean-field parameters.
    These proposed observable adjustments are data-driven, e.g., do not require prior knowledge of the index or the link function. These new adjustments
    yield confidence intervals for individual components of the index,
    as well as estimators of the correlation of $\bm\hbeta$ with the index,
    enabling parameter tuning to maximize the correlation.
    The interplay between loss, regularization and the model is 
    captured in a data-driven manner, without relying on
    the nonlinear systems 
    studied in previous works.
    The results are proved to hold both strongly convex regularizers and
    unregularized M-estimation.
\end{abstract}

\maketitle


\section{Introduction}

Consider iid observations $(\bm x_i,y_i)_{i=1,...,n}$ with Gaussian
feature vectors $\bm{x}_i\sim N(\bm 0,\bm \Sigma)$, $\bm\Sigma\in\R^{p\times p}$ and response
$y_i$ valued in a set $\mathcal Y$ following a single index model
\begin{equation}
    y_i = F(\bm{x}_i^T\bm w, U_i)
\label{single-index}
\end{equation}
where $F:\R^2\to\R$ is an unknown deterministic function, $\bm{w}\in\R^p$ an unknown
index,
and $U_i$ is a latent variable independent of $\bm{x}_i$.
The index $\bm w$
is normalized with $\Var[\bm{x}_i^T\bm w] = \|\bm \Sigma^{1/2}\bm w\|^2=1$
by convention, since $\|\bm\Sigma^{1/2}\bm w\|$ can be otherwise
absorbed into $F(\cdot)$.
The triples $(\bm{x}_i, U_i, y_i)_{i=1,...,n}$ are iid 
but only $(\bm{x}_i, y_i)_{i=1,...,n}$ are observed.
Typical examples that we have in mind
for $F$ and $U_i$ in \eqref{single-index} include
\begin{itemize}
    \item \emph{Linear regression}: $F(v,u) = \|\bm\Sigma^{1/2}\bm\beta^*\| v + u$
        for some $\bm\beta^*\in\R^p$,
        $U_i\sim N(0,\sigma^2)$
        and $\bm w = \bm \beta^* / \|\bm \Sigma^{1/2}\bm \beta^*\|$.
        Equivalently, $y_i|\bm x_i \sim N(\bm x_i^T\bm \beta^*, \sigma^2)$.
    \item 
        \emph{Logistic regression}:
        $F(v,u) = 1$ if $u\le 1/(1+e^{-\|\bm\Sigma^{1/2}\bm \beta^*\| v})$ and 0 otherwise
for some $\bm\beta^*\in\R^p$,
$U_i\sim \text{Unif}[0,1]$ and
$\bm w = \bm \beta^* / \|\bm \Sigma^{1/2}\bm \beta^*\|$.
Equivalently,
$y_i|\bm x_i \sim \text{Bernoulli}(\rho'(\bm x_i^T\bm \beta^*))$
where $\rho'(u)=1/(1+e^{-u})$ is the sigmoid function.
\item \emph{1-bit compressed sensing}:
$F(v,u) = \sign(v)$
so that $y_i=\sign (\bm x_i^T\bm w)$, or 
\emph{1-bit compressed sensing with $\epsilon$-contamination}:
$F(v,u) = u\sign(v)$
for $U_i\in\{\pm1\}$ s.t. $\P(U_i=-1) =\eps$.
\item \emph{Binomial logistic regression}:
    $y_i|\bm x_i \sim \text{Binomial}(q,\rho'(\bm x_i^T\bm \beta^*))$
    for some integer $q\ge 1$ and $\rho'(u)=1/(1+e^{-u})$
    the sigmoid function.
\end{itemize}
Throughout the paper, $\bm\hbeta$ is a regularized $M$-estimator
of the form
\begin{equation}
\bm{\hbeta}(\bm y,\bm X) = \argmin_{\bm b\in\R^p}
\frac1n\sum_{i=1}^n\ell_{y_i}\bigl(\bm{x}_i^T \bm b\bigr) + g(\bm b)
\label{hbeta}
\end{equation}
where $g:\R^p\to\R$ is a convex penalty function
and for any $y_0\in \mathcal Y$, the map 
$\ell_{y_0}:\R\to\R$, 
$t\mapsto \ell_{y_0}( t )$ is a convex loss function.
For a fixed $y_0$, the derivatives of $\ell_{y_0}$ are denoted by
$\ell_{y_0}'(t)$ and  $\ell_{y_0}''(t)$ where these derivatives exist.
If $y_i\in\{-1,1\}$ or $y_i\in\{0, 1\}$ as in binary classification
(e.g., with $y_i|\bm x_i$ following a logistic regression
or 1-bit compressed sensing model), a popular loss function is for instance
the logistic loss $\ell_{y_i}(t) = \log(1+e^{-t})$ if $y_i=1$
and $\ell_{y_i}(t) =\log(1+e^{t})$ if $y_i\ne 1$.
This paper focuses on the behavior
of estimators of the form \eqref{hbeta}
in the single index model
\eqref{single-index} when the dimensions and sample sizes
are both large and of the same order,
confidence intervals for individual components
of the index $\bm{w}$, and estimation of performance
metrics of interest such as the correlation between $\bm\hbeta$ and
the unknown index $\bm w$.

\def\proxpen{
        \prox[\sigma\tau \tilde f(\cdot)]
        (\sigma\tau(\theta\beta + \delta^{-1/2}rZ))}
\def\proxpenbar{
        \prox[\bar\sigma\bar\tau \tilde f(\cdot)]
        \bigl(\bar\sigma\bar\tau(\bar\theta\beta + \delta^{-1/2}\bar r Z)\bigr)}
\def\argproxloss{
    \kappa\alpha Z_1 + \sigma Z_2
}
\def\proxloss{
    \prox[\gamma \rho] (\argproxloss)
}
\def\proxprimeloss{
    \prox[\gamma \rho]'(\argproxloss)
}
\subsection{Prior works in asymptotic behavior of M-estimators}

There is now a rich literature on the behavior
of M-estimators in the regime where $p/n$ converges to a constant
in linear models \cite[among others]{bayati2012lasso,el_karoui2013robust,stojnic2013framework,thrampoulidis2015lasso,thrampoulidis2018precise,el_karoui2018impact,mei2019generalization,celentano2019fundamental,rush2020rate_lasspo,miolane2018distribution}
and generalized models, including logistic regression
\cite{sur2018modern,salehi2019impact} and general
teacher-student models \cite{loureiro2021capturing}.
To present typical results from this literature,
assume in this section that $n,p\to+\infty$
with $n/p\to\delta$ for some constant $\delta>0$
and isotropic covariance matrix $\bm\Sigma = \frac 1 p \bm I_p$.

Consider first a linear model with $y_i= \bm x_i^T\bm\beta^* + \eps_i$
for $\eps_i$ independent of $\bm x_i$ and the unregularized estimator
$\bm\hbeta = \argmin_{\bm b\in\R^p}  \sum_{i=1}^n \mathcal L(y_i - \bm x_i^T\bm b)$
for some convex loss $\mathcal L:\R\to\R$.
This corresponds to $g=0$ and $\ell_{y_i}(u) = \mathcal L(y_i - u)$ in \eqref{hbeta}.
The works \cite{el_karoui2013robust,donoho2016high,karoui2013asymptotic,el_karoui2018impact}
showed that the behavior of $\bm\hbeta$
is characterized by the system of two equations
\begin{equation}
    \left\{
    \begin{aligned}
    \delta^{-1}\sigma^2
    &= \E\bigl[(\prox[\gamma \mathcal L ](\eps_1 + \sigma Z) - \eps_1 - \sigma Z )^2\bigr],
    \\
    1- \delta^{-1} 
    &= \E\bigl[\prox[\gamma \mathcal L ]'(\eps_1 + \sigma Z)\bigr],
    \end{aligned}
    \right.
    \label{karoui_system}
\end{equation}
with two unknowns $(\sigma,\gamma)$,
where $Z\sim N(0,1)$ is independent of $\eps_1$.
In \eqref{karoui_system} and throughout the paper, for any convex function $f:\R\to\R$,
the proximal operator $\prox[f]$ of $f$ is defined as 
$\prox[f](u) = \argmin_{v\in \R} (u-v)^2/2 + f(v)$,
and we denote by $\prox[f]'$ its derivative.
While \cite{karoui2013asymptotic} uses notation $(r,c)$ for the two unknowns
in \eqref{karoui_system}, we use $(\sigma,\gamma)$ instead
to reveal the connection
with the larger systems \eqref{sur_system} and \eqref{salehi_system} below.
The solution $(\bar \sigma,\bar \gamma)$ to \eqref{karoui_system}
is such that
$p^{-1} \|\bm\hbeta - \bm\beta^*\|^2 \to \bar \sigma^2$ in probability,
and for each fixed component $j=1,...,p$ the convergence in distribution
$\hbeta_j - \beta_j^* \to^d N(0,\bar \sigma^2)$ holds \cite{el_karoui2013robust},
so that the system \eqref{karoui_system} and its solution
captures the asymptotic distribution of $\bm\hbeta$. Equipped with the system \eqref{karoui_system} and these results,
\cite{bean2013optimal} studied the optimal loss function $\mathcal L(\cdot)$
that minimizes $\|\bm\hbeta-\bm \beta^*\|^2$
for a given $\delta = \lim \frac{n}{p}$ and given noise distribution
for $(\eps_1,...,\eps_n)$.
In linear model with normally distributed noise,
\citet{bayati2012lasso} used Approximate Message Passing (AMP)
to establish a similar phenomenon for the Lasso
$\bm\hbeta = \argmin_{\bm b\in\R^p}\|\bm y-\bm X\bm\hbeta\|^2/2 + \lambda \|\bm b\|_1$, showing that the empirical distribution of $(\hbeta_j)_{j=1,...,p}$
is close in distribution to the empirical distribution
of $(\eta(\beta_j^* + \bar\tau Z_j; \lambda\frac{\bar\tau}{\bar b}))_{j=1,...,p}$
where $\eta(x;u) = \sign(x)(|x|-u)_+$ is the soft-thresholding 
operator and $(\bar\tau,\bar b)$ is solution to a nonlinear system
of two equations of a similar nature as \eqref{karoui_system} \cite[Theorem 1.5]{bayati2012lasso}; the works \cite{miolane2018distribution,rush2020rate_lasspo,celentano2020lasso} provides explicit error bounds between
functions of the Lasso $\bm\hbeta$ and their prediction from the
nonlinear system of two equations.
Inspired by early works from \citet{stojnic2013framework},
\citet{thrampoulidis2018precise} developed the
Convex Gaussian Min-max Theorem
and obtained analogous 
systems of equations to characterize the limit of 
$p^{-1}\|\bm\hbeta-\bm\beta^*\|^2$
for a given loss-penalty pair in linear models, see for instance
the system with four unknowns 
\cite[Eq. (15)]{thrampoulidis2018precise}
for separable loss and penalty.


With the model \eqref{single-index}, our focus in this paper
is on single-index models where $\mathbb E[y_i|\bm x_i]$ is typically not
a linear function of $\bm x_i$.
To present some representative existing results of a similar nature
as those in the previous paragraph,
we now turn to
logistic regression, a particular case of the single model \eqref{single-index}.
Consider iid observations from the logistic model
\begin{equation}
y_i|\bm x_i \sim \text{Bernoulli}\bigl(
    \rho'(\bm x_i^T\bm\beta^*)
\bigr)
\qquad \text{ for }
\rho'(u) = 1/(1+e^{-u}),
\label{eq:logistic-model}
\end{equation}
and  the
M-estimator \eqref{hbeta} with the logistic loss
$\ell_{y_i}(u_i) = \rho(u_i) - y_i u_i$
for $\rho(u) = \log(1+e^u)$.
With no regularization, i.e., $g=0$,
$\bm \hbeta$ in \eqref{hbeta} is the logistic Maximum Likelihood Estimate (MLE).
\citet{sur2018modern} describe the asymptotic distribution
of the MLE as follows. For a sequence of logistic regression problems
with $n,p,\bm\beta^*$ such that $\|\bm\beta^*\|^2/p\to\kappa^2$
and $n/p\to\delta>0$,
the MLE exists with overwhelming probability
if
$
 \delta > \min_{t\in \R}\iint (z-tv)_+^2\varphi(z)2\rho'(\kappa v)\varphi(v)dzdv 
 $
\cite{candes2020phase}
where $\varphi(z)=(\sqrt{2\pi})^{-1}\exp(-\frac{z^2}{2})$
is the standard normal pdf.
On this side of the phase transition where the MLE exists,
assuming additionally that the components of $\bm\beta^*$ are iid copies
of a random variable $\beta$
(or more generally that the empirical distribution of $\bm\beta^*$
has a weak limi ),
\citet{sur2018modern} further established that
for any Lipschitz function $\phi:\R^2\to\R$
\begin{equation}
\frac1p\sum_{j=1}^p \phi\Bigl(\hbeta_j-\bar\alpha\beta_j^*,\beta^*_j\Bigr)
\to^{\P}
\E\Bigl[\phi\Bigl(\bar\sigma Z,\beta\Bigr)\Bigr] 
\label{eq:limiting_result_sur}
\end{equation}
where $Z\sim N(0,1)$ is independent of $\beta$,
the arrow $\to^\P$ denotes convergence in probability,
and $(\bar\alpha,\bar\sigma,\bar\gamma)$ is solution of the nonlinear
system of 3 equations
\begin{equation}
    \label{sur_system}
    \left\{
    \begin{aligned}
    \delta^{-1} \sigma^2 &=
    2\E\bigl[\rho'(-\kappa Z_1)\bigl(\gamma \rho'(\proxloss) \bigr)^2\bigr],
    \\
    0 &= 
    2\E\bigl[ \rho'(-\kappa Z_1) Z_1\gamma \rho'(\proxloss)\bigr],
    \\
    1-\delta^{-1}
      &=
    2
    \E\bigl[\rho'(-\kappa Z_1)
        \proxprimeloss
    \bigr]
    .
    \end{aligned}
    \right.
\end{equation}
Above,
$\rho'(u) = 1/(1+e^{-u})$ is the sigmoid function
as in \eqref{eq:logistic-model},
$\rho''(u) = \rho'(u)(1-\rho'(u))$ its derivative,
and inside the expectation in the third line
$\prox[\gamma\rho]'(u)
    =
    1/\bigl(1+\gamma \rho''(\prox[\gamma\rho](u))\bigr)$.
    \citet{sur2018modern} observed that \eqref{sur_system}
    could be solved numerically
    if $(\delta,\kappa)$ is on the side of the
    phase transition where the MLE exists with high-probability.
    In fact, the system \eqref{sur_system} admits a solution
    if and only if $(\delta,\kappa)$ is on the side of the
    phase transition where the MLE exists with high-probability:
    this was formally proved in the global-null case
    \cite{sur2019likelihood} and for the general case
    in \cite{bellec2025phase}.
\citet{salehi2019impact}
and \citet[Chapter 4]{sur2019thesis} extended 
results of the form \eqref{eq:limiting_result_sur}
to the M-estimator \eqref{hbeta}
constructed with the same logistic loss,
$\ell_{y_i}(u_i) = \rho(u_i) - y_i u_i$
for $\rho(u) = \log(1+e^u)$,
and separable penalty function of the form
$g(\bm b) = \frac 1 j \sum_{j=1}^p \tilde f(b_j)$
for some convex $\tilde f:\R\to\R$.
\citet[Chapter 4]{sur2019thesis}
presented conjectures regarding
the system governing
the asymptotic of such penalized
logistic estimator stemming from
the rigorous AMP analysis of the MLE and the
Ridge logistic case.
\citet{salehi2019impact} provided
a rigorous proof of this asymptotic
using the CGMT \cite{thrampoulidis2018precise}.
To describe the main result of \cite{salehi2019impact},
assume that the coefficients of $\bm\beta^*$ are iid copies
of a random variables $\beta$ with finite variance,
and let $(Z,Z_1,Z_2)^T\sim N(\bm{0},\bm I_3)$
be independent of $\beta$.
Let $\kappa^2=\E[\beta^2]$ and consider the system of 6 equations
\begin{equation}
\left\{
    \begin{aligned}
        \kappa^2\alpha & =\E\bigl[\beta \proxpen\bigr],
        \\
        \sqrt \delta r \gamma &= \E\bigl[Z\proxpen\bigr],
        \\
        \kappa^2\alpha^2+\sigma^2 &= \E\bigl[\{\proxpen\}^2 \bigr],
        \\
        r^2\gamma^2 &= 2\E\bigl[\rho'(-\kappa Z_1)(
                \argproxloss - \proxloss
        )^2\bigr],
        \\
        - \theta\gamma &= 2\E\bigl[
        \rho''(-\kappa Z_1)\proxloss
        \bigr],
        \\
        1-\gamma/(\sigma\tau)
                 &=
                 2
                 \E\bigl[
                     \rho'(-\kappa Z_1)
                    \proxprimeloss
                 \bigr]
    \end{aligned}
\right.
    \label{salehi_system}
\end{equation}
with unknowns $(\alpha,\sigma,\gamma,\theta,\tau,r)$.
Equation (4.3) in \cite{sur2019thesis}
provides an alternative formulation
to \eqref{salehi_system} using four equations involving iterated expectations.
As in the case of the MLE in \cite{sur2018modern} with 
\eqref{sur_system},
the system \eqref{eq:limiting_salehi} captures the interplay
between the logistic model \eqref{eq:logistic-model},
the penalty and the limit $\delta$ of the ratio $n/p$:
The main result of \cite{salehi2019impact} states that if the nonlinear
system \eqref{salehi_system} has a unique solution
$(\bar\alpha,\bar\sigma,\bar\gamma,\bar\theta,\bar\tau,\bar r)$
then for any locally Lipschitz function $\Phi:\R^2\to\R$,
\begin{equation}
\frac1p\sum_{j=1}^p \Phi\Bigl(\hbeta_j,\beta^*_j\Bigr)
\to^{\P}
\E\Bigl[\Phi\Bigl(\proxpenbar,\beta\Bigr)\Bigr] 
\label{eq:limiting_salehi}
\end{equation}
as $n,p\to\infty$ with $n/p\to\delta$.
An informal interpretation of \eqref{eq:limiting_salehi} is the approximation
\begin{equation}
\hbeta_j \approx 
\prox[\bar\sigma\bar\tau \tilde f(\cdot)]
\bigl(\bar\sigma\bar\tau(\bar\theta\beta^*_j + \delta^{-1/2}\bar r Z_j)\bigr),
\label{informal-salehi}
\end{equation}
where $Z_j\sim N(0,1)$, and \eqref{informal-salehi} holds
in an averaged sense over $j=1,...,p$.
This approximation means that in order to understand $\bm{\hbeta}$, it 
is sufficient to understand the random vector with independent components
$\hat b_j=
\prox[\bar\sigma\bar\tau \tilde f(\cdot)]
\bigl(\bar\sigma\bar\tau y_j^{seq}\bigr) 
$
in the Gaussian sequence model $y_j^{seq} \sim N(\bar\theta \beta_j^*, \frac{\bar r^2}{\delta}), j=1,...,p$.
The works \cite{gerbelot2020asymptotic,loureiro2021capturing}
further extend these results to loss and penalty functions that need
not be separable, provide a unified theory for the systems
\eqref{karoui_system}, \eqref{eq:limiting_result_sur} and \eqref{salehi_system},
and describe the relationship of these results
with predictions from the replica method in statistical physics.

The above system \eqref{salehi_system} may reduce to simpler forms
for specific penalty functions.
With Ridge penalty
$g(\bm b) = \frac{\lambda}{2p} \|\bm b\|^2$,
in the isotropic setting with
covariance $\bm\Sigma = \frac 1 p \bm I_p$, the system \eqref{salehi_system}
reduces to
\begin{equation}
    \label{ridge_system}
    \left\{
        \begin{aligned}
        \delta^{-1}\sigma^2 &= 2\E\bigl[\rho'(-\kappa Z_1)(
                \argproxloss - \proxloss
        )^2\bigr],
        \\
        -\delta^{-1}\alpha &= 2\E\bigl[
        \rho''(-\kappa Z_1)\proxloss
        \bigr],
        \\
        1 - \delta^{-1} + \lambda\gamma
                 &=
                 2
                 \E\bigl[
                 \rho'(-\kappa Z_1)
                 \proxprimeloss
             \bigr],
    \end{aligned}
    \right.
\end{equation}
see \cite[Eq. (14), (16)]{salehi2019impact}.
By integration by parts, \eqref{ridge_system}
is equivalent to \eqref{sur_system} when $\lambda=0$,
and the convergence \eqref{eq:limiting_salehi}
with $\Phi(u,v)=\phi(u-\bar\alpha v,v)$
reduces to \eqref{eq:limiting_result_sur}
due to the simple form of the proximal operator
$\prox[t \tilde f](x) = \frac{x}{1+\lambda t}$
where $\tilde f(u) = \lambda u^2/2$.

As explained in \cite{sur2018modern,salehi2019impact} among others,
the systems \eqref{sur_system}, \eqref{salehi_system}
and \eqref{ridge_system} combined with the asymptotic results
\eqref{eq:limiting_result_sur} and \eqref{eq:limiting_salehi}
are powerful tools to analyze various characteristics
of the M-estimator $\bm \hbeta$, for instance the correlation
$p^{-1} \bm \hbeta^T \bm \beta^*$ with the true regression vector
$\bm\beta^*$
by using $\Phi(a,b) = ab$ in
\eqref{eq:limiting_salehi}, 
giving $p^{-1} \bm \hbeta^T \bm \beta^*\to^\P \alpha \kappa^2$ by the first
line in \eqref{salehi_system}.
The Mean Squared Error (MSE)
$p^{-1} \|\bm \hbeta - \bm \beta^*\|^2$ is analogously 
characterized using $\Phi(a,b) = (a-b)^2$ in \eqref{eq:limiting_salehi}.
If the penalty is of the form $g=\lambda g_0$ for some tuning parameter
$\lambda>0$, solving the above nonlinear systems
for given $(\lambda,\delta)$ provide curves of performance metrics
of interest (e.g., correlation $p^{-1} \bm \hbeta^T \bm \beta^*$
or MSE $p^{-1} \|\bm \hbeta - \bm \beta^*\|^2$) as a function
of the tuning parameter and the limit $\delta$ of $n/p$
\cite{salehi2019impact}\cite[(4.8)-(4.8) and Fig. 4.1]{sur2019thesis}.
Plotting such curves by computing the solutions
$(\bar\alpha,\bar\sigma,\bar\gamma,\bar\theta,\bar\tau,\bar r)$
of \eqref{salehi_system} require the knowledge of the 
distribution $\beta$ of the components of $\bm \beta^*$,
or in the case of \eqref{sur_system} and \eqref{ridge_system}
the second moment $\kappa^2 = \E[\beta^2]\approx p^{-1} \|\bm \beta^*\|^2$.
Solving the system also requires the knowledge of the single index model:
If the Bernoulli parameter in \eqref{eq:logistic-model}
is of the form $\tilde\rho(\bm x_i^T\bm \beta^*)$ for $\tilde\rho:\R\to[0,1]$
different than the sigmoid function $\rho$, the systems \eqref{sur_system}
and \eqref{salehi_system} require an appropriate modification,
and solving the new system requires the knowledge of $\tilde\rho$
(see for instance the system in \cite[Section B.8]{loureiro2021capturing}).

The above mean-field results \eqref{eq:limiting_salehi}-\eqref{informal-salehi}
give a powerful description
of the limiting empirical distributions of $\hbbeta$ and $\bX\hbbeta$
in terms of the low-dimensional
parameters $(\bar\alpha,\bar\sigma,\bar\gamma,\bar\theta,\bar\tau,\bar r)$.
A practical situation motivating the present paper is the following:
Given two loss-penalty pairs $(\ell,g)$
and $(\tilde \ell,\tilde g)$, the practitioner may wish to pick 
the loss-penalty pair such that the resulting M-estimator in \eqref{hbeta}
has larger correlation with the true logistic coefficient $\bm\beta^*$
in \eqref{eq:logistic-model}. The asymptotic
\eqref{eq:limiting_salehi} suggests to compute or estimate $\bar\alpha$
for each loss-penalty pair and to
pick the pair with the largest $\bar\alpha$,
since $\bar\alpha$ is the limit of $\bm\hbeta^T\bm\beta^*/\|\bm\beta^*\|^2$.
To leverage this powerful description, the corresponding
low-dimensional parameters need to be computed or
estimated from the data $(\bX,\by)$.
In practice, 
the law of $\beta$ used to define the expectations in
\eqref{salehi_system} is typically unknown so that
finding $(\bar\alpha,\bar\sigma,\bar\gamma,\bar\theta,\bar\tau,\bar r)$
by solving \eqref{salehi_system} by computing the expectations
on the right-hand side of \eqref{salehi_system} is not possible.
Estimating scalar solutions of the above system is possible in certain
cases, including in linear models \cite{bayati2013estimating,miolane2018distribution,bellec2020out_of_sample,celentano2020lasso} \cite[Proposition 2.7]{el_karoui2018impact} and unregularized logistic regression \cite{sur2018modern,yadlowsky2021sloe}, but a general recipe to estimate the mean-field
parameters in single index models is lacking.

For the MLE, \eqref{eq:limiting_result_sur}-\eqref{sur_system}
also provide confidence intervals for components $\beta_j^*$
\cite{sur2018modern,zhao2020asymptotic}.
Let $z_{\alpha/2}>0$ be such that $\P(|N(0,1)|>z_{\alpha/2})=\alpha$.
The result \eqref{eq:limiting_result_sur} applied to a
smooth approximation of
the indicator function $\phi(a,b)=I\{|a|\le \bar\sigma z_{\alpha/2}\}$
yields that approximately
$(1-\alpha)p$ covariates $j=1,...,p$ are such that
\begin{equation}\beta_j^* \in \tfrac{1}{\bar\alpha} [
\hbeta_j - \bar\sigma z_{\alpha/2},
\hbeta_j + \bar\sigma z_{\alpha/2}
].
\label{conf-interval-zhao}
\end{equation}
This provides a confidence interval in an average sense.
\citet{sur2018modern} presented an independent leave-one-out analysis
(building upon \cite{el_karoui2013robust})
for null-covariates that allows characterization of the limiting distribution
of a given $\hat \beta_j^*$ as well as finite-dimensional marginal guarantees.
\citet{zhao2020asymptotic} later proved that the same confidence interval
is valid not only in this averaged sense, but also for a fixed, given component
$j=1,...,p$ under conditions on the amplitude of $\beta_j^*$.
As in the previous paragraph, such confidence interval can be constructed
provided that $\kappa^2$ is known so that the solution
$(\bar\alpha,\bar\sigma,\bar\gamma)$ to the system \eqref{sur_system} can be
computed. This motivated the ProbeFrontier \cite{sur2018modern}
and SLOE \cite{yadlowsky2021sloe} procedures
to compute approximations of $\kappa^2$
and of the solutions $(\bar\alpha,\bar\sigma,\bar\gamma)$ of the system
\eqref{sur_system}.
These procedures \cite{sur2018modern,yadlowsky2021sloe} to estimate
$(\bar\alpha,\bar\gamma,\bar\sigma)$ in \eqref{sur_system}
for the logistic MLE require the single-index model \eqref{single-index}
to be well-specified (i.e., $y_i|\bm x_i^T\bm \beta^*$
must follow an actual logistic model 
\eqref{eq:logistic-model}):
If the single-index model \eqref{single-index}
for the conditional law $y_i|\bm x_i^T\bm \beta^*$ deviates from 
the assumed model \eqref{eq:logistic-model} then \eqref{sur_system}-\eqref{salehi_system} must be modified to account for the actual generative model of the conditional law
$y_i|\bm x_i^T\bm \beta^*$, as in \cite[Section B.8]{loureiro2021capturing}.

For regularized logistic regression with \emph{non-smooth} penalty,
even if the solution $(\bar\alpha,\bar\sigma,\bar\gamma,\bar\theta,\bar\tau,\bar r)$ were known, constructing confidence intervals 
for $\beta_j^*$
from \eqref{eq:limiting_salehi} alone is typically not possible since
$\prox[\bar\sigma\bar\tau\tilde f]$ is not injective
(e.g., for the non-smooth penalty $\tilde f(u)=|u|$,
the proximal of $\tilde f$ is the
soft-thresholding operator which is not one-to-one).
The lack of injectivity comes from the multi-valued nature 
of the subdifferential: 
$x=\prox[u f](z)$ holds if and only if
$\frac{x-z}{u}\in \partial f(x)$, but $(x,u)$ alone are not sufficient
to recover $z$ if the subdifferential $\partial f(x)$ is multi-valued.
Even if the value
\begin{equation}
\prox[\bar\sigma\bar\tau \tilde f(\cdot)]
\bigl(\bar\sigma\bar\tau(\bar\theta\beta^*_j + \delta^{-1/2}\bar r Z_j)\bigr)
\label{value_prox}
\end{equation}
were known exactly (or approximately through $\hbeta_j$
as in \eqref{informal-salehi}),
recovering $\beta_j^*$ from the value \eqref{value_prox} still requires
to choose a specific element of the subdifferential of $\tilde f$
at \eqref{value_prox}, and results such as 
\eqref{eq:limiting_salehi}-\eqref{informal-salehi} are not 
informative regarding which element of the subdifferential of $\tilde f$
at \eqref{value_prox} should be used.
Because of this difficulty, in general nonlinear models such as 
\eqref{eq:logistic-model} with non-smooth penalty,
confidence intervals for components of $\bm\beta^*$ are lacking.

\subsection{Contributions}

\paragraph*{A peek at our results}
This paper develops a different theory 
to provide proximal approximations, confidence intervals as well as data-driven estimates of the
bias of $\bm\hbeta$ and of the
correlation $\bm\hbeta^T\bm\Sigma\bm w$ with the index $\bm w$ (here, correlation refers to the inner product $\langle \bm\hbeta, \bm w \rangle_{\bm\Sigma} = \bm\hbeta^T\bm\Sigma \bm w$ induced by the positive definite matrix
$\bm\Sigma$).
If $\bm \Sigma = \frac 1 p \bm I_p$
and $g(\bm b) = \frac 1 p \sum_{j=1}^p \tilde f(b_j)$
as in the previous subsection,
a by-product of this paper is the approximation
\begin{equation}
\hbeta_j \approx \prox\Bigl[\frac{1}{\hat v} \tilde f\Bigr]
\Bigl( \pm w_j\frac{ \hat t}{\hat v}
+ \frac{1}{\sqrt \delta} \frac{\hat r}{\hat v} Z_j
\Bigr)
\label{peek}
\end{equation}
for $Z_j\sim N(0,1)$ and $\pm w_j$ the $j$-th component of the index $\bm w$
in \eqref{single-index} up to an unidentifiable sign,
where $(\hat v, \hat t, \hat r)$ are observable scalars
defined below in \eqref{hat_v_r_t}.
The informal approximation \eqref{peek}
is made rigorous in \Cref{thm:proximal-hbeta}.
The approximation \eqref{peek} mimics \eqref{informal-salehi} with $\bm\beta^*$ replaced by the normalized index
$\bm w$, with the important difference
that the adjustments $(\hat v, \hat r, \hat r)$ of the previous
display are observable from the data $(\bm y,\bm X)$,
while the deterministic adjustments
$(\bar\sigma,\bar\tau,\bar\theta,\bar r)$ in \eqref{informal-salehi}
requires the knowledge of $\kappa^2$ and of the distribution
of the components of $\bm\beta^*$ to solve the system \eqref{salehi_system}.
This means that the rich information contained in the system
\eqref{salehi_system} and the limiting result
\eqref{eq:limiting_salehi} can be captured from the data $(\bm y,\bm X)$
by computing $(\hat v,\hat r,\hat t)$ in a data-driven fashion,
bypassing solving \eqref{eq:limiting_salehi}
and the theory described in the previous subsection.

\paragraph*{Main contributions} A summary of the main contributions of the paper, with pointers to the main theorems in the next sections, is as follows.

\begin{itemize}
    \item 
        The present paper introduces the adjustments $(\hat r^2,\hat t^2,\hat v)$ that appear in the proximal representation
        \eqref{peek}.
        For strongly convex penalty functions,
        \Cref{thm:confidence-intervals-Sigma} derivatives an asymptotic
        normality result for debiased versions of $\hbeta_j$,
        which yields confidence intervals and hypothesis tests
        for the $j$-th component
        $w_j$ of the index in the single index model \eqref{single-index},
        as well as proximal representations such as \eqref{peek}
        involving the adjustments $(\hat r^2,\hat t^2, \hat v)$
        in
        \Cref{thm:proximal-hbeta}.
        Similar results hold without a strongly convex penalty for unregularized M-estimators in
        \Cref{thm:unregularized}.
    \item
        Additional adjustments
        $(\hat\sigma^2,\hat \aaa^2)$
        are introduced to estimate
        from the data $(\bm X,\bm y)$
        the correlation
        $\aaa_* = \bm w^T\bm\Sigma\bm\hbeta$.
        This is of practical interest, for instance to tune hyper-parameters in order to maximize the correlation with the index $\bm w$ in the model \eqref{single-index}. Bounds on the estimation error of $\hat \aaa^2-\aaa_*^2$ are derived in
        \Cref{thm:adjustments-approximation} below for strongly convex
        penalty and in \Cref{thm:unregularized} for unregularized M-estimation.
        The estimator $\hat a^2$ takes the form
        \begin{equation*}
            \hat \aaa^2 =
            \frac{ \big(
            \tfrac{\hat v}{n}\|\bm{X} \bm \hbeta - \hat\gamma \bm \hpsi\|^2
            + \tfrac{1}{n}\bm{\hpsi}^\top\bm X\bm \hbeta
            - \frac{\hat\gamma }{n}\|\bm\hpsi\|^2
        \big)^2
        }{
                \tfrac{1}{n^2}\|\bm{\Sigma}^{-1/2}\bm{X}^T\bm \hpsi\|^2
                +  \tfrac{2 \hat v}{n} \bm{\hpsi}^T\bm X\bm \hbeta
                + \tfrac{\hat v^2}{n}
                \|\bm{X} \bm \hbeta - \hat\gamma \bm \hpsi\|^2
                -
                \tfrac{p}{n^2}\|\bm\hpsi\|^2
        }
        \end{equation*}
        where $\bm\hpsi\in\R^n$ is the vector with components
        $\hpsi_i = -\ell_i'(\bm x_i^T\bm\hbeta)$ 
        and $(\hat v,\hat\gamma)$ are defined in \eqref{hat_v_r_t} below
        from the derivatives of $\bm\hbeta(\bm y,\bm X)$.
        The expression of $\hat a^2$ above is notably involved, but
        also surprisingly accurate in simulations as shown in \Cref{sec:simulations}, in particular in \Cref{fig:L1} for L1-regularized M-estimation
        in binomial logistic models.
        \item
        The most studied generalized linear model is logistic regression
        thanks to the seminal works \cite{candes2020phase,sur2018modern,zhao2020asymptotic} for the MLE, where the nonlinear system
        \eqref{sur_system} describe the performance and distribution
        of $\bm\hbeta$. Under the assumptions in these works
        with a logistic model with true coefficient $\bm\beta^*$
        and  $n,p\to+\infty$ with a fixed ratio $p/n$,
        asymptotic normality
        of $\hbeta_j$ for a fixed coordinate $j\in[p]$ is granted
        provided $\tau_j|\beta_j^*| = O(n^{-1/2})$
        where $\tau_j^2=(\bm\Sigma^{-1})_{jj}$
        \cite[Theorem 3.1]{zhao2006model}.
        Under the same assumptions, in \Cref{sec:unregularized}
        we show that the techniques of the present paper allow to relax
        the requirement on the magnitude
        of $|\beta_j^*|$ to
        $\tau_j|\beta_j^*| = o(1)$
        for asymptotic normality of $\hbeta_j$, so that asymptotic normality
        may fail for at most a finite number of covariates $j\in[p]$.
\end{itemize}

The pivotal expressions enjoying approximate normality, as well as
the estimates $(\hat\sigma^2,\hat\aaa^2)$
developed in the paper, do not depend on a particular nonlinear function
$F$ in the model \eqref{single-index}.
Consequently, approximate normality (as well as the resuting confidence intervals),
and the validity of $(\hat\sigma^2,\hat\aaa^2)$ for estimating their targets,
are maintained if the conditional law of $y_i|\bm x_i$ is replaced by another.
This robustness to a particular model for $y_i|\bm x_i$ stands in contrast
to procedures aimed at estimating the scalar solutions to the systems
\eqref{sur_system} or \eqref{salehi_system}
by assuming a specific generalized linear model for $y_i|\bm x_i$
(cf. \cite{yadlowsky2021sloe,sawaya2023statistical} where such procedures
are developed).
This phenomenon
is illustrated in \Cref{table:LS} where simulations
show that
approximate normality of pivotal quantities and the validity of $\hat\aaa$ are
preserved if we change the law of $y_i|\bm x_i$ from a linear model
to a classification model (logistic or 1-bit compressed sensing)
or to a Poisson model.

Some of the techniques used below to derive results of the form \eqref{peek}
have been used previously to derive asymptotic normality results
in linear models for penalized least-squares estimators
\cite{bellec_zhang2019debiasing_adjust,bellec_zhang2019second_poincare}
and robust/regularized M-estimators
\cite{bellec2021derivatives,bellec2021asymptotic}.
The present paper builds upon these techniques to tackle single-index models,
and we discuss after \eqref{317} below how the arguments required 
here depart from the known results used in linear models.
While the factor $\hat r/\hat v$ of the Gaussian part $Z_j$
in \eqref{peek} is reminiscent of the construction of confidence intervals
for linear models in 
\cite{el_karoui2018impact,bellec_zhang2019debiasing_adjust,bellec_zhang2019second_poincare,bellec2021asymptotic}, this paper introduces the new adjustments $(\hat t^2,\hat\aaa^2,\hat\sigma^2)$, defined in \eqref{hat_v_r_t} below,
 which captures the interplay between the non-linearity of the model \eqref{single-index} and the M-estimator \eqref{hbeta}.

\subsection{Organization}
\Cref{sec:assum} states our working assumptions.
\Cref{sec:derivatives} obtains formula for the derivatives
of $\bm\hbeta$ with respect to $\bm X$ and defines the observable
adjustments (e.g., $\hat r,\hat v, \hat t^2$ in \eqref{peek}) and related notation that will be used throughout the paper.
\Cref{sec:main-strongly-convex} states the main results in the paper
regarding confidence intervals, proximal mapping representations
for $\bm \hbeta$ and for the predicted values $\bm X\bm\hbeta$,
and estimation of the correlation of $\bm\hbeta$ with the index.
\Cref{sec:unregularized} develops similar results for unregularized
M-estimation. \Cref{sec:simulations} presents examples and simulations.
The proofs are delayed to the appendix.

\subsection{Notation}
For an event $E$, denote by $I_E$ or $I\{E\}$ its indicator function.
Vectors are denoted by bold lowercase letters and matrices by
bold uppercase.
The euclidean norm of a vector $\bm v$ is denoted by $\|\bm v\|$,
the operator norm (largest singular value) of a matrix $\bm M$ by $\|\bm M\|_{op}$
and its Frobenius norm by $\|\bm M\|_F$. If $\bm S$ is a symmetric matrix,
$\lambda_{\min}(\bm S)$ is its smallest eigenvalue. If $\bm S,\bm T$ are
two symmetric matrices, we write $\bm S \preceq \bm T$ if and only if $\bm T-\bm S$ is positive semi-definite. The identity matrix of size $p\times p$ is $\bm I_p$.
Convergence in distribution is denoted by $\to^d$ and convergence in probability by $\to^{\P}$.

\section{Assumptions}
\label{sec:assum}

We now come back to the single index model
\eqref{single-index} with unknown function $F$.
That is, we assume from now on
$$
    y_i = F(\bm{x}_i^T\bm w, U_i),
    \qquad
    U_i \text{ independent of } \bm x_i,
    \qquad
    \Var[\bm x_i^T\bm w]
    =
    \bm w^T \bm \Sigma \bm w = 1.
$$
Each of our results will require a subset of the following assumptions.
Let $\mathcal Y\subset \R$ be the set of allowed values for $y_i$,
i.e., a set such that $\P(y_i\in \mathcal Y)=1$.
For instance, $\mathcal Y=\R$ for continuous response regression
and $\mathcal Y=\{\pm 1\}$ or $\mathcal Y = \{0, 1\}$
in binary classification.

\begin{assumption}
    \label{assum}
    Let $\delta>0,\kappa \ge 1$ be constants independent of $n,p$.
    Consider the model \eqref{single-index}
    for some unknown nonrandom $\bm{w}\in\R^p$ with $\Var[\bm x_i^T\bm w] = 1$.
    Assume that $\frac{1}{2\delta}\le \frac p n\le \frac{1}{\delta}$,
    that $\bm{X}$ has iid $N(\bm{0},\bm{\Sigma})$ rows for $\bm \Sigma$ with 
    condition number bounded as
    $(\|\bm{\Sigma}\|_{op}\|\bm \Sigma^{-1}\|_{op})\le \kappa$,
    that $\ell_{y_0}(\cdot), g(\cdot)$ are convex
    with $\ell_{y_0}(\cdot)$ continuously differentiable
    and $\ell_{y_0}'(\cdot)$ 1-Lipschitz for any fixed $y_0\in\mathcal Y$.
\end{assumption}

\begin{assumption}
    \label{assumStrongConvex}
    For all $\bm b, \bm{\tilde b}\in\R^p$
    we have
    $g(\bm 0)\le g(\bm b)$ and
    for some constant $\tau>0$ independent of $n,p$,
    \begin{equation}
        (\bm{b}-\bm{\tilde b})^T(\bm d - \bm{\tilde d}) \ge \tau \|\bm \Sigma^{1/2}(\bm b- \bm{\tilde b})\|^2,
        \qquad
        \forall \bm d\in,\partial g(\bm b),\bm{\tilde d}\in\partial g(\bm{\tilde b})
    .
    \label{eqStrongConvex}
    \end{equation}
    The loss satisfies $\mathbb P(\ell_{y_i}'(0)=0)=0$
    where $y_i$ satisfies the model \eqref{single-index}.
\end{assumption}
In \eqref{eqStrongConvex} above, $\partial g(\bm b) = \{\bm d\in\R^p: \forall \bm {\tilde b}, g(\bm{\tilde b})\ge g(\bm b) + \bm d^T(\bm{\tilde b}-\bm b)\}$
denotes the subdifferential of $g$ at $\bm b \in\R^p$.
The assumption that $\ell_{y_i}'(0)=0$ happens with probability 0 is mild: For instance it is always satisfied for the logistic loss or the probit
negative log-likelihood. The goal if this assumption is to
ensure
\begin{equation}
    \textstyle
\P\bigl(
\sum_{i=1}^n \ell_{y_i}'(\bm x_i^T \bm\hbeta)^2 > 0
\bigr) = 1.
\label{eq:psi-ne-0-proba-1}
\end{equation}
Indeed, in the event
$\sum_{i=1}^n \ell_{y_i}'(\bm x_i^T \bm\hbeta)^2 = 0$, it must be
that $\bm 0\in \partial g(\bm \hbeta)$ so that $\bm \hbeta$ is a minimizer 
of $g$, so that $\bm\hbeta=\bm 0$ since $\bm 0$ is the only minimizer of $g$
by strong convexity. This implies $\sum_{i=1}^n\ell_{y_i}'(0)^2=0$ which has probability 0.
Thus,  \eqref{eq:psi-ne-0-proba-1} holds under \Cref{assumStrongConvex};
this allows us to divide by $\sum_{i=1}^n \ell_{y_i}'(\bm x_i^T \bm\hbeta)^2$
with probability one.

The strongly-convex
\Cref{assumStrongConvex} simplifies the analysis significantly,
especially in the $p>n$ regime. This assumption lets us provably establish
\eqref{peek} as well as
consistency results for the newly proposed estimators,
and lets us exhibit the underlying structure of single-index model regression,
without much technical argument in the proofs.
We believe that relaxing strong convexity on the penalty
should currently be within reach for L1 penalties 
(for instance for the logistic
Lasso), although under additional and suboptimal assumptions granting
that $\|\hbbeta\|_0/n$ is smaller than a constant with high-probability,
using arguments made in linear regression under assumption (iii)
of Theorem 2.1 of \cite{bellec2020out_of_sample}.
For the logistic Lasso, $\|\hbbeta\|_0/n$ may be controlled for instance
by \cite[Proposition 3.7]{bellec2019first} or references therein.
We believe that strong convexity is an artefact of the proof for some
penalty functions, although not all. For instance, with the logistic loss
and if the penalty
$g$ in \eqref{hbeta} encodes
a convex cone constraint (e.g.,
that all $p$ coefficients are non-negative), then
$\hbbeta$ does not exist when $p/n$ is too large \cite{han2022gaussian}.
Strong convexity makes sure that $\hbbeta$ always exists.
Beyond existence of $\hbbeta$, the Lasso problem \cite{miolane2018distribution}
exhibits a precise phase transition regarding sparsity levels
below which $\|\bm\Sigma^{1/2}(\hbbeta-\bm\beta^*)\|^2$ is uniformly bounded.
In the present paper, strong convexity also ensures that
$\|\bm\Sigma^{1/2}\hbbeta\|^2$ and several other key quantities
are uniformly bounded (cf. \eqref{eq:bound-A-op-recap-before-proof}-\eqref{eq:bound-htheta} in the proof). 
We conjecture that if the sequence of regression problems
is such that these quantities are uniformly bounded from above and below
in probability, then the conclusions of the paper still apply.

Alternatively to strong convexity of the penalty
in \Cref{assumStrongConvex}, we will derive results
for unregularized M-estimation when $p<n$ under the following
assumptions.

\begin{assumption}
    \label{assumLowDim}
    Assume $\frac{1}{2\delta}\le \frac p n\le \frac{1}{\delta}<1$
    and that the penalty is $g=0$.
\end{assumption}

\begin{assumption}
    \label{assumExtra}
    Assume that $\ell_{y_0}$ is twice continuously differentiable
    for all $y_0\in \mathcal Y$, that
    $\sup_{u\in\R}\max_{y_0\in\mathcal Y}|\ell_{y_0}'(u)|\le1$,
    and that the maps
    $u\mapsto \min_{y_0\in\mathcal Y}(\ell_{y_0}'(u))^2$
    and
    $u\mapsto \min_{y_0\in\mathcal Y}\ell_{y_0}''(u)$
    are both continuous and valued in the open interval $(0,+\infty)$.
    Here, the loss $\ell$ (as a function $\mathcal Y\times \R\to\R$)
    is assumed independent of $n,p$.
\end{assumption}

The requirement in \Cref{assumExtra} that 
$u\mapsto \min_{y_0\in\mathcal Y}\ell_{y_0}''(u)$ is positive and continuous
can be interpreted as local strong convexity, since $\ell_{y_0}$
is then strongly-convex on every compact.
Relaxing such local strong convexity involves technical details,
in particular to bound from below $\trace[\bm V]$ where $\bm V$ is defined in
\Cref{thm:derivatives} below. This requires additional technical arguments that
we do not pursue here, but that have been laid out in linear models for the
Huber loss and its variants in \cite{koriyama2023out}.

\section{Derivatives and observable adjustments}
\label{sec:derivatives}

Let us define some notation
used throughout.
Denote by $\partial g(\bm b) \subset \R^p$ the subdifferential
of a convex function $g:\R^p\to\R$ at a point $\bm b\in\R^p$.
The KKT conditions of the minimization problem \eqref{hbeta}
with convex penalty $g$ and convex loss $\ell_{y_i}$ read
\begin{equation}
\bm{X}^T\bm \hpsi (\bm y,\bm X)\in n \partial g(\bm \hbeta(\bm y,\bm X))
\end{equation}
where $\bm{y}\in\R^n$ is the response vector with components $y_1,...,y_n$,
$\bm{X}$ is the design matrix with rows $\bm x_1^T,...,\bm x_n^T$,
and $\bm{\hpsi}(\bm y,\bm X)\in\R^n$ has components
$\hpsi_i(\bm{y},\bm X) = - \ell_{y_i}'(\bm x_i^T\bm \hbeta(\bm y,\bm X))$.
The minus sign here is used so that, in the special case of linear models with the square loss
$\ell_{y_i}(u) = (y_i-u)^2/2$, the usual $i$-th residual
is $\psi_i(\bm y,\bm X)=y_i - \bm x_i^T\bm \hbeta$.
Similarly to $\bm\hbeta(\bm y, \bm X)$ in \eqref{hbeta},
we view $\bm\hpsi(\bm y,\bm X)$ as a function of $(\bm y, \bm X)$,
i.e., $\bm\hpsi : \R^n \times \R^{n\times p} \to \R^n$.
We let $\ell'$ act componentwise and denote
by $\ell_{\bm y}'(\bm u)\in\R^n$ the vector
with components $\ell_{y_i}'(u_i)$ for every $\bm{u},\bm y\in\R^n$,
so that 
\begin{equation}
\bm{\hpsi}(\bm y,\bm X) = - \ell_{\bm y}'\bigl(\bm X\bm \hbeta(\bm y,\bm X)\bigr).
\label{eq:def-psi}
\end{equation}
Similarly, $\ell''$ acts componentwise on vectors in $\R^n$
so that $\ell_{\bm y}''(\bm u)$ has components
$[\ell_{\bm{y}}''(\bm u)]_i = \ell_{y_i}''(u_i)$ for $\bm y,\bm u\in\R^n$
and each $i=1,...,n$.
This notation highlights that derivatives will always be 
with respect to $\bm{u}$ for a fixed value of $\bm y$.
If the context is clear, we write simply $\bm{\hbeta}$ for $\bm \hbeta(\bm y,\bm X)$
and $\bm{\hpsi}$ for $\bm \hpsi(\bm y,\bm X)$; in this case
the functions $\bm{\hbeta},\bm \hpsi$ and their derivatives are
implicitly taken at the observed data $(\bm{y},\bm X)$.

The observable adjustments $(\hat t,\hat v)$ in \eqref{peek} are
constructed from the derivatives of $\bm\hbeta$ with respect to $\bm X$.
The following proposition provides the structure of these derivatives,
extending the corresponding result for linear models 
\cite[Theorem 1]{bellec2021derivatives} to the present setting.

\begin{restatable}{proposition}{thmDerivatives}
    \label{thm:derivatives}
    Assume that {$\ell_{y_0}$ is convex with} $\ell_{y_0}'$ 1-Lipschitz
    for all $y_0\in\mathcal Y$ and that
    \eqref{eqStrongConvex} holds
    for some some $\tau > 0$ and positive definite $\bm{\Sigma}\in\R^{p\times p}$.
    Then
    for any fixed $\bm{y}\in \mathcal Y^n$ 
    the function $\bm{\hbeta}(\bm y, \cdot)$ is differentiable
    almost everywhere in $\R^{n\times p}$ with
    derivatives
    $$
    (\partial/\partial x_{ij})\bm{\hbeta}(\bm{y}, \bm X)
    = \bm{\hat{A}}\bigl(\bm{e}_j \hpsi_i  - \bm X^T\bm D \bm{e}_i \hbeta_j\bigr)
    $$
    where $\bm{D}\defas\diag(\ell_{\bm{y}}''(\bm{X}\bm{\hbeta}))$
    and some matrix $\bm{\hat{A}}\in\R^{p\times p}$ with
    \begin{equation}
        \label{eq:boundA}
        \|\bm{\Sigma}^{1/2} \bm{\hat{A}}\bm \Sigma^{1/2}\|_{op}
        \le (n\tau)^{-1}
    \end{equation}
    and such that the following holds:
    If $\bm D \bm X \bm \hbeta \ne \bm \hpsi$ we have
    \begin{align}
        0\le \quad\df\quad &\le n
        \qquad\qquad\text{ where }
        \df \defas \trace[\bm{X}\bm{\hat{A}} \bm X^\top \bm D],
        \label{eq:def-V-df-bound-ideal-case}
        \\
        \trace[\bm D] / \bigl(1+\hat c\bigr)\le
        \,\trace[\bm V]~
        &\le
        \trace[\bm D] 
        \qquad\text{ where }
        \bm{V}\defas\bm{D}-\bm{D}\bm{X}\bm{\hat{A}}\bm{X}^T\bm{D},
        \nonumber
    \end{align}
    where $\hat c= \frac{1}{n\tau} \|\bm D^{1/2}\bm X\bm\Sigma^{-1/2}\|_{op}^2$,
    while if $\bm D \bm X \bm \hbeta = \bm \hpsi$ we have the slightly
    weaker
    \begin{align}
        -\hat c \le \quad\df\quad &\le n +\hat c,
        \label{eq:bound-df}
        \\
        -4\hat c
        +
        \trace[\bm D] / (1+ \hat c  )
        \le
        \,\trace[\bm V]~
           &\le
        \trace[\bm D] 
        +4\hat c.
        \label{eq:bound-trV-D}
    \end{align}
\end{restatable}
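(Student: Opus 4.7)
My plan is to adapt the linear-model argument of \cite{bellec2021derivatives} to a general convex loss with 1-Lipschitz derivative. First I would show that $\bm X\mapsto\bm\hbeta(\bm y,\bm X)$ is locally Lipschitz at fixed $\bm y$. For two design matrices $\bm X_1,\bm X_2$ with minimizers $\bm\hbeta_k$ and vectors $\bm\hpsi_k=-\ell'_{\bm y}(\bm X_k\bm\hbeta_k)$, the KKT inclusion $n^{-1}\bm X_k^T\bm\hpsi_k\in\partial g(\bm\hbeta_k)$ combined with \eqref{eqStrongConvex} yields
\[
\tau\|\bm\Sigma^{1/2}(\bm\hbeta_1-\bm\hbeta_2)\|^2\le n^{-1}(\bm\hbeta_1-\bm\hbeta_2)^T\bigl(\bm X_1^T\bm\hpsi_1-\bm X_2^T\bm\hpsi_2\bigr),
\]
and the right-hand side can be bounded via Cauchy--Schwarz and the 1-Lipschitz property of $\ell'_{y_i}$ by a constant multiple of $\|\bm X_1-\bm X_2\|_F$ on a ball of fixed radius. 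Local Lipschitzness and Rademacher's theorem then give differentiability of $\bm\hbeta(\bm y,\cdot)$ almost everywhere on $\R^{n\times p}$.

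\textbf{Step 2: Derivative formula and operator-norm bound.} At a point of differentiability, set $\bm v_{ij}=(\partial/\partial x_{ij})\bm\hbeta$ and differentiate the KKT relation $\bm X^T\bm\hpsi\in n\partial g(\bm\hbeta)$ along the direction $\bm e_i\bm e_j^T$. Using the chain rule on $\bm\hpsi=-\ell'_{\bm y}(\bm X\bm\hbeta)$, the first-order condition takes the form
\[
(\bm X^T\bm D\bm X+n\bm H)\bm v_{ij} = \bm e_j\hpsi_i-\bm X^T\bm D\bm e_i\hbeta_j\defas\bm b_{ij},
\]
where $\bm H$ is a symmetric PSD matrix playing the role of the generalized Jacobian of $\partial g$ at $\bm\hbeta$ along $\bm v_{ij}$ and satisfies $\bm H\succeq\tau\bm\Sigma$ by \eqref{eqStrongConvex}. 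Setting $\bm{\hat A}=(\bm X^T\bm D\bm X+n\bm H)^{-1}$ (or its limit under a smoothing of $g$ when $g$ is not $C^2$) yields the claimed derivative formula, and $\bm X^T\bm D\bm X\succeq 0$ together with $n\bm H\succeq n\tau\bm\Sigma$ gives $\bm{\hat A}^{-1}\succeq n\tau\bm\Sigma$, i.e.\ \eqref{eq:boundA}.

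\textbf{Step 3: Trace bounds in the generic case.} With $\bm K=\bm D^{1/2}\bm X$, the push-through identity
\[
\bm K(\bm K^T\bm K+n\bm H)^{-1}\bm K^T=\bm I_n-(\bm I_n+\bm K(n\bm H)^{-1}\bm K^T)^{-1}
\]
gives $\df=\trace[\bm K\bm{\hat A}\bm K^T]\in[0,n]$ and $\bm V=\bm D^{1/2}(\bm I_n+\bm K(n\bm H)^{-1}\bm K^T)^{-1}\bm D^{1/2}$. The bound $(\bm I+\cdot)^{-1}\preceq\bm I$ yields $\trace[\bm V]\le\trace[\bm D]$, while $\lambda_{\max}(\bm K(n\bm H)^{-1}\bm K^T)\le(n\tau)^{-1}\|\bm D^{1/2}\bm X\bm\Sigma^{-1/2}\|_{op}^2=\hat c$ (from $\bm H\succeq\tau\bm\Sigma$) yields $\trace[\bm V]\ge\trace[\bm D]/(1+\hat c)$, proving \eqref{eq:def-V-df-bound-ideal-case}.

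\textbf{Main obstacle: the degenerate case.} A direct calculation gives $\bm\hbeta^T\bm b_{ij}=\hbeta_j\bigl(\bm\hpsi-\bm D\bm X\bm\hbeta\bigr)_i$, so the vectors $\{\bm b_{ij}\}_{i,j}$ span all of $\R^p$ unless $\bm\hbeta=\bm0$ or $\bm D\bm X\bm\hbeta=\bm\hpsi$. In the generic case the derivative formula therefore pins $\bm{\hat A}$ down uniquely and Step 3 applies. In the degenerate case $\bm{\hat A}$ can be perturbed by a rank-one correction along $\bm\hbeta\bm\hbeta^T$ without affecting any partial derivative, so the identification of $\bm{\hat A}$ with $(\bm X^T\bm D\bm X+n\bm H)^{-1}$ only holds up to such a correction. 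My plan here is to construct $\bm{\hat A}$ as the limit of $(\bm X^T\bm D\bm X+n\bm H_\eta)^{-1}$ where $\bm H_\eta$ is the Hessian of a smoothing $g_\eta$ of $g$ (e.g.\ Moreau--Yosida), and to track the size of the rank-one discrepancy in the limit $\eta\to 0$ using $\bm{\hat A}\preceq(n\tau)^{-1}\bm\Sigma^{-1}$ from \eqref{eq:boundA}; this controls the perturbation and absorbs it into the slack terms $\hat c$ and $4\hat c$ in \eqref{eq:bound-df} and \eqref{eq:bound-trV-D}. I expect this limiting bookkeeping to be the most delicate part of the proof.
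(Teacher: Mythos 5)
Your Step 1 (local Lipschitzness from the KKT inclusions plus strong convexity, then Rademacher) is exactly the paper's route. The genuine gap is in Step 2: you posit a symmetric positive semi-definite matrix $\bm H$ with $\bm H\succeq \tau\bm\Sigma$ such that the differentiated KKT system reads $(\bm X^T\bm D\bm X+n\bm H)\bm v_{ij}=\bm b_{ij}$, and everything downstream (the explicit inverse $\bm{\hat{A}}=(\bm X^T\bm D\bm X+n\bm H)^{-1}$, the push-through identity in Step 3, the eigenvalue bound $\lambda_{\max}(\bm K(n\bm H)^{-1}\bm K^T)\le\hat c$) depends on this ansatz. For a general convex $g$ satisfying only \eqref{eqStrongConvex} (e.g.\ an elastic net, which the paper explicitly covers), no such $\bm H$ need exist: at a point of differentiability of $\bm\hbeta(\bm y,\cdot)$, the derivative of the subgradient selection $\bm X^T\bm\hpsi/n$ is a linear map $\bm\dX\mapsto \bm L(\bm\dX)$ satisfying only the scalar inequality $\bm\dbeta^T\bm L(\bm\dX)\ge\tau\|\bm\Sigma^{1/2}\bm\dbeta\|^2$; there is no reason it factors as $\bm H\bm\dbeta$ for a fixed symmetric matrix $\bm H$, nor that it is determined by $\bm\dbeta$ alone. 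Your fallback --- defining $\bm{\hat{A}}$ as a limit of $(\bm X^T\bm D_\eta\bm X+n\bm H_\eta)^{-1}$ under Moreau--Yosida smoothing of $g$ --- is precisely the hard part left undone: smoothing changes the minimizer itself, and convergence of $\bm\hbeta_\eta\to\bm\hbeta$ does not imply convergence of the partial derivatives $(\partial/\partial x_{ij})\bm\hbeta_\eta$ to those of $\bm\hbeta$ at (almost every) fixed $\bm X$, which is what the derivative formula requires. Calling this ``limiting bookkeeping'' understates it; as written the argument does not close.

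The paper avoids the Hessian ansatz entirely. It passes to the limit in the monotonicity inequality \eqref{eq:fundamental-KKT} to obtain the purely variational bound $\tau n\|\bm\Sigma^{1/2}\bm\dbeta\|^2+\|\bm D^{1/2}\bm X\bm\dbeta\|^2\le \bm\dbeta^T\mathcal L(\bm\dX)$ with $\mathcal L(\bm\dX)=\bm\dX^T\bm\hpsi-\bm X^T\bm D\bm\dX\bm\hbeta$, deduces the kernel inclusion $\ker(\mathcal L)\subset\ker(\bm\dX\mapsto\bm\dbeta)$, and obtains $\bm{\hat{A}}$ from the abstract factorization ``$\ker\bm B\subset\ker\bm C\Rightarrow\bm C=\bm A\bm B$''; the bounds \eqref{eq:boundA} and the trace bounds then all follow from testing the variational inequality against vectors in $\mathrm{Range}(\mathcal L)$, where the dichotomy $\bm D\bm X\bm\hbeta\ne\bm\hpsi$ governs whether $\bm X^T\bm D\bm v\in\mathrm{Range}(\mathcal L)$ for all $\bm v$ (via directions $\bm\dX=\bm S\bm D\bm X$ with $\bm S$ symmetric). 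In the degenerate case the paper restricts to a codimension-one subspace and absorbs a rank-two (not rank-one) perturbation of $\bm M^s$, which is where the $4\hat c$ slack comes from. Your identification of the degenerate case via $\bm\hbeta^T\bm b_{ij}=\hbeta_j(\bm\hpsi-\bm D\bm X\bm\hbeta)_i$ is the right intuition, but the spanning claim for the canonical directions $\{\bm b_{ij}\}$ is not fully justified (e.g.\ indices with $\hpsi_i=0\ne(\bm D\bm X\bm\hbeta)_i$ contribute only a line), and in any case the relevant object is the range of $\mathcal L$ over all directions $\bm\dX$, not just the canonical ones.
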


The proof is given in \Cref{sec:proof_derivatives}.
Closed form expressions for
$\hbA$ are available in some cases
using known techniques:
for Ridge regularization 
it is given after \eqref{KKT-ridge}
in \Cref{sec:simulations},
for L1 regularization before 
\eqref{df-L1},
and for the group-lasso 
penalty the closed-form formula
can be obtained by the same argument
as in 
\cite[Propositoin 4.2]{bellec_zhang2019second_poincare}.
If $\bm\hpsi \ne \bm D\bm X\bm \hbeta$, which we expect to happen
in practice, \eqref{eq:bound-df}-\eqref{eq:bound-trV-D} become the simpler
bounds in \eqref{eq:def-V-df-bound-ideal-case}.
This shows that
$\trace[\bm D]=\sum_{i=1}^n\ell_{y_i}''(\bm x_i^T\bm\hbeta)$ and $\trace[\bm V]$ are of the same order,
since $\hat c$ is of constant order by \eqref{eq:7-norm-G} below.
It is unclear why the special case $\bm\hpsi = \bm D\bm X\bm \hbeta$
needs to be handled separately, and we believe that it is 
an artefact of the proof. If $\bm\hpsi = \bm D\bm X\bm \hbeta$
we still obtain the slightly weaker \eqref{eq:bound-df}-\eqref{eq:bound-trV-D}, which is sufficient for our purpose.

By \Cref{thm:derivatives},
since $\bm\hpsi$ in \eqref{eq:def-psi} is given by
$\bm{\hpsi}
=
-\ell_{\bm{y}}'(\bm{X}\bm{\hbeta})
$ we find by the chain rule
$
(\partial/\partial x_{ij})\bm{\hpsi}
=
\bm{D}
[
- \bm{X} \bm{\hat{A}} \bm{e}_j \hpsi_i 
- (\bm{I}_n- \bm{X} \bm{\hat{A}} \bm{X}^T \bm{D}) \bm{e}_i \hbeta_j
]
$.
Summarizing the derivatives of both $\bm\hpsi$ and $\bm\hbeta$ side by side, 
for all $i\in[n], j\in[p]$,
\begin{equation}
    \begin{aligned}
    (\partial/\partial x_{ij})\bm{\hbeta}(\bm y,\bm X)
    &=
    \bm{\hat{A}}\bm{e}_j \hpsi_i  - \bm{\hat{A}} \bm X^T\bm D \bm{e}_i \hbeta_j
    ,
    \\
    (\partial/\partial x_{ij})\bm{\hpsi}(\bm y,\bm X)
    &= - \bm{D} \bm{X}\bm{\hat{A}}\bm{e}_j \hpsi_i
- \bm{V} \bm{e}_i \hbeta_j.
    \end{aligned}
    \label{eq:derivatives-psi-hbeta}
\end{equation}

The random quantities $\trace[\bm V]$, $\df$ and $\trace[\bm\Sigma \hbA]$ defined
using the above derivatives are intimately linked to certain quadratic
functions of $(\hbbeta,\bX\hbbeta,\bm\hpsi,\bX^\top\bm\hpsi)$ in several approximations that we
now describe informally. For simplicity of presentation, it is best
to perform a change of variable to transform the initial problem with
data $(\bm X,\by)$ where $\bm x_i\sim N(\bm 0,\bm \Sigma)$ to an isotropic
problem with data $(\bm G, \by)$ where $\bm G$ has iid $N(0,1)$ entries.
Define $\bm G = \bm X \bm \Sigma^{-1/2}$ as well as
$\bm\htheta = \bm\Sigma^{1/2}\bm\hbeta$,  $\bm\theta^*=\bm\Sigma^{1/2}\bm w$
and $\bm A = \bm \Sigma^{1/2} \hbA \bm \Sigma^{1/2}$.
The vector $\bm\theta^*$plays the role of the index after this change of variable
(in the proof in \eqref{eq:change-of-variable-G}-\eqref{eq:rewrite-after-change-variable}, we additionally rotate the coordinate system so that the rotated index becomes the first canonical basis vector, but this additional rotation is not useful for the discussion of the current paragraph).
Define
$\bm G^\perp = \bm G (\bm I_p - \bm\theta^*(\bm\theta^*)^T)$ and
$\bm\htheta^\perp=(\bm I_p - \bm\theta^*(\bm\theta^*)^T)\bm\htheta$ 
where $\bm I_p - \bm\theta^*(\bm\theta^*)^T$ is the orthogonal projection
on the orthogonal complement of the span of $\bm\theta^*$.
Then $(\by,\bm G\bm\theta^*)$ is independent of 
$\bm G^\perp$ and
\begin{align}
    p\|\bm \hpsi\|^2
    &\approx
    \|(\bm G^\perp)^T \bm \hpsi + \trace[\bm V]\bm \htheta^\perp \|^2 
    &&\text{by Prop.~\ref{prop26} with }\bm u= \bm \hpsi, \bm Z= \bm G^\perp,
    \label{main_Gamma_2}
    \\
    n \|\bm\htheta^\perp\|^2
    &\approx
    \| \bm G^\perp \bm\htheta - \trace[\bm A] \bm \hpsi\|^2
    &&\text{by Prop.~\ref{prop26} with }\bm u= \bm \htheta^\perp, \bm Z^T= \bm G^\perp.
    \label{main_Gamma_4}
\end{align}
On the other hand, by
\Cref{prop25} applied to either $\bm Z = \bm G^\perp$ or $\bm Z = (\bm G^\perp)^T$,
\begin{align}
    \bm \hpsi^T \bm G^\perp \bm \htheta
    &\approx
    -\trace[\bm V] \|\bm \htheta^\perp\|^2
    + \trace[\bm A] \|\bm \hpsi\|^2,
    &&
    \text{ with }\bm u= \bm \hpsi,\bm f = \bm\htheta^\perp,
    \label{main_Gamma_1_star}
    \\
    p\|\bm \hpsi\|^2
    &\approx
    \|(\bm G^\perp)^T \bm \hpsi\|^2
    + \trace[\bm V]
    \bm \hpsi^T \bm G^\perp \bm\htheta
    + \df \|\bm \hpsi\|^2
    &&
    \text{ with }\bm u= (\bm G^\perp)^T\bm \hpsi,\bm f = \bm\hpsi,
    \label{main_Gamma_3}
    \\
    n \|\bm \htheta^\perp\|^2
    &\approx 
    \|\bm G^\perp \bm\htheta\|^2
    - \trace[\bm A] \bm \hpsi^T \bm G^\perp \bm\theta
    + \df \|\bm \htheta^\perp\|^2
    &&
    \text{ with }\bm u= \bm G^\perp\bm\htheta,\bm f = \bm\htheta^\perp,
    \label{main_Gamma_5_star}
    \\
    0 & \approx
    (\bm G \bm\theta^*)^T
    (\bm G^\perp \bm\htheta - \trace[\bm A] \bm \hpsi)
    &&
    \text{ with }\bm u= \bm G\bm\theta^*,\bm f = \bm\htheta^\perp.
    \label{main_Gamma_6_star}
\end{align}
These informal approximations are established and used in several proofs
of the present paper, see in particular \eqref{Rem_i} and \Cref{lemma:fiveqs}
where
$\Gamma_1^*$ corresponds to \eqref{main_Gamma_1_star},
$\Gamma_2$ to \eqref{main_Gamma_2},
$\Gamma_3$ to \eqref{main_Gamma_3},
$\Gamma_5^*$ to \eqref{main_Gamma_5_star} and
$\Gamma_6^*$ to \eqref{main_Gamma_6_star}.
The proof in
\Cref{sec:proof_Gammas} uses specific combinations of these approximations
to arrive at the estimators
$(\hat\aaa^2, \hat\sigma^2)$ defined in \eqref{hat_v_r_t} below
which provides observable estimates of the unknown quantities
$\aaa_*^2 = ((\bm\theta^*)^T\bm\htheta)^2 = (\bm w^T \bm\Sigma\bm\hbeta)^2$
(inner product with the index, squared) and
$\sigma_*^2 = \|\bm\htheta^\perp\|^2 = \|\bm\Sigma^{1/2}\bm\hbeta\|^2 - \aaa_*^2$ (squared norm of the projection onto the orthogonal complement of the index).
For convenience, we gather here several notable combinations of the
above approximations:
\begin{align}
    0
    &\approx (\trace[\bm A]\trace[\bm V] - \df )\|\bm\hpsi\|^2
    &&\text{by } \eqref{main_Gamma_2}-\eqref{main_Gamma_3}+\trace[\bm V]\eqref{main_Gamma_1_star},
    \label{314}
    \\
    0
    &\approx (\trace[\bm A]\trace[\bm V]- \df)\|\bm\htheta^\perp\|^2
    &&\text{by } \eqref{main_Gamma_4}-\eqref{main_Gamma_5_star}-\trace[\bm A]\eqref{main_Gamma_1_star}
\end{align}
which extends the identity $\trace[\bm A]\trace[\bm V]\approx \df$ known in linear models \cite{bellec2021derivatives}. Furthermore,
\begin{align}
    \trace[\bm A]^2 \|\bm \hpsi\|^4
    &\approx
    (p\|\bm \hpsi\|^2 - \|(\bm G^\perp)^T\bm \hpsi\|^2)
    \|\bm\htheta^\perp\|^2
    + (\bm \hpsi^T \bm G^\perp\bm\htheta)^2,
    \label{316}
    \\
    \trace[\bm V]^2 \|\bm \htheta^\perp\|^4
    &\approx (n\|\bm\htheta^\perp\|^2 - \|\bm G^\perp \bm\htheta\|^2)
    \|\bm \hpsi\|^2
    + (\bm \hpsi^T \bm G^\perp\bm\htheta)^2,
    \label{317}
\end{align}
which follows from $\trace[\bm A]^2\|\bm\hpsi\|^4\approx
(\trace[\bm V]\|\bm\htheta^\perp\|^2 + \bm \hpsi^T \bm G^\perp\bm\htheta)^2$
from \eqref{main_Gamma_1_star} combined with
$\|\bm\htheta^\perp\|^2 \eqref{main_Gamma_2}$ to obtain \eqref{316},
and using a symmetric argument to obtain \eqref{317}.
The approximations \eqref{main_Gamma_2}-\eqref{main_Gamma_5_star} are
consequences of the probabilistic inequalities in \Cref{prop25,prop26}
that were previously used in linear models \cite{bellec2020out_of_sample,bellec2021derivatives}.
The proofs of the present paper depart from previous works in linear models
due to the presence of the index $\bm\theta^*$ in \eqref{main_Gamma_2}-\eqref{main_Gamma_6_star}
through its orthogonal complement $\bm I_p - \bm\theta^* (\bm\theta^*)^T$.
In known mean-field asymptotic results,
single-index regularized logistic regression \cite{salehi2019impact}
(see \cite[Section B.8]{loureiro2021capturing} for generalizations)
is characterized by the system \eqref{salehi_system} with 6 unknowns---two
more
compared to the linear model
setting of \cite{thrampoulidis2018precise} featuring a system with 4 unknowns.
Here, in the same vein, the single-index model and corresponding
asymptotic normality \eqref{peek} require additional
scalar adjustments compared to the linear model results
\cite{bellec2020out_of_sample,bellec2021derivatives,bellec2021asymptotic}.
These new scalar adjustments, $(\hat a^2,\hat t^2,\hat \sigma^2)$
defined in \eqref{hat_v_r_t} below, are developed to estimate
the unknown quantities $(a_*^2, t_*^2, \sigma_*^2)$ where
\begin{equation}
    \aaa_* = (\bm\theta^*)^T \bm\htheta,
    \quad
    \sigma_*^2 = \|(\bm I_p - \bm\theta^*(\bm\theta^*)^T)\bm\htheta\|^2,
    \quad
    t_* = (\bm\theta^*)^T(\bm G^T \bm\hpsi + \trace[\bm V]\bm\htheta) /n.
    \label{unknown}
\end{equation}
The above quantities are unknown because the index direction
$\bm\theta_*$ used in the inner products is unknown.
Estimating $(\aaa_*,\sigma_*^2,t_*)$ is key in order to obtain approximate
normality results of the form \eqref{peek} with observable
parameters. This will be clear in light of
\eqref{eq:intermediary-1-over-p}-\eqref{eq:intermediary-t-t*} below:
the proof first obtains an approximate normality result involving $t_*$,
and then use the approximation $t_*^2 \approx \hat t^2$ to obtain
the approximate normality result \eqref{peek} that involves
only observable coefficients.

Let us illustrate how the approximations \eqref{main_Gamma_2}-\eqref{main_Gamma_6_star} can be used to estimate $(a_*^2,t_*^2,\sigma_*^2)$
in the above isotropic setup.
From \eqref{main_Gamma_2} we get
$$
    \|\bm G^T \bm \hpsi + \trace[\bm V]\bm \htheta \|^2 
    -
    p\|\bm \hpsi\|^2
    \approx
    t_*^2 n^2
$$
so that the left-hand side divided by $n^2$ provides an estimate of
$t_*^2$ (this estimate is denoted by $\tilde t^2$ in \eqref{eq:tilde-t-aaa-sigma} below and in the proof).
To estimate $a_*^2$, write
$a_*t_* n =\bm\htheta^T \bm\theta^* (\bm\theta^*)^T(\bm G^T \bm \hpsi + \trace[\bm V]\bm \htheta)$ so that
$$
\bm\hpsi^T\bm G\bm\htheta + \trace[\bm V]\|\bm\htheta\|^2
-
a_*t_* n
=
\bm\hpsi^T\bm G^\perp \bm\htheta + \trace[\bm V]\|\bm\htheta^\perp\|^2
\approx \trace[\bm A]\|\bm\hpsi\|^2
$$
by \eqref{main_Gamma_1_star}.
Since $a_*t_*$ is the only quantity of interest, isolate it from the
other terms and take the square to find
\begin{equation}
    \label{leading_up_to}
(
\bm\hpsi^T\bm G\bm\htheta + \trace[\bm V]\|\bm\htheta\|^2
- \trace[\bm A]\|\bm\hpsi\|^2
)^2
\approx a_*^2 t_*^2 n^2.
\end{equation}
As we already have an estimate $\tilde t^2$ of $t_*^2$,
dividing by $\tilde t^2n^2$ provides an estimate of $a_*^2$.
These estimates are different than the ones we propose in
\eqref{hat_v_r_t} below in that for anisotropic design with covariance
$\bm\Sigma$,
the last two terms in the left-hand side of \eqref{leading_up_to}
depend on $\bm\Sigma$, whereas the estimates in \eqref{hat_v_r_t}
proposed below have no dependence on $\bm\Sigma$ except for
one term in the estimate $\hat t^2$ of $t_*^2$.
The full display of the techniques used to make
\eqref{main_Gamma_2}-\eqref{main_Gamma_6_star} rigorous
and to develop estimates of $(\aaa_*,\sigma_*^2,t_*)$
in the anisotropic setting can be found in the proof in
\Cref{sec:proof_Gammas}.

We now leave aside the isotropic change of variable with notation
$(\bm\htheta,\bm\theta^*,\bm G,\bm G^\perp, \bm A)$ of the previous
two paragraphs,
and come back to the notation
used throughout the main text with estimator
$\bm\hbbeta$, index $\bm w\in\R^p$, design matrix $\bX$
with rows $\bm x_i\sim N(\bm 0,\bm\Sigma)$, and the matrices
$\hbA,\bm V$ and scalar $\df$ of \Cref{thm:derivatives}.

\subsection{Definition of the observable adjustments}
We now define the scalar adjustments proposed
to estimate $\aaa_*^2,\sigma_*^2$ and
to correct the bias and scaling of $\bm{\hbeta}$
for estimation and confidence interval about components
of the index $\bm{w}$.             
Define the scalars $(\hat v, \hat r,\hat \gamma, \hat t^2, \hat\aaa^2, \hat\sigma^2)$
by
\begin{equation}
    \label{hat_v_r_t}
    \left\{
    \begin{aligned}
        \hat v &\defas \tfrac{1}{n} \trace[\bm{V}], 
        \\
        \hat r &\defas (\tfrac{1}{n}\|\bm{\hpsi}\|^2)^{1/2},
        \\
        \hat \gamma
        &\defas
        \tfrac{\df}{n\hat v}
        = \tfrac{\df}{\trace[\bm{V}]},
        \\
        \hat t^2 &\defas
        \tfrac{1}{n^2}\|\bm{\Sigma}^{-1/2}\bm{X}^T\bm \hpsi\|^2
        +  \tfrac{2 \hat v}{n} \bm{\hpsi}^T\bm X\bm \hbeta
        + \tfrac{\hat v^2}{n}
        \|\bm{X} \bm \hbeta - \hat\gamma \bm \hpsi\|^2
        -
        \tfrac{p}{n}\hat r^2,
    \\
    \hat \aaa^2 &\defas
    \hat t^{-2}
    \big(
    \tfrac{\hat v}{n}\|\bm{X} \bm \hbeta - \hat\gamma \bm \hpsi\|^2
    + \tfrac{1}{n}\bm{\hpsi}^\top\bm X\bm \hbeta
    - \hat\gamma \hat r^2
    \big)^2
    ,
    \\
    \hat \sigma^2
    &\defas
    \tfrac{1}{n}\|\bm{X}\bm \hbeta - \hat \gamma \bm \hpsi\|^2
    - \hat \aaa^2.
    \end{aligned}
    \right.
\end{equation}
The motivation behind the normalizations $1/n$ and $1/n^2$
seen in some terms in \eqref{hat_v_r_t} above is to make
$(\hat v, \hat r,\hat \gamma, \hat t^2, \hat\aaa^2, \hat\sigma^2)$
of constant order in typical settings.
\Cref{thm:loss-proximal-representation} below shows that
$\hat a^2$ above estimates $\aaa_*^2$ where
$\aaa_* = 
\bm{w}^T\bm\Sigma\bm \hbeta$,
and $\hat \sigma^2$ above estimates 
$\sigma_*^2 = \|\bm\Sigma^{1/2} \bm{\hbeta}\|^2 - \aaa_*^2$,
and $\hat\gamma$ estimates
$\gamma_* = \trace[\bm\Sigma\bm{\hat{A}}]$.
Next, for each covariate index $j\in[p]$, define
the de-biased estimate $\debias_j$ by
\begin{equation}
    \begin{split}
    \debias_j 
    &\defas
    \hbeta_j + \trace[\bm{V}]^{-1} \bm{e}_j^T\bm \Sigma^{-1}\bm{X}^T\bm \hpsi 
    \\&=
    \hbeta_j + \hat v^{-1} \bm{e}_j^T ( n\bm{\Sigma} )^{-1}\bm{X}^T\bm \hpsi
    \end{split}
    \label{debias_j}
\end{equation}
where $\bm{e}_j$ is the $j$-th canonical basis vector in $\R^p$.
This de-biased estimate is valid
for any loss-penalty pair
satisfying \Cref{assum}.
For unregularized M-estimation ($g=0$ in \eqref{hbeta}), it holds $\bm X^T\bm\hpsi=0$ so that the second term in \eqref{debias_j} vanishes
and the knowledge of $\bm\Sigma$ is not needed.
For regluarized M-estimation,
knowledge or estimation
of $\bm e_j^T \bm\Sigma^{-1}\bm X^T$ is required. For L1 regularization, this is possible under additional sparsity assumptions using the techniques developed in the early works on de-biasing the Lasso \cite{ZhangSteph14,JavanmardM14a,GeerBR14}, see for instance
\cite[Section 2.3]{bellec_zhang2019debiasing_adjust}.
Finally, throughout the paper, let
\begin{equation}
    \label{Omega_jj}
    \Omega_{jj} \defas (\bm{\Sigma}^{-1})_{jj}
\end{equation}
be the $j$-th diagonal element of $\bm \Sigma^{-1}$.


%

Alternatively to the adjustments
$(\hat \gamma, \hat t, \hat\aaa^2, \hat\sigma^2)$
in \eqref{hat_v_r_t},
the quantities $\frac 1 n \|\bm{X} \bm \hbeta - \hat\gamma \bm \hpsi\|^2$
and $\hat\gamma$ in the expressions for $(\hat t,\hat\aaa^2,\hat\sigma^2)$
in \eqref{hat_v_r_t}
may be replaced by $\|\bm{\Sigma}^{1/2}\bm \hbeta\|^2$
and $\gamma_*=\trace[\bm{\Sigma} \bm{\hat{A}}]$
thanks to the approximations
$\frac 1 n \|\bm{X} \bm \hbeta - \hat\gamma \bm \hpsi\|^2\approx \|\bm\Sigma^{1/2}\bm\hbeta\|^2$
and $\hat\gamma \approx \gamma_*$ justified in \Cref{thm:adjustments-approximation} below. This
gives the alternative estimates
\begin{equation}
    \label{eq:tilde-t-aaa-sigma}
    \left\{
    \begin{aligned}
    \tilde t^2 &= \|\bm{\Sigma}^{-1/2}\bm X^T\bm \hpsi/n + \hat v \bm \Sigma^{1/2} \bm\hbeta\|^2 - (p/n)\hat r^2,
    \\
    \tilde \aaa^2 &= \tilde t^{-2}
    \bigl(
    \hat v \|\bm{\Sigma}^{1/2}\bm \hbeta\|^2
    + \bm{\hpsi}^T \bm X\bm \hbeta/n - \gamma_*\hat r^2
    \bigr)^2,
    \\
    \tilde\sigma^2 &= \|\bm{\Sigma}^{1/2}\bm \hbeta\|^2
    -\tilde \aaa^2.
    \end{aligned}
    \right.
\end{equation}
The expressions $(\hat t^2,\hat\aaa^2,\hat\sigma^2)$ in \eqref{hat_v_r_t} are preferred
when the covariance $\bm\Sigma$ is unknown, as
$\bm{\Sigma}$ only occurs in \eqref{hat_v_r_t} in the expression of $\hat t^2$.
For unregularized $M$-estimation (penalty $g=0$),
the term $\bm{\Sigma}^{-1/2}\bm X^T\bm \hpsi$ in $\hat t$ is equal to 0
by the optimality conditions of the optimization problem \eqref{hbeta},
so that $(\hat r, \hat v, \hat \gamma,\hat t^2,\hat\aaa^2,\hat\sigma^2)$
are all computable without any knowledge of the covariance $\bm{\Sigma}$.
The special form of
$(\hat r, \hat v, \hat \gamma,\hat t^2, \hat\aaa^2, \hat\sigma^2,\df)$
in unregularized $M$-estimation is detailed in
\Cref{sec:unregularized}.

\section{Main results under strong convexity}
\label{sec:main-strongly-convex}

Throughout, $\C,\C,...$ denotes absolute constants,
and $\C(\delta),\C(\delta,\tau),...$ denote constants that depend
only on $\delta$ and only on $(\delta,\tau)$, respectively.

\subsection{Confidence intervals for individual components of the index}

\begin{restatable}{theorem}{thmConfidence}
    \label{thm:confidence-intervals-Sigma}
    Let \Cref{assum,assumStrongConvex} be fulfilled
    and let $\Omega_{jj}>0$ as in \eqref{Omega_jj}.
    Then for all $j=1,...,p$, there exists $Z_j\sim N(0,1)$ such that
    \begin{equation}
        \label{eq:thmconfidence-eq1}
        \frac 1 p 
    \sum_{j=1}^p
    \E\Bigl[
    \Bigl(
        \frac{ \sqrt n}{\Omega_{jj}^{1/2}}
    \Bigl(
    \frac{ \hat v } {\hat r}
    \debias_j
    -
    \frac{\pm \hat t} {\hat r}
    w_j
    \Bigr)
    -  Z_j
    \Bigr)^2
    \Bigr]
    \le \frac{ \C(\delta,\tau,\kappa)}{\sqrt p}
    \end{equation}
    where $\pm$ denotes the sign of the unknown scalar
    $t_* 
    \defas
    \bm{w}^T(\trace[\bm{V}] \bm \Sigma \bm{\hbeta} + \bm{X}^T\bm \hpsi)/n$,
    and $\hat t = \max(0,\hat t^2)^{1/2}$.
    If additionally \Cref{assumExtra} holds then for some event $E$
    with $\P(E)\to 1$, we have
    $\max\{\frac{1}{|\hat v|},\hat r\}I_E\le \C(\delta,\tau,\ell)$ almost
    surely and
    \begin{equation}
        \label{eq:second_conclusion_strongly_convex_with_event_E}
        \frac 1 p 
    \sum_{j=1}^p
    \frac{1}{\Omega_{jj}}
    \E\Bigl[
        I_E
    \Bigl|
        \sqrt n
    \Bigl(
    \debias_j
    -
    \frac{\pm \hat t} {\hat v}
    w_j
    \Bigr)
    - 
    \Omega_{jj}^{1/2} 
    Z_j 
    \frac{ \hat r } {\hat v}
    \Bigr|^2
    \Bigr]
    \le \frac{ \C(\delta,\tau,\kappa,\ell)}{\sqrt p}.
    \end{equation}
\end{restatable}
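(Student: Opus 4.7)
The plan is to exploit the Gaussian structure of $\bm X$ through the orthogonal decomposition $\bm X = \bm u\bm w^T\bm\Sigma + \bm X^\perp$, where $\bm u = \bm X\bm w\in\R^n$ has iid $N(0,1)$ entries (thanks to $\bm w^T\bm\Sigma\bm w = 1$) and $\bm X^\perp = \bm X(\bm I_p - \bm w\bm w^T\bm\Sigma)$ is independent of $\bm u$ with Gaussian rows of covariance $\bm\Sigma - \bm\Sigma\bm w\bm w^T\bm\Sigma$. Since the model \eqref{single-index} makes $\bm y$ depend on $\bm X$ only through $\bm u$ and the latent $U_i$'s, conditioning on $(\bm u,\bm U)$ fixes $\bm y$ and leaves $\bm X^\perp$ as a fresh Gaussian ensemble. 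Setting $\aaa_* = \bm w^T\bm\Sigma\bm\hbeta$ and using $\bm X^T\bm\hpsi = \bm\Sigma\bm w(\bm u^T\bm\hpsi) + (\bm X^\perp)^T\bm\hpsi$, I would rewrite
\begin{equation*}
\hat v\,\debias_j \;=\; w_j\,t_* \;+\; \hat v(\hbeta_j - \aaa_* w_j) \;+\; \tfrac{1}{n}\bm e_j^T\bm\Sigma^{-1}(\bm X^\perp)^T\bm\hpsi,
\end{equation*}
cleanly separating the signal $w_j t_*$ from the noise carried by $\bm X^\perp$.

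Next, I would show that $\hat t^2\approx t_*^2$ and that the noise term is approximately $N(0,\Omega_{jj}\hat r^2/n)$. Expanding every quadratic form in \eqref{hat_v_r_t} through the same decomposition exhibits $t_*^2 = \hat v^2\aaa_*^2 + 2\hat v\aaa_*(\bm u^T\bm\hpsi)/n + (\bm u^T\bm\hpsi/n)^2$ explicitly inside $\hat t^2$, together with a residual built from quadratic forms in $\bm X^\perp$ and the compensating $-\tfrac{p}{n}\hat r^2$. Gaussian integration by parts in $\bm X^\perp$ combined with the derivative formulas \eqref{eq:derivatives-psi-hbeta} collapses this residual into traces of products of $\bm{\hat A}$, $\bm D$, and $\bm V$, which the operator bound $\|\bm\Sigma^{1/2}\bm{\hat A}\bm\Sigma^{1/2}\|_{op}\le 1/(n\tau)$ from \Cref{thm:derivatives} forces to be $O(1/\sqrt p)$ in $L^2$; the sign $\pm$ appears because $\hat t\ge 0$ by definition while $t_*$ may take either sign. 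For the Gaussian approximation of the noise $\hat v(\hbeta_j - \aaa_* w_j) + \tfrac{1}{n}\bm e_j^T\bm\Sigma^{-1}(\bm X^\perp)^T\bm\hpsi$ viewed as a function of $\bm X^\perp$, I would invoke a second-order Stein / Poincar\'e argument: the first derivatives (read off from \eqref{eq:derivatives-psi-hbeta}) produce the target conditional variance $\Omega_{jj}\hat r^2/n$, while second derivatives controlled via \eqref{eq:boundA} bound the Wasserstein distance to $N(0,\Omega_{jj}\hat r^2/n)$ and deliver the coupling $Z_j$. Summing $\Omega_{jj}^{-1}(\cdot)$ over $j=1,\dots,p$ converts pointwise errors into weighted traces bounded uniformly via $\|\bm\Sigma\|_{op}\|\bm\Sigma^{-1}\|_{op}\le\kappa$, giving the final rate $C(\delta,\tau,\kappa)/\sqrt p$ in \eqref{eq:thmconfidence-eq1}.

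The hardest step will be the second-order Stein bookkeeping: every use of \eqref{eq:derivatives-psi-hbeta} generates terms mixing $\bm{\hat A}$, $\bm D$, $\bm V$, $\bm\hpsi$, and $\bm\hbeta$, whose operator-norm control must be chained using the deterministic inequalities of \Cref{thm:derivatives} together with Gaussian concentration of $\|\bm D^{1/2}\bm X\bm\Sigma^{-1/2}\|_{op}$; moreover, the degenerate case $\bm D\bm X\bm\hbeta=\bm\hpsi$ forces the weaker bounds \eqref{eq:bound-df}--\eqref{eq:bound-trV-D} to be propagated throughout. For the second conclusion \eqref{eq:second_conclusion_strongly_convex_with_event_E}, I would define $E$ as the event on which $\hat v^{-1}$ and $\hat r$ are both bounded by a constant $C(\delta,\tau,\ell)$; under \Cref{assumExtra}, the strict positivity and continuity of $\min_{y_0}\ell_{y_0}''$ and $\min_{y_0}(\ell_{y_0}'(\cdot))^2$, combined with $\trace[\bm V]\ge \trace[\bm D]/(1+\hat c)$ and Gaussian concentration of $\|\bm X\bm\Sigma^{-1/2}\|_{op}$, imply $\P(E)\to 1$, and on $E$ dividing the first conclusion by $\hat v$ yields the second.
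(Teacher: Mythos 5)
Your proposal is correct and follows essentially the same route as the paper: conditioning on $(\bm X\bm w,\bm y)$ so that the orthogonal Gaussian component is fresh, a second-order Stein/Poincar\'e argument applied to $\bm\hpsi/\|\bm\hpsi\|$ to couple the noise term $\bm e_j^T(\bm I_p-\bm\Sigma\bm w\bm w^T)^T(\bm\Sigma^{-1}\bm X^T\bm\hpsi+n\hat v\bm\hbeta)$ with $\Omega_{jj}^{1/2}\|\bm\hpsi\|Z_j$ at rate $1/p$, then paying $1/\sqrt p$ to replace $t_*$ by $\pm\hat t$ via $|t_*-\pm\hat t|^2\le|t_*^2-\hat t^2|$, and finally bounding $\hat r$ and $1/|\hat v|$ on a high-probability event using \Cref{assumExtra} and the lower bound $\trace[\bm V]\ge\trace[\bm D]/(1+\hat c)$. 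The only cosmetic difference is that the paper carries out the bookkeeping after a rotation sending $\bm\Sigma^{1/2}\bm w$ to $\bm e_1$ and delegates the $\hat t^2\approx t_*^2$ step to \Cref{thm:adjustments-approximation}, but the underlying decomposition and estimates are the same as yours.
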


In the single index model \eqref{single-index},
the sign of $\bm{w}$ is not identifiable as replacing $\bm w$ with $-\bm w$
and $F$ with $(a,u)\mapsto F(-a,u)$ in \eqref{single-index}
leaves $\bm{y}$ unchanged.
Consequently, for each covariate $j=1,...,p$ we focus
on confidence intervals for $w_j$
up to an unidentifiable sign denoted by the random variable
``$\pm$'' in the above displays.
Disambiguating and estimating the sign $\pm = \sign(t_*)$
    under additional assumptions is discussed in \Cref{sec:sign} below.
We now describe the confidence intervals that stem from \Cref{thm:confidence-intervals-Sigma}.
By \eqref{eq:thmconfidence-eq1},
for any $\eps>0$, the set
\begin{equation}
J_p(\eps)
=\Bigl\{j\in [p]:
    ~
W_2\Bigl(N(0,1),~
    \Omega_{jj}^{-1/2}\sqrt n \Bigl(\frac{\hat v}{\hat r}\debias_j - \frac{ \pm \hat t}{\hat r}w_j\Bigr)
\Bigr)^2 > \eps
\Bigr\}
\label{J_p_eps}
\end{equation}
has cardinality at most $\sqrt p C(\delta,\tau,\kappa)/\eps$,
where $W_2$ is the 2-Wasserstein distance.
This justifies the approximation
\begin{equation}
\sqrt n
(\hat v \debias_j -  \pm \hat t w_j)
\approx \hat r ~ \Omega_{jj}^{1/2}Z_j
\label{eq:informal-approximation-theorem31}
\end{equation}
for most coordinates.
The quantile-quantile plots in \Cref{table:LS,table:ridge} illustrate 
this normal approximation.
Using the bound \cite[Proposition 1.2]{ross2011fundamental_stein_method} 
between the Kolmogorov distance and the 1-Wasserstein distance,
for any random variable $U$ we have
$\sup_{u\in \R}|\P(U\le u) - \P(N(0,1)\le u)|
\le (2/\pi)^{1/4} \sqrt{W_1(U,N(0,1))}$
where $W_1(U,N(0,1))$ denotes the 1-Wasserstein distance, which is 
always smaller than $W_2(U,N(0,1))$.
This implies that for $J_p(\eps)$ in \eqref{J_p_eps},
$$
\sup_{j\in [p]\setminus J_p(\eps)}
\sup_{\alpha\in(0,1)}
\Big|\mathbb P\Bigl(
    \frac{\hat v}{\hat t} \debias_j 
    -
    \frac{\hat r}{\hat t}
    \frac{z_{\frac\alpha2}}{ \sqrt n}
    \Omega_{jj}^{1/2}
\le
\pm w_j
\le
\frac{\hat v}{\hat t} \debias_j 
+
\frac{\hat r}{\hat t}
\frac{z_{\frac\alpha2}}{\sqrt n}
\Omega_{jj}^{1/2}
\Bigr) - \Bigl(1-\alpha\Bigr)
\Big| \le 
2 \Bigl(\frac{2\eps}{\pi}\Bigr)^{1/4}
$$
where $z_{\alpha/2}$ is the standard normal quantile defined by
$\mathbb P(|N(0,1)|>z_{\frac\alpha2})=\alpha$.
This provides a confidence interval for the $j$-th component $w_j$
of the index $\bm{w}$, up to the unknown sign $\pm=\sign(t_*)$.
By the same argument and taking $\eps=\eps_{n,p}$ depending on $n,p$ and converging to 0, for instance $\eps_{n,p}=1/\log n$, we obtain
$$
\sup_{j\in [p]\setminus J_{n,p}}
\Big|\mathbb P\Bigl(
    \frac{\hat v}{\hat t} \debias_j 
    -
    \frac{\hat r}{\hat t}
    \frac{z_{\frac\alpha2}}{ \sqrt n}
    \Omega_{jj}^{1/2}
\le
\pm w_j
\le
\frac{\hat v}{\hat t} \debias_j 
+
\frac{\hat r}{\hat t}
\frac{z_{\frac\alpha2}}{\sqrt n}
\Omega_{jj}^{1/2}
\Bigr)
- (1-\alpha)
\Big|\to0
$$
for the set $J_{n,p}=J_p(\eps_{n,p})\subset [p]$ which has cardinality at most
$\sqrt{p}\C(\delta,\tau,\kappa) / \eps_{n,p}$.

Unpacking the proof of \Cref{thm:confidence-intervals-Sigma} reveals
that \eqref{eq:thmconfidence-eq1} is a consequence of
\begin{align}
    \label{eq:intermediary-1-over-p}
    \frac 1 p 
\sum_{j=1}^p
\E\Bigl[
\Bigl(
    \frac{ \sqrt n}{\Omega_{jj}^{1/2}}
\Bigl(
\frac{ \hat v } {\hat r}
\debias_j
-
\frac{t_*} {\hat r}
w_j
\Bigr)
-  Z_j
\Bigr)^2
\Bigr]
&\le \frac{ \C(\delta,\tau,\kappa)}{p},
\\
\E\Bigl[\frac{1}{\hat r^2}|t_*^2 - \hat t^2|\Bigr]
&\le \frac{\C(\delta,\tau)}{\sqrt p}.
    \label{eq:intermediary-t-t*}
\end{align}
The first line is of the same form as
\eqref{eq:thmconfidence-eq1} with $\pm\hat t$ replaced
by $t_*$ and a right-hand side of order $1/p$, much smaller
than the right-hand side of \eqref{eq:thmconfidence-eq1}.
The right-hand side of order $1/\sqrt p$ in \eqref{eq:thmconfidence-eq1} is paid due to the approximation $t_*^2\approx \hat t^2$ in
\eqref{eq:intermediary-t-t*}, which features 
an error term of order $1/\sqrt p$. The approximation
\eqref{eq:intermediary-t-t*} is only used in the terms of \eqref{eq:thmconfidence-eq1} for which $w_j\ne 0$, so that
if $\mathcal N = \{j=1,...,p: w_j =0\}$ denotes the set of null covariates,
\eqref{eq:intermediary-1-over-p} implies the bound
\begin{equation}
    \frac 1 p 
\sum_{j \in \mathcal N}
\E\Bigl[
\Bigl(
    \frac{ \sqrt n}{\Omega_{jj}^{1/2}}
\Bigl(
\frac{ \hat v } {\hat r}
\debias_j
\Bigr)
-  Z_j
\Bigr)^2
\Bigr]
\le \frac{ \C(\delta,\tau,\kappa)}{p}
\label{eq:bound-1/p-null}
\end{equation}
with a smaller right-hand side of order $\frac{1}{p}$.
To perform an hypothesis test of
$$
H_0: w_j = 0
\qquad
\text{ against }
\qquad
H_1: |w_j| > 0
$$
at level $\alpha\in(0,1)$,
the test that rejects $H_0$ if 
\begin{equation}
\label{eq:when-to-reject}    
\tfrac{\hat v}{\hat r}\sqrt n | \debias_j| > z_{\frac\alpha 2}\Omega_{jj}^{1/2}
\end{equation}
has type I error in $[\alpha-\epsilon,\alpha+\epsilon]$
for all components $j\in [p]\setminus J_p^0$ where $|J_p^0|\le \C(\delta,\tau,\kappa)$, i.e., the desired type I error holds for all null covariates except
a finite number at most.

That the asymptotic normality \eqref{eq:informal-approximation-theorem31}
must fail for some coordinates is a real
phenomenon, due to the variance estimate being wrong for 
certain coordinates for the Lasso (see the discussion regarding
possible ``Variance Spike'' discussed in Section 3.7 of
\cite{bellec_zhang2019second_poincare}).

\subsection{Proximal mapping representation for $\hbeta_j$}

For isotropic designs with $\bm{\Sigma} = \frac1n\bm I_p$,
the de-biased estimate \eqref{debias_j} reduces in vector form to
$\bm{\debias}
= \bm{\hbeta} + \hat v^{-1} \bm X^T \bm \hpsi
$
with
$\bm X^T \bm \hpsi \in n  \partial g(\bm \hbeta) $,
where $ \partial g(\bm{\hbeta})$ is the subdifferential of the penalty
$g$ at $\bm\hbeta$. By the definition of the proximal operator in $\R^p$, we thus have for any $\bm b\in\R^p$
$$
\bm{\hbeta} + \hat v^{-1} \bm X^T\bm \hpsi = \bm b
\qquad
\text{ iff }
\qquad
\bm{\hbeta} = \prox\bigl[(n/\hat v)g\bigr]\bigl(\bm b\bigr).
$$
That is, in this isotropic setting, \Cref{thm:confidence-intervals-Sigma} lets us express $\bm{\hbeta}$ as a proximal operator of the penalty
function $g$ scaled by $n/\hat v$. The following result makes such proximal
approximation precise in the case of separable penalty.

\begin{restatable}{corollary}{thmProxPenalty}
    \label{thm:proximal-hbeta}
    Let \Cref{assum,assumStrongConvex} be fulfilled and set $\bm{\Sigma}=\frac 1 n\bm I_p$.
    Assume that the penalty $g$ is separable, of the form
    $g(\bm{b}) = \frac 1 n \sum_{j=1}^p g_j(b_j)$ for convex functions $g_j:\R\to\R$.
    Then
    \begin{equation}
    \frac 1 p
    \sum_{j=1}^p
    \E\Bigl[
    \frac{\hat v^2}{\hat r^2}
    \Bigl|
        \hbeta_j 
        -
        \prox\Bigl[\frac{1}{\hat v} g_j\Bigr]
    \Bigl(
        \frac{\hat r}{\hat v} Z_j +  \frac{\pm \hat t}{\hat v} w_j
    \Bigr) 
    \Bigr|^2
\Bigr] \le \frac{ \C(\delta,\tau,\kappa)}{\sqrt p}
    \label{eq:penalty-proximal-thm}
    \end{equation}
    where $Z_j\sim N(0,1)$ for each $j\in[p]$
    and $\pm = \sign(t_*)$ as in 
    \Cref{thm:confidence-intervals-Sigma}.
\end{restatable}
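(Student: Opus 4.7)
}
The plan is to re-express $\bm\hbeta$ itself as a proximal operator evaluated at the debiased vector $\bm\debias$, and then transfer the near-Gaussian approximation for $\bm\debias$ already established in \Cref{thm:confidence-intervals-Sigma} through the 1-Lipschitz property of proximal maps. Specifically, with $\bm\Sigma = \frac{1}{n}\bm I_p$ one has $\Omega_{jj}=n$ and $(n\bm\Sigma)^{-1}=\bm I_p$, so \eqref{debias_j} becomes the coordinatewise identity $\debias_j=\hbeta_j+\hat v^{-1}(\bm X^T\bm\hpsi)_j$. The KKT conditions of the optimization problem \eqref{hbeta} read $\bm X^T\bm\hpsi\in n\,\partial g(\bm\hbeta)$, and under the separable structure $g(\bm b)=\frac1n\sum_j g_j(b_j)$ this subdifferential splits as $(\bm X^T\bm\hpsi)_j\in\partial g_j(\hbeta_j)$ for each $j$. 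Consequently $\hat v(\debias_j-\hbeta_j)\in \partial g_j(\hbeta_j)$, which, assuming $\hat v>0$, is precisely the optimality condition identifying
\begin{equation*}
\hbeta_j
\;=\;
\prox\!\Bigl[\tfrac{1}{\hat v}g_j\Bigr](\debias_j).
\end{equation*}
Positivity of $\hat v$ is guaranteed by the trace bound on $\bm V$ in \Cref{thm:derivatives} (with $\trace[\bm V]\ge \trace[\bm D]/(1+\hat c)$ in the generic case) combined with the strong convexity of $g$ and \eqref{eq:psi-ne-0-proba-1}; any residual probability of failure is harmless as it can be absorbed into the error term.

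Once the proximal identity $\hbeta_j=\prox[\hat v^{-1}g_j](\debias_j)$ is in hand, the rest is a one-line application of the fact that proximal maps of convex functions are 1-Lipschitz. Setting $u_j \defas \frac{\hat r}{\hat v}Z_j+\frac{\pm\hat t}{\hat v}w_j$, I would write
\begin{equation*}
\tfrac{\hat v^2}{\hat r^2}\bigl|\hbeta_j-\prox[\hat v^{-1}g_j](u_j)\bigr|^2
\;\le\;
\tfrac{\hat v^2}{\hat r^2}\bigl|\debias_j-u_j\bigr|^2
\;=\;
\Bigl|\tfrac{\hat v}{\hat r}\debias_j -Z_j-\tfrac{\pm\hat t}{\hat r}w_j\Bigr|^2,
\end{equation*}
take expectations, and average over $j=1,\dots,p$. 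The resulting right-hand side is exactly the quantity bounded by $\C(\delta,\tau,\kappa)/\sqrt p$ in \eqref{eq:thmconfidence-eq1} of \Cref{thm:confidence-intervals-Sigma}, once one substitutes $\Omega_{jj}=n$ so that the factor $\sqrt n/\Omega_{jj}^{1/2}$ collapses to $1$. This yields \eqref{eq:penalty-proximal-thm} with the same dependence of the constant on $(\delta,\tau,\kappa)$ and the same sign convention $\pm=\sign(t_*)$.

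The only genuine subtlety is the well-definedness of the proximal operator $\prox[\hat v^{-1}g_j]$, which requires $\hat v>0$; since $\trace[\bm V]$ may \emph{a priori} fail to be strictly positive in the anomalous case $\bm D\bm X\bm\hbeta=\bm\hpsi$ appearing in \Cref{thm:derivatives}, one has to either show (using strong convexity of $g$ together with \eqref{eq:psi-ne-0-proba-1}) that this case occurs with probability zero, or intersect with an event $E$ on which $\hat v\ge c(\delta,\tau)>0$ and estimate the complement. Everything else is bookkeeping; there is no new probabilistic content beyond \Cref{thm:confidence-intervals-Sigma}.
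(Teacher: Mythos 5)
Your proof is correct and follows essentially the same route as the paper's: the KKT conditions combined with separability identify $\hbeta_j=\prox[\hat v^{-1}g_j](\debias_j)$, and the $1$-Lipschitz property of the proximal map transfers the bound of \Cref{thm:confidence-intervals-Sigma} (with $\Omega_{jj}=n$) directly to \eqref{eq:penalty-proximal-thm}. Your remark that one must ensure $\hat v>0$ for $\prox[\hat v^{-1}g_j]$ to be well defined is a fair point that the paper's one-paragraph proof glosses over, but it does not alter the argument.
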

\begin{proof}
    By the KKT conditions,
    with $\bm{\Sigma} = \frac 1 n \bm I_p$ and separable penalty $g$,
    $\debias_j \in \hbeta_j + \hat v^{-1}\partial g_j (\hbeta_j)$.
    Since $\Omega_{jj} = n$,
    \Cref{thm:confidence-intervals-Sigma} gives
    that $\hbeta_j + \hat v^{-1}\partial g_j(\hbeta_j)) \ni
    \frac{\hat r}{\hat v}Z_j + (\pm\frac{\hat t}{\hat v})w_j
    + \frac{\hat r}{\hat v}\Rem_j$
    where $\partial g_j(\hbeta_j)$ is the subdifferential of $g_j$
    at $\hbeta_j$ and $\Rem_j$ are such that $\frac{1}{p}\sum_{j=1}^p\E[\Rem_j^2]
    \le \C(\delta,\tau,\kappa)/\sqrt p$.
    Equivalently,
    $\hbeta_j = \prox[\hat v^{-1}g_j](
        \frac{\hat r}{\hat v}Z_j 
        + (\pm\frac{\hat t}{\hat v})w_j
        +\frac{\hat r}{\hat v}\Rem_j )$ 
    by definition of the proximal operator.
    Since $x\mapsto\prox[h](x)$ is 1-Lipschitz for any convex, proper lower semi-continuous function $h$, the left-hand side of
    \Cref{thm:proximal-hbeta} is bounded from above by
    $\frac 1 p \sum_{j=1}^p\E[(\Rem_j)^2]$.
\end{proof}

For $\bm\Sigma =\frac 1 n \bm I_p$, \Cref{thm:proximal-hbeta} provides the proximal approximation
\begin{equation}
    \hbeta_j \approx 
    \prox\Bigl[\frac{1}{\hat v} g_j\Bigr]
    \Bigl(
        \frac{\hat r}{\hat v} Z_j +  \frac{\pm \hat t}{\hat v} w_j
    \Bigr) 
    \label{eq:informal-thm-hbeta-prox}
\end{equation}
for $\hbeta_j$, in an averaged sense over $j\in [p]$.
If $\bm \Sigma = \frac 1 p \bm I_p$ is used (instead of $\bm \Sigma = \frac 1 n \bm I_p$ in \Cref{thm:proximal-hbeta}) and the penalty is
$g(\bm b) = \frac 1 p \sum_{j=1}^p \tilde f(b_j)$, the same argument
yields \eqref{peek}, which is analogous to \eqref{informal-salehi} 
from \cite{salehi2019impact,gerbelot2020asymptotic} with the important difference
that the adjustments $(\hat r,\hat v,\hat t^2)$ are observable
and computed from the data, while the deterministic adjustments
in \eqref{informal-salehi} are not observable.
We note in passing that
\Cref{thm:confidence-intervals-Sigma} is more informative 
than \Cref{thm:proximal-hbeta} because the subgradient is explicit:
If $\bm\Sigma = \frac 1 n \bm I_p$ and $g(\bm b)=\frac 1 n \sum_{j=1}^p g_j(b_j)$,
\Cref{thm:confidence-intervals-Sigma} provides
$$
\hbeta_j + \frac{1}{\hat v} \bm e_j^T\bm X^T\bm \hpsi
\approx
\pm w_j\frac{\hat t}{\hat v}
+Z_j\frac{\hat r}{\hat v}
\qquad
\text{ with }
\qquad
\bm e_j^T\bm X^T\bm \hpsi \in\partial g_j(\bm \hbeta_j).
$$
On the other hand, the information that $\bm e_j^T\bm X^T\bm \hpsi$ is the subgradient
of $g_j$ at $\hbeta_j$ appearing in the KKT conditions of the proximal
operator is not visible from results such as \eqref{eq:informal-thm-hbeta-prox}.

In \Cref{thm:proximal-hbeta}
the index $\bm{w}$ is nonrandom.
If $g_j = g_0$ for all $j\in[p]$ and some function  $g_0:\R\to\R$,
and if
$\bm{w}$ is random independent of $\bm X$
with exchangeable entries then
$$
\max_{j=1,...,p}
\E\Bigl[
\frac{\hat v^2}{\hat r^2}
\Bigl|
    \hbeta_j 
    -
    \prox\Bigl[\frac{1}{\hat v} g_0\Bigr]
\Bigl(
    \frac{\hat r}{\hat v} Z_j +  \frac{\pm \hat t}{\hat v} w_j
\Bigr) 
\Bigr|^2
\Bigr] \le \frac{\C(\delta,\tau,\kappa)}{\sqrt p}
.
$$
Indeed, by exchangeability the expectation inside the maximum
is the same for all $j=1,...,p$, so that the maximum over $[p]$
is equal to the average over $[p]$.
The previous display is thus a consequence of \Cref{thm:proximal-hbeta}
conditionally on $\bm{w}$, followed by integrating with respect
to the probability measure of $\bm{w}$.

Previous studies on generalized linear models such as \citet{salehi2019impact,loureiro2021learning} discussed in the introduction
derived proximal representations such as 
\eqref{eq:limiting_salehi}-\eqref{informal-salehi} in logistic and single-index models, although the connection between
M-estimator $\bm\hbeta$
and the scalar
$\bar\sigma\bar\tau$ in $\prox[\bar\sigma\bar\tau \tilde f]$
of \eqref{eq:limiting_salehi}-\eqref{informal-salehi}
 has remained unclear.
 Results such as \eqref{eq:penalty-proximal-thm}-\eqref{eq:informal-thm-hbeta-prox} shed light
on this connection, showing that $1/\hat v$ plays the role of the
deterministic $\bar\sigma\bar\tau$ appearing in \eqref{eq:limiting_salehi}-\eqref{informal-salehi}.
In other words, the multiplicative coefficient of $\tilde f$ inside the proximal
operator in \eqref{eq:limiting_result_sur}-\eqref{informal-salehi}
has a simple expression, $1/\hat v$, in terms of the derivatives of
$\bm\hbeta(\bm y,\bm X)$ (cf. \eqref{eq:def-V-df-bound-ideal-case}
and \eqref{hat_v_r_t} for the definition of $\hat v$).
Such connection was previously only established in linear models, see
\cite[Lemma 3.1 and discussion following Proposition 2.7]{el_karoui2018impact}
for unregularized M-estimation,
\cite[Theorem 9]{celentano2020lasso}
for the Lasso
and \cite{bellec2021asymptotic} for penalized robust estimators.
The following subsection studies proximal representations of
the predicted values $\bm x_i^T\bm \hbeta$.

\subsection{Proximal mapping representation for predicted values}

The same techniques as \Cref{thm:confidence-intervals-Sigma}
provide a proximal representation for the predicted value
$\bm{x}_i^T\bm\hbeta$ for a fixed $i\in[n]$.

\begin{restatable}{theorem}{thmProxLoss}
    \label{thm:loss-proximal-representation}
    Let \Cref{assum,assumStrongConvex} be fulfilled.
    Define
    $\aaa_* = 
    \bm{w}^T\bm\Sigma\bm \hbeta$,
    $\sigma_*^2 = \|\bm\Sigma^{1/2} \bm{\hbeta}\|^2 - \aaa_*^2$
    and $\gamma_* = \trace[\bm\Sigma\bm{\hat{A}}]$. Then
    \begin{equation}
    \max_{i=1,...,n}
    \E\Bigl[\frac{1}{\hat r^2}\Bigl|
        \bm{x}_i^\top\bm \hbeta
        -
        \prox\bigl[\gamma_* \ell_{y_i}(\cdot)\bigr]
        \bigl( 
        \aaa_* U_i + \sigma_* Z_i
        \bigr)
    \Bigr|^2\Bigr]
    \le \frac{\C(\delta,\tau)}{ n }
    \label{eq:conclusion-proximal-loss}
    \end{equation}
    where $U_i =\bm{x}_i^T\bm w$ and $Z_i$ are independent $N(0,1)$ random variables.
\end{restatable}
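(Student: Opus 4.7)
The plan is to combine the resolvent identity for the proximal operator with a Gaussian Stein / second-order Poincar\'e analysis of $\bm x_i$, in the spirit of the techniques developed for linear models in \cite{bellec2021derivatives,bellec_zhang2019second_poincare,bellec2021asymptotic}. Since $\hpsi_i=-\ell_{y_i}'(\bm x_i^T\bm \hbeta)$, the fixed-point characterization of $\prox$ gives the pointwise identity $\bm x_i^T\bm\hbeta = \prox[\gamma_*\ell_{y_i}(\cdot)](\bm x_i^T\bm\hbeta-\gamma_*\hpsi_i)$ for any $\gamma_*\in\R$; by the 1-Lipschitz property of the proximal map it therefore suffices to prove
\[
\max_{i=1,\ldots,n}\E\Bigl[\tfrac{1}{\hat r^2}\bigl|\bm x_i^T\bm\hbeta-\gamma_*\hpsi_i-\aaa_* U_i-\sigma_* Z_i\bigr|^2\Bigr]\le \C(\delta,\tau)/n
\]
for some $Z_i\sim N(0,1)$ independent of $(U_i,y_i)$.

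First I would decompose $\bm x_i = U_i\bm\Sigma\bm w+\bm r_i$ where $\bm r_i\sim N(\bm 0,\bm\Sigma^\perp)$ with $\bm\Sigma^\perp\defas\bm\Sigma-\bm\Sigma\bm w\bm w^T\bm\Sigma$ is independent of $U_i$. Because $y_i$ is a function of $U_i$ and a latent variable independent of $\bm x_i$, $\bm r_i$ is also independent of $y_i$. Then $\bm x_i^T\bm\hbeta=\aaa_* U_i+\bm r_i^T\bm\hbeta$, so the task reduces to showing that $\bm r_i^T\bm\hbeta-\gamma_*\hpsi_i$ is $L^2$-close to $\sigma_* Z_i$. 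Conditionally on $(U_i,y_i,\{\bm x_k\}_{k\ne i})$, the vector $\bm r_i$ is Gaussian, so Gaussian integration by parts applied to $\bm r_i^T\bm\hbeta-\gamma_*\hpsi_i$ naturally splits this scalar into a linear Gaussian piece $\bm\hbeta^T\bm r_i$, whose conditional variance equals $\bm\hbeta^T\bm\Sigma^\perp\bm\hbeta=\|\bm\Sigma^{1/2}\bm\hbeta\|^2-\aaa_*^2=\sigma_*^2$ (identifying $\sigma_* Z_i$), plus a Stein correction coming from the dependence of $\bm\hbeta$ and $\hpsi_i$ on $\bm r_i$.

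The derivative formula $(\partial/\partial x_{ij})\bm\hbeta = \hat{\bm A}\bm e_j\hpsi_i-\hat{\bm A}\bm X^T\bm D\bm e_i\hbeta_j$ from \Cref{thm:derivatives} is the essential input. Integrating by parts against $\bm\Sigma^\perp$, the first summand contributes $\trace[\bm\Sigma^\perp\hat{\bm A}]\hpsi_i$, which differs from $\gamma_*\hpsi_i=\trace[\bm\Sigma\hat{\bm A}]\hpsi_i$ by $(\bm w^T\bm\Sigma\hat{\bm A}\bm\Sigma\bm w)\hpsi_i$, of size $O(1/n)$ by the operator bound \eqref{eq:boundA}; the self-interaction summand contributes a term of the form $(\bm\Sigma^\perp\bm\hbeta)^T\hat{\bm A}\bm X^T\bm D\bm e_i$, which is again of the right order by Cauchy--Schwarz combined with $\|\bm\Sigma^{1/2}\hat{\bm A}\bm\Sigma^{1/2}\|_{op}\le(n\tau)^{-1}$ and $\df\le n$. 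Differentiating $-\gamma_*\hpsi_i$ through $\hpsi_i=-\ell_{y_i}'(\bm x_i^T\bm\hbeta)$ produces analogous $1/n$ contributions, thanks to $\ell_{y_i}'$ being 1-Lipschitz.

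The main obstacle will be controlling the quadratic remainder in the Stein / second-order Poincar\'e expansion, which requires estimating the expected Hilbert--Schmidt norm of $\nabla_{\bm r_i}^2(\bm r_i^T\bm\hbeta-\gamma_*\hpsi_i)$ by differentiating \eqref{eq:derivatives-psi-hbeta} once more. The payoff is that each occurrence of $\hat{\bm A}$ contributes a factor $(n\tau)^{-1}$, so the iterated derivatives automatically deliver the desired $\C(\delta,\tau)/n$ rate, modulo using the bounds on $\df$ and $\trace[\bm V]$ from \Cref{thm:derivatives} to dominate polynomial factors in $\bm D,\bm X,\bm\hpsi$. The weighting by $1/\hat r^2$ in the conclusion absorbs the natural scale $\|\bm\hpsi\|^2/n$ of these error terms and is precisely what makes the statement robust to the unknown link function $F$ in \eqref{single-index}.
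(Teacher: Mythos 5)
Your overall architecture is the same as the paper's: reduce the proximal statement to the approximation $\bm x_i^T\bm\hbeta - \gamma_*\hpsi_i \approx \aaa_* U_i + \sigma_* Z_i$ via the fixed-point characterization of $\prox$ and its 1-Lipschitzness, decompose $\bm x_i$ into its projection on $\bm\Sigma\bm w$ plus an independent Gaussian remainder, and use Stein-type integration by parts so that the divergence of the derivative formula from \Cref{thm:derivatives} produces $\trace[\bm\Sigma^\perp\bm{\hat A}]\hpsi_i \approx \gamma_*\hpsi_i$ while the linear Gaussian part carries variance $\sigma_*^2$. (The paper implements the same decomposition through the rotation of \Cref{sec:change-of-variable}, writing $\bm x_i^T\bm\hbeta = \aaa_* U_i + \hat r\,\bm z_i^T\bm f$ with $\bm f = \bm\htheta_{2:p}/\hat r$, and applies \Cref{lemma-SOS} conditionally on everything but the last $p-1$ entries of row $i$.) Two points in your final step do not go through as written.

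First, the claim that the factors $(n\tau)^{-1}$ from $\bm{\hat A}$ ``automatically'' yield the $\C(\delta,\tau)/n$ rate is not sufficient for a \emph{fixed} index $i$, which is what the $\max_{i}$ in \eqref{eq:conclusion-proximal-loss} requires. The error terms produced by the derivative formula carry factors of $\hpsi_i^2$ (e.g.\ a contribution $\|\bm{\hat A}\|_F^2\,\hpsi_i^2\cdot n/\|\bm\hpsi\|^2$ from the first summand of $(\partial/\partial x_{ij})\bm\hbeta$), and for a worst-case $i$ one only knows $\hpsi_i^2\le\|\bm\hpsi\|^2 = n\hat r^2$, which loses a full factor of $n$ and leaves an $O(1)$ bound. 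Under \Cref{assum,assumStrongConvex} alone, $\hpsi_i$ is not bounded pointwise (think of the square loss). The paper closes this by an exchangeability argument: the joint law is invariant under permuting observations, so $\E[\bm A_{11}^2\hpsi_i^2/\hat r^2] = \frac1n\sum_{l}\E[\bm A_{11}^2\hpsi_l^2/\hat r^2] = \E[\bm A_{11}^2]$, and similarly for the other remainders; this replaces $\hpsi_i^2$ by its average $\hat r^2$ and recovers the missing factor $1/n$. You need this step (or a leave-one-out substitute) and your proposal does not supply it. Second, your plan to control ``the expected Hilbert--Schmidt norm of $\nabla_{\bm r_i}^2(\bm r_i^T\bm\hbeta - \gamma_*\hpsi_i)$'' would require second derivatives of $\bm\hbeta$ with respect to the design, which \Cref{thm:derivatives} does not provide ($\bm\hbeta$ is only shown to be Lipschitz, hence once weakly differentiable, and $\bm{\hat A}$ need not be differentiable). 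The correct tool is the vector-valued second-order Stein bound of \Cref{lemma-SOS} applied to $\bm f\propto\bm\htheta_{2:p}/\|\bm\hpsi\|$, whose remainder involves only \emph{first} derivatives of $\bm\htheta$ and $\bm\hpsi$; recasting your scalar expansion in that form is necessary, not merely cosmetic.
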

For each $i\in[n]$, $Z_i$ above is obtained by considering a random vector
$\hbbeta^{(i)}$ constructed with the same loss and penalty as $\hbbeta$
constructed from observations $(y_l,\bm x_l)_{l\in[n]\setminus\{i\}}$
and $(y_i, \tilde{\bm{x}}_i)$ where $\tilde{\bm{x}}_i = \bm\Sigma \bm w  U_i + (\bm I_p - \bm\Sigma  \bm w \bm w^T)\bm g_i$ with $\bm g_i\sim N(\bm 0,\bm I_p)$ is independent
of everything else, so that
$\bm x_i$ and $\tilde{\bm{x}}_i$ are iid conditionally on $U_i$.
An alternative construction of $Z_i$ by leaving-out
observation $(y_i,\bm x_i)$; see \cite{bellec2025simultaneous}.

Inequality \eqref{eq:conclusion-proximal-loss} justifies the approximation
$\bm{x}_i^\top\bm \hbeta
\approx
\prox\bigl[\gamma_* \ell_{y_i}(\cdot)\bigr]
\bigl( 
\aaa_* U_i + \sigma_* Z_i
\bigr)$,
or equivalently by definition of the proximal operator,
\begin{align*}
\bm{x}_i^\top\bm \hbeta
+ \gamma_* \ell_{y_i}'(\bm x_i^T\bm\hbeta)
&\approx
\aaa_* U_i + \sigma_* Z_i.
\end{align*}
This provides a clear description of the predicted value $\bm x_i^T\bm\hbeta$,
although $(\aaa_*^2,\sigma_*^2)$ is not observable.
The topic of the next subsection is the estimation of these quantities by
$(\hat\aaa^2,\hat\sigma^2)$.

\subsection{Correlation estimation}

Recall the notation
$\aaa_* = 
\bm{w}^T\bm\Sigma\bm \hbeta$,
$\sigma_*^2 = \|\bm\Sigma^{1/2}\bm{\hbeta}\|^2 - \aaa_*^2$
and $\gamma_* = \trace[\bm\Sigma\bm{\hat{A}}]$
given in \Cref{thm:loss-proximal-representation},
and 
$t_*$ given in \Cref{thm:confidence-intervals-Sigma}.
While the adjustments $(\hat v,\hat r,\hat t)$ for the proximal representation
\eqref{eq:penalty-proximal-thm} are observable from the data,
the quantities $(\aaa_*,\sigma_*)$ in \eqref{eq:conclusion-proximal-loss}
are not. The star subscript in $(\aaa_*,\sigma_*)$ is meant
to emphasize that they are not observable.
Estimation of the quantity $\aaa_*$ is of interest in itself:
An estimate $\hat\aaa^2$ with $\hat\aaa^2\approx \aaa_*^2$ 
would allow the Statistician to estimate 
the correlation ${\aaa_*}\big/{\|\bm\Sigma^{1/2}\bm{\hbeta}\|}$
up to a sign,
or other performance metrics for $\bm\hbeta$.
The following result shows that 
$(\aaa_*^2, \sigma_*^2,\gamma_*, t_*^2)$ can 
can be estimated
by $(\hat \aaa^2,\hat\sigma^2,\hat\gamma, \hat t^2)$.

\begin{restatable}{theorem}{thmAdjustments}
    \label{thm:adjustments-approximation}
    Let \Cref{assum,assumStrongConvex} hold. Let $(\hat r,\hat v,
    \hat\gamma,\hat t^2, \hat \aaa^2,\hat\sigma^2)$
    be as in \eqref{hat_v_r_t},
    and let
    $(t_*,\aaa_*,\sigma_*)$ be as in 
    \Cref{thm:confidence-intervals-Sigma,thm:loss-proximal-representation}.
    Then
    \begin{align}
        \label{eq:consistency-hat-gamma}
        \E\bigl[ \big|\hat v(\hat \gamma - \gamma_*)\big| \bigr]
        &\le \C(\delta,\tau) n^{-1/2},
        \\
        \E\bigl[ \tfrac{1}{\hat r^2} \big|\hat t^2 - t_*^2\big| \bigr]
        &\le \C(\delta,\tau) n^{-1/2},
        \label{eq:consistency-hat-t}
        \\
        \E\bigl[\tfrac{|\hat v|}{\hat r^2}
        \big| \tfrac 1 n \|\bm{X}\bm \hbeta - \hat\gamma \bm \hpsi\|^2  
        - 
        \|\bm\Sigma^{1/2}\bm \hbeta\|^2
        \big|
        \bigr]
        &\le 
        \C(\delta,\tau) n^{-1/2},
        \label{eq:consistency-norm}
        \\
        \E\bigl[
            \tfrac{\hat v^2  \hat t^2  }{\hat r^4}
            (
            \big|
                \hat \aaa^2 -  \aaa_*^2
            \big|
            +
            \big|
                \hat \sigma^2 -  \sigma_*^2
            \big|
            )
        \bigr]
        &\le
        \C(\delta,\tau) n^{-1/2}.
        \label{eq:consistency-aaa}
    \end{align}
    If additionally \Cref{assumExtra} holds, then there exists an event $E$ with $\P(E)\to 1$ such that
    \begin{equation}
        \E\Bigl[ I_{E}
        \Bigl(\Big|\hat \gamma - \gamma_*\Big| 
        +
        \Big|\hat t^2 - t_*^2\Big| 
        +
        \Big| \tfrac 1 n \|\bm{X}\bm \hbeta - \hat\gamma \bm \hpsi\|^2  
        - 
        \|\bm\Sigma^{1/2}\bm \hbeta\|^2
        \Big|
        \Bigr)
        \Bigr]
        \le 
        \C(\delta,\tau,\ell) n^{-1/2}.
        \label{thm_adjustments_additional}
    \end{equation}
\end{restatable}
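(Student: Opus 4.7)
The plan is to combine first-order Stein identities (Gaussian integration by parts on each row $\bm{x}_i\sim N(\bm 0,\bm\Sigma)$) with the Gaussian Poincar\'e inequality. For any scalar functional $h(\bm X)$ of $\bm X$, Poincar\'e controls $\Var(h)$ by $\E[\|\bm\Sigma^{1/2}\nabla_{\bm X} h\|_F^2]$, and \Cref{thm:derivatives} supplies closed-form expressions for $(\partial/\partial x_{ij})\bm\hbeta$ and $(\partial/\partial x_{ij})\bm\hpsi$ in terms of $\hbA$ and $\bm V$. Because $\|\bm\Sigma^{1/2}\hbA\bm\Sigma^{1/2}\|_{op}\le (n\tau)^{-1}$, the Frobenius norms of the gradients of the relevant functionals are of order $n^{-1/2}$, producing concentration of every observable functional around a Stein-identified mean at rate $n^{-1/2}$.

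I would prove the four estimates in order. For \eqref{eq:consistency-hat-gamma}, write $\hat v\hat\gamma = \df/n = n^{-1}\sum_i \ell_{y_i}''(\bm x_i^T\bm\hbeta)\bm x_i^T\hbA\bm x_i$ and apply Stein on each $\bm x_i$ to replace $\bm x_i^T\hbA\bm x_i$ by $\trace[\bm\Sigma\hbA]=\gamma_*$ plus derivative corrections; combining these corrections with \eqref{eq:derivatives-psi-hbeta} and the identity $\trace[\bm V]=\trace[\bm D]-\trace[\bm D\bm X\hbA\bm X^T\bm D]$ yields $\hat v\hat\gamma \approx \hat v\gamma_*$ up to a Poincar\'e remainder. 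For \eqref{eq:consistency-hat-t}, expand $t_* = \hat v\aaa_* + n^{-1}\bm w^T\bm X^T\bm\hpsi$, square it, and compare term by term with $\hat t^2$ in \eqref{hat_v_r_t}; the matching of cross terms reduces to a Stein identity relating $n^{-1}\bm w^T\bm X^T\bm\hpsi$ to $n^{-1}\bm\hpsi^T\bm X\bm\hbeta - \hat\gamma\hat r^2 + \hat v\aaa_*$. For \eqref{eq:consistency-norm}, expand the square into $n^{-1}\|\bm X\bm\hbeta\|^2 - 2\hat\gamma n^{-1}\bm\hpsi^T\bm X\bm\hbeta + \hat\gamma^2\hat r^2$; a first-order Stein identity on $n^{-1}\|\bm X\bm\hbeta\|^2$ that uses $\partial\bm\hbeta/\partial\bm x_i$ produces $\|\bm\Sigma^{1/2}\bm\hbeta\|^2$ together with a cross term that cancels the $-2\hat\gamma n^{-1}\bm\hpsi^T\bm X\bm\hbeta + \hat\gamma^2\hat r^2$ modulo $n^{-1/2}$ fluctuations.

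The bound \eqref{eq:consistency-aaa} then follows algebraically. Writing $N$ for the unsquared numerator in the definition of $\hat\aaa^2$ (so $\hat\aaa^2 = N^2/\hat t^2$), the estimate \eqref{eq:consistency-norm} gives $N \approx \hat v(\aaa_*^2+\sigma_*^2) + n^{-1}\bm\hpsi^T\bm X\bm\hbeta - \gamma_*\hat r^2$; a further Stein identity shows the second group equals $\aaa_*\cdot n^{-1}\bm w^T\bm X^T\bm\hpsi - \hat v\sigma_*^2$ up to a Poincar\'e remainder, hence $N \approx \aaa_*(\hat v\aaa_* + n^{-1}\bm w^T\bm X^T\bm\hpsi) = \pm\aaa_* t_*$. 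Squaring, $\hat\aaa^2 = N^2/\hat t^2 \approx \aaa_*^2 t_*^2/\hat t^2 \approx \aaa_*^2$ using \eqref{eq:consistency-hat-t}, and $\hat\sigma^2 - \sigma_*^2$ follows from $\hat\sigma^2 = n^{-1}\|\bm X\bm\hbeta - \hat\gamma\bm\hpsi\|^2 - \hat\aaa^2$ by the triangle inequality. The $\hat v^2\hat t^2/\hat r^4$ prefactor arises because we must divide by $\hat t^2$ and normalize remainders by $\hat r^2$ at the intermediate steps.

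The main obstacle is the coupled nonlinear dependence of $\hbA$, $\bm D$, $\bm\hbeta$, $\bm\hpsi$ on $\bm X$: each Stein manipulation generates corrections involving $\hbA\bm X^T\bm D$ and $\bm V$ that must be bounded uniformly via \eqref{eq:boundA} together with a high-probability operator-norm bound on $\bm X\bm\Sigma^{-1/2}$ (Bai--Yin type); one must track these residuals meticulously to guarantee they remain $O(n^{-1/2})$. For \eqref{thm_adjustments_additional} I would construct an event $E$ on which $\|\bm\Sigma^{1/2}\bm\hbeta\|$ is bounded using strong convexity of the objective, so that $\bm x_i^T\bm\hbeta$ lies in a compact range with high probability; \Cref{assumExtra} then gives $\min_i \ell_{y_i}''(\bm x_i^T\bm\hbeta)$ bounded below on $E$, hence $\hat v = \trace[\bm V]/n$ bounded away from zero via \eqref{eq:bound-trV-D}. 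Dividing the previous bounds by this positive lower bound on $\hat v$ removes the $\hat v$ prefactors to yield \eqref{thm_adjustments_additional}.
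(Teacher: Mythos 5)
Your overall strategy (Gaussian integration by parts against the derivative formulas of \Cref{thm:derivatives}, variance control of the resulting discrepancies, the algebraic reduction of $\hat\aaa^2-\aaa_*^2$ to the numerator $N\approx \pm\aaa_* t_*$ followed by \eqref{eq:consistency-hat-t}, and the event construction for \eqref{thm_adjustments_additional} via boundedness of $\bm x_i^T\bm\hbeta$ and \Cref{assumExtra}) is the same family of argument as the paper's, and the last two of these pieces match the actual proof closely. However, there are two concrete gaps in the probabilistic core.

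First, your route to \eqref{eq:consistency-hat-gamma} integrates by parts inside the quadratic form $\bm x_i^T\bm{\hat{A}}\bm x_i$, which generates derivative corrections involving $\partial\bm{\hat{A}}/\partial x_{ij}$. These are not available: \Cref{thm:derivatives} only provides the derivatives of $\bm\hbeta$ and $\bm\hpsi$, and for a general strongly convex penalty $\bm{\hat{A}}$ is characterized only implicitly, with the operator-norm bound \eqref{eq:boundA} but no formula or bound for its own derivatives. The paper circumvents this by never differentiating $\bm{\hat{A}}$: it applies a bilinear Stein-type second-moment bound to pairs $(\bm u,\bm f)$ built exclusively from $\bm\hpsi$ and $\bm\Sigma^{1/2}\bm\hbeta$, so that the quantities $\gamma_*=\trace[\bm\Sigma\bm{\hat{A}}]$ and $\df$ emerge as divergences of $\bm\hbeta$ and of $\bm X^T\bm\hpsi$ through \eqref{eq:derivatives-psi-hbeta} (e.g., $\sum_{i,j}\hpsi_i\,\partial\hbeta_j/\partial x_{ij}$ produces $\trace[\bm{\hat{A}}]\|\bm\hpsi\|^2$ up to controllable terms). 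Identifying these specific functionals is the substantive step your sketch leaves open. Second, the term $\tfrac{1}{n^2}\|\bm\Sigma^{-1/2}\bm X^T\bm\hpsi\|^2 - \tfrac{p}{n}\hat r^2$ inside $\hat t^2$ is not handled by a first-order Stein identity plus the Poincar\'e inequality on a scalar functional: one must show that the squared norm of the $(p-1)$-dimensional vector of Stein discrepancies $\bigl(\bm e_j^T\bm X^T\bm\hpsi/\|\bm\hpsi\|+\trace[\bm V]\hbeta_j/\|\bm\hpsi\|\bigr)_j$ concentrates around $p-1$ with fluctuation $O(\sqrt p)$. This is a genuinely second-order (chi-square--type) concentration statement, supplied in the paper by Theorem 7.2 of \cite{bellec2020out_of_sample}, and it is the bottleneck that forces the $n^{-1/2}$ (rather than $n^{-1}$) rate in \eqref{eq:consistency-hat-t} and \eqref{eq:consistency-aaa}. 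Without it, the comparison of $\hat t^2$ with $t_*^2$ does not close.
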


\Cref{thm:adjustments-approximation} justifies the approximations
$\hat \gamma\approx \gamma_*$,
$\hat t^2 \approx t_*^2$,
$\hat \aaa^2 \approx \aaa_*^2$
and $\hat \sigma^2 \approx \sigma_*^2$,
so that the quantities $(\gamma_*, \aaa_*^2,\sigma_*^2)$
appearing in the proximal representation for $\bm{x}_i^T\bm \hbeta$
in \Cref{thm:loss-proximal-representation} are estimable
by $(\hat\gamma, \hat\aaa^2, \hat\sigma^2)$.
We focus on estimation of $\aaa_*^2$ here instead of $\aaa_*$, because
the sign of $\aaa_*=\bm w^T\bm \Sigma\bm \hbeta$ is
unidentifiable in the single index model
\eqref{single-index} as any sign change can be absorbed into
$F(\cdot)$.
Disambiguating and estimating the sign of $\aaa_*$
    under additional assumptions is discussed in \Cref{sec:sign} below.
The accuracy of $\hat \aaa^2$ for estimation $\aaa_*^2$ is confirmed in simulations (\Cref{sec:simulations}) in
\Cref{table:LS,table:ridge} and \Cref{fig:L1}.

\subsection{Sign disambiguation: estimating $\sign(t_*)$ and $\sign(a_*)$}
\label{sec:sign}

\Cref{thm:confidence-intervals-Sigma} features the unknown sign
$\sign (t_*)$ where $t_*$ is the unknown scalar
defined in \Cref{thm:confidence-intervals-Sigma}.
The estimate $\hat t^2$ defined in \eqref{hat_v_r_t}
consistently estimates $t_*^2$ by \eqref{eq:consistency-hat-t},
but since $\hat t^2$ and $t_*$ are squared \eqref{eq:consistency-hat-t}
is not informative regarding $\sign(t_*)$.
Regarding the correlation with the index $\aaa_* = \bm w^T \bm\Sigma \hbbeta$,
the same can be said about $\hat a^2$ in \eqref{hat_v_r_t}:
by \eqref{eq:consistency-aaa} we have that $\hat a^2$ consistently
estimates $\aaa_*^2$ when $\hat v^2\hat t^2/\hat r^4$ is bounded away from 0,
but since both quantities are squared
\eqref{eq:consistency-aaa} is also non-informative regarding $\sign(\aaa_*)$.

In order to estimate $\sign(t_*)$ or $\sign(\aaa_*)$,
additional assumptions must be made to break symmetry,
otherwise $\bm w$ and $-\bm w$ are
unidentifiable in the single index model
\eqref{single-index} as any sign change can be absorbed into
$F(\cdot)$.
Consider a function $\varrho:\mathcal Y \to\R$ and the vector
$\varrho(\bm y)\in\R^n$ with components $\{\varrho(y_i)\}_{i\in[n]}$
(that is, $\varrho$ acts componentwise).
If $y_i$ is real valued, one may choose
the identity $\varrho(u)=u$.
We break the symmetry
between $\bm w$ and $-\bm w$ by
additionally assuming that $\varrho(\bm y)$ and $\bX\bm w$ are
positively correlated, i.e.,
\begin{equation}
    \E[ \varrho(\bm y)^T \bm X \bm w ] / n 
    = \E[\varrho(y_i) \bm x_i^T \bm w]
    \in (0,+\infty).
    \label{sign_assu}
\end{equation}
Only one of $\bm w$ and $-\bm w$ can satisfy this positive correlation
constraint, so assumption \eqref{sign_assu} disambiguates the sign of $\bm w$.
By the Law of Large Numbers, \eqref{sign_assu} ensure
$\varrho(\bm y)^T \bm X \bm w / n > 0$ with high-probability for
$n$ large enough, so $\sign(\varrho(\bm y)^T \bm X \bm w / n)$ is known.
This makes it possible to estimate $\sign(\aaa_*)$ due to the
approximation
$$
(\varrho(\bm y)^T \bm X \bm w) \aaa_*
\approx
 \varrho(\bm y)^T(\bX \hbbeta - \hat \gamma\bm\hpsi)
$$
formally proved in \Cref{prop:sign} below. The sign of $\aaa_*$ is then estimated
as the sign of $\varrho(\bm y)^T(\bX \hbbeta - \hat \gamma\bm\hpsi)$.
The sign of $t_*$ can be further estimated thanks to the approximation
$$
(\bm{\hpsi}^T \bm X \hbbeta
+ \hat v\|\bm X \hbbeta - \hat\gamma\bm \hpsi \|^2
)/(n\hat r^2)
- \hat \gamma
\approx \aaa_* t_* / \hat r^2,
$$
so that, since $\hat r^2=\|\bm \hpsi\|^2/n$,
we estimate the sign of $t_*$ by the product
\begin{equation}
    \sign\Bigl(\varrho(\bm y)^T(\bX \hbbeta - \hat \gamma\bm\hpsi)\Bigr)
    \cdot
    \sign\Bigl(\bm{\hpsi}^T \bm X \hbbeta
    + \hat v\|\bm{X} \hbbeta - \hat\gamma\bm \hpsi \|^2
    -\|\bm \hpsi\|^2 \hat \gamma\Bigr).
\end{equation}

\begin{proposition}
    \label{prop:sign}
    Let \Cref{assum,assumStrongConvex} be fulfilled. Then with $\bm P = \bm I_p - \bm\Sigma \bm w \bm w^T$,
    \begin{align*}
        \E\Bigl[\Big|
\frac{
 \varrho(\bm y)^T(\bX\bm P^T \hbbeta - \hat \gamma\bm\hpsi)
}{\|\varrho(\bm y)\| \|\bm \hpsi\|}
\Big|\Bigr]
=
        \E\Bigl[\Big|
            \frac{
 \varrho(\bm y)^T(\bX \hbbeta - \hat \gamma\bm\hpsi)
-
(\varrho(\bm y)^T \bm X \bm w) \aaa_*
}{\|\varrho(\bm y)\| \|\bm \hpsi\|}
\Big|\Bigr]
&\le \C(\delta, \tau)n^{-1/2},
        \\
        \E\Bigl[\Big|\hat v\Bigl(
    (\bm{\hpsi}^T \bm X \hbbeta
    + \hat v\|\bm{X} \hbbeta - \hat\gamma\bm \hpsi \|^2
    )/(n\hat r^2)
    - \hat \gamma
    - \aaa_* t_*/\hat r^2
    \Bigr)
\Big|\Bigr] &\le \C(\delta, \tau)n^{-1/2}.
    \end{align*}
\end{proposition}

\section{Main results for unregularized $M$-estimation}
\label{sec:unregularized}
Unregularized $M$-estimation refers to the special case of penalty $g$
in \eqref{hbeta} being identically 0.
In this case, \eqref{hbeta} includes Maximum Likelihood Estimator
if the negative log-likelihood is used for the loss functions
$\ell_{y_i}(\cdot)$ in \eqref{hbeta}.
Our first task is to justify the derivative formula \eqref{eq:derivatives-psi-hbeta} and to determine $\bm{\hat{A}}$ without the strong convexity
assumption made in \Cref{thm:derivatives}.
The following lemma
follows from the implicit function theorem and is proved in
\Cref{sec:proof:unreg}.

\begin{restatable}{lemma}{derivativeUnregularized}
    \label{lemma:derivative_unregularized}
    Let \Cref{assumLowDim,assumExtra} be fulfilled so that the penalty is $g=0$.
    Let $\bm y\in\mathcal Y^n$ and $\bar{\bm X}\in\R^{n\times p}$ be fixed.
    If a minimizer $\bm \hbeta$ exists at $(\bm y,\bar{\bm X})$
    and $\bar{\bm X}^T\bar{\bm X}$ is invertible,
    then there exists a neighborhood of $\bar{\bm X}$ such that
    $\bm\hbeta(\bm y,\bm X)$ exists in this neighborhood,
    the map $\bm X\mapsto \bm\hbeta(\bm y,\bm X)$ restricted to this
    neighborhood is continuously differentiable,
    and \eqref{eq:derivatives-psi-hbeta}
    holds with $\bm{\hat{A}} = \Aunreg$.
\end{restatable}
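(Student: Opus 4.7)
The plan is to apply the implicit function theorem to the first-order optimality condition of \eqref{hbeta} with $g=0$. Define the map
\[
F:\R^p\times\R^{n\times p}\to\R^p,
\qquad
F(\bm b,\bm X)\defas \bm X^\top\ell_{\bm y}'(\bm X\bm b)=\sum_{i=1}^n\ell_{y_i}'(\bm x_i^\top\bm b)\bm x_i.
\]
The KKT condition for \eqref{hbeta} with $g=0$ is $F(\bm\hbeta(\bm y,\bm X),\bm X)=\bm 0$, and by assumption $F(\bm\hbeta,\bar{\bm X})=\bm 0$ at the given point. Under \Cref{assumExtra}, $\ell_{y_0}$ is twice continuously differentiable for every $y_0$, so $F$ is $C^1$ on $\R^p\times\R^{n\times p}$.

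Next I would verify the invertibility hypothesis of the IFT. A direct computation gives
\[
\partial_{\bm b}F(\bm b,\bm X)=\bm X^\top\bm D(\bm b,\bm X)\bm X,
\qquad\text{with }\bm D(\bm b,\bm X)\defas\diag(\ell_{\bm y}''(\bm X\bm b)).
\]
By \Cref{assumExtra}, the diagonal entries of $\bm D$ are strictly positive, so $\bm D$ is positive definite; combined with $\bar{\bm X}^\top\bar{\bm X}$ invertible (which forces $\bar{\bm X}$ to have rank $p$), the matrix $\bar{\bm X}^\top\bm D(\bm\hbeta,\bar{\bm X})\bar{\bm X}$ is invertible. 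The IFT then yields a neighborhood $\mathcal U$ of $\bar{\bm X}$ and a continuously differentiable map $\bm X\mapsto\bm\hbeta(\bm y,\bm X)$ on $\mathcal U$ satisfying $F(\bm\hbeta(\bm y,\bm X),\bm X)=\bm 0$. Since $\ell_{y_i}$ is strictly convex (positive second derivative) and $\bm X$ retains full column rank on $\mathcal U$ by continuity of the determinant, the empirical risk is strictly convex in $\bm b$ on $\mathcal U$, so this critical point is the unique minimizer, justifying that the map produced by the IFT coincides with $\bm\hbeta(\bm y,\cdot)$.

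For the derivative formula, differentiate the identity $F(\bm\hbeta(\bm y,\bm X),\bm X)=\bm 0$ in $x_{ij}$. The partial $\partial F/\partial x_{ij}$ at fixed $\bm b$ has two contributions: from $\partial\bm X^\top/\partial x_{ij}=\bm e_j\bm e_i^\top$ one gets $\bm e_j\ell_{y_i}'(\bm x_i^\top\bm b)=-\hpsi_i\bm e_j$, while from the chain rule on $\ell_{\bm y}'(\bm X\bm b)$ one gets $\bm X^\top(\ell_{y_i}''(\bm x_i^\top\bm b)b_j\bm e_i)=b_j\bm X^\top\bm D\bm e_i$. Applying the IFT identity
\[
\frac{\partial\bm\hbeta}{\partial x_{ij}}=-\bigl(\bm X^\top\bm D\bm X\bigr)^{-1}\Bigl(-\hpsi_i\bm e_j+\hbeta_j\bm X^\top\bm D\bm e_i\Bigr)
\]
and setting $\bm{\hat A}=(\bm X^\top\bm D\bm X)^{-1}=\Aunreg$ gives the first line of \eqref{eq:derivatives-psi-hbeta}. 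The second line, for $\bm\hpsi$, then follows immediately by the chain rule applied to \eqref{eq:def-psi}, exactly as in the derivation preceding \eqref{eq:derivatives-psi-hbeta}.

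The only genuinely delicate point, which I would emphasize carefully, is checking invertibility of $\bm X^\top\bm D\bm X$: this is where the strong-convexity assumption of \Cref{thm:derivatives} has been swapped for the combination of strict positivity of $\ell_{y_0}''$ (from \Cref{assumExtra}) and the hypothesis that $\bar{\bm X}^\top\bar{\bm X}$ is invertible. Everything else is a routine application of the IFT plus the chain rule, and no estimates analogous to \eqref{eq:boundA}--\eqref{eq:bound-trV-D} need to be re-derived here since the lemma only asserts the existence of the derivative and its explicit form.
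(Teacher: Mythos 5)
Your proposal is correct and follows essentially the same route as the paper: both apply the implicit function theorem to the stationarity condition $\bm X^\top\ell_{\bm y}'(\bm X\bm b)=\bm 0$, verify invertibility of the Jacobian $\bm X^\top\bm D\bm X$ via the strict positivity of $\ell_{y_0}''$ from \Cref{assumExtra} together with full column rank of $\bar{\bm X}$, and then differentiate the identity to read off $\bm{\hat{A}}=\Aunreg$. Your added remark that strict convexity identifies the IFT branch with the (unique) minimizer $\bm\hbeta(\bm y,\cdot)$ is a slightly more careful version of the step the paper states briefly, but it is the same argument.
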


For unregularized $M$-estimation with $p<n$, the optimality conditions
of the optimization problem \eqref{hbeta} read $\bm{X}^T \bm \hpsi = \bm 0$.
With $\bm{X}^T \bm \hpsi = \bm 0$ and the explicit expression
$\bm{\hat{A}} = \Aunreg$ from the previous lemma,
the adjustments $(\df,\hat \gamma,\hat t^2,\hat \aaa^2,\hat\sigma^2)$ in \eqref{hat_v_r_t} reduce to the simpler forms
\begin{align}
    \df = p, 
    \quad
    \hat \gamma =  \frac{p/n}{\hat v},
    \quad
    \hat \aaa^2  = \frac{\|\bm{X} \bm \hbeta\|^2}{n}
    - \frac pn\bigl(1-\frac pn\bigr)\frac{ \hat r^2}{\hat v^2}
    ,
    \quad
    \hat t^2 = 
    \hat\aaa^2 \hat v^2,
    \quad
    \hat\sigma^2 
    = 
        \frac{p}{n}
    \Bigl(
        \frac{\hat r}{\hat v}
    \Bigr)^2.
    \label{adjustments_unregularized}
\end{align}
Here, the fact that $\df=p$ justifies the notation $\df$ for the quantity defined
in \Cref{thm:derivatives}: In unregularized $M$-estimation
$\df$ is the number of parameters, or degrees of freedom of the estimator.
A similar justification for the notation $\df$ is observed for L1
regularized $M$-estimation as discussed in \Cref{sec:L1} below.

The low-dimensional systems of equations (e.g., \eqref{karoui_system}, \eqref{sur_system}) characterizing the behavior of
unregularized $M$-estimation were obtained
in linear models \cite{el_karoui2013robust,donoho2016high,el_karoui2018impact},
in logistic regression models \citep{sur2018modern,zhao2020asymptotic},
other generalized linear models \cite{sawaya2023statistical},
single index models \cite{loureiro2021capturing} and
Gaussian mixture models \cite{mai2019large,loureiro2021learning}.
These results state that if the low-dimensional system corresponding
to the loss and the generative model for $y_i$ given $x_i$ admits
a unique solution, then this solution characterizes the limit in probability
of $\|\bm\Sigma^{1/2}\bm\hbeta\|^2$ as well as the limit in probability
of the correlation
$\aaa_*=\bm w^T\bm\Sigma\bm\hbeta$ with the index $\bm w$, and
limits of averages with respect to test functions $\phi$ as in
\eqref{eq:limiting_result_sur}. In logistic regression,
the
existence and unicity of solutions to 
the low-dimensional system \eqref{sur_system} is provably related to
the existence of the minimizer $\bm\hbeta$ and the linear 
separability of the data \cite{candes2020phase,sur2018modern}.
In the global-null setting of logistic regression, which
corresponds to the linear model as in \cite{el_karoui2013robust,donoho2016high},
existence of solutions to the low-dimensional system is proved
in the supplement of \cite{sur2019likelihood}.
If $\|\bm\Sigma^{1/2}\bm\hbeta\|^2$ and $\aaa_*$ admits non-zero limits
in probability as in these references, then $\P(\frac{1}{K}\le |\aaa_*|, \frac{1}{n}\|\bm X\bm\hbeta\|^2\le K)$ for some constant $K>$0.
The aforementioned results thus justify the following assumption that will be
used in the theorems of this section.

\begin{assumption}
    \label{assum:bounded_exists}
    Let $K>0$ be constant.
    Assume that \Cref{assum,assumLowDim,assumExtra} hold so that the penalty is $g=0$.
    Assume that with probability approaching one
    as $n,p\to\infty$, the estimator $\bm{\hbeta}$ exists,
    is bounded and has non-vanishing correlation with the index $\bm w$ 
    in the sense that
    $\P(\text{minimizer }\bm \hbeta \text{ in \eqref{hbeta} exists and }
    \frac 1 n \|\bm X\bm\hbeta\|^2 \le K
    \text{ and }\frac1K \le |\aaa_*|)\to1$.
\end{assumption}

The next result involves the square root of $\Omega_{jj}-w_j^2$.
This quantity is always non-negative since it equals
$\bm e_j^T\bm\Sigma^{-1}\bm e_j - \bm e_j^T\bm w \bm w^T \bm e_j
=\bm e_j^T\bm\Sigma^{-1/2}(\bm I_p - \bm v\bm v^T)\bm\Sigma^{-1/2}\bm e_j$
for $\bm v = \bm\Sigma^{1/2}\bm w$
and $\bm I_p-\bm v\bm v^T$ is positive semi-definite thanks to $\|\bm v\|=1$.

\begin{restatable}{theorem}{ThmUnregularized}
    \label{thm:unregularized}
    Let \Cref{assum,assumLowDim,assumExtra} be fulfilled so that the penalty is $g=0$,
    and let $\Omega_{jj}>0$ be defined in \eqref{Omega_jj}.
    Let $K>0$ and define the event
    $E = \{\text{the minimizer }\bm \hbeta \text{ in \eqref{hbeta} exists and }
    \frac 1 n \|\bm X\bm\hbeta\|^2 \le K) \}.$
    For each $j=1,...,p$
    such that $w_j^2\ne \Omega_{jj}$,
    there exists a standard normal $Z_j\sim N(0,1)$ satisfying
    \begin{equation}
    \E\Bigl[
    I_E
    \Bigl|
    \Bigl(\frac{n}{\Omega_{jj} - w_j^2}\Bigr)^{1/2}
    \Bigl(
    \hbeta_j
    -
    \aaa_*
    w_j
    \Bigr)
    -  Z_j
       \frac{\hat r}{\hat v}
    \Bigr|^2
    \Bigr]
    \le
    \C(\delta,K,\ell)
    \frac{1}{p}
    \label{eq:thm-Z_j-unregularized}
    \end{equation}
    for $\aaa_*=\bm\hbeta^T\bm \Sigma \bm w$ and
    the observables $(\hat \gamma, \hat v, \hat r ,\hat \aaa^2)$ in
    \eqref{adjustments_unregularized}.
    For $\sigma_*^2 = \|\bm\Sigma^{1/2}\bm\hbeta\|^2 - \aaa_*^2$, it holds
    \begin{equation}
    \E\bigl[
    I_E
    \bigl(
    \big|\aaa_*^2 - \hat \aaa^2\big|
    +
    \big|\sigma_*^2 - \hat \sigma^2\big|
    \bigr)
    \bigr]
    \le \C(\delta,K,\ell) / \sqrt p 
    \label{eq:bound-correlation-unreg}
    \end{equation}
    and $\E[I_E\max\{\hat r, \frac{1}{\hat r}, \hat v, \frac{1}{\hat v} \}]\le \C(K,\delta,\ell)$.
    If additionally \Cref{assum:bounded_exists} holds,
    then $\big|\hat \aaa - |\aaa_*|\big|= O_\P(n^{-1/2})$
    for $\hat\aaa = \max(0,\hat a^2)^{1/2}$
    where $O_\P(\cdot)$ hides constants depending only on $(\delta,K,\ell)$.
\end{restatable}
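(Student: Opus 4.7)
The plan is to transfer the proofs of \Cref{thm:confidence-intervals-Sigma,thm:adjustments-approximation} to the unregularized setting by exploiting three structural simplifications that are available once $g=0$ and $p<n$: (i) the KKT conditions force $\bm X^T\bm{\hpsi}=\bm 0$, so $\debias_j=\hbeta_j$; (ii) \Cref{lemma:derivative_unregularized} gives the explicit form $\bm{\hat A}=(\bm X^T\bm D\bm X)^{-1}$ on the high-probability event where $\bm X^T\bm X$ is invertible; (iii) the observable adjustments collapse to \eqref{adjustments_unregularized}, in particular $\df=p$ and $\hat t^2=\hat\aaa^2\hat v^2$, so that $\hat t/\hat v=\hat\aaa$ and the $\pm t_*$ ambiguity present in \Cref{thm:confidence-intervals-Sigma} is replaced by the signed scalar $\aaa_*$.

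For the normal approximation \eqref{eq:thm-Z_j-unregularized}, I would decompose
\[
\bm\hbeta=\aaa_*\bm w+\bm\hbeta^\perp,\qquad \bm w^T\bm\Sigma\bm\hbeta^\perp=0,
\]
so that $\hbeta_j-\aaa_* w_j=\bm e_j^T\bm\hbeta^\perp=(\bm e_j^\perp)^T\bm\hbeta^\perp$, where $\bm e_j^\perp\defas\bm e_j-w_j\bm\Sigma\bm w$ is the $\bm\Sigma^{-1}$-orthogonal projection of $\bm e_j$ onto the orthogonal complement of $\bm\Sigma\bm w$ and satisfies $\|\bm\Sigma^{-1/2}\bm e_j^\perp\|^2=\Omega_{jj}-w_j^2$. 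Running the second-order Stein/Gaussian Poincaré identity behind \Cref{thm:confidence-intervals-Sigma} along the test direction $\bm e_j^\perp$ instead of $\bm e_j$, and using the derivative formula \eqref{eq:derivatives-psi-hbeta} with $\bm X^T\bm\hpsi=\bm 0$, removes the signal contribution along $\bm w$ and yields an effective Gaussian variance $(\Omega_{jj}-w_j^2)(\hat r/\hat v)^2/n$ in place of $\Omega_{jj}(\hat r/\hat v)^2/n$; this is exactly the pivot in \eqref{eq:thm-Z_j-unregularized}.

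The consistency estimate \eqref{eq:bound-correlation-unreg} follows the blueprint of \Cref{thm:adjustments-approximation}: integrating the previous normal approximation against $\bm\Sigma^{1/2}\bm\hbeta$, together with a quadratic Gaussian concentration for $\|\bm X\bm\hbeta\|^2$, gives
$\tfrac{1}{n}\|\bm X\bm\hbeta\|^2=\|\bm\Sigma^{1/2}\bm\hbeta\|^2+\tfrac{p}{n}(\hat r/\hat v)^2+O_\P(n^{-1/2})$; substituting into the definitions of $\hat\aaa^2$ and $\hat\sigma^2$ in \eqref{adjustments_unregularized} and using $\aaa_*^2+\sigma_*^2=\|\bm\Sigma^{1/2}\bm\hbeta\|^2$ yields $|\hat\aaa^2-\aaa_*^2|+|\hat\sigma^2-\sigma_*^2|=O_\P(n^{-1/2})$. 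Boundedness of $\hat r,1/\hat r,\hat v,1/\hat v$ on $E$ is read off from \Cref{assumExtra} (which gives $|\ell'_{y_0}|\le 1$ and continuous strictly positive lower bounds for $\min_{y_0}(\ell'_{y_0})^2$ and $\min_{y_0}\ell''_{y_0}$) combined with the trace bounds of \Cref{thm:derivatives} and $\|\bm X\bm\hbeta\|^2/n\le K$. Under \Cref{assum:bounded_exists}, the final claim $\bigl|\hat\aaa-|\aaa_*|\bigr|=O_\P(n^{-1/2})$ then follows from $\bigl||\hat\aaa|-|\aaa_*|\bigr|\le|\hat\aaa^2-\aaa_*^2|/(|\hat\aaa|+|\aaa_*|)$ using the lower bound $|\aaa_*|\ge 1/K$.

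The main obstacle is the absence of the strong-convexity bound $\|\bm\Sigma^{1/2}\bm{\hat A}\bm\Sigma^{1/2}\|_{op}\le(n\tau)^{-1}$ from \eqref{eq:boundA}, which drives every remainder estimate in the strongly convex proofs. In the unregularized regime one instead has $\|\bm\Sigma^{1/2}\bm{\hat A}\bm\Sigma^{1/2}\|_{op}=1/\lambda_{\min}(\bm\Sigma^{-1/2}\bm X^T\bm D\bm X\bm\Sigma^{-1/2})$, and I would obtain the needed $O(1/n)$ bound on a high-probability subset of $E$ by combining (a) a truncation argument showing $\max_i|\bm x_i^T\bm\hbeta|\lesssim\sqrt{\log n}$ via Gaussian concentration conditional on $\bm\hbeta$, (b) \Cref{assumExtra} to deduce $\bm D\succeq c\bm I$ for some deterministic $c>0$ once its arguments are bounded, and (c) a Bai--Yin-type lower bound on $\lambda_{\min}(\bm\Sigma^{-1/2}\bm X^T\bm X\bm\Sigma^{-1/2}/n)$ in the regime $p\le n/\delta<n$. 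With this surrogate operator-norm bound in hand, the remainder analysis underlying the strongly convex theorems carries through essentially unchanged.
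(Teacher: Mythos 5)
Your test-direction idea for \eqref{eq:thm-Z_j-unregularized} (projecting $\bm e_j$ onto the $\bm\Sigma^{-1}$-orthogonal complement of $\bm\Sigma\bm w$, giving effective variance $\Omega_{jj}-w_j^2$) is exactly what the paper does, via $\bm u=\bm P\bm e_j/\|\bm\Sigma^{-1/2}\bm P\bm e_j\|$ with $\bm P=\bm I_p-\bm\Sigma\bm w\bm w^T$. But the overall strategy of "running the strongly convex proofs on a high-probability subset of $E$" has a genuine gap: the Stein/Gaussian--Poincar\'e identities underlying \Cref{lemma-SOS}, \Cref{cor_SOS} and \Cref{prop25,prop26} are identities over the \emph{full} Gaussian measure and require $\bm X\mapsto\bm\hpsi(\bm y,\bm X)$ to be defined and weakly differentiable (almost) everywhere. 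In the unregularized case the minimizer can fail to exist on a set of positive probability (separable data in logistic regression), so $\bm\hpsi$, $\bm{\hat A}$, $\hat v$ are simply undefined there, and Stein's formula does not localize: you cannot insert $I_E$ inside $\E[z_if(\bm z)]=\E[\partial_if(\bm z)]$. This is precisely why the paper does not adapt \Cref{thm:confidence-intervals-Sigma,thm:adjustments-approximation} directly, but instead proves a general non-separable-loss result (\Cref{thm:nonseparable-mathcalL}) and applies it to the coercively modified objective \eqref{modified} of \Cref{thm:unregularized_with_H}, whose minimizer $\bm{\hat b}$ always exists, is globally well-behaved, and coincides with $\bm\hbeta$ on $E$; the conclusion is then transferred via $\E[I_E|\cdot|]\le\E[|\cdot|]$. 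Your proposal is missing this device or an equivalent substitute.

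A second, more technical gap is in your step (a)--(b) for lower-bounding $\bm D$. Showing $\max_i|\bm x_i^T\bm\hbeta|\lesssim\sqrt{\log n}$ "by Gaussian concentration conditional on $\bm\hbeta$" is circular, since $\bm\hbeta$ is a function of $\bm X$ and $\bm x_i^T\bm\hbeta$ is not Gaussian conditionally on $\bm\hbeta$; and even granting such a bound, \Cref{assumExtra} only guarantees $\min_{y}\ell''_y(u)>0$ pointwise, so the infimum over $|u|\le\sqrt{\log n}$ may degenerate as $n\to\infty$. The paper instead uses Markov's inequality on $\frac1n\|\bm X\bm{\hat b}\|^2\le K+2$ to extract a \emph{random} subset $I\subset[n]$ of size $\lceil(1-\alpha)n\rceil$ with $|\bm x_i^T\bm{\hat b}|^2\le(K+2)/\alpha$ for $i\in I$ (a fixed compact set, hence a fixed positive lower bound on $\ell''$), and then controls $\lambda_{\min}(\bm G^T\bm P_I\bm G/n)$ \emph{uniformly over all such subsets} via a union bound and the Edelman-type smallest-eigenvalue estimates of \Cref{lem:edelman_exp}; a plain Bai--Yin bound on the full design matrix is not enough because $\bm D$ may be negligible off $I$. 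The algebra you propose for \eqref{eq:bound-correlation-unreg} and the final $n^{-1/2}$ rate for $\bigl|\hat\aaa-|\aaa_*|\bigr|$ is consistent with the paper once these two issues are repaired.
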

\Cref{thm:unregularized} is proved in
\Cref{sec:proof_thm_unregularized}.
In logistic regression \cite{candes2020phase,sur2018modern} for instance,
the event that $\bm\hbeta$ does not exist has positive probability
for any $n,p$, although this probability may be exponentially small.
This makes the indicator function $I_E$
(of an event $E$ on which $\hbbeta$ exists) unavoidable inside the expectations
in \eqref{eq:thm-Z_j-unregularized}-\eqref{eq:bound-correlation-unreg},
since we cannot make any statement about $\bm\hbeta$ in the event that
it does not exist.

When $\P(E)\to 1$ as in \Cref{assum:bounded_exists} and $\Omega_{jj}>w_j^2$,
\eqref{eq:thm-Z_j-unregularized} implies the
approximation
\begin{equation}
    \textstyle
    \sqrt{\frac{n}{\Omega_{jj}-w_j^2}}
    \bigl(\hbeta_j - \aaa_* w_j\bigr) 
    - Z_j \frac{\hat r}{\hat v}
    =
    \sqrt{\frac{n}{\Omega_{jj}-w_j^2}}
    \bigl(\hbeta_j - \aaa_* w_j\bigr) 
    - Z_j\hat \sigma \sqrt{\frac np}
    = O_\P(\frac 1p)  
\label{eq:asymp-normality-O_p1}
\end{equation}
with $Z_j\sim N(0,1)$ having pivotal distribution,
and the remainder $O_\P(1/p)$
converges to 0 in probability.
In a logistic model where $\bm\hbeta$ is the logistic MLE,
the stochastic representation result
in \cite[Lemma 2.1]{zhao2020asymptotic} yields
the normal approximation
$\sqrt{p}(\Omega_{jj}-w_j^2)^{-1/2}(\hbeta_j-\aaa_*w_j)/\sigma_*\to^d N(0,1)$
which is comparable to the previous display with $\hat \sigma$ replaced
by $\sigma_*$; the advantage of the asymptotic normality involving
$\hat\sigma$ is that the asymptotic variance is already estimated.

It remains to explain how the unknown $\aaa_*$ in \eqref{eq:asymp-normality-O_p1}
can be estimated.
One important consequence of \Cref{thm:unregularized} resides in the rate of convergence for $|\aaa_*^2-\hat\aaa^2|$ in \eqref{eq:bound-correlation-unreg}
and for $\big|\hat a - |\aaa_*|\big|$ under \Cref{assum:bounded_exists}.
From \eqref{eq:asymp-normality-O_p1}, in order to relate $\hbeta_j$
to the unknown parameter $w_j$ of interest through data-driven
quantities, one needs to replace the unknown correlation $\aaa_*=\bm\hbeta^T\bm\Sigma \bm w$ by some known scalar $a$, leading to the approximation
\begin{equation}
    \textstyle
    \sqrt{\frac{n}{\Omega_{jj}-w_j^2}}
    \bigl(\hbeta_j - a w_j\bigr) 
    - Z_j \frac{\hat r}{\hat v}
    = O_\P(\frac 1p) +  
    \frac{|w_j|}{\sqrt{\Omega_{jj}-w_j^2}}
    \sqrt n|a-a_*|
\label{eq:asymp-normality-O_p3}
\end{equation}
In order to obtain an asymptotic normality result from the
previous display,
the scalar $a$ needs to estimate $\aaa_*$ fast enough to ensure
that the right-hand side converges 
to 0 in probability;
similar observations were already made in \cite[Section 3.2.2]{zhao2020asymptotic}.
\citet{zhao2020asymptotic} further conjectured that
$|\bar a-\aaa_*|=O_\P(n^{-1/2})$ holds if $\bar a$ is the deterministic 
limit in probability
of $\aaa_*=\bm\hbeta^T\bm\Sigma \bm w$
(that is, $\aaa_*\to^\P \bar a$),
obtained by solving the nonlinear system of three unknowns \eqref{sur_system}
studied in \cite{sur2018modern}
under the assumptions in \cite{sur2018modern,zhao2020asymptotic}.
Instead of considering deterministic scalars $a$ in \eqref{eq:asymp-normality-O_p3}, our proposal is to use $a=\pm\hat \aaa$ for $\hat \aaa=\max(0,\hat a^2)^{1/2}$
and $\pm=\sign(\aaa_*)$, where $\hat a^2$ is 
the observable adjustment defined in
\eqref{adjustments_unregularized}.
Disambiguating the sign $\pm = \sign(a_*)$
under additional assumptions is discussed in \Cref{sec:sign},
and the estimate proposed there for $\sign(a_*)$ is still valid
in the unregularized case.
The last part of \Cref{thm:unregularized} shows that 
$\hat a$ is a $n^{-1/2}$-consistent estimate of $|\aaa_*|$.
On the other hand, as discussed in \cite[3.2.2]{zhao2020asymptotic} and
\cite[Remark 1]{yadlowsky2021sloe}, obtaining $O_\P(n^{-1/2})$ error bounds
on $|\bar a - \aaa_*|$ appear out of reach of current techniques.

Developing $n^{-1/2}$-consistent estimates of $|\aaa_*|$ is important because
\eqref{eq:asymp-normality-O_p1}
and \eqref{eq:asymp-normality-O_p3} with $a=\hat a$ imply
\begin{equation}
    \textstyle
    \sqrt{\tfrac{n}{\Omega_{jj}-w_j^2}}
    \bigl(\hbeta_j - \pm \hat\aaa w_j\bigr) 
    - Z_j \frac{\hat r}{\hat v} 
    =
    o_\P(1) + 
    O_\P(1)
    \tfrac{|w_j|}{\sqrt{\Omega_{jj}-w_j^2}}
    \label{eq:asymp-normality-O_p2}
\end{equation}
if
$\big|\hat\aaa - |a_*| \big| = O_\P(n^{-1/2})$
as in the discussion of the previous paragraph.
Then the right-hand side converges to 0 in probability for any
covariate $j\in[p]$ such that $w_j^2/\Omega_{jj} \to 0$.
Since $\max\{\hat r,\frac{1}{\hat r},\hat v, \frac{1}{\hat v} \} = O_\P(1)$
by \Cref{thm:unregularized}, $w_j^2/\Omega_{jj} \to 0$ implies 
the normal approximation
\begin{equation}
    \label{eq:unreg_normal-approx}
    \textstyle
    \sqrt{\tfrac{n}{\Omega_{jj}-w_j^2}}
    \bigl(\hat v / \hat r\bigr)
    \bigl(\hbeta_j - \pm \hat\aaa w_j\bigr) 
    \to^d N(0,1)
    \qquad
    \text{ where }
    \pm = \sign(\aaa_*), ~\hat \aaa = \max(0,\hat a^2),
\end{equation}
which relates $\hbeta_j$ and the unknown parameter of interest
$w_j$ to the pivotal
standard normal distribution
via the observable adjustments $\hat v,\hat r, \hat\aaa$.
We summarize this discussion in the next corollary.

\begin{corollary}
    \label{cor:unregularized}
    Let \Cref{assum:bounded_exists}
    be fulfilled.
    For any sequence $j=j_n$,
    if
    \begin{equation}
        \label{eq:assum_w_j}
        w_j^2/\Omega_{jj}\to 0 
        \quad
        \text{ as }
        \quad
        n,p\to+\infty,
    \end{equation}
    then the normal approximation \eqref{eq:unreg_normal-approx} holds.
\end{corollary}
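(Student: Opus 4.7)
The plan is to deduce the corollary from Theorem~\ref{thm:unregularized} by substituting the unobservable correlation $\aaa_*$ with the observable estimate $\pm\hat\aaa$, and controlling the resulting error via the $n^{-1/2}$-consistency of $\hat\aaa$. Under Assumption~\ref{assum:bounded_exists} we have $\P(E)\to 1$, so the indicator $I_E$ in \eqref{eq:thm-Z_j-unregularized} may be removed at the cost of an $o_\P(1)$ term, giving a standard normal $Z_j\sim N(0,1)$ with
\begin{equation*}
\sqrt{\tfrac{n}{\Omega_{jj}-w_j^2}}\bigl(\hbeta_j - \aaa_* w_j\bigr) - Z_j\,\tfrac{\hat r}{\hat v} = o_\P(1).
\end{equation*}

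Next, write $\hbeta_j - \pm\hat\aaa w_j = (\hbeta_j - \aaa_* w_j) + (\aaa_* - \pm\hat\aaa)w_j$. Since $\pm=\sign(\aaa_*)$ and $\hat\aaa\ge 0$, we have $\aaa_* - \pm\hat\aaa = \pm(|\aaa_*|-\hat\aaa)$, and the final claim of Theorem~\ref{thm:unregularized} under Assumption~\ref{assum:bounded_exists} gives $\big|\hat\aaa - |\aaa_*|\big| = O_\P(n^{-1/2})$. Therefore $\sqrt n\,(\aaa_* - \pm\hat\aaa) = O_\P(1)$ and
\begin{equation*}
\sqrt{\tfrac{n}{\Omega_{jj}-w_j^2}}\bigl(\aaa_* - \pm\hat\aaa\bigr) w_j = O_\P(1)\cdot\tfrac{|w_j|}{\sqrt{\Omega_{jj}-w_j^2}}.
\end{equation*}
Assumption \eqref{eq:assum_w_j} yields $w_j^2/\Omega_{jj}\to 0$, hence $w_j^2/(\Omega_{jj}-w_j^2)\to 0$, so the right-hand side is $o_\P(1)$.

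Combining the two displays and multiplying through by $\hat v/\hat r$ gives
\begin{equation*}
\sqrt{\tfrac{n}{\Omega_{jj}-w_j^2}}\,\tfrac{\hat v}{\hat r}\,\bigl(\hbeta_j - \pm\hat\aaa w_j\bigr) = Z_j + o_\P(1).
\end{equation*}
The multiplication preserves the $o_\P(1)$ remainder because $\hat v/\hat r = O_\P(1)$, which follows from the bound $\E[I_E\max\{\hat r,1/\hat r,\hat v,1/\hat v\}]\le \C(K,\delta,\ell)$ established in Theorem~\ref{thm:unregularized} combined with $\P(E)\to 1$. Slutsky's theorem then delivers the claimed $N(0,1)$ convergence.

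The main subtlety is the unidentifiable sign: $\hat\aaa$ estimates $|\aaa_*|$ rather than $\aaa_*$, so the corollary must be phrased with $\pm\hat\aaa$ where $\pm=\sign(\aaa_*)$. The quantitative rate $\big|\hat\aaa - |\aaa_*|\big| = O_\P(n^{-1/2})$ supplied by Theorem~\ref{thm:unregularized} is precisely what makes the plug-in argument work: any slower rate would leave a non-negligible $\sqrt n\,|\aaa_* - \pm\hat\aaa|\cdot |w_j|/\sqrt{\Omega_{jj}-w_j^2}$ term in the decomposition, whereas the $n^{-1/2}$ rate together with \eqref{eq:assum_w_j} drives it to zero. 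No other fundamentally new step is needed beyond invoking the results already proved.
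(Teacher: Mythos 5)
Your proposal is correct and follows essentially the same route as the paper: the paper's proof of \Cref{cor:unregularized} simply invokes the decomposition in \eqref{eq:asymp-normality-O_p1}--\eqref{eq:asymp-normality-O_p2} (replace $\aaa_*$ by $\pm\hat\aaa$, control the extra term $\sqrt{n}\,|\hat\aaa-|\aaa_*||\cdot|w_j|/\sqrt{\Omega_{jj}-w_j^2}$ via the $O_\P(n^{-1/2})$ rate from \Cref{thm:unregularized} and condition \eqref{eq:assum_w_j}, then normalize using $\max\{\hat r/\hat v,\hat v/\hat r\}=O_\P(1)$), which is exactly what you reconstruct. The only cosmetic difference is that the paper places most of this argument in the discussion preceding the corollary rather than inside the proof environment itself.
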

\begin{proof}[Proof of \Cref{cor:unregularized}]
Since  $\P(E)\to 1$ for the event $E$ in \Cref{thm:unregularized}
under \Cref{assum:bounded_exists},
the bounds $\big|\hat \aaa - |\aaa_*|\big|=\big|\hat \aaa - \pm\aaa_*\big| = O_\P(n^{-1/2})$ and $\max\{\frac{\hat r}{\hat v}, \frac{\hat v}{\hat r}\}= O_\P(1)$ hold  by the last part of \Cref{thm:unregularized}.
Consequently the right-hand side in
\eqref{eq:asymp-normality-O_p2} converges to 0 in probability 
if \eqref{eq:assum_w_j} holds.
\end{proof}

If the condition number $\kappa$ of $\bm\Sigma$ is bounded from above by $\kappa$ we have $
1/\kappa \le \Omega_{jj}/\|\bm w\|^2 \le \kappa$
thanks to $\|\bm \Sigma^{1/2}\bm w\|=1$, so that
$$
\frac1{\kappa}\frac{w_j^2}{\|\bm w\|^2}
\le
\frac{w_j^2}{\Omega_{jj}} 
\le \kappa \frac{w_j^2}{\|\bm w\|^2}.
$$
As long as the condition number $\kappa$ is a constant independent of $n,p$,
the quantity $w_j^2/\Omega_{jj}\to 0$ appearing in the right-hand side
of 
\eqref{eq:thm-Z_j-unregularized},
\eqref{eq:asymp-normality-O_p1} and 
of \eqref{eq:asymp-normality-O_p2} is of the same order as
$w_j^2/\|\bm w\|^2$, and the condition $w_j^2/\Omega_{jj}\to 0$
is equivalent to $w_j^2$ being
negligible compared to the full squared norm $\|\bm w\|^2$ of the index.
In other words, in typical cases where the unregularized M-estimator
$\bm\hbeta$ exists, is bounded and has non-vanishing correlation $\aaa_*$
with high-probability as in
\Cref{assum:bounded_exists},
if the condition number of $\bm\Sigma$ is bounded 
and $w_j^2/\|\bm w\|^2\to 0$ then
the normal approximation
\eqref{eq:unreg_normal-approx} holds.

In a well-specified binary logistic regression model where
$\bm\hbeta$ is the MLE and $\bm\beta^*$ is the true logistic
regression vector
normalized such that $\E[(\bm x_i^T\bm\beta^*)^2]=(\bm\beta^*)^T\bm\Sigma\bm\beta^*$ is a fixed constant $\gamma^2$,
\cite[Theorem 3.1]{zhao2020asymptotic}
established asymptotic normality of $\hbeta_j$ under the assumption
that $\sqrt n \tau_j \beta_j^* = O(1)$ where $\tau_j^2=(\bm e_j^T\bm\Sigma^{-1}\bm e_j)^{-1}$ in the notation of \cite{zhao2020asymptotic}.
\Cref{cor:unregularized} obtains asymptotic normality results
under the relaxed assumption \eqref{eq:assum_w_j}, which is equivalent
to $\tau_j\beta_j^* \to 0$ as $n,p\to+\infty$
since $\tau_j\beta_j^* = \gamma w_j/\sqrt{\Omega_{jj}}$.
In other words, the amplitude of $\tau_j\beta_j^*$ for which asymptotic
normality provably holds in \Cref{cor:unregularized} is almost $\sqrt n$
greater than allowed in previous studies.

\subsection{Non-separable loss function}
We use this section to explain 
the presence of the indicator function
$I_E$ in \Cref{thm:unregularized}
and to explain 
the argument behind its proof.
Throughout this subsection, assume $p<n$ and $\bm X^T\bm X$ positive definite,
which holds with probability one under \Cref{assum,assumLowDim}.
The situation where $\bm\hbeta$ does not exist
stems from the lack of coercivity of the loss function:
in the minimization problem
$\min_{\bm b\in\R^p}\sum_{i=1}^n \ell_{y_i}(\bm x_i^T\bm b)$,
one can find a sequence $(\bm b^{(t)})_{t\ge 1}$ with $\|\bm X\bm b^{(t)}\|\to+\infty$ as $t\to+\infty$ such that
$\ell_{y_i}(\bm x_i^T\bm b^{(t)})\to 
\inf_{\bm b\in\R^p}\sum_{i=1}^n \ell_{y_i}(\bm x_i^T\bm b)$,
although the infimum is not attained at any $\bm b\in\R^p$.
This is for instance 
the case in logistic regression when the data is separable
(see, e.g., \cite{candes2020phase} and the references therein).

This situation can be avoided by modifying the loss function to make it
coercive. We prove \Cref{thm:unregularized} for loss functions 
satisfying \Cref{assumExtra} by introducing the modified optimization
problem
\begin{equation}
    \label{modified}
\min_{\bm b\in\R^p} \sum_{i=1}^n\ell_{y_i}(\bm x_i^T\bm b)
+ n H\Bigl(\frac{1}{2}(\frac1n\|\bm X\bm b\|^2 - K)\Bigr)
\end{equation}
where $K>0$ is a fixed constant and $H:\R\to\R$ is a convex smooth 
function with derivative $h=H'$ satisfying $h(t)=0$ for $t<0$, $h(t)=1$ for $t>1$. An example of such function $H$ is given in \Cref{thm:unregularized_with_H} below. This modified optimization maintains desirable properties 
of $\bm\hbeta$ if $\bm\hbeta$ exists and satisfies $\frac1n\|\bm X\bm\hbeta\|^2\le K$, since in this case $\bm\hbeta$ is also a minimizer of \eqref{modified}.
In other words, in the event $E$ of \Cref{thm:unregularized},
$\bm\hbeta$ is also a minimizer of \eqref{modified} so we may as well
study the optimization problem \eqref{modified} to bound from above
the expectations in \eqref{eq:thm-Z_j-unregularized}-\eqref{eq:bound-correlation-unreg}.
The second term in \eqref{modified} could also be useful
in practice if
$\inf_{\bm b\in\R^p}\sum_{i=1}^n \ell_{y_i}(\bm x_i^T\bm b)$ is not attained,
as the coercivity of the second term in \eqref{modified} ensures
that a minimizer always exists.
The modified objective function in \eqref{modified} is not a separable
function of $(\bm x_i^T\bm b)_{i\in[n]}$. The next result
provides general bounds applicable to such non-separable loss function.

\begin{restatable}{theorem}{ThmNonSeparable}
    \label{thm:nonseparable-mathcalL}
    Assume that $1 < \delta \le n/p \le 2\delta$, that $\bm X$ has iid $N(\bm 0,\bm \Sigma)$ rows
    and $\bm w\in\R^p$ satisfies $\bm w^T\bm \Sigma \bm w = 1$.
    Let $U$ be a latent random variable independent of $\bm X$
    and assume that $\mathcal L:\R^n\to\R$ is a random loss function
    of the form
    $\mathcal L(\bm v)= F(\bm v ,U,\bm X \bm w)$ for all $\bm v\in\R^n$ 
    for some deterministic measurable function $F$.
    Assume that with probability one with respect to $(U,\bm X\bm w)$,
    $\mathcal L$ is
    convex, coercive, twice
    differentiable with positive definite Hessian everywhere.
    Let $\bm\hbeta = \argmin_{\bm b\in\R^p} \mathcal L(\bm X \bm b)$
    and assume that $\P(\|\nabla \mathcal L(\bm X\bm\hbeta)\|\le \sqrt n)=1$.
    Extend the notation $\bm D,\hbA,\bm V$ to this non-separable
    setting by
    $$
    \bm \hpsi(\bm y,\bm X) = - \nabla \mathcal L(\bm X\bm\hbeta)\in\R^n,
    \quad
    \bm D = \nabla^2 \mathcal L(\bm X \bm \hbeta)\in\R^{n\times n},
    \quad
    \hbA = (\bm X^T\bm D \bm X)^{-1}\in\R^{p\times p}
    $$
    and $\bm V = \bm D - \bm D \bm X\hbA\bm X^T\bm D$.
    Define $\hat r^2=\|\bm\hpsi\|^2/n$ as well as $\hat v=\trace[\bm V]/n$.
    Let $(\hat a^2,\hat\sigma^2)$ be as in \eqref{adjustments_unregularized}
    and define $\aaa_*=\bm\hbeta^T\bm\Sigma \bm w$ as well as
    $\sigma_*^2=\|\bm\Sigma^{1/2}\bm\hbeta\|^2-\aaa_*^2$.
    Then
    \begin{multline}
    \E[\hat v^2|\hat\sigma^2-\sigma_*^2|]
    +
    \E[\hat v^2|\hat\aaa^2-\aaa_*^2|]
    +
    \E[
    (\sqrt n \hat v \bm\hbeta^T\bm u - \hat r Z )^2
    ]^{1/2}
    \\\le 
    \tfrac{\C(\delta)}{\sqrt n}
    \E\bigl[
        \bigl(
        1
        \vee
        \| n \bm\Sigma^{1/2}\hbA\bm\Sigma^{1/2}\|_{op}
        \vee 
        \|\tfrac{1}{\sqrt n}\bm D\|_F
        \vee
        \|\bm\Sigma^{1/2}\bm\hbeta\|
        \vee
        \|n^{-1/2}\bm X\bm\Sigma^{-1/2}\|_{op}
        \bigr)^8
    \bigr]
        \label{eq:conclusion_nonseparable}
    \end{multline}
    for some $Z\sim N(0,1)$
    for any deterministic $\bm u\in\R^p$ with $\|\bm\Sigma^{-1/2}\bm u\|=1$
    such that $\bm w^T\bm u =0$.
\end{restatable}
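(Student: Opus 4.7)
The proof follows the strategy established for \Cref{thm:unregularized} and its predecessors \cite{bellec2021asymptotic,bellec_zhang2019second_poincare}, adapted to the non-separable random loss $\mathcal L(\bm v) = F(\bm v, U, \bm X\bm w)$. The key structural feature is that $\mathcal L$ depends on $\bm X$ not only through its argument $\bm v$ but also through the third slot $\bm X\bm w$; the hypothesis $\bm w^T\bm u = 0$ is designed precisely to decouple these two dependencies. The first move is to write $\bm X = \bm G\bm\Sigma^{1/2}$ with $\bm G$ having iid $N(0,1)$ entries, and set $\bm u_0 = \bm\Sigma^{1/2}\bm w$ and $\bm u_\perp = \bm\Sigma^{-1/2}\bm u$. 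Both are unit vectors, and $\bm w^T\bm u = 0$ translates to $\bm u_\perp^T\bm u_0 = 0$, so that $\bm G\bm u_\perp \sim N(\bm 0, \bm I_n)$ is independent of $\bm G\bm u_0 = \bm X\bm w$ and of $U$. Along this $\bm u_\perp$-direction, the loss is, conditionally on $(U, \bm X\bm w)$, a deterministic function of its first argument.

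Next, I would apply the implicit function theorem to the KKT equation $\bm X^T\bm\hpsi = \bm 0$ (valid since the Hessian of $\mathcal L$ is positive definite and $\bm X^T\bm X$ is invertible a.s.\ under $p<n$) to obtain derivative formulas for $\bm\hbeta$ and $\bm\hpsi$ in $\bm G$-coordinates, extending those of \Cref{thm:derivatives}. The new feature is an additional term carrying the cross-Hessian $\bm E = \partial_v\partial_z F$, coming from the dependence of $\mathcal L$ on $\bm X\bm w = \bm G\bm u_0$; schematically
\[
\partial_{G_{ij}}\bm\hbeta = \hbA\bm\Sigma^{1/2}\bigl[\bm e_j\hpsi_i - \bm G^T\bm D\bm e_i(\bm\Sigma^{1/2}\bm\hbeta)_j - \bm G^T\bm E\bm e_i\,(\bm u_0)_j\bigr].
\]
The third summand vanishes upon weighting by $u_{\perp,j}$ and summing over $j$, since $\bm u_\perp^T\bm u_0 = 0$. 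An analogous cancellation occurs in $\partial_{G_{ij}}\bm\hpsi$, so the derivative algebra in the $\bm u_\perp$-direction reduces to the separable case of \Cref{thm:derivatives}.

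The core of the argument is then Gaussian integration by parts in the $\bm G\bm u_\perp$-direction. Starting from the KKT-implied identity $(\bm G\bm u_\perp)^T\bm\hpsi = \bm u_\perp^T\bm G^T\bm\hpsi = 0$, applying Stein's formula and substituting the derivative calculation above produces a pointwise pivotal identity of the form
\[
\sqrt n\,\hat v\,\bm u^T\bm\hbeta = \hat r\,Z + R,
\]
where $Z \sim N(0,1)$ is built from $(\bm G\bm u_\perp)^T\bm\hpsi/\|\bm\hpsi\|$ and $R$ is a Stein remainder whose $L^2$-norm is controlled via a second-order Poincar\'e inequality for Gaussian measures, as in \cite{bellec_zhang2019second_poincare,bellec2021asymptotic}. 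The Hessian bound involved in that inequality produces, through two successive derivative applications and Cauchy-Schwarz, a product of four factors chosen from $\|n\bm\Sigma^{1/2}\hbA\bm\Sigma^{1/2}\|_{op}$, $\|\bm D\|_F/\sqrt n$, $\|\bm\Sigma^{1/2}\bm\hbeta\|$, and $\|\bm X\bm\Sigma^{-1/2}\|_{op}/\sqrt n$; bounding the $L^2$-norm of the remainder therefore calls for an 8th moment of their maximum, matching \eqref{eq:conclusion_nonseparable}. The bounds on $|\hat\aaa^2 - \aaa_*^2|$ and $|\hat\sigma^2 - \sigma_*^2|$ follow from the same technique applied to the quadratic pivots $\tfrac{1}{n}\|\bm X\bm\hbeta\|^2$ and $\tfrac{1}{n}\bm\hpsi^T\bm X\bm\hbeta$, together with the KKT-driven elimination of the corresponding $\bm E$-contributions in the $\bm u_0$-direction.

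The principal obstacle is the careful bookkeeping of the cross-Hessian $\bm E$ and of the full (non-diagonal) Hessian $\bm D$. First-order contributions of $\bm E$ vanish along $\bm u_\perp$ by orthogonality, but second-order Poincar\'e terms involve iterated derivatives that could in principle reintroduce $\bm E$; one must verify that their net contribution is either absorbed into observable adjustments via KKT or bounded by the stated 8th-moment factor. Moreover, unlike the separable case where $\bm D$ is diagonal with uniformly bounded entries, here $\|\bm D\|_F/\sqrt n$ is the relevant scale, forcing every Hessian norm in the remainder bound to be of Frobenius type. These are the technical subtleties the proof must handle, while the overall structure parallels that of \Cref{thm:unregularized}.
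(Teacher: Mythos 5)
Your overall strategy---reduce to an isotropic Gaussian matrix, exploit $\bm w^T\bm u=0$ so that the direction $\bm G\bm u_\perp$ is independent of $(U,\bm X\bm w)$ and hence of the randomness in $\mathcal L$, then apply the KKT identity $\bm X^T\bm\hpsi=\bm 0$ together with Stein-type integration by parts---is the same as the paper's. Two remarks before the main issue. First, your concern about the cross-Hessian $\bm E=\partial_v\partial_z F$ reappearing in second-order terms is moot in the paper's setup: after the change of variable the loss depends on $\bm G$ only through its first column, and every differentiation is taken with respect to the remaining $p-1$ columns conditionally on $(U,\bm G\bm e_1)$, so $\mathcal L$ is literally a fixed deterministic function along all differentiated directions and $\bm E$ never appears at any order. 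Second, your treatment of $|\hat\sigma^2-\sigma_*^2|$ and $|\hat a^2-\aaa_*^2|$ via quadratic pivots matches the paper's use of \Cref{prop26,prop25}.

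The genuine gap is in the third term, $\E[(\sqrt n\,\hat v\,\bm\hbeta^T\bm u-\hat r Z)^2]^{1/2}$. A single application of the second-order Stein lemma (\Cref{lemma-SOS}) in the one direction $\bm z=\bm G\bm u_\perp$ produces a remainder bounded by $\C\,\E\sum_{i}\|\sum_j u_{\perp,j}\,\partial\bm\hpsi/\partial g_{ij}\|^2 \le \C\,\E[\|\bm D\bm G\hbA\|_{op}^2\|\bm\hpsi\|^2+\|\bm V\|_F^2(\bm u_\perp^T\bm\htheta)^2]$. The second summand is only $O(n)$ a priori, since $\|\bm V\|_F^2=O(n)$ and $(\bm u_\perp^T\bm\htheta)^2$ can only be bounded by $\|\bm\htheta\|^2=O(1)$ without already knowing the conclusion; after dividing by $n$ this yields an $O(1)$ bound on $\E[(\sqrt n\,\hat v\,\bm\hbeta^T\bm u-\hat r Z)^2]$ instead of the required $O(1/n)$. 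A self-bounding bootstrap does not close either, because it would need $\|\bm V\|_F^2\lesssim\trace[\bm V]^2/n$, which the Cauchy--Schwarz inequality gives in the wrong direction, or a lower bound on $\hat v$ that is not assumed in this theorem. The paper's resolution is to apply \Cref{lemma-SOS} simultaneously to every vector of an orthonormal basis $\bm a^2,\dots,\bm a^p$ of $\{0\}\times\R^{p-1}$, sum the resulting inequalities so that the problematic factor becomes $\|\bm V\|_F^2\|\bm P_1^\perp\bm\htheta\|^2=O(n)$ in total over $p-1$ directions, and then invoke rotational invariance of the conditional law of the last $p-1$ columns of $\bm G$ (which is available precisely because $\mathcal L$ does not depend on them) to conclude that each summand equals the average, i.e.\ is $O(n/p)$, which gives the $O(1/n)$ rate per direction. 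This averaging-plus-symmetry device is essential to the stated rate and is absent from your proposal.
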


\Cref{thm:nonseparable-mathcalL} is proved in
\Cref{sec:thm:nonseparable-mathcalL}.
To obtain a desired $n^{-1/2}$ upper bound on the first line of \eqref{eq:conclusion_nonseparable}, we only need to show that the expectation
on the right-hand side of \eqref{eq:conclusion_nonseparable} is bounded
from above by a constant, which is done for the loss \eqref{modified}
in the next result.

\begin{restatable}{theorem}{ThmUnregularizedExtra}
    \label{thm:unregularized_with_H}
    Let \Cref{assum,assumLowDim,assumExtra} be fulfilled so that the penalty is $g=0$.
    Let $K>0$. Define $h(t)= 0$ for $t<0$, $h(t)=1$ for $t>1$ and $h(t)=3t^2-2t^3$ for $t\in[0,1]$, set $H(t)=\int_0^t h(u)du$ and note that $H$ is convex, twice continuously differentiable and nondecreasing. 
    For any $\bm v\in\R^n$, define
    \begin{equation}
    \mathcal L(\bm v)=
    \frac{1}{1+\sqrt{K+2}}
    \Bigl[
        \sum_{i=1}^n
    \ell_{y_i}(v_i)
    + n H\Bigl(\tfrac12(\tfrac{1}{n} \|\bm v\|^2 - K)\Bigr)
    \Bigr].
    \end{equation}
    Then the minimizer $\bm{\hat{b}}=\argmin_{\bm b\in\R^p}\mathcal L(\bm X\bm b)$ satisfies $\mathbb P(\|\nabla \mathcal L(\bm X \bm{\hat b})\|\le \sqrt n)=1$, the assumptions of \Cref{thm:nonseparable-mathcalL} are satisfied
    and the right-hand side of \eqref{eq:conclusion_nonseparable}
    is bounded from above by $\C(\delta,K,\ell) n^{-1/2}$.
    In the event $E$ of \Cref{thm:unregularized},
    the minimizer
    $\bm{\hbeta}=\argmin_{\bm b\in\R^p}\sum_{i=1}^n\ell_{y_i}(\bm x_i^T\bm b)$
    is equal to $\bm{\hat b}$
    and $\max\{\hat r, \frac{1}{\hat r}, \hat v, \frac{1}{\hat v} \}\le \C(K,\delta,\ell)$.
\end{restatable}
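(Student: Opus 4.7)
The plan is to verify the hypotheses of \Cref{thm:nonseparable-mathcalL} for the modified loss $\mathcal L$, to establish the deterministic bound $\|\bm X\bm{\hat b}\|^2/n \le K' := (1+\sqrt{K+2})^2$ that drives all downstream moment estimates, to control the right-hand side of \eqref{eq:conclusion_nonseparable} accordingly, and to identify $\bm{\hat b}$ with $\bm\hbeta$ on the event $E$.

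First I would verify that $\mathcal L$ is convex (sum of convex terms, with $H$ convex and nondecreasing composed with the convex map $\bm v\mapsto \tfrac12(\|\bm v\|^2/n - K)$), coercive (since $H(t)\ge t-1/2$ for $t\ge 1$ makes $nH(\tfrac12(\|\bm v\|^2/n - K))$ at least quadratic in $\|\bm v\|$ while each $\ell_{y_0}$ is at most linear by $1$-Lipschitzness of $\ell_{y_0}'$), and twice continuously differentiable with positive definite Hessian
\[
\nabla^2\mathcal L(\bm v) = \tfrac{1}{1+\sqrt{K+2}}\bigl[\diag(\ell_{\bm y}''(\bm v)) + h(\cdot)\bm I_n + \tfrac{h'(\cdot)}{n}\bm v\bm v^T\bigr],
\]
positivity of which follows from $\min_{y_0}\ell_{y_0}''(u)>0$ pointwise by \Cref{assumExtra}. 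Comparing $\mathcal L(\bm X\bm{\hat b}) \le \mathcal L(\bm 0) = (1+\sqrt{K+2})^{-1}\sum_i \ell_{y_i}(0)$ (using $H(-K/2)=0$) with $\sum_i \ell_{y_i}(v_i) \ge \sum_i \ell_{y_i}(0) - \sqrt n\|\bm v\|$ (from $1$-Lipschitzness and Cauchy--Schwarz) produces $nH(\tfrac12(\|\bm v\|^2/n-K)) \le \sqrt n\|\bm v\|$ at $\bm v=\bm X\bm{\hat b}$. In the regime where the argument of $H$ exceeds $1$, using $H(t)\ge t-1/2$ yields a quadratic in $\|\bm v\|$ with solution $\|\bm v\|\le\sqrt n(1+\sqrt{K+2})$, so $\|\bm X\bm{\hat b}\|^2/n\le K'$ in all cases. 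Dotting the stationarity condition $\bm X^T\nabla\mathcal L(\bm X\bm{\hat b})=\bm 0$ with $\bm{\hat b}$ then gives $h(\cdot)\|\bm X\bm{\hat b}\|^2 = -\bm{\hat b}^T\bm X^T\ell_{\bm y}'(\bm X\bm{\hat b}) \le \sqrt n\|\bm X\bm{\hat b}\|$, so $h(\cdot)\|\bm X\bm{\hat b}\|\le\sqrt n$; combining with $\|\ell_{\bm y}'(\bm X\bm{\hat b})\|\le\sqrt n$ delivers $\|\nabla\mathcal L(\bm X\bm{\hat b})\| \le 2\sqrt n/(1+\sqrt{K+2}) \le \sqrt n$, where the last inequality uses $1+\sqrt{K+2}\ge 1+\sqrt 2 > 2$ for $K>0$. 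This is precisely why the scaling $1/(1+\sqrt{K+2})$ appears in the definition of $\mathcal L$.

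Next I would bound each factor inside the $\vee$ on the right of \eqref{eq:conclusion_nonseparable}. Standard Gaussian concentration of $\bm Z := \bm X\bm\Sigma^{-1/2}$ controls both the $8$th moments of $\|\bm Z/\sqrt n\|_{op}$ and of $\lambda_{\min}(\bm Z^T\bm Z/n)^{-1}$; combined with $\|\bm X\bm{\hat b}\|^2 \ge n\lambda_{\min}(\bm Z^T\bm Z/n)\|\bm\Sigma^{1/2}\bm{\hat b}\|^2$ and $\|\bm X\bm{\hat b}\|^2/n\le K'$, this yields moments of $\|\bm\Sigma^{1/2}\bm{\hat b}\|$. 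The explicit Hessian formula together with $|\ell_{y_i}''|,|h|\le 1$, $|h'|\le 3/2$, and $\|\bm v\|^2/n\le K'$ gives $\|\bm D\|_F/\sqrt n \le \|\bm D\|_{op}\le \C(K)$ deterministically. The main obstacle is controlling $\|n\bm\Sigma^{1/2}\hbA\bm\Sigma^{1/2}\|_{op} = n/\lambda_{\min}(\bm Z^T\bm D\bm Z)$: although $\|\bm v\|^2\le nK'$, an individual $|v_i|$ can be as large as $\sqrt{nK'}$, so $\min_i\ell_{y_i}''(v_i)$ need not be uniformly positive. The strategy is Markov: for any fixed $\epsilon\in(0,1-1/\delta)$, the set $S_\epsilon := \{i : v_i^2 \le K'/\epsilon\}$ has cardinality at least $(1-\epsilon)n > p$, and on $S_\epsilon$ one has $\ell_{y_i}''(v_i)\ge c_\epsilon := \min_{|u|\le\sqrt{K'/\epsilon}}\min_{y_0}\ell_{y_0}''(u) > 0$ by the continuity and positivity supplied by \Cref{assumExtra}. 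Consequently $\bm D \succeq (c_\epsilon/(1+\sqrt{K+2}))\bm P_{S_\epsilon}$ for the coordinate projection $\bm P_{S_\epsilon}$, and $\lambda_{\min}(\bm Z^T\bm D\bm Z)$ is bounded below by a constant times the squared smallest singular value of the Gaussian submatrix $\bm P_{S_\epsilon}\bm Z$. A union bound over the $\binom{n}{\lceil(1-\epsilon)n\rceil}$ possible choices of $S_\epsilon$, combined with Gaussian smallest-singular-value tail bounds, yields the required $8$th-moment control.

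Finally, on the event $E$ where $\bm\hbeta$ exists and $\|\bm X\bm\hbeta\|^2/n \le K$, the argument of $h$ at $\bm X\bm\hbeta$ is non-positive, so $h=h'=0$ there and the KKT condition for $\mathcal L$ at $\bm\hbeta$ reduces to $\bm X^T\ell_{\bm y}'(\bm X\bm\hbeta)=\bm 0$, i.e.\ the unregularized KKT. Strict convexity of $\mathcal L$ together with full column rank of $\bm X$ (which holds almost surely under \Cref{assumLowDim}) yields uniqueness of the minimizer, so $\bm{\hat b}=\bm\hbeta$ on $E$, and $\bm D$, $\bm V$, $\bm\hpsi$, $\hat r$, $\hat v$ from \Cref{thm:nonseparable-mathcalL} reduce (up to the multiplicative constant $1/(1+\sqrt{K+2})$) to the separable formulas in \eqref{adjustments_unregularized}. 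The upper bounds $\hat r^2\le 1$ (from $|\ell_{y_i}'|\le 1$) and $\hat v\le\|\bm D\|_{op}\le\C(K)$ are immediate; matching lower bounds on $\hat r$ and $\hat v$ follow from the same ``majority of $|v_i|$ bounded'' argument combined with the positivity and continuity of $\min_{y_0}(\ell_{y_0}')^2$ and $\min_{y_0}\ell_{y_0}''$ from \Cref{assumExtra}.
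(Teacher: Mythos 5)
Your overall strategy matches the paper's: a deterministic bound on $\frac1n\|\bm X\bm{\hat b}\|^2$, the Markov/``majority set'' argument giving local strong convexity $\bm D\succeq c\,\bm P_I$ on a set $I$ of size proportional to $n$, a union bound over subsets combined with smallest-singular-value tails of Gaussian submatrices to get the eighth moments, and vanishing of $h,h'$ on $E$ to identify $\bm{\hat b}$ with $\bm\hbeta$. Two of your micro-arguments differ from the paper's and both are valid: for the norm bound you compare $\mathcal L(\bm X\bm{\hat b})\le\mathcal L(\bm 0)$ with the $1$-Lipschitzness of $\ell_{y_i}$ and the linear growth $H(t)\ge t-\tfrac12$, obtaining $\frac1n\|\bm X\bm{\hat b}\|^2\le(1+\sqrt{K+2})^2$ rather than the paper's sharper $K+2$ (obtained there by a KKT contradiction when $h=1$); the weaker constant is harmless since everything downstream only needs a constant depending on $K$. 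You then compensate for the weaker norm bound by dotting the stationarity condition with $\bm{\hat b}$ to get $h(\cdot)\|\bm X\bm{\hat b}\|\le\sqrt n$, which still yields $\|\nabla\mathcal L(\bm X\bm{\hat b})\|\le\sqrt n$; the paper instead uses $\|\nabla\mathcal L(\bm v)\|\le(1+\sqrt{K+2})^{-1}(\sqrt n+\|\bm v\|)$ directly with the sharper norm bound. Either route explains the normalizing factor $1/(1+\sqrt{K+2})$.

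There is one genuine gap: the lower bound on $\hat v$. You assert that $1/\hat v\le\C(K,\delta,\ell)$ ``follows from the same majority argument,'' but that argument only lower-bounds $\trace[\bm D]$, whereas $\hat v=\frac1n\trace[\bm V]$ with $\bm V=\bm D-\bm D\bm X\hbA\bm X^T\bm D$, and the subtracted term is not controlled by knowing that most diagonal entries of $\bm D$ are bounded below. The crude bound $\trace[\bm D\bm X\hbA\bm X^T\bm D]\le p\,\|\bm D\|_{op}$ does not suffice, since $p\|\bm D\|_{op}$ can exceed the lower bound on $\trace[\bm D]$ when the local strong-convexity constant is small relative to $\|\bm D\|_{op}$. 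The missing ingredient is that $\bm P\defas\bm D^{1/2}\bm X\hbA\bm X^T\bm D^{1/2}$ is an orthogonal projector of rank $p$ (because $\hbA=(\bm X^T\bm D\bm X)^{-1}$), so by the trace/interlacing inequality
\begin{equation*}
\trace[\bm V]=\trace\bigl[\bm D^{1/2}(\bm I_n-\bm P)\bm D^{1/2}\bigr]\ \ge\ \sum_{i=1}^{n-p}\lambda_i(\bm D),
\end{equation*}
the sum of the $n-p$ smallest eigenvalues of $\bm D$. One then applies the Markov argument \emph{within} the index set $J$ of those $n-p$ smallest diagonal entries: $\sum_{i\in J}(\bm x_i^T\bm{\hat b})^2\le n(K+2)$ forces at least $(1-\alpha)(n-p)$ indices $i\in J$ to satisfy $(\bm x_i^T\bm{\hat b})^2\le\frac{n}{n-p}(K+2)/\alpha$, whence $\trace[\bm V]\ge\C(\delta,K,\ell)(n-p)$. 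Without this projector step the claimed bound on $1/\hat v$ is not established. The rest of the proposal, including the identification $\bm{\hat b}=\bm\hbeta$ on $E$ via uniqueness of the minimizer and the moment bounds on the right-hand side of \eqref{eq:conclusion_nonseparable}, is sound.
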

\Cref{thm:unregularized_with_H} is proved in 
\Cref{sec:proof:thm:unregularized_with_H}.
\Cref{thm:unregularized} is finally obtained as a consequence
of \Cref{thm:nonseparable-mathcalL,thm:unregularized_with_H} thanks to
$\E[I_E|\cdot|]\le \E[|\cdot|]$ and using that $\bm\hbeta$
from \Cref{thm:unregularized} equals $\bm{\hat{b}}$ from \Cref{thm:unregularized_with_H} in $E$, where $E$ is the event in \Cref{thm:unregularized}.

\section{Examples and simulations}
\label{sec:simulations}

\subsection{Linear models: Square loss and Huber loss}

It is first instructive to specialize \Cref{thm:confidence-intervals-Sigma}
and the adjustments \eqref{hat_v_r_t}
to loss functions usually used in linear models.
For the square loss $\ell_{y_i}(u_i)=\frac 1 2 (y_i - u_i)^2$
and convex penalty $g$ in \eqref{hbeta} we have
$\ell_{y_0}''(u)=1$ for all $y_0\in\R$ and 
\begin{equation}
    \label{v-gamma-r-squareloss}
    \hat v = 1- \frac{\df}{n},
    \qquad
    \hat \gamma = \frac{\df}{n - \df} =\frac{1}{\hat v} - 1,
    \qquad
    \hat r^2 
    = \frac{\|\bm y - \bm X\bm \hbeta\|^2}{n}
\end{equation}
where $\df$ is defined in \eqref{eq:def-V-df-bound-ideal-case},
and the quantity $\df$
equals $\trace[\frac{\partial}{\partial \bm y} \bm X \bm\hbeta(\bm y,\bm X)]$
thanks to \cite[Theorem 2.1]{bellec2021derivatives} which relates
the matrix ${\smash{\bm{\hat{A}}}}$ of \Cref{thm:derivatives} to the derivatives
with respect to $\bm y$.
Thus, $\df$ is the usual notion of degrees-of-freedom of the estimator
$\bm\hbeta$ in linear models as introduced in \citet{stein1981estimation},
and $\hat v = 1 - \df/n$ captures the difference
between sample size and degrees-of-freedom.
On the other hand, $n \hat r^2$ is the usual residual sum of squares.

If the Huber loss 
$H(u)=\int_0^{|u|}\min(1,v)dv$ is used and 
$\bm \hbeta$ is the estimate \eqref{hbeta} with
$\ell_{y_i}(\bm x_i^T\bm b) = H(y_i - \bm x_i^Tb)$ and convex penalty $g$, 
then
\begin{equation}
      \hat v =\frac{\hat n - \df}{n},
      \qquad
      \hat \gamma = \frac{\df}{\hat n - \df},
      \qquad
      \hat r^2 = \frac{\|H'(\bm y - \bm X\bm \hbeta)\|^2}{n}
\end{equation}
where $\hat I \defas \{i=1,...,n: |y_i-\bm x_i^T\bm\hbeta|\le 1\}$ and
$\hat n = |\hat I|$ denotes the number of residuals that fall within the interval
$[-1,1]$ where the loss $H(\cdot)$ is quadratic.
The estimate $\hat r^2$ is the average of the squared
residuals clipped to $[-1,1]$, since here the derivative $H'$ of the Huber
loss is $H'(u)=\max(-1, \min (1, u)))$.
The integer $\hat n=|\hat I|$ represents the effective sample size,
since observations $i\notin \hat I$ do not participate in the fit
of \eqref{hbeta} in the sense that $\frac{\partial}{\partial y_i}\bm\hbeta(\bm y,\bm X)=0$ for $i\notin \hat I$ \cite{bellec2021derivatives}.
Similarly to the square loss case, $n\hat v=\hat n - \df$ captures
the effective sample size left after subtracting the degrees-of-freedom of the estimator $\bm\hbeta$.

For the square loss,
both $(n - \df)$ and $\|\bm y - \bm X \bm \hbeta\|^2$
are expected to appear in the confidence interval about $\beta_j^*$
for regularized least-squares \cite{bellec_zhang2019second_poincare},
while $(\hat n - \df)$ and $\|H'(\bm y - \bm X \bm \hbeta)\|^2$
are expected to appear in confidence intervals about $\beta_j^*$
for the Huber loss \cite{bellec2021asymptotic}.
In \Cref{thm:confidence-intervals-Sigma} on the other hand,
the confidence interval is about the component $w_j$ of the normalized
index $\bm w$. This is where $\hat t$
enters the picture:
the role of $\hat t$ is to bring the index $\bm w$ (which is normalized with $\|\bm \Sigma^{1/2}\bm w\|=1$) on the same scale as $\hat v \bm\hbeta$.
The following proposition makes this precise.
\begin{restatable}{proposition}{PropHatTLinearModel}
    \label{prop:hat-t-linear-model}
    Let \Cref{assum,assumStrongConvex} be fulfilled and additionally assume a linear model
    where observations $y_i = \bm x_i^T\bm\beta^* + \eps_i$ are iid with
    additive noise
    $\eps_i$ independent of $\bm x_i$.
    Set $\bm w = \bm \beta^* / \|\bm \Sigma^{1/2}\bm \beta^*\|$,
    and assume that $\|\bm \Sigma^{1/2}\bm \beta^* \|$ equals a
    constant independent of $n,p$.
    Then
    $$\hat t = \hat v\|\bm \Sigma^{1/2}\bm \beta^* \| 
     + n^{-1/2}O_\P(1)\bigl[
            \hat r  + \|\bm \Sigma^{1/2}\bm \beta^*\|
     \bigr].
    $$
    Above, $O_\P(1)$ denotes a random variable $W$ such that
    for any $\eta>0$ there exists a constant $K$ depending on $(\eta,\delta,\tau,\|\bm \Sigma^{1/2}\bm\beta^*\|)$ only such that
    $\P(|W|>K)\le \eta$.
\end{restatable}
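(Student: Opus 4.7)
The strategy is to combine the closeness $\hat t^2\approx t_*^2$ already established within the proof of \Cref{thm:confidence-intervals-Sigma} with an explicit evaluation of $t_*$ tailored to the linear model. The intermediate bound \eqref{eq:intermediary-t-t*}, $\E[\hat r^{-2}|\hat t^2-t_*^2|]\le \C(\delta,\tau)/\sqrt p$, gives $|\hat t^2-t_*^2|=\hat r^2\, O_\P(n^{-1/2})$. Factoring $(\hat t-|t_*|)(\hat t+|t_*|)$ and using $\hat t,|t_*|\ge 0$ produces $|\hat t-|t_*||=O_\P(n^{-1/2})(\hat r+|t_*|)$, reducing the claim to showing
\begin{equation*}
t_*=\hat v\,\gamma+n^{-1/2}O_\P(1)(\hat r+\gamma),\qquad \gamma:=\|\bm\Sigma^{1/2}\bm\beta^*\|.
\end{equation*}
The sign $\pm$ of \Cref{thm:confidence-intervals-Sigma} becomes $+$ automatically once $t_*$ is shown close to the non-negative scalar $\hat v\gamma$.

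By definition $t_*=\hat v\aaa_*+\bm g^T\bm\hpsi/n$, where $\aaa_*:=\bm w^T\bm\Sigma\bm\hbeta$ and $\bm g:=\bm X\bm w\sim N(\bm 0,\bm I_n)$ (as $\bm w^T\bm\Sigma\bm w=1$). Using $\gamma=\bm w^T\bm\Sigma\bm\beta^*$, the target rearranges as $\bm g^T\bm\hpsi/n+\hat v\bm w^T\bm\Sigma(\bm\hbeta-\bm\beta^*)=n^{-1/2}O_\P(1)(\hat r+\gamma)$. Decompose $\bm X=\bm g\bm w^T\bm\Sigma+\bm X_1$ with $\bm X_1$ having rows in the $\bm\Sigma$-orthogonal complement of $\bm w$ and independent of $\bm g$, and write $\bm y=\gamma\bm g+\bm\eps$ with $\bm\eps$ independent of $\bm g$; then $(\bm\hbeta,\bm\hpsi)$ are deterministic functions of $(\bm g,\bm X_1,\bm\eps)$. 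I would then apply Gaussian integration by parts and the Poincaré inequality to the map $\bm g\mapsto \bm g^T\bm\hpsi$, conditional on $(\bm X_1,\bm\eps)$.

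By the chain rule, $\partial\hpsi_i/\partial g_i=\gamma\,\partial\hpsi_i/\partial y_i+\sum_k[\bm\Sigma\bm w]_k\,\partial\hpsi_i/\partial x_{ik}$. Using \eqref{eq:derivatives-psi-hbeta}, the $\bm X$-part contributes $-[\bm D\bm X\hbA\bm\Sigma\bm w]_i\hpsi_i-\aaa_*V_{ii}$; summing over $i$ gives $-n\hat v\aaa_*-\bm\hpsi^T\bm D\bm X\hbA\bm\Sigma\bm w$, the latter being $O_\P(\sqrt n\,\hat r)$ by the operator-norm control in \Cref{thm:derivatives}. The $\bm y$-derivative is computed via the implicit function theorem on the KKT equations (as in \Cref{lemma:derivative_unregularized}), producing $\gamma\sum_i\partial\hpsi_i/\partial y_i\approx n\hat v\gamma$ via the identity $\trace[\bm V]=n\hat v$, which in the square-loss case reduces to the familiar $n-\df$. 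Combining gives $\E[\bm g^T\bm\hpsi/n\mid \bm X_1,\bm\eps]\approx \hat v(\gamma-\aaa_*)$ up to $n^{-1/2}O_\P(1)(\hat r+\gamma)$, and the Poincaré inequality applied to $\bm g\mapsto\bm g^T\bm\hpsi$ — whose Lipschitz constant is bounded by $\|\bm\hpsi\|+\sqrt n(\hat r+\gamma)O_\P(1)$ via the same derivative formulas — controls the fluctuation. The main obstacle will be the bookkeeping: cleanly identifying the $-n\hat v\aaa_*$ and $n\hat v\gamma$ contributions so that they combine into $-n\hat v(\gamma-\aaa_*)$, and showing that the cross-terms and the Poincaré remainder are $O_\P(\sqrt n(\hat r+\gamma))$ uniformly in the relevant operator norms.
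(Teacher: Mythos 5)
Your proposal is correct and follows essentially the same route as the paper's proof: after reducing to $t_*$ via \eqref{eq:consistency-hat-t}, the paper likewise conditions on the part of the design orthogonal to $\bm w$ (via a rotation sending $\bm\Sigma^{1/2}\bm w$ to $\bm e_1$), applies the second-order Stein lemma (\Cref{lemma-SOS}) to $\bm z=\bm X\bm w$ acting on $\bm\hpsi$ normalized by $(\|\bm\hpsi\|^2+n\|\bm\Sigma^{1/2}\bm\beta^*\|^2)^{1/2}$, and uses the chain rule through both the design entries and $y_i$ so that the divergence term produces exactly $\trace[\bm V]\,(\|\bm\Sigma^{1/2}\bm\beta^*\|-\aaa_*)$ plus negligible remainders. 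The one caveat (which the paper's own last sentence shares) is that converting $|\hat t^2-t_*^2|=\hat r^2\,O_\P(n^{-1/2})$ into a bound on $|\hat t-|t_*||$ of order $n^{-1/2}(\hat r+|t_*|)$ implicitly requires $\hat t+|t_*|$ not to be small compared with $\hat r$, which should be justified using the already-established closeness of $t_*$ to $\hat v\|\bm\Sigma^{1/2}\bm\beta^*\|$ together with a lower bound on $|\hat v|$.
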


\Cref{prop:hat-t-linear-model} justifies the approximation
$\hat t \approx \hat v \|\bm \Sigma^{1/2}\bm\beta^*\|$, and combined
with \eqref{eq:informal-approximation-theorem31},
$
\sqrt n \hat v (\debias_j - \beta_j^*)
\approx
\hat r ~ \Omega_{jj}^{1/2}Z_j
$
with $Z_j\sim N(0,1)$.
This recovers the asymptotic normality result 
proved for regularized least-squares in \cite{bellec_zhang2019second_poincare}
and for some robust loss functions in \cite{bellec2021asymptotic}.
\Cref{prop:hat-t-linear-model} illustrates that $\hat t$ brings the normalized index $\bm w$
on the same scale as $\hat v \bm\beta^*$ and $\hat v \bm\debias$
in this linear model setting.

We note in passing that \Cref{prop:hat-t-linear-model}
justifies the use of $\hat t /\hat v$ to estimate the signal strength $\|\bm \Sigma^{1/2}\bm\beta^*\|$, when an initial M-estimator $\bm\hbeta$ is provided
to estimate the high-dimensional parameter $\bm\beta^*$.
For $\bm\hbeta = 0$ (which can be seen as a special case of \eqref{hbeta} with penalty satisfying $g(\bm 0)=0$ and $g(\bm b) = +\infty$ for $\bm b\ne \bm 0$), the quantity $\hat t^2/\hat v^2$ reduces to the
estimator of the signal strength in \cite{dicker2014variance}.
\Cref{table:signal-strength} reports experiments demonstrating
the accuracy of \Cref{prop:hat-t-linear-model}.
The approximate normality of $\hat t/\hat v - \|\bm\Sigma^{1/2}\bm\beta^*\|$ 
observed in the QQ-plot of \Cref{table:signal-strength} is not currently proved theoretically---a proof of this observed normality
remains an open problem.

\begin{table}[ht]
    \small
\begin{tabular}{@{}|l|l|l|l|l|@{}}
\toprule
Estimate & Signal strength \\ \midrule
$\hat t / \hat v$ & $\|\bm \Sigma^{1/2}\bm\beta^*\|$  \\ \midrule
$2.071 \pm 0.122$ (average $\pm$ std) & 2.000  \\ \midrule
\includegraphics[width=3in]{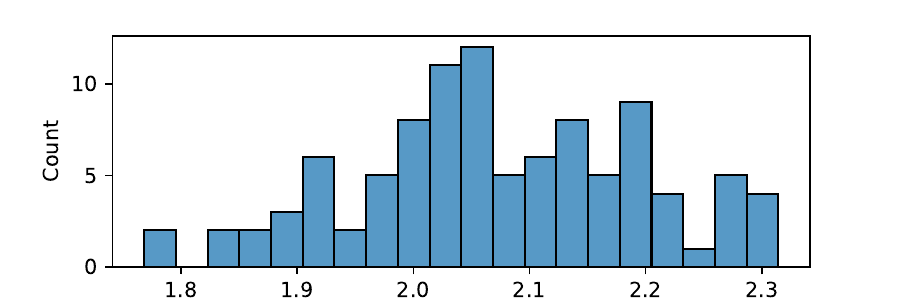} & \\ \midrule
\includegraphics[width=3in]{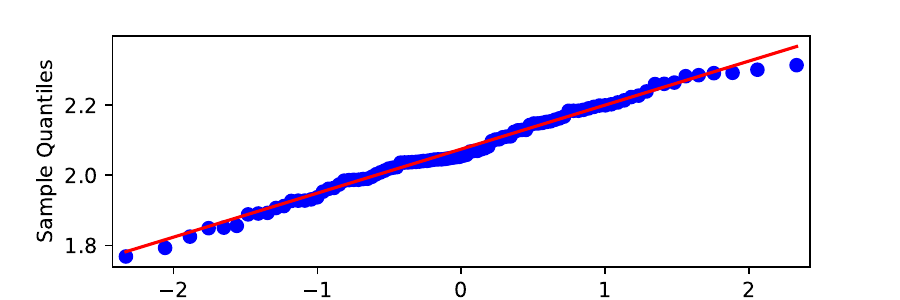} & \\ \midrule
\end{tabular}
\caption{
    \label{table:signal-strength}
    \small
    Estimate $\hat v / \hat v$ of the signal strength $\|\bm \Sigma^{1/2}\bm \beta^*\|$
    in a linear model $y_i+\bm x_i^T\bm\beta^* +\eps_i$ with
    standard Cauchy noise $\eps_i$ independent of $\bm x_i\sim N(\bm 0,\bm \Sigma)$ with $\bm \Sigma = \bm I_p$,
    with $n=1500, p=1501$ and $\|\bm\beta^*\|_0=200$ with all non-zero
    coordinates equal to the same value. The M-estimator
    $\bm \hbeta$ is chosen with
    $\ell_{y_i}(u) = H(u-y_i)$ for the Huber loss $H(u)=\int_0^{|u|}\min(1,v)dv$ and Elastic-Net penalty $g(\bm b) = n^{-1/2} \|\bm b\|_1 + 0.05\|\bm b\|_2^2$.
    Average, standard deviation, histogram and QQ-plot of $\hat t/\hat v$ were computed
    over 100 independent realizations of the dataset.
    Over the 100 repetitions, $\bm\hbeta$
    has False Negatives $47.4\pm7.3$, False Positives $271.36\pm13.4$ and
    True Positives $152.6\pm7.3$.
}
\end{table}

In summary, for the square loss and Huber loss,
\begin{itemize}
    \item$\hat r^2$ is a generalization of the residual sum of squares,
    \item $\hat v$ is the difference of an effective sample size minus the degrees-of-freedom of $\bm \hbeta$,
and
\item
    $\hat t/\hat v$ brings the normalized index $\bm w$
on the same scale as $\bm\hbeta$.
\end{itemize}

\subsection{Least-squares with nonlinear response}
Let $p/n\le \gamma < 1$.
We now focus on the square loss, $\ell_{y_i}(u) = (y_i - u)^2$
with no penalty ($g=0$ in \eqref{hbeta}),
so that $\bm\hbeta$ is the least-squares estimate
$\bm\hbeta = (\bm X^T\bm X)^{-1}\bm X^T \bm y$.
We emphasize that here, $\bm y$ does not follow a linear model:
$y_i$ is allowed to depend non-linearly on $\bm x_i^T \bm w$
as in the single index model \eqref{single-index}.
In this setting, 
$\hat r^2=\frac 1 n \|\bm y - \bm X \bm \hbeta\|^2$ is the residual sum of squares
as in \eqref{v-gamma-r-squareloss},
$\hat v = 1-\frac p n$ since the degrees-of-freedom $\df$ equals $p$,
and 
$(\hat \aaa,\hat t)$ defined in \eqref{adjustments_unregularized}
satisfy
\begin{equation}
\frac{\hat t^2}{\hat r^2\hat v^2} 
=
\frac{\hat \aaa^2}{\hat r^2} = 
\frac{\|\bm X\bm \hbeta\|^2}{\|\bm X\bm \hbeta - \bm y\|^2}- \frac{p}{n-p}.
\label{square_loss_adjustments}
\end{equation}
Assuming $p/n\le \gamma < 1$,
\Cref{thm:unregularized} yields the approximation
\begin{equation}
    \label{eq:LS}
\frac{n-p}{\Omega_{jj}^{1/2}}
\Bigl[\frac{\hbeta_j}{\|\bm y-\bm X\bm \hbeta\|}
-
\frac{\pm w_j}{\sqrt n}
    \Bigl(
\frac{\|\bm X\bm \hbeta\|^2}{\|\bm X\bm \hbeta - \bm y\|^2}- \frac{p}{n-p}
\Bigr)_+^{1/2}
\Bigr]\approx N(0,1)
\end{equation}
where $\pm$ is the sign of $\bm w^T\bm\Sigma \bm\hbeta$.
If $\bm\Sigma$ is unknown, the quantity $\Omega_{jj}$ is linked
to the noise variance in the linear model of regressing $\bm X\bm e_j$
onto $\bm X_{-j}$:
$\Omega_{jj}$ can be estimated using
$$\Omega_{jj}\big\|\big[\bm I_n - \bm X_{-j}(\bm X_{-j}^T\bm X_{-j})^{-1}\bm X_{-j}^T\big]\bm X\bm e_j\big\|^2 \sim \chi^2_{n-p+1}$$
where $\bm X_{-j}\in\R^{n\times (p-1)}$ has the $j$-th column removed.
While \eqref{eq:LS} does not formally follow from \Cref{thm:unregularized}
because the square loss fails to satisfy \Cref{assumExtra}, 
the argument of the proof of \Cref{thm:unregularized} only requires minor modifications to obtain \eqref{eq:thm-Z_j-unregularized} and \eqref{eq:LS}
for the square loss (thanks to $\ell_{y_0}''=1$, the proof for the square loss is actually much simpler than for the logistic loss and other loss functions covered by \Cref{thm:unregularized}).
Alternatively, \eqref{eq:LS} follows from \Cref{thm:nonseparable-mathcalL}
with $\mathcal L(\bm v) = \frac12\|\bm v - \frac{1}{\|\bm y\|}\bm y\|^2$.

For fixed $p$ and $n\to+\infty$, asymptotic normality and confidence intervals
for the least-squares
in single index models with Gaussian covariates dates back to at least 
\citet{brillinger1982gaussian}. High-dimensional estimation performance of the
least-squares and penalized least-squares is studied in 
\cite{yang2017high,plan2016high,plan2016generalized}.
With $\bm\Sigma=\bm I_p$ to simplify comparison,
asymptotic normality in \cite{brillinger1982gaussian} concerns the random
variable $\bm\hbeta - \mu\bm w$ and the estimation bounds in
\cite{yang2017high,plan2016high,plan2016generalized} bounds the estimation
error of $\|\bm\hbeta - \mu\bm w\|$ where $\mu=\E_{g\sim N(0,1)}[gF(g,U_i)]$
where $F$ is the function defining the single index model in \eqref{single-index}.
The scaled vector $\mu\bm w$ appears here because it is the minimizer
of the population minimization problem $\min_{\bm b\in\R^p} \E[(\bm x_i^T\bm b - y_i)^2]$. The constant $\mu$ is typically unknown. A major difference
with these previous results featuring the unknown $\mu$ is that the multiplicative coefficient of $w_j$
in \eqref{eq:LS} is an estimate from the data.

We illustrate the normal approximation \eqref{eq:LS}
in \Cref{table:LS} with
$n=3000, p=2400$ and four different models:
linear, logistic, Poisson, and 1-Bit compressed sensing with
a 20\% probability of flipped bits ($\P(u_i=-1)=0.2=1-\P(u_i=1)$ with $u_i$ 
independent of $\bm x_i$).

\begin{table}[t]
    \small
\begin{tabular}{@{}|l|l|l|l|l|@{}}
\toprule
& Linear & Logistic $y_i\in\{0,1\}$ & 1-bit $y_i\in\{\pm1\}$ & Poisson
\\\midrule $y_i|\bm x_i$ & $y_i\sim N(\bm x_i^T\bm w,0.5)$
                        & $\E[y_i|\bm x_i]=\frac{e^{\bm x_i^T\bm w}}{1+e^{\bm x_i^T\bm w}}$ 
                        & $y_i = u_i \sign(\bm x_i^T\bm w)$
                        & $y_i|\bm x_i\sim\text{Poisson}(e^{\bm x_i^T\bm w})$
\\\midrule$\hat \aaa\pm$std & $.999\pm.021$
                     & $.407\pm.072$ 
                     & $.475\pm.05$
                     & $1.629\pm.163$ 
\\\midrule $\aaa_*\pm$std & $.999\pm.027$  
                   & $-.413\pm.033$  
                   & $.483\pm.037$
                   & $1.637\pm.141$
\\\midrule QQplot & \includegraphics[width=1in]{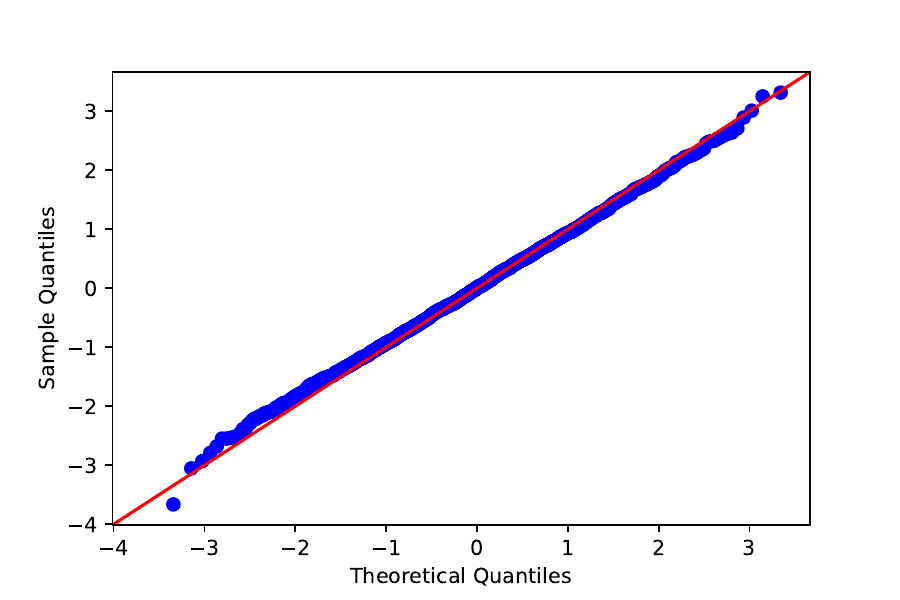}  
                  & \includegraphics[width=1in]{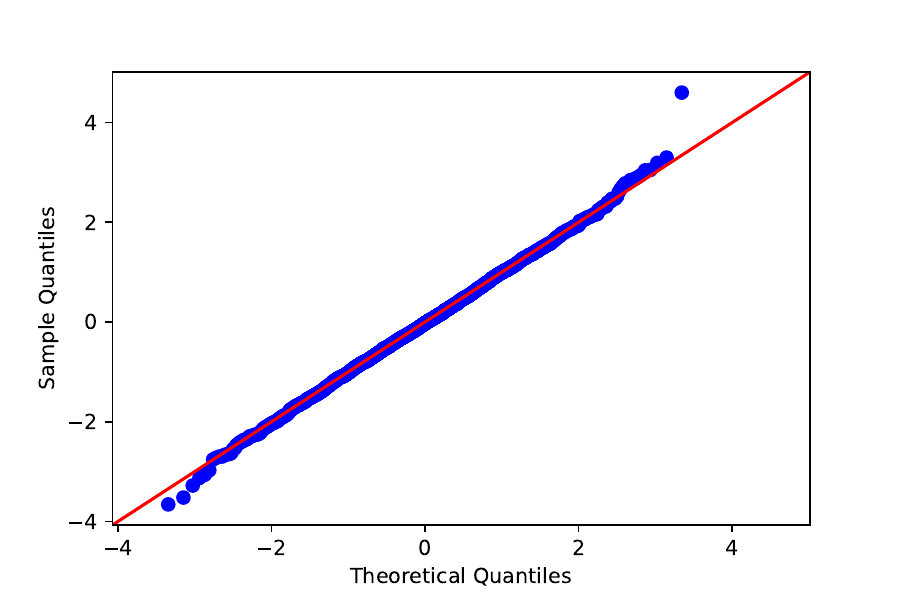}
                  & \includegraphics[width=1in]{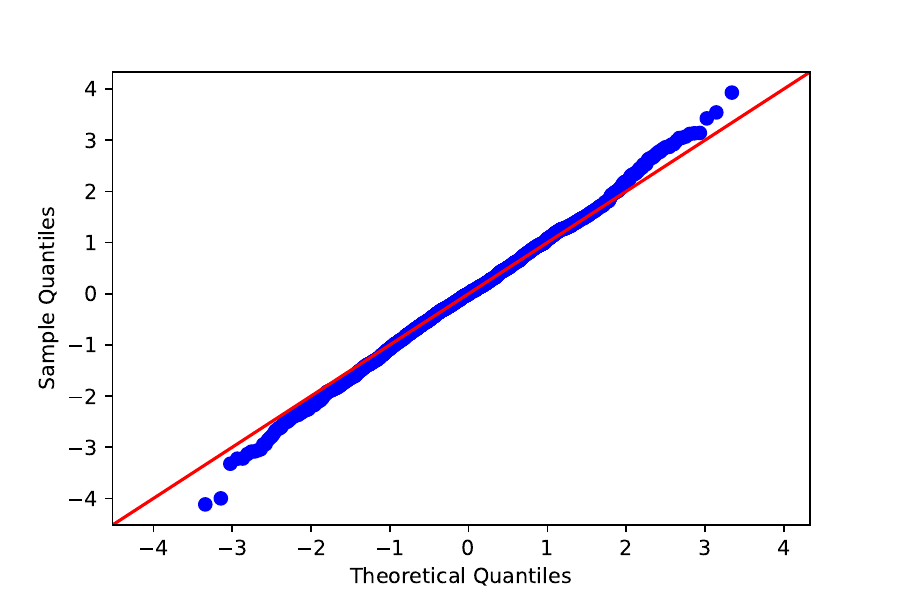}
                  & \includegraphics[width=1in]{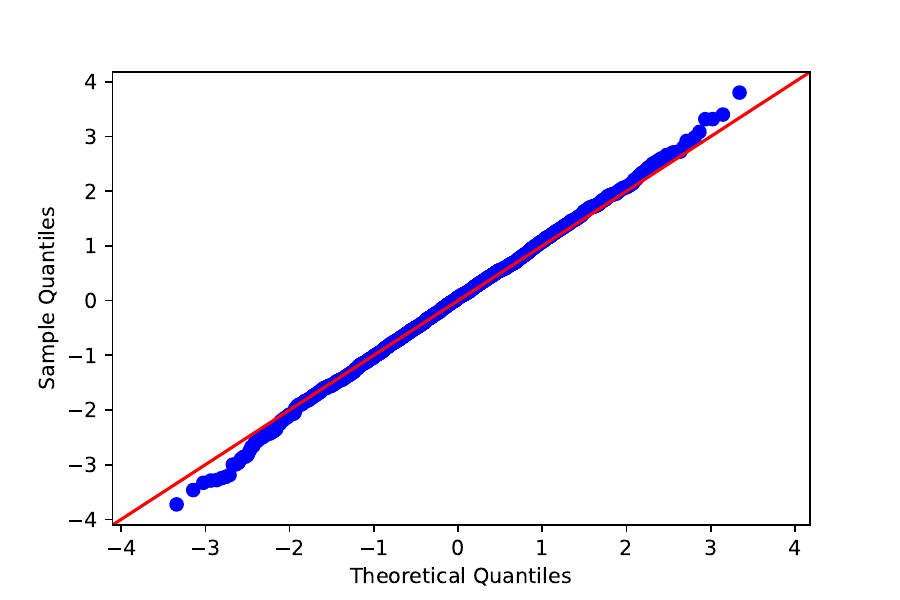} 
\\ \midrule
\end{tabular}
\caption{
    \label{table:LS}
    \small
    Quantile-quantile plots
    of the left-hand side of \eqref{eq:LS} for $n=3000, p=2400$,
    $\bm \Sigma = \bm I_p$
    and $\bm \hbeta = (\bm X^T\bm X)^{-1}\bm X^T\bm y$.
    For each model, the QQ-plot features the quantities \eqref{eq:LS}
    for all $j=1,...,p$ for a single realization of $(\bm X,\bm y)$.
    For the 1-Bit compressed sensing model, $\P(u_i=-1)=0.2=1-\P(u_i=1)$
    and $u_i$ is independent of $\bm x_i$.
    The quantity $\aaa_* = \bm w^T\bm \Sigma \bm\hbeta$
and its estimate $\hat \aaa$  are computed over 100 independent
    realizations of the data $(\bm X,\bm y)$; the corresponding
    columns in the table show the average and standard deviation
    over these 100 repetitions.
}
\end{table}

\subsection{Ridge regularized $M$-estimation}

Consider now an isotropic design with $\bm{\Sigma} = \frac1p \bm I_p$
and the Ridge penalty $g(\bm{b}) = \lambda \|\bm b\|^2/(2p)$
as for the nonlinear system \eqref{ridge_system} of \cite{salehi2019impact}.
The optimality conditions of the optimization problem \eqref{hbeta}
are 
\begin{equation}
    \label{eq:actualKKT_ridge}
\bm{X}^T \bm \hpsi = (n/p) \lambda \bm \hbeta
\end{equation}
 so that
the terms $\bm{\hpsi}^T \bm X \bm \hbeta /n$
and $\|\bm{\Sigma}^{-1/2}\bm X^T\bm \hpsi\|^2/n^2$
in \eqref{hat_v_r_t} and \eqref{eq:tilde-t-aaa-sigma} reduce to
\begin{equation}
    \label{KKT-ridge}
\bm{\hpsi}^T \bm X \bm \hbeta /n =
\lambda \|\bm{\hbeta}\|^2/p,
\qquad
\|\bm{\Sigma}^{-1/2}\bm X^T\bm \hpsi\|^2/n^2
= \lambda^2 \|\bm{\hbeta}\|^2/p.
\end{equation}
Computing explicitly 
$(\partial/\partial x_{ij})\bm\hbeta(\bm{y},\bm X)$
in \Cref{thm:derivatives} for this Ridge penalty yields
$\bm{\hat{A}} = (\bm X^T\bm D\bm X + \lambda \frac n p \bm I_p)^{-1}$.
This implies that
$\gamma_* \defas \trace[\bm{\Sigma} \bm{\hat{A}}]$
satisfies
$$n\lambda\gamma_* + \df 
= \trace[\lambda \tfrac n p \bm{\hat{A}} + \bm X^T\bm D \bm X\bm{\hat{A}}]
= \trace[\bm I_p]
= p$$
by definition of $\df$ in \Cref{thm:derivatives}.
Next, \eqref{eq:consistency-hat-gamma}
provides $\df/n = \hat v \gamma_* + O_\P(n^{-1/2})$
so that
$(\lambda + \hat v)\gamma_* = \frac p n + O_\P(n^{-1/2})$.
This justifies replacing $\gamma_*$ by $\frac p n (\lambda+\hat v)^{-1}$
in the expression of $\tilde \aaa^2$, which provides the approximations
\begin{equation}
    \left\{
    \begin{aligned}
        \tilde t^2 &= (\hat v + \lambda)^2 \tfrac 1 p\|\bm\hbeta\|^2 - (p/n)\hat r^2,
        \\\tilde \aaa^2 &\approx
         \tfrac 1 p \|\bm\hbeta\|^2 - \tfrac{p/n}{(\hat v + \lambda)^2}\hat r^2
        = \tfrac{ \hat t^2 }{(\hat v + \lambda)^2}
    ,
    \\
    \tilde\sigma^2 &= \tfrac 1 p \|\bm{\hbeta}\|^2
    -\tilde \aaa^2
    \approx
    \tfrac{p/n}{(\hat v + \lambda)^2} \hat r^2
    \approx \tfrac{n}{p} \gamma_*^2 \hat r^2
    .
    \end{aligned}
    \right.
    \label{ridge_adjustments}
\end{equation}
In this setting,
the normal approximation \eqref{eq:informal-approximation-theorem31}
from \Cref{thm:confidence-intervals-Sigma} becomes
\begin{equation}
\label{Z_j-approx-ridge}
\frac{(n/p)^{1/2}}{\hat r}\Bigl[(\hat v+\lambda)\hat\beta_j - \pm \hat t w_j \Bigr]\approx Z_j \quad \text{ with } Z_j\sim N(0,1).
\end{equation}
We illustrate these results for Ridge regularized M-estimation
with the simulation study in \Cref{table:ridge} for the logistic loss
in the logistic model.

\begin{table}[t]
    \small
\begin{tabular}{|l|r|r|r|}
\toprule $\lambda$ &      0.01 &      0.10 &      1.00 \\
\midrule
$\frac 1 n \|\bm X\bm\hbeta-\hat\gamma\bm\hpsi\|^2 - \frac{p/n}{(\hat v+\lambda)^2}\hat r^2$
       &  0.621942$\pm$0.160021 &  0.168159$\pm$0.043360 &  0.016483$\pm$0.004789 \\
\midrule
$\frac 1 p \|\bm\hbeta\|^2 - \frac{p/n}{(\hat v+\lambda)^2}\hat r^2$
  &  0.630087$\pm$0.167536  &  0.170862$\pm$0.039237 &  0.016765$\pm$0.003354 \\
\midrule
$\aaa_*^2 = \frac{1}{p}\frac{(\bm w^T\bm\hbeta)^2}{\|\bm w\|^2}$       &  0.610240$\pm$0.039087 &  0.164714$\pm$0.009765 &  0.016184$\pm$0.000914 \\
\midrule
\eqref{Z_j-approx-ridge} for $j:w_j=0$ &
\includegraphics[width=1.3in]{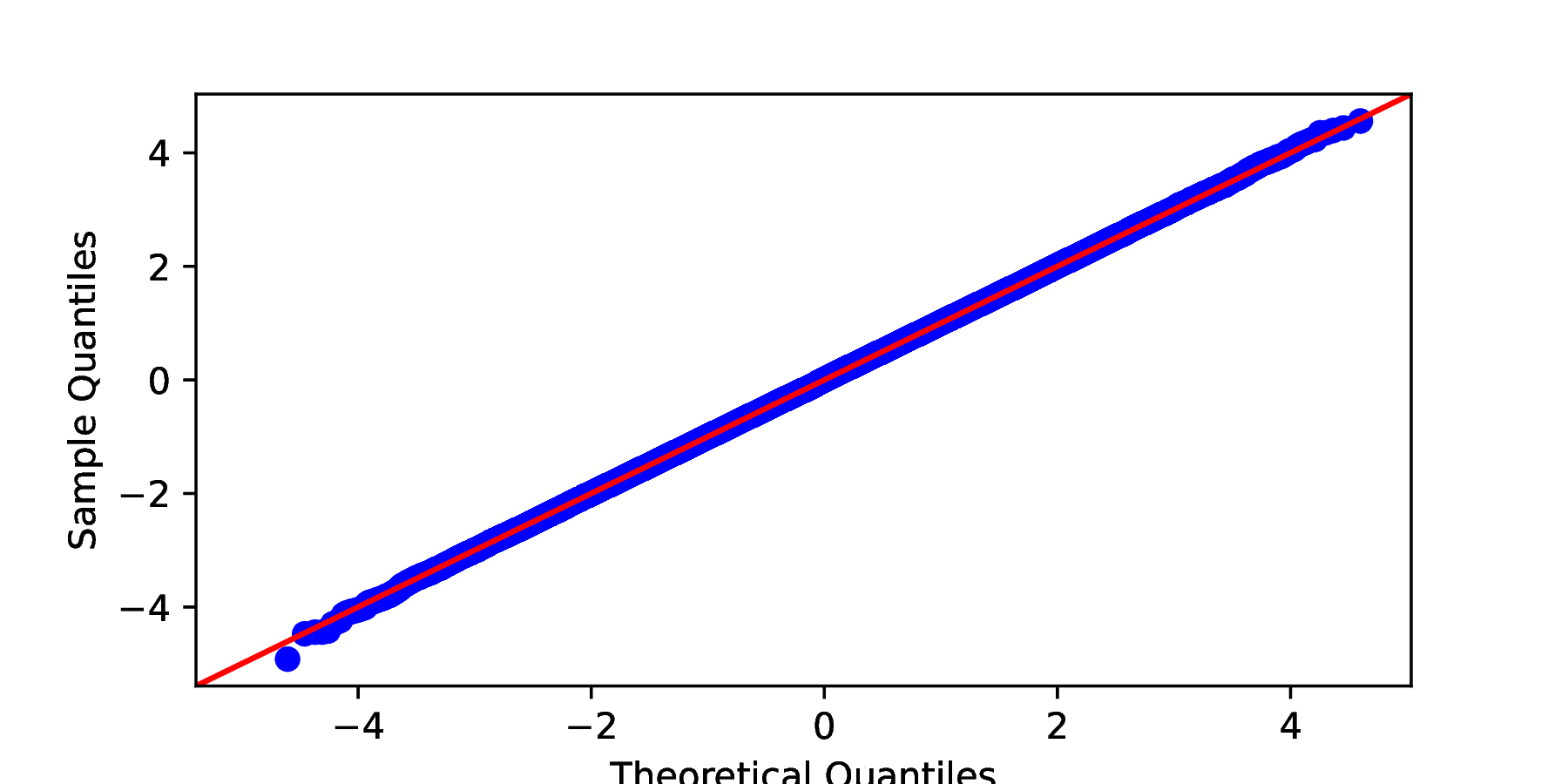}
&\includegraphics[width=1.3in]{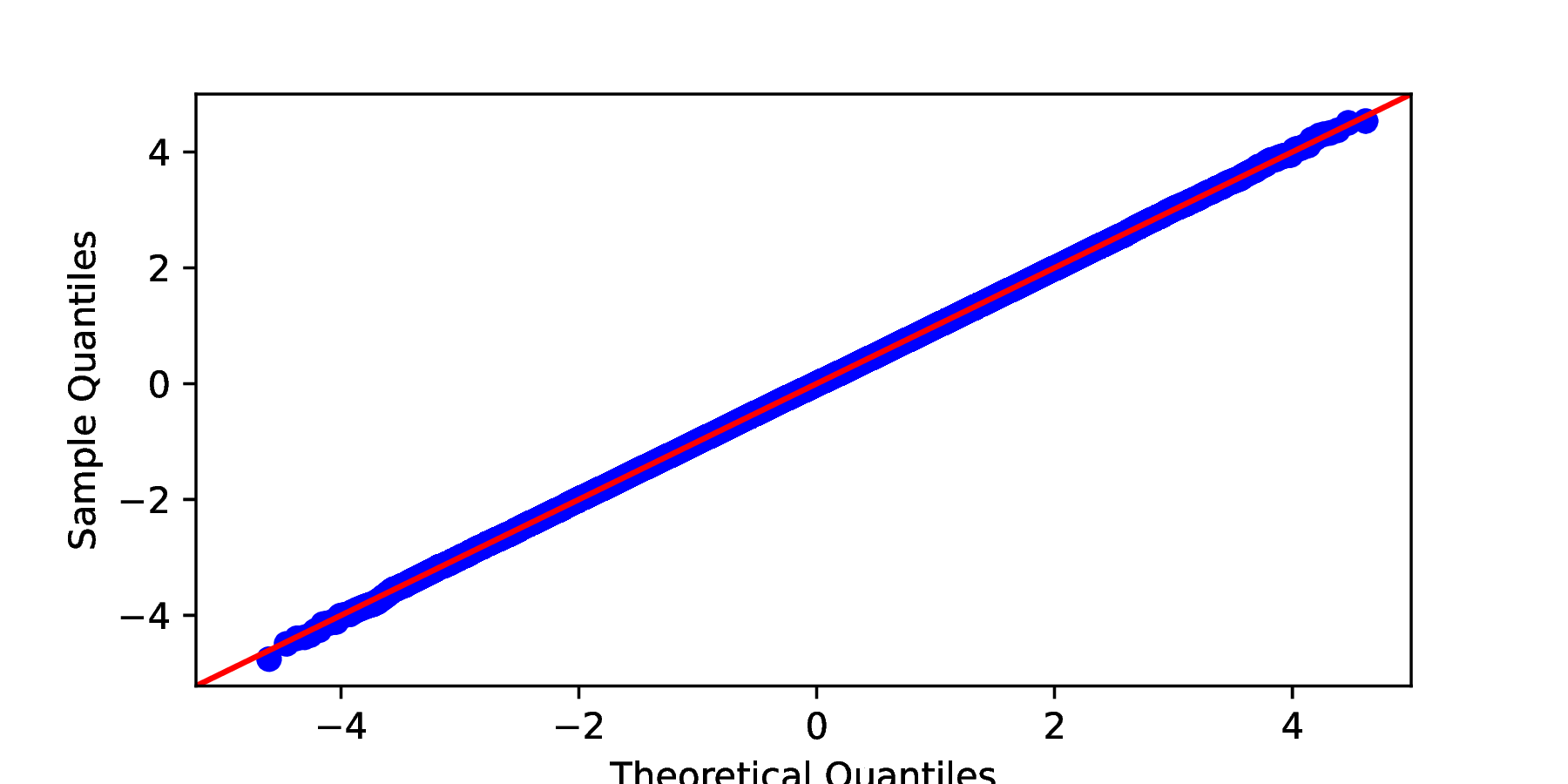}
&\includegraphics[width=1.3in]{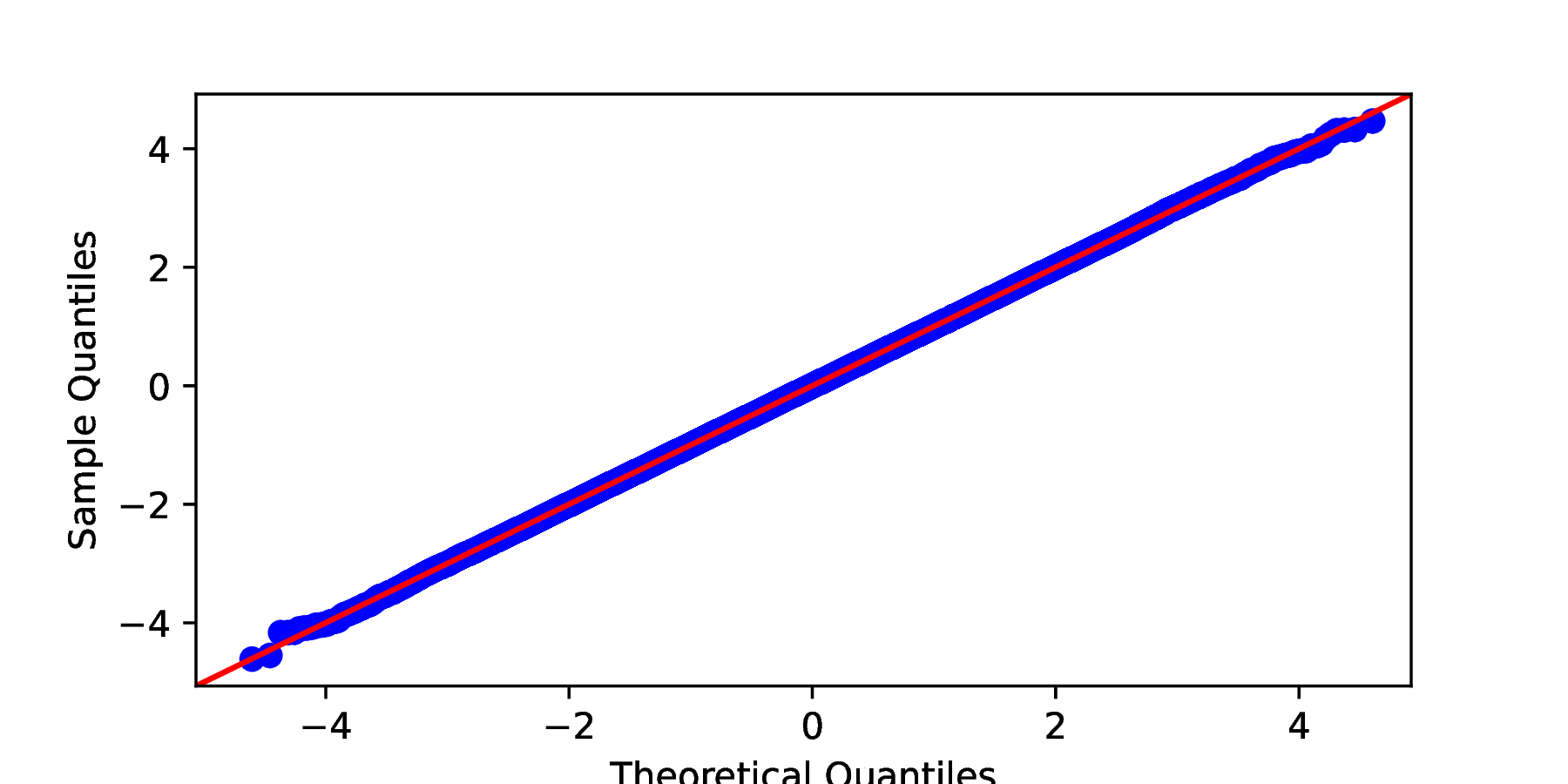}
\\
\midrule
\eqref{Z_j-approx-ridge} for $j:w_j\ne 0$ 
& \includegraphics[width=1.3in]{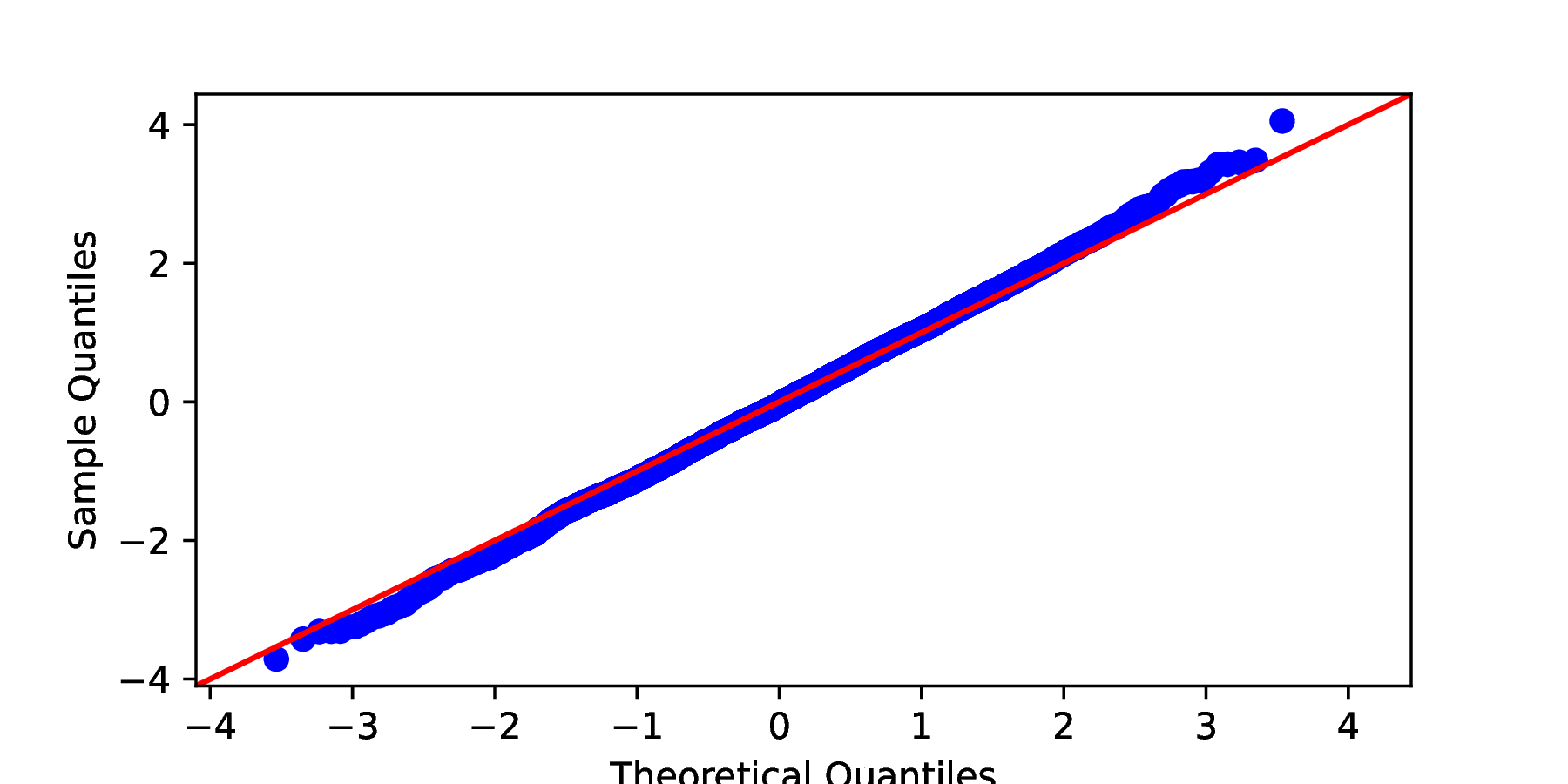}
& \includegraphics[width=1.3in]{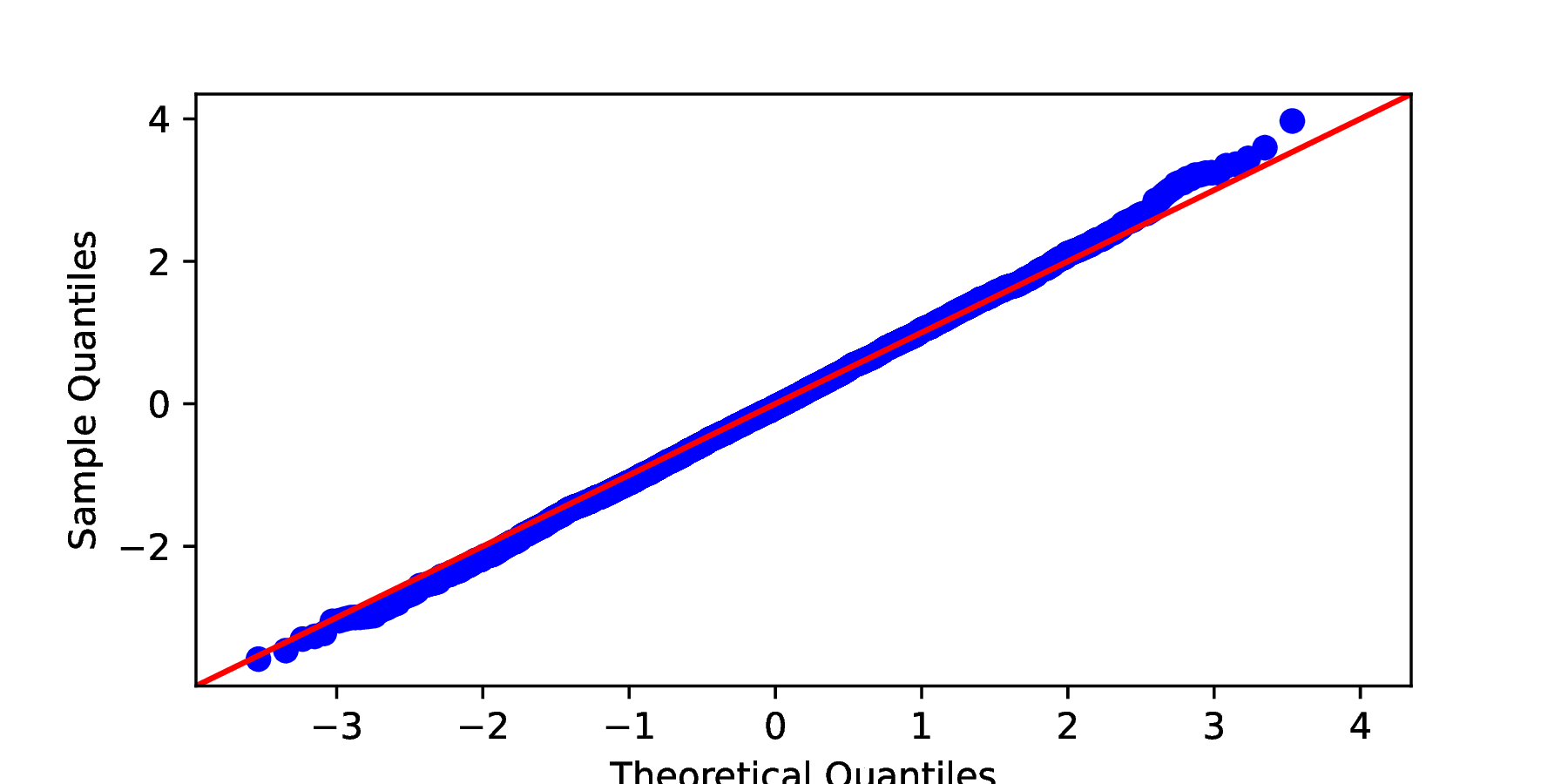}
& \includegraphics[width=1.3in]{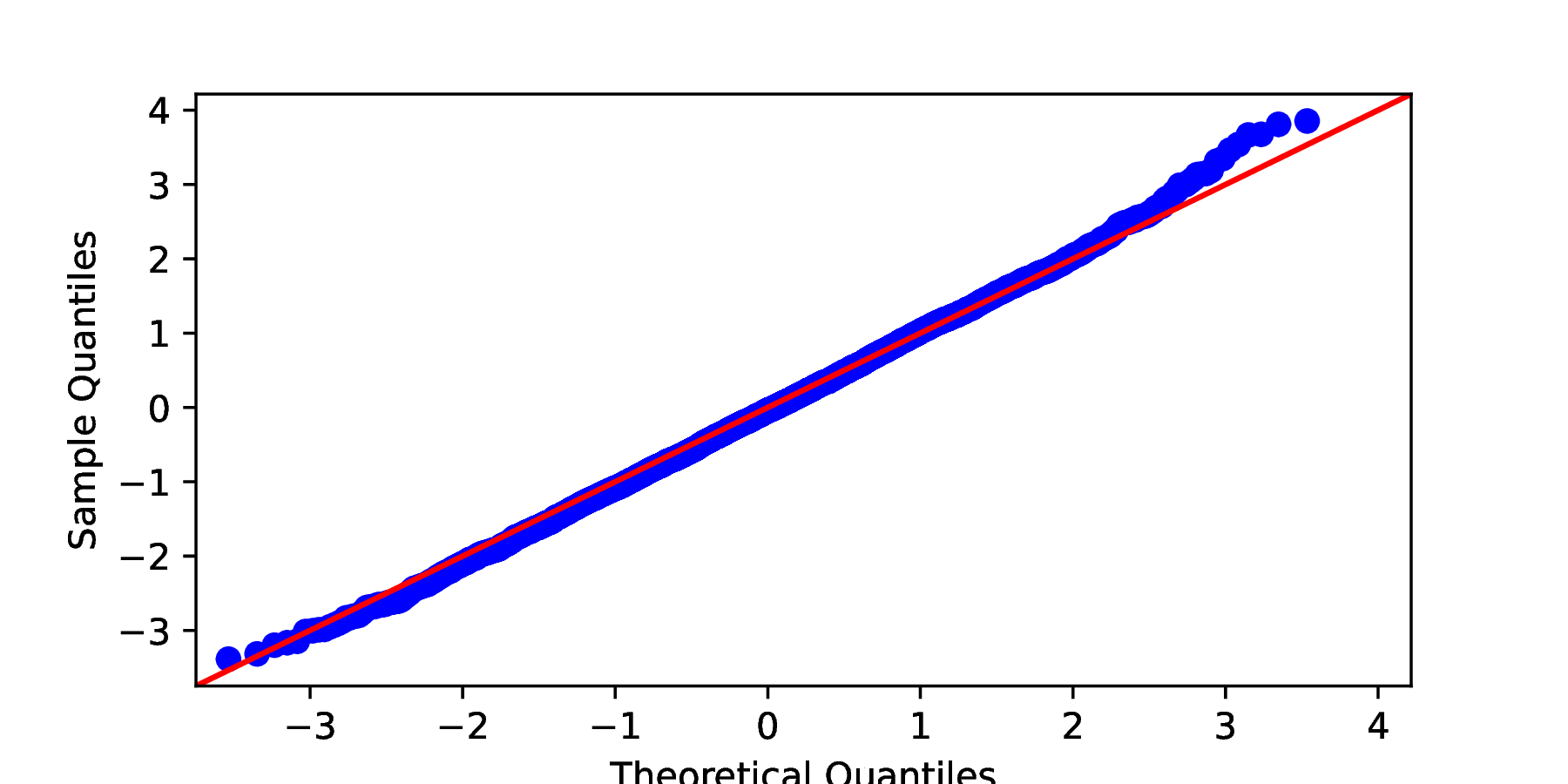}
\\
\midrule
$\frac{p/n}{(\hat v + \lambda)^2}\hat r^2$ &  8.540738$\pm$0.265670   &  2.105677$\pm$0.047351   &  0.185784$\pm$0.001738 \\
\midrule
$\sigma_*^2 =\frac 1 p \|(\bm I_p-\frac{\bm w\bm w^T}{\|\bm w\|^2})\bm\hbeta\|^2$   &  8.560584$\pm$0.110858   &  2.111824$\pm$0.014522   &  0.186364$\pm$0.001680 \\
\midrule
\end{tabular}
\caption{
    \label{table:ridge}
    In the logistic
    model $y_i\in\{0,1\}$, $\P(y_i=1) = 1/(1+\exp(-\bm x_i^T\bm w))$
    with
    $\bm x_i\sim N(\bm 0,\bm\Sigma)$ and isotropic covariance 
    $\bm\Sigma=\frac 1 p \bm I_p$,
    the M-estimator $\bm\hbeta$ is constructed with logistic loss
    $\ell_{y_i}(u)=\log(1+e^{u})-y_iu$
    and Ridge penalty $g(\bm b)=\lambda \|\bm b\|^2/p$.
    Dimension and sample size are $(n,p)=(5000,10000)$ and
    the three tuning parameters $\lambda \in\{0.01, 0.1, 1.0\}$ are used.
    For $\aaa_*^2$, its estimates
    $\frac 1 n \|\bm X\bm\hbeta-\hat\gamma\bm\hpsi\|^2 - \frac{p/n}{(\hat v+\lambda)^2}\hat r^2$
    and
    $\frac 1 p \|\bm\hbeta\|^2 - \frac{p/n}{(\hat v+\lambda)^2}\hat r^2$,
    as well as
    $\sigma_*^2$ and its estimate
    $\frac{p/n}{(\hat v+\lambda)^2}\hat r^2$,
    we report
    the average$\pm$standard deviation over 50 independent 
    repetitions of the dataset. The index $\bm w$ has $s=100$ entries equal to $\sqrt{p/s}$ and
    $p-s$ entries equal to 0.
    Middle rows show standard normal QQ-plots of
    $\{\hat r^{-1}(n/p)^{1/2}[(\hat v + \lambda)\hbeta_j - \pm\hat t w_j]\}_{j=1,...,p}$ (as in \eqref{Z_j-approx-ridge})
    for the null covariates $(j\in[p]: w_j=0$) collected over the 50 repetitions (the QQ-plot thus featuring $50(p-s)$ points),
    and for the non-nulls $(j\in[p]: w_j \ne0)$
    (the QQ-plot thus featuring $50s$ points).
}
\end{table}

Because the simple structure of the matrix $\bm{\hat{A}}$ and the KKT conditions \eqref{eq:actualKKT_ridge}, for isotropic design the estimation error
of $\aaa_*^2 = (\bm w^T\bm\Sigma\bm\hbeta)^2$ and of $\sigma_*^2 = \|\bm\Sigma^{1/2}\bm\hbeta\|^2 - \aaa_*^2$
can be slightly improved compared to the estimation error in
\eqref{eq:consistency-aaa}, and the proof is significanly simpler.
With $\bm\Sigma = \frac 1 p \bm I_p$ as in the present subsection,
we have $\gamma_* = \frac 1 p \trace[\bm{\hat{A}}]$,
$\aaa_* = \frac{1}{p}\bm w^T\bm\hbeta$
and
$\sigma_*^2 = \frac 1 p\|\bm\hbeta\|^2 - \aaa_*^2$.

\begin{restatable}{proposition}{easyRidgeProposition}
    \label{prop:improved-ridge}
    Let \Cref{assum} be fulfilled
    with $\bm\Sigma = \frac 1 p \bm I_p$. Set 
    $g(\bm b) = \lambda \|\bm b\|_2^2/p$ as the penalty in \eqref{hbeta}.
    If additionally $\sup_{y_0\in\mathcal Y}|\ell_{y_0}'| \le 1$ then
    \begin{equation}
        \label{eq:conclusion-improved-ridge}
    \E\Bigl[
    \Bigl(
        \sigma_*^2
        - \frac{\gamma_* \hat r^2}{\lambda+\hat v}
    \Bigr)^2
    \Bigr]
    \le \frac{\C(\delta,\lambda)}{p},
    \quad
    \E\Bigl[
    \Bigl(
        \aaa_*^2
        -
        \Bigl\{
        \frac{1}{p} \|\bm\hbeta\|^2
        -\frac{\gamma_* \hat r^2}{\lambda+\hat v}
        \Bigr\}
    \Bigr)^2
    \Bigr]
    \le \frac{\C(\delta,\lambda)}{p}.
    \end{equation}     
\end{restatable}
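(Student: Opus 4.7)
The plan is to first observe that with $\bm\Sigma = \frac{1}{p}\bm I_p$, one has $\|\bm\Sigma^{1/2}\bm\hbeta\|^2 = \|\bm\hbeta\|^2/p = \sigma_*^2 + \aaa_*^2$, so the two random variables appearing inside the squares in \eqref{eq:conclusion-improved-ridge} are negatives of each other. Thus the second bound follows from the first, and I focus on $\sigma_*^2$. The starting point for the first bound is the Ridge KKT condition $\bm X^T\bm\hpsi = c\bm\hbeta$ for a scalar $c$ proportional to $n\lambda/p$. Writing $\bm P_1 = \bm w\bm w^T/p$ (using $\|\bm w\|^2 = p$, which follows from $\bm w^T\bm\Sigma\bm w = 1$), the KKT condition projected onto $\bm P_1^\perp$ yields $\bm P_1^\perp\bm\hbeta = c^{-1}(\bm X\bm P_1^\perp)^T\bm\hpsi$, and hence
\[
    \sigma_*^2 = \frac{\|\bm P_1^\perp\bm\hbeta\|^2}{p} = \frac{1}{c^2 p}\bigl\|(\bm X\bm P_1^\perp)^T\bm\hpsi\bigr\|^2.
\]

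Next I would exploit a key independence: the matrix $\bm M := \bm X\bm P_1^\perp$ has iid rows $N(\bm 0, \bm P_1^\perp/p)$ and is independent of $\bm X\bm w$ (hence of $\bm y$), since the joint Gaussian covariance $\bm P_1^\perp\bm\Sigma\bm w$ vanishes. Conditionally on $(\bm y, \bm X\bm w)$, $\bm\hpsi$ is a smooth implicit function of $\bm M$ through the M-estimator optimality conditions, and I would apply Gaussian integration by parts (Stein's identity) to compute $\E[\|\bm M^T\bm\hpsi\|^2 \mid \bm y, \bm X\bm w]$. The derivatives of $\bm\hpsi$ with respect to $\bm M$ are obtained by chain rule from \eqref{eq:derivatives-psi-hbeta} combined with the Ridge-specific closed form $\hbA = (\bm X^T\bm D\bm X + \lambda(n/p)\bm I_p)^{-1}$; after the Stein computation and cancellations, the dominant term is proportional to $\trace[\hbA]\,\|\bm\hpsi\|^2/(\lambda+\hat v)$, which matches $c^2 p \cdot \gamma_*\hat r^2/(\lambda+\hat v)$ up to the prefactor.

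The factor $(\lambda + \hat v)^{-1}$ will come from the exact trace identity $\lambda n\gamma_* + \df = p$ (obtained by taking traces in $\lambda(n/p)\hbA + \hbA\bm X^T\bm D\bm X = \bm I_p$), combined with the relation $\df/n = \hat v\hat\gamma \approx \hat v\gamma_*$, where the approximation $\hat\gamma\approx\gamma_*$ is provided by \Cref{thm:adjustments-approximation}. To upgrade the expectation identity to the claimed $L^2$ rate $\C(\delta,\lambda)/p$, I would apply Gaussian Poincaré in $\bm M$ and $\bm X\bm w$, using the operator norm bound $\|\bm\Sigma^{1/2}\hbA\bm\Sigma^{1/2}\|_{op}\le(n\tau)^{-1}$ from \Cref{thm:derivatives} with the strong convexity constant $\tau \propto \lambda$ of the Ridge penalty, together with the a priori bound $\|\bm\hpsi\|_\infty\le 1$ coming from $\sup|\ell'|\le 1$. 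The hardest part will be the bookkeeping in the Stein calculation: a number of \emph{a priori} distinct resolvent terms must collapse into the single factor $(\lambda+\hat v)^{-1}$. What makes this possible---and simpler than the proof of \Cref{thm:adjustments-approximation}---is that $\bm P_1^\perp$ commutes with the Ridge resolvent structure in the isotropic case, so the $\bm w$-direction can be cleanly isolated from the orthogonal directions.
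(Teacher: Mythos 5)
Your overall strategy is recognizably the right one (rotate so that $\bm w$ is along a coordinate axis, exploit the Ridge KKT conditions to linearize, and integrate by parts in the Gaussian directions orthogonal to $\bm w$ conditionally on $(\bm y,\bm X\bm w)$), but the specific functional you chose to apply Stein's identity to creates a gap that you cannot close at the claimed $L^2$ rate. By writing $\sigma_*^2 = \frac{1}{c^2p}\|(\bm X\bm P_1^\perp)^T\bm\hpsi\|^2$ you commit to analyzing a \emph{quadratic} form in the Gaussian. The Stein/divergence computation for that quadratic form only involves the derivatives of $\bm\hpsi$, whose divergence is $-\trace[\bm V]\hbeta_j - \bm\hpsi^T\bm D\bm X\bm{\hat{A}}\bm e_j$; no $\trace[\bm{\hat{A}}]$ term arises, so the "dominant term proportional to $\trace[\bm{\hat{A}}]\|\bm\hpsi\|^2/(\lambda+\hat v)$" you announce is not what the calculation produces. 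What you actually get is $\sigma_*^2\approx p\hat r^2/(n(\lambda+\hat v)^2)$, and converting $p/(n(\lambda+\hat v))$ into $\gamma_*$ forces you through the chain $\lambda n\gamma_*+\df=p$ together with $\df/n=\hat v\hat\gamma\approx\hat v\gamma_*$. That last approximation is supplied by \eqref{eq:consistency-hat-gamma} only as an $L^1$ bound of order $n^{-1/2}$ (its proof goes through $\Gamma_2$, which the paper controls only in $L^1$ via the chi-square-type identity), whereas the statement you must prove is a \emph{second-moment} bound $\E[(\cdot)^2]\le \C(\delta,\lambda)/p$. The same $L^1$-versus-$L^2$ mismatch afflicts the chi-square step relating $\|(\bm X\bm P_1^\perp)^T\bm\hpsi\|^2$ to $(p-1)\|\bm\hpsi\|^2$; your proposed repair via Gaussian Poincar\'e controls $\Var(\sigma_*^2)$ but still leaves you needing to match the (random) target $\gamma_*\hat r^2/(\lambda+\hat v)$ in $L^2$ at rate $p^{-1/2}$, which is exactly the part the available bounds do not give. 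Invoking \Cref{thm:adjustments-approximation} also defeats the stated purpose of this proposition, which is to give a short self-contained argument.

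The fix is to use the KKT identity in the other direction: rather than squaring $\bm P_1^\perp\bm X^T\bm\hpsi$, observe that $\bm\hpsi^T\bm X\bm P_1^\perp\bm\hbeta = \frac{\lambda n}{p}\|\bm P_1^\perp\bm\hbeta\|^2 = \lambda n\sigma_*^2$ and apply the \emph{bilinear} second-order identity (\Cref{prop25}) with $\bm u=\bm\hpsi$ and $\bm f=(\hbeta_j/\sqrt p)_{j\ge 2}$. That identity is a genuine second-moment bound, and its divergence term splits as $\sum_i\hpsi_i\sum_{j\ge2}\partial\hbeta_j/\partial x_{ij} + \sum_{j\ge2}\hbeta_j\sum_i\partial\hpsi_i/\partial x_{ij}$; the first sum contributes $(\trace[\bm{\hat{A}}]-\hat A_{11})\|\bm\hpsi\|^2$, which is where $\gamma_*=\trace[\bm{\hat{A}}]/p$ enters directly, and the second contributes $-\trace[\bm V]\|\bm P_1^\perp\bm\hbeta\|^2$. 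One application then yields $\E[((\lambda+\hat v)\sigma_*^2-\gamma_*\hat r^2)^2]\le\C(\delta,\lambda)n^2/p\cdot n^{-2}$ with no detour through $\df$, $\hat\gamma$, or a separate concentration argument. Your reduction of the second inequality to the first via $\frac1p\|\bm\hbeta\|^2=\sigma_*^2+\aaa_*^2$ is correct, as are your observations about the independence of $\bm X\bm P_1^\perp$ from $\bm X\bm w$ and the strong convexity constant $\tau\propto\lambda$.
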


The proof of \Cref{thm:adjustments-approximation} for general penalty functions
encompasses \eqref{eq:conclusion-improved-ridge}
up to multiplicative factor $\hat v^2\hat t^2/\hat r^4$ in
\eqref{eq:consistency-aaa}. The bound \eqref{eq:conclusion-improved-ridge} avoids such multiplicative factor.
We give a short proof of \eqref{eq:conclusion-improved-ridge}
in \Cref{sec:proof_easy_ridge},
in order to
present a concise overview of some of the techniques 
behind the proof of more general result in
\Cref{thm:adjustments-approximation}.

%


\subsection{L1 regularized $M$-estimation}
\label{sec:L1}

The last example concerns L1 regularized M-estimation,
with penalty $g(\bm b) = \lambda_{n,p}\|\bm b\|_1$
where $\lambda_{n,p}>0$ is a tuning parameter.
Throughout \Cref{sec:L1}, let $\hat S =\{j\in[p]: \hbeta_j\ne 0\}$ be the set of active covariates
of the corresponding L1 regularized M-estimator in \eqref{hbeta}.
Then, by now well-understood arguments for the Lasso
(cf. \cite{tibshirani2012,tibshirani2013lasso}\cite[Proposition 3.10]{bellec_zhang2018second_stein},\cite[Proposition 2.4]{bellec2020out_of_sample}), the KKT conditions
of \eqref{hbeta} hold strictly with probability one, and the formulae
\eqref{eq:derivatives-psi-hbeta} hold true with $\bm{\hat{A}}$ symmetric and diagonal by block with
$\bm{\hat{A}}_{\hat S,\hat S} = (\bm X_{\hat S}^T\bm D \bm X_{\hat S})^{-1}$
and $\bm{\hat{A}}_{\hat S^c, \hat S^c} = \bm 0$.
This implies that $\df$ defined in \eqref{eq:def-V-df-bound-ideal-case}
is simply
\begin{equation}\df = \trace[\bm{\hat{A}}\bm X^T\bm D\bm X ] =
\trace[
\bm{\hat{A}}_{\hat S,\hat S}
(\bm{\hat{A}}_{\hat S,\hat S})^{-1}
]
=
|\hat S|
\label{df-L1}
\end{equation}
for almost every $\bm X$
if the diagonal matrix $\bm D$ has at least $|\hat S|$ positive entries.
This motivates
the notation $\df$ for the effective degrees-of-freedom
of $\bm\hbeta$, following tradition on the Lasso in linear regression 
\cite{zou2007degrees,tibshirani2012} in the context of
Stein's Unbiased Risk Estimate \cite{stein1981estimation}.

In previous sections, the algebraic nature of the loss
and/or the penalty allowed some major simplifications
in the expression of $\hat\aaa^2, \hat t^2$ and other adjustments
in \eqref{hat_v_r_t}, 
see for instance
\eqref{square_loss_adjustments}
for the square loss 
and \eqref{ridge_adjustments}
for Ridge regularized estimates
under isotropic design.
In this case of L1-regularized M-estimation, no such simplification
other than $\df=|\hat S|$ 
is available and the estimator
$\hat\aaa^2$ in \eqref{hat_v_r_t} of the squared correlation
$(\bm \hbeta^T\bm\Sigma\bm w)^2$
has expression
\begin{equation}
    \hat \aaa^2 =
    \frac{ \big(
    \tfrac{\hat v}{n}\|\bm{X} \bm \hbeta - \hat\gamma \bm \hpsi\|^2
    + \tfrac{1}{n}\bm{\hpsi}^\top\bm X\bm \hbeta
    - \hat\gamma \hat r^2
\big)^2
}{
        \tfrac{1}{n^2}\|\bm{\Sigma}^{-1/2}\bm{X}^T\bm \hpsi\|^2
        +  \tfrac{2 \hat v}{n} \bm{\hpsi}^T\bm X\bm \hbeta
        + \tfrac{\hat v^2}{n}
        \|\bm{X} \bm \hbeta - \hat\gamma \bm \hpsi\|^2
        -
        \tfrac{p}{n}\hat r^2
}.
\label{hat_a_L1}
\end{equation}
\begin{figure}[t]
    \includegraphics[height=2.2in]{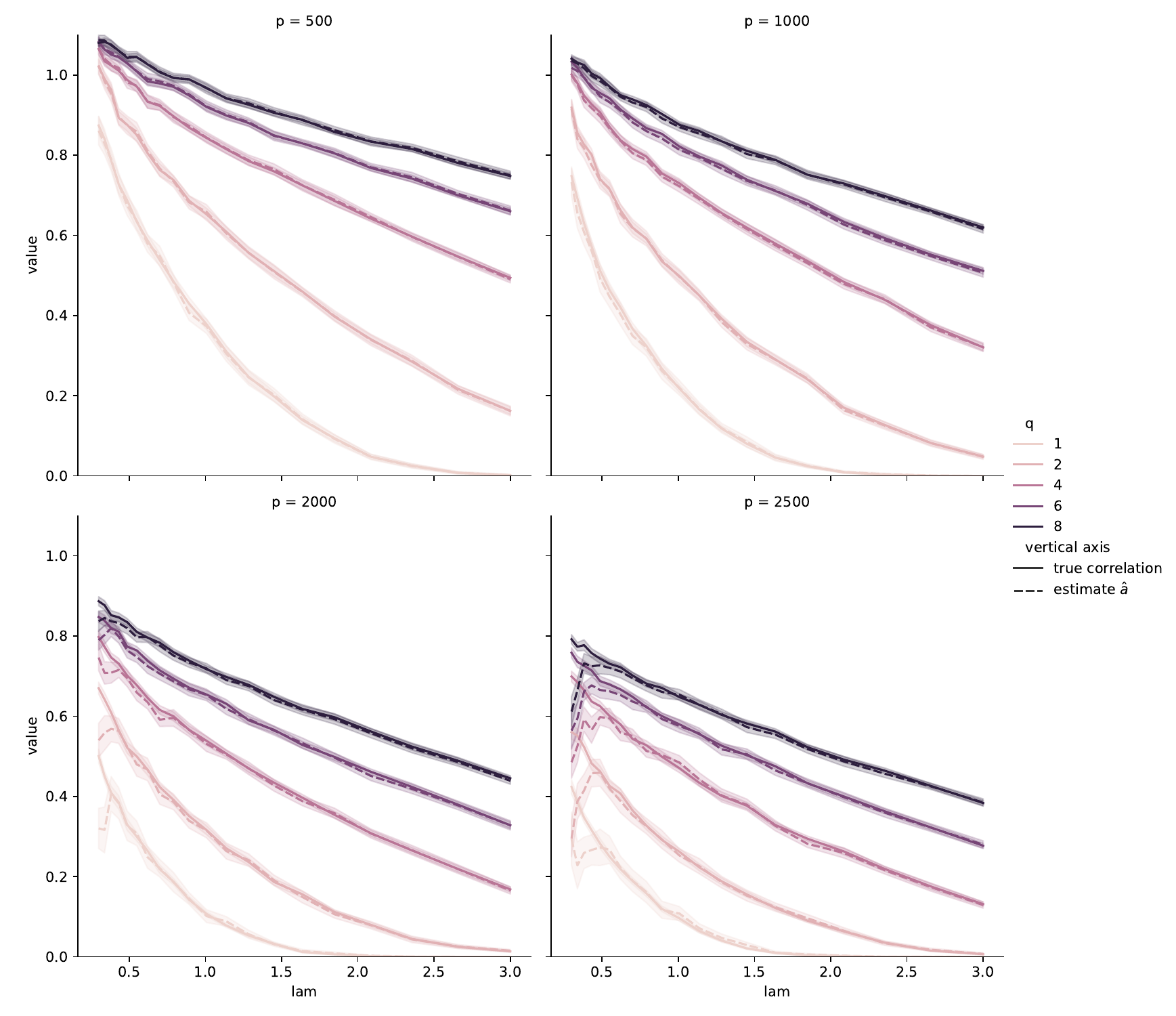}
    \includegraphics[height=2.2in]{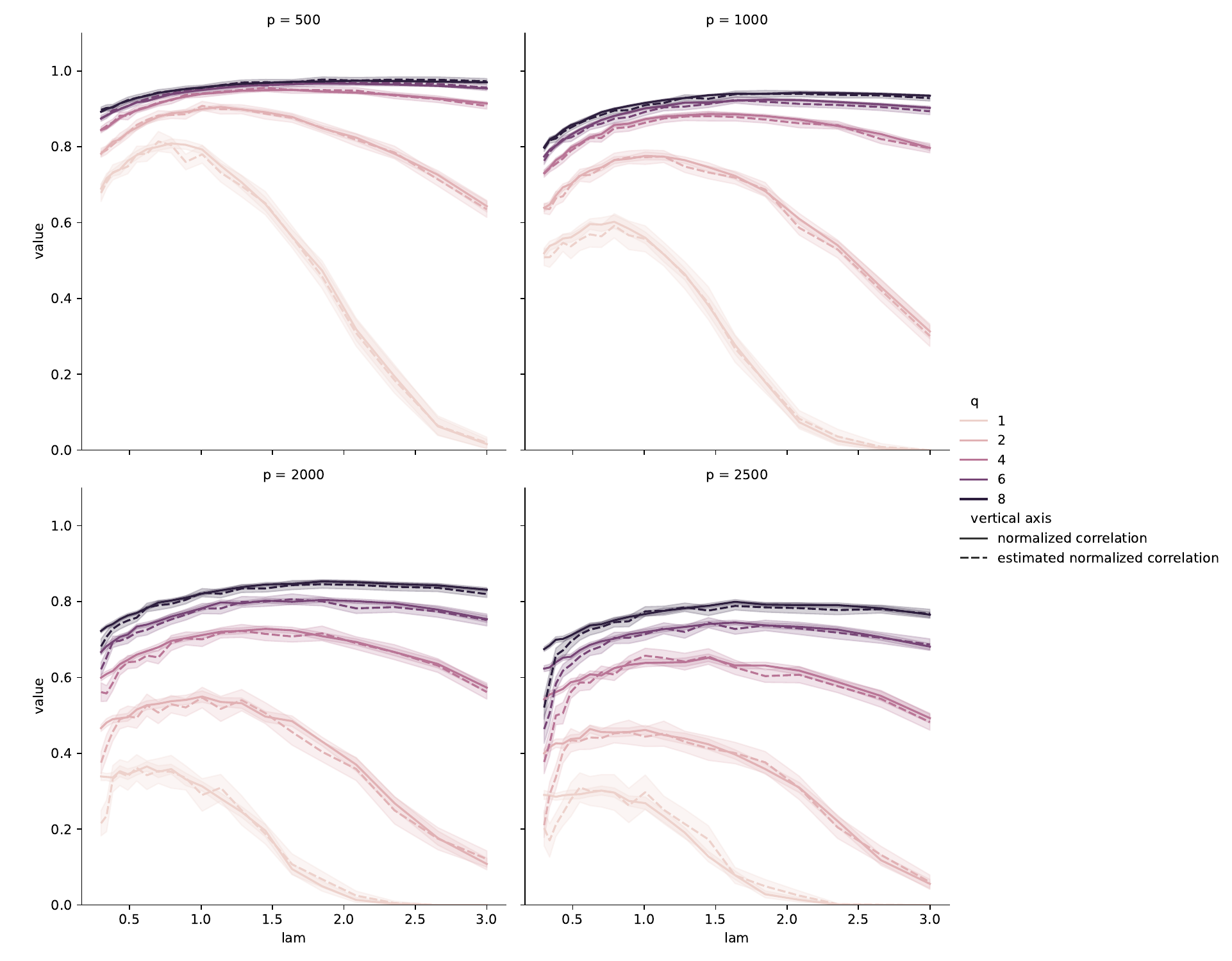}
    \caption{Left:
        Correlation
        $\aaa_*=\bm w^T\bm\Sigma\bm\hbeta$ and its 
        estimate $\sqrt{\hat a^2}$ for $\hat a^2$ in \eqref{hat_a_L1}
        for $n=1000$, in the binomial model $y_i|\bm x_i\sim\text{Binomial}(q,\rho'(\bm x_i^T\bm\beta^*))$ and the L1-penalized M-estimator
        \eqref{hbeta-L1}. The precise simulation setting is described
        in \Cref{sec:L1}.
        The x-axis represents the tuning parameter $\lambda$ in
        \eqref{hbeta-L1}.
    }
    \label{fig:L1}
\end{figure}
We could not think of
any intuition to suggest
the validity this expression, except from the algebra
and the probabilistic inequalities used in the proof
of \eqref{eq:consistency-aaa}.
We verify in \Cref{fig:L1} the validity of this estimate
under the following simulation setting.
Let $\bm\Sigma=\bm I_p$, $n=1000$ and $p\in\{500, 1000, 2000, 2500\}$.
For each $q\in\{1,2,4,6,8\}$, the response $y_i$ is generated from the
model $y_i|\bm x_i \sim \text{Binomial}(q, \rho'(\bm x_i^T\bm\beta^*))$
where $\rho'(t)=1/(1+e^{-t})$ is the sigmoid. The unknown vector
$\bm\beta^*$ has $\frac{p}{20}$ nonzero coefficients and is 
deterministically generated as follows:
first generate $\bm\beta^0$ with the first $\frac{p}{20}$ coefficients
being equispaced in $[0.5,4]$ and with remaining coefficients equal to 0,
second set $\bm\beta^* = 1.1 \bm\beta^0 / \|\bm\Sigma^{1/2}\bm\beta^0\|$.
For each dataset, we compute the regularized M-estimate
\begin{equation}
\bm\hbeta = \argmin_{\bm b\in\R^p}
\frac1n
\sum_{i=1}^n 
\Bigl[
q \log(1+e^{\bm x_i^T\bm b})
- y_i \bm x_i^T\bm b
\Bigr]
+ \frac{\lambda}{\sqrt n}\|\bm b\|_1
\label{hbeta-L1}
\end{equation}
for each tuning parameter $\lambda$ in a logarithmic grid
of cardinality 20 from $0.3$ to $3.0$.
\Cref{fig:L1} reports averages (as well as error bands depicting 
standard errors times 1.96),
over 50 independent copy of the dataset $(\bm X, \bm y)$, of
$\sqrt{\hat a^2}$ for $\hat a^2$ in \eqref{hat_a_L1} versus $\aaa_*=\bm w^T\bm\Sigma\bm\hbeta$
in the left plot of \Cref{fig:L1},
and of the normalized correlation 
$\sqrt{\hat a^2}/(0.01+\|\bm\Sigma^{1/2}\bm\hbeta\|^2)^{1/2}$ versus $\aaa_*/(0.01+\|\bm\Sigma^{1/2}\bm\hbeta\|^2)^{1/2}$.
We add 0.01 to the denominator of the normalized correlation
to avoid numerical instability
for large tuning parameters for which $\bm\hbeta=\bm 0_p$ holds in
some repetitions.
\Cref{fig:L1} shows that \eqref{hat_a_L1} is accurate across
different dimensions, different tuning parameter $\lambda$ and
different binomial parameter $q$. Estimation inaccuracies start to appear 
at small tuning parameter $\lambda$
and for the largest values of the dimension $p$.


\renewcommand{\doi}{}
\bibliographystyle{plainnat}
\bibliography{../../bibliography/db}

\begin{thebibliography}{59}
\providecommand{\natexlab}[1]{#1}
\providecommand{\url}[1]{\texttt{#1}}
\expandafter\ifx\csname urlstyle\endcsname\relax
  \providecommand{\doi}[1]{doi: #1}\else
  \providecommand{\doi}{doi: \begingroup \urlstyle{rm}\Url}\fi

\bibitem[Alquier et~al.(2019)Alquier, Cottet, and Lecu{\'e}]{alquier_cottet_lecue2019lipschitz_loss}
Pierre Alquier, Vincent Cottet, and Guillaume Lecu{\'e}.
\newblock {Estimation bounds and sharp oracle inequalities of regularized procedures with Lipschitz loss functions}.
\newblock \emph{Ann. Statist.}, 47\penalty0 (4):\penalty0 2117 -- 2144, 2019.
\newblock URL \url{https://doi.org/10.1214/18-AOS1742}.

\bibitem[Bayati and Montanari(2012)]{bayati2012lasso}
Mohsen Bayati and Andrea Montanari.
\newblock The lasso risk for gaussian matrices.
\newblock \emph{IEEE Trans. Inf. Theory}, 58\penalty0 (4):\penalty0 1997--2017, 2012.

\bibitem[Bayati et~al.(2013)Bayati, Erdogdu, and Montanari]{bayati2013estimating}
Mohsen Bayati, Murat~A Erdogdu, and Andrea Montanari.
\newblock Estimating lasso risk and noise level.
\newblock In \emph{Advances in Neural Information Processing Systems}, pages 944--952, 2013.

\bibitem[Bean et~al.(2013)Bean, Bickel, El~Karoui, and Yu]{bean2013optimal}
Derek Bean, Peter~J Bickel, Noureddine El~Karoui, and Bin Yu.
\newblock Optimal m-estimation in high-dimensional regression.
\newblock \emph{Proceedings of the National Academy of Sciences}, 110\penalty0 (36):\penalty0 14563--14568, 2013.

\bibitem[Bellec and Kuchibhotla(2019)]{bellec2019first}
Pierre Bellec and Arun Kuchibhotla.
\newblock First order expansion of convex regularized estimators.
\newblock In \emph{Adv. Neural Inf. Process. Syst.}, pages 3457--3468, 2019.
\newblock URL \url{https://papers.nips.cc/paper/8606-first-order-expansion-of-convex-regularized-estimators.pdf}.

\bibitem[Bellec(2023)]{bellec2020out_of_sample}
Pierre~C Bellec.
\newblock Out-of-sample error estimation for m-estimators with convex penalty.
\newblock \emph{Information and Inference: A Journal of the IMA}, 12\penalty0 (4):\penalty0 2782--2817, 2023.
\newblock URL \url{https://arxiv.org/pdf/2008.11840.pdf}.

\bibitem[Bellec(2025)]{bellec2025simultaneous}
Pierre~C Bellec.
\newblock Simultaneous analysis of approximate leave-one-out cross-validation and mean-field inference.
\newblock \emph{arXiv preprint arXiv:2501.02624}, 2025.

\bibitem[Bellec and Koriyama(2025)]{koriyama2023out}
Pierre~C. Bellec and Takuya Koriyama.
\newblock Error estimation and adaptive tuning for unregularized robust m-estimator.
\newblock \emph{Journal of Machine Learning Research}, 26\penalty0 (16):\penalty0 1--40, 2025.
\newblock URL \url{http://jmlr.org/papers/v26/24-0060.html}.

\bibitem[Bellec and Shen(2022)]{bellec2021derivatives}
Pierre~C Bellec and Yiwei Shen.
\newblock Derivatives and residual distribution of regularized m-estimators with application to adaptive tuning.
\newblock In \emph{Conference on Learning Theory}, pages 1912--1947. PMLR, 2022.
\newblock URL \url{https://proceedings.mlr.press/v178/bellec22a/bellec22a.pdf}.

\bibitem[Bellec and Zhang(2021)]{bellec_zhang2018second_stein}
Pierre~C. Bellec and Cun-Hui Zhang.
\newblock Second-order stein: Sure for sure and other applications in high-dimensional inference.
\newblock \emph{Ann. Statist.}, 49\penalty0 (4):\penalty0 1864--1903, 2021.
\newblock ISSN 0090-5364.
\newblock URL \url{https://arxiv.org/pdf/1811.04121.pdf}.

\bibitem[Bellec and Zhang(2022)]{bellec_zhang2019debiasing_adjust}
Pierre~C. Bellec and Cun-Hui Zhang.
\newblock De-biasing the lasso with degrees-of-freedom adjustment.
\newblock \emph{Bernoulli}, 28\penalty0 (2):\penalty0 713--743, 2022.
\newblock ISSN 1350-7265.
\newblock URL \url{https://arxiv.org/pdf/1902.08885.pdf}.

\bibitem[Bellec and Zhang(2023)]{bellec_zhang2019second_poincare}
Pierre~C. Bellec and Cun-Hui Zhang.
\newblock Debiasing convex regularized estimators and interval estimation in linear models.
\newblock \emph{Ann. Statist.}, 51\penalty0 (2):\penalty0 391--436, 2023.
\newblock ISSN 0090-5364.
\newblock URL \url{https://arxiv.org/pdf/1912.11943.pdf}.

\bibitem[Bellec et~al.(2022)Bellec, Shen, and Zhang]{bellec2021asymptotic}
Pierre~C Bellec, Yiwei Shen, and Cun-Hui Zhang.
\newblock Asymptotic normality of robust m-estimators with convex penalty.
\newblock \emph{Electronic Journal of Statistics}, 16\penalty0 (2):\penalty0 5591--5622, 2022.
\newblock URL \url{https://doi.org/10.1214/22-EJS2065}.

\bibitem[Blanchard et~al.(2011)Blanchard, Cartis, and Tanner]{blanchard2011compressed}
Jeffrey~D Blanchard, Coralia Cartis, and Jared Tanner.
\newblock Compressed sensing: How sharp is the restricted isometry property?
\newblock \emph{SIAM review}, 53\penalty0 (1):\penalty0 105--125, 2011.

\bibitem[Boucheron et~al.(2013)Boucheron, Lugosi, and Massart]{boucheron2013concentration}
St{\'e}phane Boucheron, G{\'a}bor Lugosi, and Pascal Massart.
\newblock \emph{Concentration Inequalities: A Nonasymptotic Theory of Independence}.
\newblock Oxford University Press, 2013.

\bibitem[Brillinger(1982)]{brillinger1982gaussian}
David~R Brillinger.
\newblock A generalized linear model with "gaussian" regressor variables.
\newblock \emph{A Festschrift For Erich L. Lehmann}, page~97, 1982.

\bibitem[Cand{\`e}s and Sur(2020)]{candes2020phase}
Emmanuel~J Cand{\`e}s and Pragya Sur.
\newblock The phase transition for the existence of the maximum likelihood estimate in high-dimensional logistic regression.
\newblock \emph{Ann. Statist.}, 48\penalty0 (1):\penalty0 27--42, 2020.

\bibitem[Celentano and Montanari(2022)]{celentano2019fundamental}
Michael Celentano and Andrea Montanari.
\newblock Fundamental barriers to high-dimensional regression with convex penalties.
\newblock \emph{Ann. Statist.}, 50\penalty0 (1):\penalty0 170--196, 2022.

\bibitem[Celentano et~al.(2023)Celentano, Montanari, and Wei]{celentano2020lasso}
Michael Celentano, Andrea Montanari, and Yuting Wei.
\newblock The lasso with general gaussian designs with applications to hypothesis testing.
\newblock \emph{Ann. Statist.}, 51\penalty0 (5):\penalty0 2194--2220, 2023.

\bibitem[Chen and Dongarra(2005)]{chen2005condition}
Zizhong Chen and Jack~J Dongarra.
\newblock Condition numbers of gaussian random matrices.
\newblock \emph{SIAM Journal on Matrix Analysis and Applications}, 27\penalty0 (3):\penalty0 603--620, 2005.

\bibitem[Davidson and Szarek(2001)]{DavidsonS01}
Kenneth~R Davidson and Stanislaw~J Szarek.
\newblock Local operator theory, random matrices and banach spaces.
\newblock \emph{Handbook of the geometry of Banach spaces}, 1\penalty0 (317-366):\penalty0 131, 2001.

\bibitem[Dicker(2014)]{dicker2014variance}
Lee~H Dicker.
\newblock Variance estimation in high-dimensional linear models.
\newblock \emph{Biometrika}, 101\penalty0 (2):\penalty0 269--284, 2014.

\bibitem[Donoho and Montanari(2016)]{donoho2016high}
David Donoho and Andrea Montanari.
\newblock High dimensional robust m-estimation: Asymptotic variance via approximate message passing.
\newblock \emph{Probability Theory and Related Fields}, 166\penalty0 (3-4):\penalty0 935--969, 2016.

\bibitem[Edelman(1988)]{edelman1988eigenvalues}
Alan Edelman.
\newblock Eigenvalues and condition numbers of random matrices.
\newblock \emph{SIAM Journal on Matrix Analysis and Applications}, 9\penalty0 (4):\penalty0 543--560, 1988.

\bibitem[El~Karoui(2018)]{el_karoui2018impact}
Noureddine El~Karoui.
\newblock On the impact of predictor geometry on the performance on high-dimensional ridge-regularized generalized robust regression estimators.
\newblock \emph{Probability Theory and Related Fields}, 170\penalty0 (1-2):\penalty0 95--175, 2018.

\bibitem[El~Karoui et~al.(2013)El~Karoui, Bean, Bickel, Lim, and Yu]{el_karoui2013robust}
Noureddine El~Karoui, Derek Bean, Peter~J Bickel, Chinghway Lim, and Bin Yu.
\newblock On robust regression with high-dimensional predictors.
\newblock \emph{Proceedings of the National Academy of Sciences}, 110\penalty0 (36):\penalty0 14557--14562, 2013.

\bibitem[Gerbelot et~al.(2020)Gerbelot, Abbara, and Krzakala]{gerbelot2020asymptotic}
C{\'e}dric Gerbelot, Alia Abbara, and Florent Krzakala.
\newblock Asymptotic errors for convex penalized linear regression beyond gaussian matrices.
\newblock \emph{arXiv preprint arXiv:2002.04372}, 2020.

\bibitem[Han and Ren(2022)]{han2022gaussian}
Qiyang Han and Huachen Ren.
\newblock Gaussian random projections of convex cones: approximate kinematic formulae and applications.
\newblock \emph{arXiv preprint arXiv:2212.05545}, 2022.

\bibitem[Javanmard and Montanari(2014)]{JavanmardM14a}
Adel Javanmard and Andrea Montanari.
\newblock Confidence intervals and hypothesis testing for high-dimensional regression.
\newblock \emph{The Journal of Machine Learning Research}, 15\penalty0 (1):\penalty0 2869--2909, 2014.

\bibitem[Karoui(2013)]{karoui2013asymptotic}
Noureddine~El Karoui.
\newblock Asymptotic behavior of unregularized and ridge-regularized high-dimensional robust regression estimators: rigorous results.
\newblock \emph{arXiv preprint arXiv:1311.2445}, 2013.

\bibitem[Koriyama and Bellec(2025)]{bellec2025phase}
Takuya Koriyama and Pierre~C Bellec.
\newblock Phase transitions for the existence of unregularized m-estimators in single index models.
\newblock In \emph{Proceedings of the 42nd International Conference on Machine Learning}, volume 267 of \emph{Proceedings of Machine Learning Research}, pages 31519--31540. PMLR, 13--19 Jul 2025.
\newblock URL \url{https://proceedings.mlr.press/v267/koriyama25a.html}.

\bibitem[Loureiro et~al.(2021{\natexlab{a}})Loureiro, Gerbelot, Cui, Goldt, Krzakala, Mezard, and Zdeborov{\'a}]{loureiro2021capturing}
Bruno Loureiro, Cedric Gerbelot, Hugo Cui, Sebastian Goldt, Florent Krzakala, Marc Mezard, and Lenka Zdeborov{\'a}.
\newblock Learning curves of generic features maps for realistic datasets with a teacher-student model.
\newblock \emph{Advances in Neural Information Processing Systems}, 34:\penalty0 18137--18151, 2021{\natexlab{a}}.

\bibitem[Loureiro et~al.(2021{\natexlab{b}})Loureiro, Sicuro, Gerbelot, Pacco, Krzakala, and Zdeborov{\'a}]{loureiro2021learning}
Bruno Loureiro, Gabriele Sicuro, C{\'e}dric Gerbelot, Alessandro Pacco, Florent Krzakala, and Lenka Zdeborov{\'a}.
\newblock Learning gaussian mixtures with generalized linear models: Precise asymptotics in high-dimensions.
\newblock \emph{Advances in Neural Information Processing Systems}, 34:\penalty0 10144--10157, 2021{\natexlab{b}}.

\bibitem[Mai et~al.(2019)Mai, Liao, and Couillet]{mai2019large}
Xiaoyi Mai, Zhenyu Liao, and Romain Couillet.
\newblock A large scale analysis of logistic regression: Asymptotic performance and new insights.
\newblock In \emph{ICASSP 2019-2019 IEEE International Conference on Acoustics, Speech and Signal Processing (ICASSP)}, pages 3357--3361. IEEE, 2019.

\bibitem[Mei and Montanari(2019)]{mei2019generalization}
Song Mei and Andrea Montanari.
\newblock The generalization error of random features regression: Precise asymptotics and the double descent curve.
\newblock \emph{Communications on Pure and Applied Mathematics}, 2019.

\bibitem[Miolane and Montanari(2021)]{miolane2018distribution}
L\'{e}o Miolane and Andrea Montanari.
\newblock The distribution of the {L}asso: uniform control over sparse balls and adaptive parameter tuning.
\newblock \emph{Ann. Statist.}, 49\penalty0 (4):\penalty0 2313--2335, 2021.
\newblock ISSN 0090-5364.
\newblock URL \url{https://doi.org/10.1214/20-aos2038}.

\bibitem[Plan and Vershynin(2016)]{plan2016generalized}
Yaniv Plan and Roman Vershynin.
\newblock The generalized lasso with non-linear observations.
\newblock \emph{IEEE Trans. Inf. Theory}, 62\penalty0 (3):\penalty0 1528--1537, 2016.

\bibitem[Plan et~al.(2016)Plan, Vershynin, and Yudovina]{plan2016high}
Yaniv Plan, Roman Vershynin, and Elena Yudovina.
\newblock High-dimensional estimation with geometric constraints.
\newblock \emph{Information and Inference}, page iaw015, 2016.

\bibitem[Ross(2011)]{ross2011fundamental_stein_method}
Nathan Ross.
\newblock {Fundamentals of Stein’s method}.
\newblock \emph{Probability Surveys}, 8\penalty0 (none):\penalty0 210 -- 293, 2011.
\newblock URL \url{https://doi.org/10.1214/11-PS182}.

\bibitem[Rush(2020)]{rush2020rate_lasspo}
Cynthia Rush.
\newblock An asymptotic rate for the lasso loss.
\newblock In Silvia Chiappa and Roberto Calandra, editors, \emph{Proceedings of the Twenty Third International Conference on Artificial Intelligence and Statistics}, volume 108 of \emph{Proceedings of Machine Learning Research}, pages 3664--3673. PMLR, 26--28 Aug 2020.
\newblock URL \url{https://proceedings.mlr.press/v108/rush20a.html}.

\bibitem[Salehi et~al.(2019)Salehi, Abbasi, and Hassibi]{salehi2019impact}
Fariborz Salehi, Ehsan Abbasi, and Babak Hassibi.
\newblock The impact of regularization on high-dimensional logistic regression.
\newblock In \emph{Advances in Neural Information Processing Systems}, pages 12005--12015, 2019.

\bibitem[Sawaya et~al.(2023)Sawaya, Uematsu, and Imaizumi]{sawaya2023statistical}
Kazuma Sawaya, Yoshimasa Uematsu, and Masaaki Imaizumi.
\newblock Statistical inference in high-dimensional generalized linear models with asymmetric link functions, 2023.

\bibitem[Stein(1981)]{stein1981estimation}
Charles~M Stein.
\newblock Estimation of the mean of a multivariate normal distribution.
\newblock \emph{The annals of Statistics}, pages 1135--1151, 1981.

\bibitem[Stojnic(2013)]{stojnic2013framework}
Mihailo Stojnic.
\newblock A framework to characterize performance of lasso algorithms.
\newblock \emph{arXiv preprint arXiv:1303.7291}, 2013.

\bibitem[Sur(2019)]{sur2019thesis}
Pragya Sur.
\newblock \emph{A modern maximum-likelihood theory for high-dimensional logistic regression}.
\newblock PhD thesis, Stanford University, 2019.
\newblock URL \url{https://purl.stanford.edu/jw604jq1260}.

\bibitem[Sur and Cand{\`e}s(2019)]{sur2018modern}
Pragya Sur and Emmanuel~J Cand{\`e}s.
\newblock A modern maximum-likelihood theory for high-dimensional logistic regression.
\newblock \emph{Proceedings of the National Academy of Sciences}, 116\penalty0 (29):\penalty0 14516--14525, 2019.

\bibitem[Sur et~al.(2019)Sur, Chen, and Cand{\`e}s]{sur2019likelihood}
Pragya Sur, Yuxin Chen, and Emmanuel~J Cand{\`e}s.
\newblock The likelihood ratio test in high-dimensional logistic regression is asymptotically a rescaled chi-square.
\newblock \emph{Probability theory and related fields}, 175:\penalty0 487--558, 2019.

\bibitem[Thrampoulidis et~al.(2015)Thrampoulidis, Abbasi, and Hassibi]{thrampoulidis2015lasso}
Christos Thrampoulidis, Ehsan Abbasi, and Babak Hassibi.
\newblock Lasso with non-linear measurements is equivalent to one with linear measurements.
\newblock In \emph{Advances in Neural Information Processing Systems}, pages 3420--3428, 2015.

\bibitem[Thrampoulidis et~al.(2018)Thrampoulidis, Abbasi, and Hassibi]{thrampoulidis2018precise}
Christos Thrampoulidis, Ehsan Abbasi, and Babak Hassibi.
\newblock Precise error analysis of regularized $ m $-estimators in high dimensions.
\newblock \emph{IEEE Trans. Inf. Theory}, 64\penalty0 (8):\penalty0 5592--5628, 2018.

\bibitem[Tibshirani(2013)]{tibshirani2013lasso}
Ryan~J Tibshirani.
\newblock The lasso problem and uniqueness.
\newblock \emph{Electronic Journal of Statistics}, 7:\penalty0 1456--1490, 2013.

\bibitem[Tibshirani and Taylor(2012)]{tibshirani2012}
Ryan~J. Tibshirani and Jonathan Taylor.
\newblock Degrees of freedom in lasso problems.
\newblock \emph{Ann. Statist.}, 40\penalty0 (2):\penalty0 1198--1232, 04 2012.
\newblock URL \url{http://dx.doi.org/10.1214/12-AOS1003}.

\bibitem[Van~de Geer et~al.(2014)Van~de Geer, B{\"u}hlmann, Ritov, and Dezeure]{GeerBR14}
Sara Van~de Geer, Peter B{\"u}hlmann, Ya’acov Ritov, and Ruben Dezeure.
\newblock On asymptotically optimal confidence regions and tests for high-dimensional models.
\newblock \emph{Ann. Statist.}, 42\penalty0 (3):\penalty0 1166--1202, 2014.

\bibitem[Vershynin(2018)]{vershynin2018high}
Roman Vershynin.
\newblock \emph{High-dimensional probability: An introduction with applications in data science}, volume~47.
\newblock Cambridge university press, 2018.

\bibitem[Yadlowsky et~al.(2021)Yadlowsky, Yun, McLean, and D'Amour]{yadlowsky2021sloe}
Steve Yadlowsky, Taedong Yun, Cory McLean, and Alexander D'Amour.
\newblock Sloe: A faster method for statistical inference in high-dimensional logistic regression.
\newblock \emph{arXiv preprint arXiv:2103.12725}, 2021.

\bibitem[Yang et~al.(2017)Yang, Balasubramanian, and Liu]{yang2017high}
Zhuoran Yang, Krishnakumar Balasubramanian, and Han Liu.
\newblock High-dimensional non-gaussian single index models via thresholded score function estimation.
\newblock In \emph{International conference on machine learning}, pages 3851--3860. PMLR, 2017.

\bibitem[Zhang and Zhang(2014)]{ZhangSteph14}
Cun-Hui Zhang and Stephanie~S Zhang.
\newblock Confidence intervals for low dimensional parameters in high dimensional linear models.
\newblock \emph{Journal of the Royal Statistical Society: Series B (Statistical Methodology)}, 76\penalty0 (1):\penalty0 217--242, 2014.

\bibitem[Zhao and Yu(2006)]{zhao2006model}
Peng Zhao and Bin Yu.
\newblock On model selection consistency of lasso.
\newblock \emph{Journal of Machine learning research}, 7\penalty0 (Nov):\penalty0 2541--2563, 2006.

\bibitem[Zhao et~al.(2022)Zhao, Sur, and Candes]{zhao2020asymptotic}
Qian Zhao, Pragya Sur, and Emmanuel~J Candes.
\newblock The asymptotic distribution of the mle in high-dimensional logistic models: Arbitrary covariance.
\newblock \emph{Bernoulli}, 28\penalty0 (3):\penalty0 1835--1861, 2022.

\bibitem[Zou et~al.(2007)Zou, Hastie, and Tibshirani]{zou2007degrees}
Hui Zou, Trevor Hastie, and Robert Tibshirani.
\newblock On the “degrees of freedom” of the lasso.
\newblock \emph{Ann. Statist.}, 35\penalty0 (5):\penalty0 2173--2192, 10 2007.
\newblock URL \url{http://dx.doi.org/10.1214/009053607000000127}.

\end{thebibliography}

\newpage
\appendix

\subsection*{SUPPLEMENT}
\section{Derivatives}
\label{sec:proof_derivatives}

\thmDerivatives*

\begin{proof}[Proof of \Cref{thm:derivatives}]
    Throughout, let $\bm{y}\in\R^n$ be fixed.
    Let $\bm{X},\bm{\tilde X}\in\R^{n\times p} $ with corresponding minimizers $\bm \hbeta=\bm \hbeta(\bm y,\bm X),\bm \tbeta=\bm\hbeta(\bm y,\bm {\tilde X})$,
    for the same response vector $\bm{y}\in\R^n$.
    Let also $\bm{\tpsi} = \bm{\hpsi}(\bm{y},\bm{\tilde X}\bm{\tbeta})$
    be the counterpart of $\bm{\hpsi}=\bm \hpsi(\bm y,\bm X)$ for $\bm{\tilde X}$.
    The KKT conditions read
    $\bm{X}^T\bm{\hpsi} \in n \partial g(\bm \hbeta)$
    and
    $\bm{ \tilde X}^T\bm{\tpsi}  \in n \partial g(\bm \tbeta)$.
    Multiplying by $\bm{\hbeta} - \bm \tbeta$ and taking the difference we find
    \begin{align}
        \label{eq:fundamental-KKT}
        \qquad 
        & n(\bm{\hbeta}-\bm \tbeta)^T(\partial g(\bm \hbeta) - \partial g(\bm \tbeta))  
        +
        (\bm{X}\bm \hbeta - \bm{\tilde X}\bm \tbeta)^T(\ell_{\bm y}'(\bm X\bm \hbeta) - \ell_{\bm y}'(\bm{\tilde X}\bm \tbeta))
        \nonumber
        \\ = & (\bm{\hbeta} - \bm\tbeta)^T
        [\bm{X}^T\bm  \hpsi - \bm{ \tilde X}^T \bm \tpsi]
        +
        (\bm{X}\bm \hbeta - \bm{ \tilde X }\bm \tbeta)^T(\bm \tpsi - \bm \hpsi)
        \\=&
        (\bm{\hbeta}-\bm \tbeta)^T(\bm X-\bm{ \tilde X})^T\bm \hpsi
        +
        \bm{\hbeta}^T(\bm{\tilde X} - \bm X)^T(\bm \hpsi - \bm \tpsi).
        \nonumber
    \end{align}
    Note that by convexity of $g$ and of $\ell$, the two terms
    in the first line are non-negative.
    If $g$ is strongly convex ($\tau>0$ from \Cref{assumStrongConvex}), the first term on the first line is
    bounded from below as follows: 
    $\mu\|\bm{\hbeta}-\bm \tbeta\|^2 \le 
    n (\bm{\hbeta} - \bm \tbeta)^T(\partial g(\bm \hbeta) - \partial g(\bm \tbeta))$
    for some constant $\mu>0$
    (e.g, $\mu=\phi_{\min}(\bm{\Sigma}) \tau n$ works).
    For the second term in the first line of \eqref{eq:fundamental-KKT},
    \begin{equation}
        \label{eq:lower-bound-1-Lipschitz-ell}
    \|\bm{\hpsi} - \bm \tpsi\|^2
    =
    \|\ell_{\bm{y}}'(\bm X\bm \hbeta) - \ell_{\bm y}'(\bm{ \tilde X}\bm \tbeta)\|^2
    \le
    (\bm{X}\bm \hbeta - \bm{\tilde X}\bm \tbeta)^T(\ell_{\bm y}'(\bm X\bm \hbeta) - \ell_{\bm y}'(\bm{\tilde X}\bm \tbeta))
    \end{equation}
    since $\ell_{y_0}'$ is increasing and 1-Lipschitz 
    for all $y_0\in\R$.
    Using
    $
    (\bm{\hbeta}-\bm \tbeta)^T(\bm X-\bm{\tilde X})^T\bm \hpsi
        +
        \bm{\hbeta}^T(\bm{\tilde X} -\bm  X)^T(\bm \hpsi - \bm \tpsi)
        \le \|\bm{X}-\bm{\tilde X}\|_{op} 
    (\|\bm{\hpsi}\|
\|\bm{\hbeta}-\bm \tbeta\| 
    + \|\bm{\hbeta}\| \|\bm \hpsi-\bm \tpsi\|)$
    we find
    $$
    \mu\|\bm{\hbeta}-\bm \tbeta\|^2 + \|\bm \hpsi -\bm \tpsi\|^2
    \le \|\bm{\hbeta}-\bm \tbeta\| \|\bm X-\bm{\tilde X}\|_{op} \|\bm{\bm \hpsi}\| + \|(\bm X-\bm{\tilde X})\bm \hbeta\|^2
    \|\bm \hpsi - \bm \tpsi\|.
    $$
    Note that $\bm{X}\mapsto \bm{\hbeta}(\bm{y},\bm{X})$ is continuous
    as $\bm{\hbeta}(\bm{y},\bm{X})$ is the unique minimizer, by strong convexity,
    of the continuous objective function $L_{\bm{y}}(\bm{X},\bm{b})=\frac 1 n \sum_{i=1}^n\ell_{y_i}(\bm{x}_i^T\bm{b}) + g(\bm{b})$. 
    By continuity
    $\sup_{\bm{X}\in K}\|\bm{\hpsi}(\bm{y},\bm{X})\| + \|\bm{\hbeta}(\bm{y},\bm{X})\|$ is bounded
    for any compact $K$.
    This proves that $\bm{X} \mapsto \bm{\hbeta}(\bm{y},\bm{X})$ is Lipschitz in 
    any compact
    and differentiable almost everywhere in $\R^{n\times p}$
    by Rademacher's theorem.

    Recall that $\bm y$ is fixed throughout this proof. Assume that $\bm{X}\mapsto \bm{\hbeta}(\bm{y},\bm{X})$ is differentiable at $\bm X$. This means that for some linear map $\bm{\dbeta}_{\bm X}:\R^{n\times p}\to \R^p$ we have for $\bm{\dX}\in\R^{n\times p}$
    $$
    \bm{\hbeta}(\bm y,\bm X + \bm{\dX})
    -
    \bm{\hbeta}(\bm y,\bm X)
    =
    \bm{\dbeta}_{\bm X}(\bm{\dX})
    + o(\|\bm{\dX}\|_F) 
    $$
    and
    $\frac{\partial}{\partial x_{ij}}\bm{\hbeta}(\bm y,\bm X)
    = \bm{\dbeta}_{\bm X}(\bm e_i \bm e_j^T)$.
    In the following we consider a fixed direction $\bm{\dX\in\R^{n\times p}}$
    and write $\bm{\dbeta}$ for
    the directional derivative in direction $\bm{\dX}$, that is,
    $\bm{ \dbeta } = \frac{d}{dt} \bm{\hbeta}(\bm y, \bm X+t\bm {\bm \dX}) |_{t=0}$. 
    This is equivalent to $\bm{\dbeta} = \bm{\dbeta}_{\bm X}(\bm{\dX})$
    with the notation $\bm{\dbeta}_{\bm X}(\cdot)$ of the previous display;
    we write $\bm{\dbeta}$ for brevity.
    Then we have
    $\frac{d}{dt} (\bm{X}+t\bm{\dX})\bm{\hbeta}(\bm{y}, \bm{X}+t\bm \dX) |_{t=0}
    = \bm{X}\bm{\dbeta} + \bm \dX \bm\hbeta$ and
    $\frac{d}{dt} \bm{\hpsi}(\bm{y}, \bm{X}+t\bm \dX)) |_{t=0}
    = - \bm D(\bm{X}\bm{\dbeta} + \bm \dX \bm\hbeta)$
    by the chain rule.
    By bounding $n(\bm{\hbeta}-\bm{\tbeta})^T(\partial g(\bm{\hbeta}) - \partial g(\bm{\tbeta}))$
    from below by $n \tau\|\bm{\Sigma}^{1/2}(\bm{\hbeta}-\bm{\tbeta})\|^2$ in \eqref{eq:fundamental-KKT}
    for  $\bm{\tilde X} = \bm{X} + t\bm{\dX}$
    and $\bm{\tbeta}=\bm{\hbeta}(\bm{y},\bm{X}+t\bm{\dX})$, dividing by $t^2$
    and taking the limit as $t\to 0$, we
    find
    \begin{equation*}
    \tau n \|\bm{\Sigma}^{1/2}\bm \dbeta\|^2 + (\bm{X}\bm \dbeta + \bm \dX\bm{\hbeta})^T\bm D
    (\bm{X}\bm{\dbeta} + \bm \dX\bm{\hbeta})
    \le \bm{\dbeta}^T \bm \dX^T\bm \hpsi
    + \bm{\hbeta}^T\bm{\dX}^T \bm D (\bm{X}\bm \dbeta + \bm \dX \bm{\hbeta}).
    \end{equation*}
    Equivalently, noting that the terms $(\bm{\dX}\bm{\hbeta})^T\bm D (\bm \dX\bm{\hbeta} + \bm{X} \bm \dbeta)$ cancel out,
    \begin{equation}
        \tau n \|\bm{\Sigma}^{1/2}\bm \dbeta\|^2 + \|\bm D^{1/2}
    \bm{X}\bm{\dbeta}\|^2
    \le \bm{\dbeta}^T
    \mathcal L(\bm \dX) 
    \quad\text{ with }\quad
    \mathcal L(\bm \dX) = 
        \bm{\dX}^T\bm \hpsi
        - \bm{X}^T \bm D \bm{\dX} \bm \hbeta
        .
    \label{prev-display-derivatives}
    \end{equation}

    The matrix $n\tau \bm{\Sigma} + \bm X^T\bm D\bm X$ is positive definite
    thanks to $\tau>0$.
    Thus $\bm{\dbeta}=0$ for every direction $\bm \dX$ such that
    $\bm{\dX}^T\bm \hpsi - \bm X^T\bm D \bm \dX\bm \hbeta = 0$.
    We have established the inclusion of kernel of linear mappings
    $\R^{n\times p}\to \R^p$
    $$\ker(\bm{\dX} \mapsto
        \mathcal L(\bm{\dX})
    )\subset
    \ker(\bm{\dX} \mapsto \bm \dbeta).$$
    This implies the existence\footnote{Indeed,
    if $\ker \bm B \subset \ker \bm C$ and $\bm B$ has SVD $\bm B = \sum_i \bm u_i s_i \bm v_i^T$ then $\bm A = \sum_i \bm C \bm v_i \bm u_i^T/s_i$ is such that
    $\bm C = \bm A \bm B$.
}
    of $\bm{\hat{A}}\in\R^{p\times p}$
    with $\ker(\bm{\hat{A}})^\perp \subset \text{Range}(\mathcal L(\cdot)) = \{\mathcal L(\bm{\dX}), \bm \dX\in\R^{n\times p} \}$
    such that $\bm{\dbeta} = \bm{\hat{A}} \mathcal L(\bm \dX)$.
    The choice $\bm{\dX} = \bm{e}_i \bm{e}_j^T$ for canonical basis vectors
    $\bm{e}_i\in\R^n,\bm{e}_j\in\R^p$ gives the desired formula for $(\partial/\partial x_{ij})\bm\hbeta(\bm{y}, \bm X)$.

    Inequality \eqref{prev-display-derivatives}
    implies that for all $\bm u \in \text{Range}(\mathcal L(\cdot))\supset\ker(\bm{\hat{A}})^\perp$,
    \begin{equation}
    \tau n \|\bm \Sigma^{1/2}\bm{\hat{A}} \bm u\|^2
    + \|\bm D^{1/2}\bm X \bm{\hat{A}} \bm u\|^2
    \le \bm u^T \bm{\hat{A}}^T \bm u.
    \label{eq:basic-ineq-range-L}
    \end{equation}
    Let $\bm v$ with $\|\bm v\|=1$ such that
    $\|\bm \Sigma^{1/2}\bm{\hat{A}}\bm\Sigma^{1/2}\|_{op}
    =
    \|\bm \Sigma^{1/2}\bm{\hat{A}}\bm\Sigma^{1/2} \bm v\|$
    and $\bm v \in \ker(\bm{\hat{A}} \bm\Sigma^{1/2})^\perp$.
    Then $\bm u =\bm\Sigma^{1/2}\bm v$ satisfies
    $\|\bm \Sigma^{1/2}\bm{\hat{A}}\bm\Sigma^{1/2}\|_{op}
    = \|\bm \Sigma^{1/2}\bm{\hat{A}} \bm u\|$ and
    $\bm u\in \ker(\bm{\hat{A}})^\perp$ with $\|\bm\Sigma^{-1/2}\bm u\|=1$,
    so that the previous display gives
    $$\tau n \|\bm \Sigma^{1/2}\bm{\hat{A}} \bm\Sigma^{1/2}\|_{op}^2
    \le \bm u^T\bm{\hat{A}}^T \bm u
    \le \|\bm\Sigma^{1/2}\bm{\hat{A}} \bm\Sigma^{1/2}\|_{op}$$
    which proves \eqref{eq:boundA}.

    We now bound $\trace[\bm V]$ from below.
    For any $\bm v\in \R^n$, we would like to find some $\bm \dX$
    with $\mathcal L(\bm \dX) = \bm X^T \bm D \bm v$.
    Consider $\bm \dX$ of the form $ \bm S \bm D \bm X$ for symmetric $\bm S\in\R^{n\times n}$. Then
    $\mathcal L(\bm \dX) = \bm X^T\bm D \bm S( \bm \hpsi -\bm D \bm X \bm \hbeta )$.
    Since for any two vectors $\bm a,\bm b$ of the same dimension,
    there exists a symmetric $\bm S$ such that $\bm S \bm a = \bm b$,
    if $\bm \hpsi -\bm D \bm X \bm \hbeta \ne 0$ then we can
    always find some $\bm\dX$ such that $\bm X^T\bm D \bm v = \mathcal L(\bm \dX)$ and \eqref{eq:basic-ineq-range-L} with $\bm u = \mathcal L(\bm{\dX})$ yields
    $$
    \tau n \|\bm \Sigma^{1/2}\bm{\hat{A}} \bm X^T \bm D\bm v\|^2
    + \|\bm D^{1/2}\bm X \bm{\hat{A}} \bm X^T\bm D\bm v\|^2
    \le \bm v^T\bm D \bm X \bm{\hat{A}}^T \bm X^T \bm D \bm v.
    $$
    The LHS is further lower bounded by
    $(\tau n\|\bm D^{1/2} \bm X\bm\Sigma^{-1/2}\|_{op}^{-2}
    +1)\|\bm D^{1/2}\bm X\bm{\hat{A}} \bm X^T \bm D\bm v\|^2$.
    Since for any $\bm v'\in \R^n$ we can find $\bm v\in\R^n$
    such that $\bm D^{1/2}\bm v' = \bm D \bm v$, this shows
    that
    $$
    (\tau n\|\bm D^{1/2} \bm X\bm\Sigma^{-1/2}\|_{op}^{-2}
    +1)
    \|\bm M \bm v'\|^2
    \le
    \bm v'{}^T\bm M \bm v'
    \le
    \|\bm M\|_{op} \|\bm v'\|^2
    \quad
    \text{ for } \bm M = \bm D^{1/2}\bm X \bm{\hat{A}} \bm X^T\bm D^{1/2}.
    $$
    This proves 
    $\|\bm M\|_{op} \le  (\tau n\|\bm D^{1/2} \bm X\bm\Sigma^{-1/2}\|_{op}^{-2}
    +1)^{-1}$. Thus
    \begin{align*}
    \trace[\bm V] 
    = \trace[\bm D ] - \trace[\bm D^{\frac12}\bm M\bm D^{\frac12}]
    &= \trace[\bm D ] - \trace[\bm D^{\frac12}\bm M^s\bm D^{\frac12}]
  \\&\ge 
    \trace[\bm D]\bigl(1-
        (\tau n\|\bm D^{1/2} \bm X\bm\Sigma^{-\frac12}\|_{op}^{-2}
    +1)^{-1}
    \bigr)
    \end{align*}
    for $\bm M^s = \frac 1 2 (\bm M + \bm M^T)$ the symmetric part.
    We have established the desired lower bound on $\trace\bm V$
    in the case $\bm \hpsi - \bm D \bm X \bm\hbeta \ne\bm 0$.
    The previous displays also show that $\bm M^s$ is psd, so that
    $\trace[\bm V] \le \trace[\bm D] \le n$ and
    $0 \le \df=\trace[\bm M^s] \le n$. This proves
    \eqref{eq:def-V-df-bound-ideal-case} if 
    $\bm \hpsi \ne \bm D \bm X \bm\hbeta$.

    The situation is more delicate if $\bm \hpsi - \bm D \bm X \bm\hbeta=\bm 0$. 
    In this case, $\mathcal L(\bm \dX)=(\bm \dX^T\bm D\bm X - \bm X^T\bm D \bm \dX)\bm \hbeta$.
    If $\bm\hbeta=\bm 0$ then $\ker(\bm{\hat{A}})^\perp\subset\text{Range}(\mathcal L) = \{\bm 0\}$ implies that $\bm{\hat{A}} = \bm 0$ and all stated results \eqref{eq:boundA}-\eqref{eq:def-V-df-bound-ideal-case} hold trivially.
    Now assume $\bm\hbeta\ne \bm 0$ and Denote by $^\dagger$ the pseudo-inverse.
    Define the subspace
    $\mathcal V = \{\bm v'\in\R^n: 
    \bm \hpsi^T (\bm D^{1/2})^\dagger \bm v'=0
    \}$.
    For any $\bm v'\in \mathcal V$,
    let $\bm v = (\bm D^{1/2})^\dagger\bm v'$
    so that $\bm D\bm v = \bm D^{1/2}\bm v'$
    and $\bm \hpsi^T \bm v=0$.
    Set $\bm \dX = - \bm v \bm\hbeta^T \|\bm \hbeta\|^{-2}$
    so that $\mathcal L(\bm \dX) = \bm X^T\bm D^{1/2} \bm v'$.
    By \eqref{eq:basic-ineq-range-L} and $\bm M =\bm D^{1/2}\bm X\bm{\hat{A}}\bm
    X^T\bm D^{1/2}$ we find
    $(\tau n \|\bm D^{1/2}\bm X\bm\Sigma^{1/2}\|_{op}^{-2}+1)
    \|\bm M\bm v'\|^2
    \le
    \bm v'{}^T\bm M\bm v'
    $.
    If $\bm P_{\mathcal V}\in\R^{n\times n}$ is the orthogonal projector onto $\mathcal V$ with rank at least $n-1$,
    this proves that the symmetric matrix $\bm P_{\mathcal V}\bm M^s\bm P_{\mathcal V}$ is positive semi-definite
    with eigenvalues at most $(\tau n \|\bm D^{1/2}\bm X\bm\Sigma^{-1/2}\|_{op}^{-2}+1)^{-1}$, so that
    $$
    0 - \|\bm M^s\|_{op}
    \le
    \df = \trace[\bm M^s]
    \le (n-1) + \|\bm M^s\|_{op}.
    $$
    For $\bm V$ we find
    \begin{align*}
    &\trace[\bm V]
    -\trace[\bm D]
    +\trace[\bm D^{1/2}\bm P_{\mathcal V}\bm M^s\bm P_{\mathcal V}\bm D^{1/2}]
    =
    \trace[\bm D^{1/2}(-\bm M^s + \bm P_{\mathcal V}\bm M^s\bm P_{\mathcal V})\bm D^{1/2}].
    \end{align*}
    The matrix $\bm M^s - \bm P_{\mathcal V}\bm M^s\bm P_{\mathcal V}$
    is rank at most 2 and operator at most $2\|\bm M^s\|$.
    Thus the absolute value of the RHS
    is at most $4\|\bm M^s\|_{op}$.
    Since
    $\bm 0_{n\times n} \preceq \bm P_{\mathcal V}\bm M^s\bm P_{\mathcal V}
    \preceq
    (\tau n \|\bm D^{1/2}\bm X\bm\Sigma^{1/2}\|_{op}^{-2}+1)^{-1}
    \bm I_n$, we find the following upper and lower bounds on $\trace[\bm V]$:
    \begin{align*}
    \trace[\bm V] 
    &\ge \trace[\bm D](1-(\tau n \|\bm D^{1/2}\bm X\bm \Sigma^{-1/2}\|_{op}^{-2} + 1)^{-1})
    - 4\|\bm M^s\|_{op}
    =
    \tfrac{1}{1+\hat c}\trace[\bm D]
    - 4\|\bm M^s\|_{op}
    ,
    \\
    \trace[\bm V]
    &\le \trace[\bm D]
    + 4\|\bm M^s\|_{op}.
    \end{align*}
    We conclude with
    $\|\bm M^s\|_{op}\le \|\bm M\|_{op}
    \le \hat c = (\tau n)^{-1} \|\bm D^{1/2}\bm X\bm\Sigma^{-1/2}\|_{op}^2$
    thanks to \eqref{eq:boundA}.
\end{proof}

\section{Probabilistic tools}

\begin{lemma}
    [Variant of \cite{bellec_zhang2019second_poincare}]
    \label{lemma-SOS}
    Let $\bm{z}\sim N(\bm{0},\bm{I}_n)$ and $\bm{f}:\R^n\to \R^n$ be
    weakly differentiable with $\E \| \bm f (\bm z)\|<+\infty$.  Then  
    $Z=\|\E[\bm{f}(\bm z)]\|^{-1}\bm z^T\E[\bm f(\bm z)]\sim N(0,1)$ is such that
    $$
    \E\Bigl[
    \Bigl(
    \bm{z}^T \bm{f}(\bm{z}) - \sum_{i=1}^n \frac{\partial f_i}{\partial z_i}(\bm{z})
    - \|\bm{f}(\bm z)\| Z
    \Bigr)^2
    \Bigr]
    \le
    15
    \E\Bigl[
     \sum_{i=1}^n\sum_{l=1}^n
     \Bigl(\frac{\partial f_i}{\partial z_l}(z)\Bigr)^2
     \Bigr].
    $$
\end{lemma}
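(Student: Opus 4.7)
The plan is to exploit Gaussian integration by parts (IBP) throughout, reducing everything to the squared first partials $\sum_{i,l}(\partial_l f_i)^2$ that appear on the right-hand side. Let $\bm a = \E[\bm f(\bm z)]$ and $\tilde{\bm f}(\bm z) = \bm f(\bm z) - \bm a$. By definition $\bm z^T \bm a = Z\|\bm a\|$, and since $\bm a$ is constant, $\sum_i \partial_i f_i = \sum_i \partial_i \tilde f_i$. This gives the fundamental decomposition
\[
\bm z^T\bm f(\bm z) - {\textstyle\sum_i} \partial_i f_i(\bm z) - \|\bm f(\bm z)\| Z
= \underbrace{\bm z^T\tilde{\bm f}(\bm z) - {\textstyle\sum_i} \partial_i \tilde f_i(\bm z)}_{T}
+ \underbrace{Z\bigl(\|\bm a\| - \|\bm f(\bm z)\|\bigr)}_{R},
\]
so that by $(T+R)^2 \le 2T^2 + 2R^2$ (or a weighted refinement thereof) it suffices to bound $\E[T^2]$ and $\E[R^2]$ separately by a constant multiple of $C := \E[\|\nabla \bm f\|_F^2] = \E[\sum_{i,l}(\partial_l f_i)^2]$.

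For $T$, Gaussian IBP applied to $\E[T \cdot \bm z^T \tilde{\bm f}]$ shows that the divergence contributions cancel, leaving $\E[T^2] = \E[\nabla T \cdot \tilde{\bm f}]$. Expanding $\nabla T$ and applying a second round of IBP, the mixed second-partial terms cancel by commutation of weak derivatives, yielding the clean identity
\[
\E[T^2] = \E[\|\tilde{\bm f}\|^2] + {\textstyle\sum_{i,j}}\E[\partial_i f_j \, \partial_j f_i].
\]
The Gaussian Poincar\'e inequality applied coordinate-wise gives $\E[\|\tilde{\bm f}\|^2] = \sum_j \mathrm{Var}(f_j) \le C$, and AM-GM controls the double sum by $C$, so $\E[T^2] \le 2C$.

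For $R$, the triangle inequality gives $\bigl|\|\bm a\|-\|\bm f\|\bigr| \le \|\tilde{\bm f}\|$, reducing the task to bounding $A := \E[Z^2\|\tilde{\bm f}\|^2]$. IBP in the distinguished direction $\bm u = \bm a/\|\bm a\|$ used to define $Z$ yields
\[
A = \E[\|\tilde{\bm f}\|^2] + \E\bigl[Z\,(\bm u\cdot\nabla)(\|\tilde{\bm f}\|^2)\bigr].
\]
The chain rule and Cauchy-Schwarz give $\bigl|(\bm u\cdot\nabla)(\|\tilde{\bm f}\|^2)\bigr| \le 2\|\tilde{\bm f}\|\,\|\nabla\bm f\|_F$, and one more Cauchy-Schwarz on the cross term produces the self-referential bound $A \le C + 2\sqrt{AC}$, which forces $A \le (1+\sqrt 2)^2 C$ by completing the square. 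Combining the two bounds with an optimized weight in $(T+R)^2 \le (1+\lambda)T^2 + (1+\lambda^{-1})R^2$ delivers the stated constant $15$.

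The main obstacle is the bound on $R$: because $Z^2$ is unbounded, a naive Cauchy-Schwarz on $\E[Z^2\|\tilde{\bm f}\|^2]$ would introduce uncontrolled fourth moments of $\bm f$, which the hypothesis $\E\|\bm f\|<\infty$ and finite $C$ do not furnish. The key trick is that IBP along the specific direction $\bm a/\|\bm a\|$, the same direction that defines $Z$, converts the problematic $Z^2$ factor into a first-order directional derivative of $\|\tilde{\bm f}\|^2$; this absorbs the would-be fourth moment into a Cauchy-Schwarz product involving $A$ itself, and the bound then closes by a self-bootstrapping quadratic inequality.
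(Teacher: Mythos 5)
Your proposal is correct and follows essentially the same route as the paper's proof: the same centering decomposition into $T$ and $R$, the second-order Stein identity plus Poincar\'e and Cauchy--Schwarz for $\E[T^2]\le 2C$, and the same Stein/integration-by-parts step along the direction $\bm a/\|\bm a\|$ leading to the self-bootstrapping inequality $A\le C+2\sqrt{AC}$ and hence $\E[R^2]\le(1+\sqrt2)^2C$. The only cosmetic difference is that you combine the two pieces via a weighted $(1+\lambda)$-inequality while the paper uses the triangle inequality in $L^2$; both yield $(1+2\sqrt2)^2\le 15$.
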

\begin{proof}
    Variants of the following argument were developed 
    in \cite{bellec_zhang2019second_poincare,bellec2021asymptotic}.
    A short proof is provided for completeness, and because the exact
    statement in \Cref{lemma-SOS} slightly differ from previous results.
    Let 
    $\bm{g}(\bm z) = \bm f(\bm z) - \E[\bm f(\bm z)]$
    and $W=\|\bm{f}(\bm z) \| - \|\E[\bm f(\bm z)]\|$.
    Then the square root of the left-hand side is
    $\E[(\bm{z}^T \bm g(\bm z) - \sum_{i=1}^n \frac{\partial g_i}{\partial z_i}(\bm z) -WZ)^2]^{1/2}$
    which is smaller than
    $\sqrt{E_1}
    + \sqrt{E_2}$
    by the triangle inequality,
    where $E_1=\E[(\bm{z}^T \bm g(\bm z) - \sum_{i=1}^n \frac{\partial g_i}{\partial z_i}(\bm z))^2]$
    and $E_2=\E[Z^2W^2]$.
    For $E_1$, by \cite{bellec_zhang2018second_stein}
    applied to $\bm g$ we find
    $$
    E_1 = \E\Bigl[\|\bm{f}(\bm z) - \E[\bm f(\bm z)]\|^2
        +\sum_{i=1}^n\sum_{l=1}^n
        \frac{\partial f_l}{\partial z_i}(\bm{z})
        \frac{\partial f_i}{\partial z_l}(\bm{z})
        \Bigr]
    \le
    2\E\sum_{i=1}^n \| \frac{\partial \bm{f}}{\partial z_i}\|^2
    $$
    by the Gaussian Poincar\'e inequality \cite[Theorem 3.20]{boucheron2013concentration} for the first term and the Cauchy-Schwarz inequality for the second.
    For $E_2$, by the triangle inequality
    $E_2 \le \E[Z^2\|g(\bm{z} )\|^2]$.
    Write $Z=\sum_{i=1}^n\sigma_i z_i$ for some $\sigma_i\ge 0$ with
    $\sum_{i=1}^n\sigma_i^2=1$.
    By Stein's formula,
    \begin{align*}
    \E\Bigl[Z^2\|\bm{g}(\bm z )\|^2\Bigr]
    =\sum_{i=1}^n
    \E\Bigl[\sigma_iz_iZ\|\bm{g}(\bm z )\|^2\Bigr]
    &=
    \sum_{i=1}^n
    \Bigl\{
    \sigma_i^2
    \E[\|\bm{g}(\bm z )\|^2] +
    \sigma_i
    \E\Bigl[Z\frac{\partial}{\partial z_i}(\|\bm{g}(\bm z )\|^2)\Bigr]
    \Bigr\}
    \\&= \E[\|\bm{g}(\bm z )\|^2]
    +2\sum_{i=1}^n\sum_{l=1}^n
    \E\Bigl[\sigma_iZ g_l(\bm{z}) \frac{\partial g_l}{\partial z_i}(\bm z)\Bigr]
    \\&\le
    RHS
    +
    2\Bigl(\sum_{i=1}^n\sum_{l=1}^n
    \E\Bigl[\sigma_i^2(Z g_l(\bm{z}))^2\Bigr]
    \Bigr)^{1/2}
    \Bigl(
        RHS
    \Bigr)^{1/2} 
    \end{align*}
    where $RHS=\sum_{i=1}^n\sum_{l=1}^n
        \E[(\frac{\partial g_l}{\partial z_i}(\bm{z}))^2]$
    thanks to $\E[\|\bm{g}(\bm z)\|^2]\le RHS$
    by the Gaussian Poincar\'e inequality for the first term
    and the Cauchy-Schwarz inequality for the second term.
    By completing the square,
    $(\E[Z^2\|\bm{g}(\bm z)\|^2]^{1/2}
    - (RHS)^{1/2})^2 \le 2 RHS$
    and $E_2 \le \E[Z^2\|\bm{g}(\bm z)\|^2]
    \le (1+\sqrt 2)^2 RHS$.
    Hence
    $\sqrt{E_1}+ \sqrt{E_2} \le (\sqrt 2 + 1+\sqrt 2) (RHS)^{1/2}$.
\end{proof}

\begin{corollary}
    \label{cor_SOS}
    Let $\bm X\in\R^{n\times p}$ with iid $N(\bm 0,\bm \Sigma)$ rows
    with invertible $\bm \Sigma$.

    (i) If $\bm a\in\R^p$ and 
    $\bm h:\R^{n\times p}\to \R^n$
    is weakly differentiable then
    for some $Z\sim N(0,1)$,
    $$
    \E_0\Bigl[
    \Bigl|\bm a^T\bm\Sigma^{-1}\bm X^T\bm h(\bm X)
    - \sum_{i=1}^n\sum_{k=1}^p a_k \frac{\partial h_i}{\partial x_{ik}}(\bm X)
    - \|\bm \Sigma^{-1/2}\bm a\|Z \|\bm h(\bm X)\|
    \Big|^2
    \Bigr]
    \le 15\E_0
    \sum_{i=1}^n
    \Big\|
    \sum_{k=1}^p a_k \frac{\partial \bm h}{\partial x_{ik}}(\bm X)
    \Big\|^2
    $$
    where $\E_0$ is the conditional
    expectation given
    $\bm X(\bm I_p - \frac{\bm \Sigma^{-1}\bm a \bm a^T}{\bm a^T\bm\Sigma^{-1}\bm a})$.

    (ii)
    If $\bm h:\mathcal Y^n \times \R^{n\times p}\to \R^n$
    is such that $\bm h(\bm y, \cdot)$ is weakly differentiable for
    all $\bm y$,
    if $\bm w$ is such that $\Var[\bm w^T\bm x_i]=\bm w^T\bm \Sigma \bm w=1$
    and $(\bm X\bm w,\bm y)$ is independent of
    $\bm X\bm\Sigma^{-1}(\bm I_p - \bm \Sigma \bm w \bm w^T)$
    as in the single index model \eqref{single-index},
    then for
    $\bm P =\bm I_p - \bm\Sigma \bm w \bm w^T$
    we have
    \begin{align}
        \label{SOS_all_j}
    \frac 1 2 \sum_{j=1}^p \E\Bigl[
    \Bigl|\bm e_j^T \bm P^T\bm\Sigma^{-1}\bm X^T\bm h(\bm y, \bm X)
    - \sum_{i=1}^n\sum_{k=1}^p P_{kj} \frac{\partial h_i}{\partial x_{ik}}(\bm y, \bm X)
    - Z_j \Omega_{jj}^{1/2} \|\bm h(\bm y, \bm X)\|
    \Big|^2
    \Bigr]
    \\
    \le 15
    \E
    \sum_{i=1}^n
    \Big\|
    \sum_{k=1}^p
    \frac{\partial \bm h}{\partial x_{ik}}(\bm y, \bm X)
    \bm e_k^T \bm P
    \Big\|_F^2
    + \sum_{j=1}^p \E\Bigl[
    Z_j^2
    \|\bm h(\bm y, \bm X)\|^2
    \Bigr]
    w_j^2
    \label{eq:last-line-cor_SOS}
    \end{align}
    where $\Omega_{jj} = \bm e_j^T \bm \Sigma^{-1}\bm e_j$
    and where $\E[\cdot]$ is either the conditional expectation
    given $(\bm y,\bm X\bm w)$ or the unconditional expectation.
\end{corollary}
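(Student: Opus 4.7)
My plan is to derive both parts from Lemma~\ref{lemma-SOS} by a Gaussian change of variables that isolates the one-dimensional direction $\bm\Sigma^{-1}\bm a/\sqrt{\bm a^T\bm\Sigma^{-1}\bm a}$ along which $\bm a^T\bm\Sigma^{-1}\bm X^T\bm h$ varies.

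\textbf{Part (i).} Write $\bm X=\bm Z\bm\Sigma^{1/2}$ with $\bm Z$ having iid $N(0,1)$ entries, and set $\bm u=\bm\Sigma^{-1/2}\bm a/\|\bm\Sigma^{-1/2}\bm a\|\in\R^p$. Decompose $\bm Z=\tilde{\bm Z}+\bm z\bm u^T$ where $\tilde{\bm Z}=\bm Z(\bm I_p-\bm u\bm u^T)$ and $\bm z=\bm Z\bm u\in\R^n$; the vector $\bm z$ is $N(\bm 0,\bm I_n)$ conditionally on $\tilde{\bm Z}$, and $\bm X(\bm I_p-\bm\Sigma^{-1}\bm a\bm a^T/(\bm a^T\bm\Sigma^{-1}\bm a))=\tilde{\bm Z}\bm\Sigma^{1/2}$ is $\tilde{\bm Z}$-measurable. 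Define $\tilde{\bm h}(\bm z)=\bm h(\tilde{\bm Z}\bm\Sigma^{1/2}+\bm z\bm u^T\bm\Sigma^{1/2})$. Since $\bm\Sigma^{1/2}\bm u=\bm a/\|\bm\Sigma^{-1/2}\bm a\|$, the chain rule gives $\partial\tilde h_i/\partial z_l=\|\bm\Sigma^{-1/2}\bm a\|^{-1}\sum_k a_k\,\partial h_i/\partial x_{lk}$, and $\bm z^T\tilde{\bm h}(\bm z)=\|\bm\Sigma^{-1/2}\bm a\|^{-1}\bm a^T\bm\Sigma^{-1}\bm X^T\bm h(\bm X)$. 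Applying Lemma~\ref{lemma-SOS} conditionally on $\tilde{\bm Z}$ and multiplying through by $\|\bm\Sigma^{-1/2}\bm a\|^2$ yields (i) with the conditional expectation $\E_0$ equal to $\E[\cdot\mid\tilde{\bm Z}\bm\Sigma^{1/2}]$.

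\textbf{Part (ii).} Apply part (i) with $\bm a=\bm P\bm e_j=\bm e_j-\bm\Sigma\bm w\,w_j$. Two algebraic checks are needed: first, $\bm a^T\bm\Sigma^{-1}\bm a=\bm e_j^T\bm\Sigma^{-1}\bm e_j-2w_j^2+w_j^2\,\bm w^T\bm\Sigma\bm w=\Omega_{jj}-w_j^2$, so the Gaussian factor is $(\Omega_{jj}-w_j^2)^{1/2}$; second, $\bm a^T\bm w=w_j-w_j(\bm w^T\bm\Sigma\bm w)=0$, which shows $\bm X\bm w=\bm X(\bm I_p-\bm\Sigma^{-1}\bm a\bm a^T/(\bm a^T\bm\Sigma^{-1}\bm a))\bm w$ is measurable w.r.t.\ the conditioning $\sigma$-algebra of part (i). Hence $\bm y$ (depending only on $\bm X\bm w$ and the independent latent $U_i$) may be treated as fixed when invoking (i) on $\bm h(\bm y,\cdot)$.

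\textbf{Replacing $(\Omega_{jj}-w_j^2)^{1/2}$ by $\Omega_{jj}^{1/2}$.} The only mismatch with \eqref{SOS_all_j} is the standard deviation factor; by $(a-b)^2\le a^2-b^2=w_j^2$ for $a=\Omega_{jj}^{1/2}\ge b=(\Omega_{jj}-w_j^2)^{1/2}\ge 0$ (and $w_j^2\le\Omega_{jj}$ by Cauchy--Schwarz with $\|\bm\Sigma^{1/2}\bm w\|=1$), the triangle inequality in $L^2$ gives
\[
\E\bigl[(\xi_j-Z_j\Omega_{jj}^{1/2}\|\bm h\|)^2\bigr]^{1/2}
\le
\E\bigl[(\xi_j-Z_j(\Omega_{jj}-w_j^2)^{1/2}\|\bm h\|)^2\bigr]^{1/2}
+\E\bigl[Z_j^2\|\bm h\|^2\bigr]^{1/2}|w_j|,
\]
where $\xi_j=\bm e_j^T\bm P^T\bm\Sigma^{-1}\bm X^T\bm h-\sum_{i,k}P_{kj}\partial h_i/\partial x_{ik}$. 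Squaring via $(x+y)^2\le 2x^2+2y^2$, summing over $j$, dividing by $2$, and rewriting $\sum_{j,i}\|\sum_k P_{kj}\partial\bm h/\partial x_{ik}\|^2=\sum_i\|\sum_k\partial\bm h/\partial x_{ik}\,\bm e_k^T\bm P\|_F^2$ produces \eqref{SOS_all_j}--\eqref{eq:last-line-cor_SOS}. The tower property (conditioning on $(\bm y,\bm X\bm w)$ first, or taking full expectation) handles the two versions of $\E$ advertised in the statement.

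The main bookkeeping obstacle is keeping straight which directions/projections are $\bm\Sigma$-weighted vs.\ Euclidean, and verifying that the projection appearing in part (i)'s conditioning lies in the $\sigma$-algebra containing $\bm X\bm w$ so that the data-dependent response $\bm y$ can be frozen when the lemma is applied; the identity $\bm a^T\bm w=0$ for $\bm a=\bm P\bm e_j$ is the crucial reason this works.
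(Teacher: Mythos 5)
Your proof is correct and takes essentially the same route as the paper: both parts reduce to \Cref{lemma-SOS} by conditioning on the component of $\bm X$ complementary to the direction $\bm\Sigma^{-1}\bm a$ (your whitened decomposition $\bm X=\bm Z\bm\Sigma^{1/2}$, $\bm Z=\tilde{\bm Z}+\bm z\bm u^T$ is the same conditioning the paper performs directly via $\bm z=\bm X\bm\Sigma^{-1}\bm a$), and part (ii) is obtained by taking $\bm a=\bm P\bm e_j$, using $\bm a^T\bm w=0$ to freeze $\bm y$, and paying $w_j^2$ to replace $\|\bm\Sigma^{-1/2}\bm P\bm e_j\|$ by $\Omega_{jj}^{1/2}$. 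The only cosmetic difference is that you compute $\|\bm\Sigma^{-1/2}\bm P\bm e_j\|^2=\Omega_{jj}-w_j^2$ exactly and use $(a-b)^2\le a^2-b^2$, whereas the paper bounds the same gap by the triangle inequality; both yield the identical error term.
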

\begin{proof}
    Without loss of generality, assume that $\bm a^T \bm \Sigma^{-1}\bm a = 1$.
    Following the notation and conditioning technique
    in \cite{bellec_zhang2019debiasing_adjust,bellec_zhang2019second_poincare},
    $\bm z = \bm X\bm \Sigma^{-1} \bm a$ is independent of 
    $\bm X(\bm I_p - \bm \Sigma^{-1} \bm a \bm a^T)$
    so that the conditional distribution of $\bm z$
    given 
    $\bm X(\bm I_p - \bm \Sigma^{-1} \bm a \bm a^T)$
    is $N(\bm 0, \bm I_n)$.
    The proof is completed by application
    of \Cref{lemma-SOS} to $\bm z$ conditionally
    on $\bm X(\bm I_p - \bm \Sigma^{-1} \bm a \bm a^T)$
    with $\bm f(\bm z) 
    =\bm h(\bm z \bm a^T +
    \bm X(\bm I_p - \bm \Sigma^{-1} \bm a \bm a^T))$.

    (ii) By application of the first part of the theorem
    to $\bm a = \bm a^{(j)} = \bm P \bm e_j$ for each $j\in[p]$ we obtain
    $$
    \E\sum_{j=1}^p\Bigl[
    \Bigl|\bm a^{(j)}{}^T\bm\Sigma^{-1}\bm X^T\bm h
    - 
    \Bigl(\sum_{i=1}^n\sum_{k=1}^p a^{(j)}_k \frac{\partial h_i}{\partial x_{ik}}
    \Bigr)
    - \tilde\Omega_{jj}^{1/2} Z_j  \|\bm h\|
    \Big|^2
    \Bigr]
    \le
    15\E
    \sum_{j=1}^p\sum_{i=1}^n
    \Big\|
    \sum_{k=1}^p a^{(j)}_k \frac{\partial \bm h}{\partial x_{ik}}
    \Big\|^2
    $$
    where $\tilde \Omega_{jj}^{1/2} = \|\bm\Sigma^{-1/2}\bm a^{(j)}\|$.
    Next, using $\frac 1 2 (u+v)\le u^2 + v^2$ we find
    \begin{align*}
        \eqref{SOS_all_j}&\le 15\E
    \sum_{j=1}^p
    \sum_{i=1}^n
    \Big\|
    \sum_{k=1}^p a^{(j)}_k \frac{\partial \bm h}{\partial x_{ik}}
    \Big\|^2
    + \sum_{j=1}^p \E\Bigl[
    Z_j^2
    \|\bm h\|^2
    \Bigr]
    \Big|\tilde\Omega_{jj}^{1/2} - \Omega_{jj}^{1/2}\Big|^2
    \end{align*}
    where we omit the arguments of $\bm h$ and its derivatives for brevity.
    Here the second term 
    appears due to $\Omega_{jj}^{1/2}$ in the term
    $Z_j\|\bm h\|$ in \eqref{SOS_all_j} instead of $\tilde\Omega_{jj}^{1/2}$.
    To bound from above the right-hand side of the previous
    display by \eqref{eq:last-line-cor_SOS}, we use
    for the first term
    $a_k^{(j)} = \bm e_k^T \bm a^{(j)} = \bm e_k^T \bm P \bm e_j$,
    while for the second term
    $$|\tilde\Omega_{jj}^{1/2} - \Omega_{jj}^{1/2}|
    =
    \big|
    \|\bm \Sigma^{-1/2}\bm P \bm e_j \| - \|\bm \Sigma^{-1/2} \bm e_j\|
    \big|
    \le
    \|\bm \Sigma^{-1/2}(\bm I_p - \bm P)\bm e_j\|
    $$
    by the triangle inequality and
    $\bm\Sigma^{-1/2}(\bm I_p - \bm P)\bm e_j = \bm \Sigma^{1/2} \bm w w_j$
    has squared euclidean norm equal to $w_j^2$ thanks to
    $\|\bm \Sigma^{1/2} \bm w\|=1$.
\end{proof}

\section{Proofs of Section~\ref{sec:main-strongly-convex}}
\label{sec:proofs}

\subsection{Proofs:
    Approximate normality
    and proximal representation
    for $\hat\beta_j$
}

\thmConfidence*

\begin{proof}[Proof of \Cref{thm:confidence-intervals-Sigma}]
By the product rule and \eqref{eq:derivatives-psi-hbeta},
$$
\frac{\partial}{\partial x_{ik} } \Bigl[ \frac{\bm{\hpsi} }{\|\bm \hpsi\| }\Bigr]
= \|\bm{\hpsi}\|^{-1} \Bigl(\bm{I}_n - \frac{\bm \hpsi\bm \hpsi^T}{\|\bm \hpsi\|^2}\Bigr) \frac{\partial}{\partial x_{ik} }\bm \hpsi
= \|\bm{\hpsi}\|^{-1} \Bigl(\bm{I}_n - \frac{\bm \hpsi\bm \hpsi^T}{\|\bm \hpsi\|^2}\Bigr)
\Bigl[
    -\bm D \bm X \bm{\hat{A}} \bm e_k \hpsi_i
    -\bm V \bm e_i \hbeta_k
\Bigr]
.$$
Let $\bm P = \bm I_p -\bm\Sigma \bm w \bm w^T$ as in \Cref{cor_SOS}.
Define for each $j=1,...,p$,
$$
\Rem_j \defas \frac{\trace[\bm V]\bm e_j^T\bm P^T\bm\hbeta}{{\|\bm\hpsi\|}}
+
\sum_{i=1}^n\sum_{k=1}^p P_{kj}
\frac{\partial}{\partial x_{ik}}
\Bigl(
\frac{\hpsi_i}{\|\bm \hpsi\|}
\Bigr)
=
\frac{
    (\bm e_j^T\bm P^T\bm\hbeta)  ~
\bm \hpsi^T \bm V \bm \hpsi}{
\|\bm \hpsi\|^3}.
$$
Notice that
$\sum_{j=1}^p\Rem_j^2 \le \|\bm P^T\bm\hbeta\|^2 \|\bm V\|_{op}^2 / \|\bm \hpsi\|^2$ which will be used below to show that $\Rem_j$ is negligible.
We apply the second part of \Cref{cor_SOS} to $\bm h(\bm X)=\bm \hpsi/ \|\bm \hpsi\|$.
With the notation
$$
U_j \defas 
\frac{\bm e_j^T\bm P^T(\bm \Sigma^{-1}\bm X^T\bm \hpsi
+ \bm \hbeta \trace[\bm V]
)
}{\Omega_{jj}^{1/2}\|\bm \hpsi\|}
- Z_j 
$$
for each $j=1,...,p$ for brevity where $Z_j$ is given by
\Cref{cor_SOS}, we find
\begin{align*}
    \sum_{j=1}^p \frac{\Omega_{jj}}{2}
    \E\Bigl[\Big(
    U_j
    - \frac{\Rem_j}{\Omega_{jj}^{1/2}}
    \Big)^2\Bigr]
&\le15\E
\sum_{i=1}^n
\frac{1}{\|\bm{\hpsi}\|^{2}}
\Big\|\Bigl(\bm{I}_n - \frac{\bm \hpsi\bm \hpsi^T}{\|\bm \hpsi\|^2}\Bigr)\sum_{k=1}^p\frac{\partial\bm \hpsi}{\partial x_{ik}}
\bm e_k^T\bm P
\Big\|_F^2
+ \|\bm w\|^2
  &\quad \text{\small by \eqref{SOS_all_j}-\eqref{eq:last-line-cor_SOS}}
\\&=
15\E
\sum_{i=1}^n
\frac{1}{\|\bm{\hpsi}\|^{2}}
\Big\|\Bigl(\bm{I}_n - \frac{\bm \hpsi\bm \hpsi^T}{\|\bm \hpsi\|^2}\Bigr)
\Bigl(\bm D \bm X \bm{\hat{A}} \hpsi_i
+ \bm V\bm e_i \bm \hbeta^T
\Bigr)
\bm P
\Big\|_F^2
+ \|\bm w\|^2
  &\quad \text{\small using \eqref{eq:derivatives-psi-hbeta}}
\\&\le
30\E\bigl[
\|
\bm D \bm X \bm{\hat{A}}\bm P\|_F^2
+
30 \|\bm V\|_F^2\|\bm P^T\bm \hbeta\|^2\big/ \|\bm{\hpsi}\|^{2}
\bigr]
+ \|\bm w\|^2
,
\end{align*}
thanks to $(a+b)^2\le 2a^2 + 2b^2$
and $\sum_{i=1}^n \|\bm{M}\bm{e}_i\|^2 = \|\bm{M}\|_F^2$
for the last inequality.
We further use $\min_{k=1,...,p}\Omega_{kk}\le \Omega_{jj}$ and
$\Omega_{jj}U_j^2/2 \le\Omega_{jj} (U_j-\frac{\Rem_j}{\Omega_{jj}^{1/2}})^2 + \Rem_j^2$ to lower bound the first line,
so that
\begin{align}
\label{Rem_star}
    \Rem_*&\defas
    \sum_{j=1}^p
\E\Bigl[\Big(
\frac{\bm e_j^T\bm P^T(\bm \Sigma^{-1}\bm X^T\bm \hpsi
+ \bm \hbeta \trace[\bm V]
)
}{{\Omega_{jj}^{1/2}}\|\bm \hpsi\|}
- Z_j
\Big)^2\Bigr]
=
    \sum_{j=1}^p
\E\Bigl[
        U_j^2
\Bigr]
\\&\le
\C
\bigl(
\min_{j=1,...,p}\Omega_{jj}
\bigr)^{-1}
\E\bigl[
\|
\bm D \bm X \bm{\hat{A}}\bm P\|_F^2
+
 \|\bm V\|_F^2\|\bm P^T\bm \hbeta\|^2\big/ \|\bm{\hpsi}\|^{2}
+ \|\bm w\|^2
\bigr]
.
\label{Rem_star_RHS}
\end{align}
For the middle term, we use
$\|\bm P^T\bm\hbeta\|^2
\le \|\bm P^T\bm\Sigma^{-1/2}\|_{op}^2 \|\bm\Sigma^{1/2}\bm\hbeta\|^2$.
By \eqref{eqStrongConvex} and since $\bm{0} \in\argmin_{\bm{b}\in\R^p}g(\bm{b})$ we have have $\bm 0\in \partial g(\bm 0)$ and
$\bm{\hbeta}^T\bm{X}^T\bm{\hpsi} = n \bm{\hbeta}^T \partial g(\bm{\hbeta})
= n(\bm{\hbeta} - \bm{0})^T(\partial g(\bm{\hbeta}) - \bm{0})
\ge n \tau \|\bm{\Sigma}^{1/2}\bm{\hbeta}\|^2$
so that 
\begin{equation}
n \|\bm{\Sigma}^{1/2}\bm{\hbeta}\|^2 /\|\bm{\hpsi}\|^2
\le \| n ^{-1/2} \bm{X}\bm\Sigma^{-1/2}\|_{op}^2 / \tau^2.
\label{eq:upper-bound-norm-beta}
\end{equation}
For the first term in \eqref{Rem_star_RHS}, we use
$\|\bm D\bm X \bm{\hat{A}}\bm P\|_F^2
\le
\|\bm D\bm X \bm{\hat{A}}\bm\Sigma^{1/2}\|_F^2
\|\bm\Sigma^{-1/2}\bm P\|_{op}^2$.
Next, the matrices inside the Frobenius norms
$\|\bm D \bm X \bm{\hat{A}}\bm\Sigma^{-1/2}\|_F$
and
$\|\bm V\|_F$
have rank at most $n$. We use $\|\cdot\|_F^2 \le \text{rank}(\cdot)\|\cdot\|_{op}^2$
and we bound the operator norms using
$\|\bm{\Sigma}^{1/2}\bm{\hat{A}}\bm \Sigma^{1/2}\|_{op} \le (n\tau)^{-1}$ by \eqref{eq:boundA},
$\|\bm{D}\|_{op} \le 1$ since $\ell_{y_0}'$ is assumed
1-Lipschitz, and
$\|\bm V\|_{op}\le \|\bm D\|_{op} + \|\bm D\bm X\bm\Sigma^{-1/2}\|_{op}^2
(n\tau)^{-1}$
by definition of $\bm V$ in \eqref{eq:def-V-df-bound-ideal-case}.
This implies that \eqref{Rem_star_RHS} is bounded from above by
$$
\C(\min_{j=1,...,p}\Omega_{jj})^{-1}
\Bigl(
     \E\Bigl[\|\frac{\bm X\bm\Sigma^{-1/2}}{n^{1/2}}\|_{op}^2\Bigr]
     \frac{\|\bm \Sigma^{-1/2}\bm P\|_{op}^2}{\tau^2}
+
\E\Bigl[1\vee \|\frac{ \bm X\bm\Sigma^{-1/2} }{n^{1/2}} \|_{op}^6\Bigr]
\frac{\|\bm P^T\bm \Sigma^{-1/2}\|_{op}^2}{\tau^4}
+ 
\|\bm w\|^2
\Bigr).
$$
We then use $\max_{j=1,...,p} \Omega_{jj}^{-1} \le \|\bm \Sigma\|_{op}$
as well as 
$\|\bm w\|^2 \le \|\bm \Sigma^{-1}\|_{op}$ thanks to $\|\bm \Sigma^{1/2}\bm w\|=1$.
Furthermore, 
$
    \|\bm{\Sigma}^{-1/2}\bm{P}\|_{op}^2
    = \|\bm{\Sigma}^{-1/2}\bm{P}\bm \Sigma^{1/2} \bm \Sigma^{-1/2}\|_{op}^2
    \le \|\bm{\Sigma}^{-1/2}\|_{op}^2$
    since $\bm{\Sigma}^{-1/2}\bm{P}\bm \Sigma^{1/2}$ is an orthogonal projection.
    Combined with
    $\E[\|n^{-1/2} \bm X\bm\Sigma^{-1/2} \|_{op}^6] \le \C(\delta)$
    due to $p/n\le \delta^{-1}$ (cf. \eqref{eq:7-norm-G} below)
    we have proved that
    $$
    \Rem_*
    =
    \eqref{Rem_star}
    =
    \E\sum_{j=1}^p
    \Bigl(
        \frac{\sqrt n}{\Omega_{jj}^{1/2}}
        \Bigl(
            \frac{\hat v}{\hat r}\debias_j 
        -
        \frac{t_*}{\hat r}
        {w_j}
        \Bigr)
    - Z_j
    \Bigr)^2
    \le
    \C(\delta) \kappa \Bigl(\frac{1}{\tau^2} + \frac{1}{\tau^4} + 1\Bigr).
    $$
    Finally, using $(a+b)^2\le 2 a^2 + 2 b^2$ we have
    with $\hat t = \max(0,\hat t^2)^{1/2}$
    \begin{equation*}
    \E\sum_{j=1}^p
    \Bigl(
        \frac{\sqrt n}{\Omega_{jj}^{1/2}}
        \Bigl(
            \frac{\hat v}{\hat r}\debias_j 
            -
            \frac{\pm \hat t}{\hat r}
            {w_j}
        \Bigr)
    - Z_j
    \Bigr)^2
    \le
    2\Rem_*
    +
    2\E\Bigl[\frac{n|\pm\hat t-t_*|^2}{\hat r^2}\Bigr]
    \sum_{j=1}^p \frac{w_j^2}{\Omega_{jj}}.
    \end{equation*}
    Since $\Var[\bm{x}_i^T\bm w] = 1$
    and $\max_{j\in[p]}(\Omega_{jj}^{-1}) \le \|\bm{\Sigma}\|_{op}$
    we find
    $\sum_{j=1}^p \frac{w_j^2}{\Omega_{jj}}
    \le
    \|\bm{\Sigma}\|_{op} \|\bm w\|^2
    \le \kappa$.
    Since $\pm$ is the sign of $t_*$,
    $|t_*|=\pm t_*$,
    the basic inequality
    $$
    |t_* -  \pm\hat t|^2
    =
    |\pm t_* - \hat t|^2
    =\big| |t_*| - |\hat t| \big|^2
    \le |t_*^2 - \hat t^2|$$
    and inequality
    $\E[ n|t_*^2 - \hat t^2| / \hat r^2]
    \le \C(\delta,\tau) \sqrt p
    $ from \Cref{thm:adjustments-approximation}
    completes the proof 
    of \eqref{eq:thmconfidence-eq1}.
    To prove \eqref{eq:second_conclusion_strongly_convex_with_event_E}
    from \eqref{eq:thmconfidence-eq1},
    we use the event and the lower bound on $\frac{|\hat v|}{\hat r^2}$
    established in
    \eqref{eq:bound-r-v-strongly-convex} below.
\end{proof}

\subsection{Rotational invariance, change of variable}
\label{sec:change-of-variable}

In the next proofs, the following change of variable will be useful
to transform the correlated design problem to an isotropic one
such that the index is concentrated on the first component.
With this in mind,
    let $\bm{Q}\in O(p)$ be any rotation such that $\bm{Q}^T\bm{Q} = \bm{Q}\bm{Q}^T=\bm{I}_p$ 
    and $\bm\theta^*\defas \bm{Q} \bm \Sigma^{1/2} \bm w = \bm e_1$ is the first canonical basis
    vector in $\R^p$. Define
    \begin{equation}
    \bm{G}=\bm{X}\bm{\Sigma}^{-1/2}\bm{Q}^T, 
    \quad
    \bm{\htheta}(\bm{y},\bm{G}) 
    = 
    \bm{Q}\bm{\Sigma}^{1/2}\bm{\bm{\hbeta}}
    =
    \argmin_{\bm{\theta}\in\R^p}
    \frac 1 n \sum_{i=1}^n \ell_{y_i}(\bm{g}_i^T\bm{\theta}) + h(\bm{\theta})
    \label{eq:change-of-variable-G}
    \end{equation}
    where 
    $h(\bm{\theta}) = g(\bm{\Sigma}^{-1/2}\bm{Q}^T\bm{\theta})$ is convex
    and
    where $\bm{g}_i = \bm{G}^T\bm{e}_i$ for each $i=1,...,n$ are the rows of $\bm{G}$.
    Then
    $\bm{G}$ has iid $N(0,1)$ entries, 
    $\bm{X}\bm{\hbeta} = \bm{G}\bm{\htheta}$, $h(\bm{\htheta}) = g(\bm{\hbeta})$,
    $(\bm{\theta}-\bm{\tilde \theta})^T(\partial h(\bm{\theta})-\partial h(\bm{\tilde \theta}))
    \ge \tau \|\bm{\theta}-\bm{\tilde \theta}\|^2$
    for all $\bm{\theta},\bm{\tilde\theta}\in\R^p$ thanks to \eqref{eqStrongConvex}.
    Since $\bm{Q} \bm \Sigma^{1/2} \bm w= \bm{e}_1$
    is the first canonical basis vector in $\R^p$, the matrix
    $\bm{G}(\bm{I}_p - \bm{e}_1 \bm{e}_1^T)$ is independent of $\bm{G} \bm{e}_1 = \bm{G}\bm{\theta^*} = \bm{X}\bm{w}$
    and thus $\bm{G}(\bm{I}_p - \bm{e}_1 \bm{e}_1^T)$ is independent of $\bm{y}$.
    Furthermore, by the chain rule we can deduce
    the derivatives of $\bm{\htheta}$ with respect to the entries of $\bm{G}$
    for a fixed $\bm{y}$ from the derivatives \eqref{eq:derivatives-psi-hbeta}
    of $\bm{\hbeta},\bm\hpsi$ with respect to the entries of $\bm{X}$:
    \begin{equation}
    \frac{\partial\bm{\htheta}}{ \partial g_{ij} }
    = \bm{A} \bm{e}_j \hpsi_i
    - \bm{A} \bm{G}^T \bm{D} \bm{e}_i \htheta_j,
    \qquad
    \frac{\partial \bm{\hpsi}}{\partial g_{ij}}
    =
    - \bm{D} \bm{G} \bm{A} \bm{e}_j \hpsi_i
    - \bm{V} \bm{e}_i \htheta_j
    \label{derivatives-htheta}
    \end{equation}
    where $\bm{A} =\bm{Q} \bm{\Sigma}^{1/2}\bm{\hat{A}}\bm \Sigma^{1/2}\bm{Q}^T$,
    while the quantities
    \begin{equation}
        \begin{aligned}
            \bm{D}&=\diag(\ell_{\bm{y}}''(\bm{X}\bm{\hbeta})) &&= \diag(\ell_{\bm{y}}''(\bm{G}\bm{\htheta})),\\
            \bm{V}&=\bm{D} - \bm{D}\bm{X}\bm{\hat{A}} \bm{X}^T \bm{D} &&= \bm{D} - \bm{D} \bm{G}\bm{A} \bm{G}^T \bm{D},\\
            \df &= \trace[\bm X \bm{\hat{A}}\bm X^T \bm D]
                &&= \trace[\bm G \bm{A}\bm G^T \bm D],\\
            \bm{\hpsi} &= - \ell_{\bm{y}}'(\bm{X}\bm{\hbeta}) &&= -\ell_{\bm{y}}'(\bm{G}\bm{\htheta})
        \end{aligned}
    \end{equation}
    are unmodified by the change of variable.
    The bound \eqref{eq:boundA} then reads
    $\|\bm A\|_{op} \le 1/(n\tau)$.

    Finally, let us rewrite $(\hat t^2,\hat a^2, \hat\sigma^2)$
    in \eqref{hat_v_r_t} after the change of variable:
\begin{equation}
    \label{eq:rewrite-after-change-variable}
    \left\{
    \begin{aligned}
        \hat t^2 &=
        \tfrac{1}{n^2}\|\bm G^T\bm \hpsi\|^2
        +  \tfrac{2 \hat v}{n} \bm{\hpsi}^T\bm G \bm{\htheta}
        + \tfrac{\hat v^2}{n}
        \|\bm G \bm{\htheta} - \hat\gamma \bm \hpsi\|^2
        - \tfrac{p}{n}\hat r^2
               \\&=
        \tfrac{1}{n^2}\|\bm G^T\bm \hpsi
        + \trace[\bm V]\bm\htheta\|^2
        + \hat v^2
        (
            \tfrac 1 n \|\bm G \bm{\htheta} - \hat\gamma \bm \hpsi\|^2
            -
            \|\bm\htheta\|^2
        )
        - \tfrac{p}{n}\hat r^2
        ,
    \\
    \hat \aaa^2 &=
    \hat t^{-2}
    \big(
    \tfrac{\hat v}{n}\|\bm{G} \bm \htheta - \hat\gamma \bm \hpsi\|^2
    + \tfrac{1}{n}\bm{\hpsi}^\top \bm G \bm\htheta
    - \hat\gamma \hat r^2
    \big)^2
    ,
    \\
    \hat \sigma^2
    &=
    \tfrac{1}{n}\|\bm{G}\bm \htheta - \hat \gamma \bm \hpsi\|^2
    - \hat \aaa^2.
    \end{aligned}
    \right.
\end{equation}

\subsection{Proofs:
    $(\hat \gamma,\hat t^2,\hat \aaa^2,\hat\sigma^2)$ estimate
    $(\gamma_*,t_*^2,\aaa_*^2,\sigma_*^2)$
}
\label{sec:proof_Gammas}


Before reading the following arguments, we recommend to first go through
\Cref{prop:improved-ridge} and its short proof,
as well as to understand how the informal approximations
\eqref{main_Gamma_4}-\eqref{main_Gamma_5_star}
lead to \eqref{314}, \eqref{316} and \eqref{leading_up_to}.
The techniques in \Cref{prop:improved-ridge} are used
in a simpler and more restricted setting than the general setting of the
present section, but are still representative of the arguments below.

\subsubsection{Notation and deterministic preliminary}
    Consider the change of variable and notation
    defined in \Cref{sec:change-of-variable}.
    Next, define  $\bm P_1^\perp = \bm I_p - \bm e_1 \bm e_1^T
    = \bm I_p - \bm\theta^* \bm\theta^*{}^T$ as well as 
    \begin{equation}
    \tilde t ^2
               \defas
               \Big\|\frac{\bm{G}^T\bm \hpsi}{n} + \hat v \bm \htheta \Big\|^2
        -
        \frac p {n} \hat r^2
        \label{eq:tilde-t-proof}
    \end{equation}
    which will be proved to be close to $\hat t^2$ and
    $t_*^2= [\bm e_1^T(\bm G^T\bm \hpsi + \trace[\bm V]\bm\htheta)]^2/n^2$.
    Consider
    $\Gamma_1^*,\Gamma_2,\Gamma_3,\Gamma_5^*,\Gamma_6^*,\Gamma_7$ defined by
    \begin{equation}
        \left\{
        \begin{aligned}
        \myred
        +\hat v \frac{\sigma_*^2}{\hat r^2} 
        -  \gamma_*
        &= 
        \Gamma_1^*,
        \\
        \frac{-\tilde t^2+t_*^2}{\hat r^2}
        =
        \frac{p}{n}
        -
        \frac{1}{\hat r^2}\|\bm P_1^\perp(\frac{\bm G^T\bm \hpsi}{n} + \hat v \bm\htheta)\|^2
        &= \Gamma_2,
        \\
        \frac{\|\bm P_1^\perp \bm G^T\bm \hpsi\|^2 }{n^2 \hat r^2}
        + \hat v \myred
        + \hat v \hat\gamma
        - \frac{p}{n}
        &= \Gamma_3
        ,
        \\
        -\hat v^2 \frac{\sigma_*^2}{\hat r^2} 
        - \hat v \myred
        + \hat v \hat\gamma
        &=\Gamma_3 + \Gamma_2\\
        \mybro - \frac{\sigma_*^2}{\hat r^2}
        - \gamma_* \myred
        + \hat v \hat\gamma  \frac{\sigma_*^2}{\hat r^2}
        &=  \Gamma_5^*,
        \\
        \frac{\hat\theta_1(\bm{G}\bm e_1)^\top
        (
        \bm{G}\bm P_1^\perp\bm \htheta
        - \gamma_* \bm{\hpsi})
        }{n\hat r^2}
        &= 
        \Gamma_6^*,
        \\
        \hat v \frac{\hat\theta_1^2
        (\|\bm G\bm e_1\|^2 - n)
        }{n\hat r^2}
        & = \Gamma_7
        \end{aligned}
        \right.
        \label{Rem_i}
    \end{equation}
    The quantities $\Gamma_i$ and $\Gamma_i^*$ will be proved
    to be of order $n^{-1/2}$ in \Cref{lemma:fiveqs} below,
    so that each right-hand side is of negligible order.
    Before proving that the $\Gamma_i,\Gamma_i^*$ are at most of order $n^{-1/2}$
    under our working assumptions, which follows from techniques
    already developed in the linear model \cite{bellec2021derivatives},
    we explain 
    how well-chosen weighted combinations of the above quantities
    provide the desired relationships \eqref{eq:consistency-hat-gamma}-\eqref{eq:consistency-aaa}.
    We have
    \begin{equation}
    \hat v(\hat\gamma - \gamma_*)
    =
    \Gamma_1^*\hat v +\Gamma_2 + \Gamma_3.
    \label{algebra-bound-gamma_star}
    \end{equation}
    If the previous display is of negligible order,
    then $\hat v\hat\gamma \approx \hat v \gamma_*$:
    This means that after multiplication by $\hat v$,
    we can replace $\gamma_*$ by $\hat \gamma$ in
    \eqref{Rem_i} without significantly
    enlarging the right-hand sides
    in the three equations involving $\Gamma_1^*,\Gamma_5^*,\Gamma_6^*$.
    With this in mind, define $\Gamma_1,\Gamma_5,\Gamma_6$ by
    \begin{align*}
        \hat v\Bigl(    \myred
        + \hat v \frac{\sigma_*^2 }{\hat r^2}
        - \hat \gamma
        \Bigr)
                 &=
        \hat v \Gamma_1^*
        + (\gamma_* - \hat\gamma) \hat v 
                 &&\defas\Gamma_1
        ,
        \\
        \hat v \Bigl(\mybro - \frac{\sigma_*^2}{\hat r^2}
        - \hat \gamma \myred
        + \hat v \hat\gamma  \frac{\sigma_*^2}{\hat r^2}
        \Bigr)
        &=
         \hat v
         \Gamma_5^* 
         + (\gamma_*-\hat \gamma)\hat v\myred 
        &&\defas
        \Gamma_5 ,
        \\
        \hat v
        \frac{\hat\theta_1(\bm{G}\bm e_1)^\top
        (
        \bm{G}\bm P_1^\perp\bm \htheta
        - \hat\gamma \bm{\hpsi})
        }{n\hat r^2}
                 &=
        \hat v
        \Gamma_6^*
        +(\gamma _* - \hat\gamma)\hat v \frac{\hat\theta_1(\bm{G} \bm e_1)^T\bm \hpsi}{n \hat r^2}
                 &&\defas \Gamma_6.
    \end{align*}
    Here $\Gamma_1,\Gamma_5,\Gamma_6$ are the analogous of $\Gamma_1^*,\Gamma_5^*,\Gamma_6^*$ after multiplication by $\hat v$ and replacing $\gamma_*$ by $\hat \gamma$ in the left-hand side
    of \eqref{Rem_i}.
    Completing the square, we find using $\Gamma_5,\Gamma_1$
    and $\hat \gamma^2\hat r^2 = \|\hat \gamma\bm \hpsi\|^2/n$
    \begin{align*}
        \hat v
        \Bigl[
        \frac{\|\bm{G}\bm P_1^\perp \bm \htheta - \hat \gamma \bm \hpsi\|^2}{n\hat r^2}
        -
        \frac{\sigma_*^2}{\hat r^2}
        \Bigr]
        =&
        \Gamma_5 - \hat\gamma \Gamma_1.
    \end{align*}
    For the following, recall
    that $\aaa_*^2 = \hat\theta_1^2$,
    $\sigma_*^2 = \|\bm P_1^\perp \bm\htheta\|^2$
    and $\|\bm\htheta\|^2 = \aaa_*^2 + \sigma_*^2$.
    Expanding now the square with
    $\bm{G} \bm \htheta - \hat\gamma \bm \hpsi
    = (\bm{G} \bm P_1^\perp \bm \htheta
    - \hat\gamma \bm{\hpsi})
     + \bm{G} \bm e_1 \htheta_1$,
    \begin{align}
        \hat v
        \Bigl(
        \frac{\|\bm{G}\bm \htheta - \hat \gamma \bm \hpsi\|^2}{n \hat r^2}
        -
        \frac{\sigma_*^2+\aaa_*^2}{\hat r^2}
        \Bigr)
        =&
           (\Gamma_5 - \hat\gamma \Gamma_1)
       + \Gamma_7  
        +   2 \Gamma_6
       \defas \Gamma_8.
        \label{algebra-bound-squared_norm}
    \end{align}
    This will justify the approximation
    $
        \hat v
        \|\bm{G}\bm \htheta - \hat \gamma \bm \hpsi\|^2/n
        \approx
        \hat v(
        \sigma_*^2
        +\aaa_*^2
        )
        =\hat v\|\bm \htheta\|^2
    $ when the right-hand side of the previous display
    is of negligible order.

    %

    We now focus on $\tilde t^2$ in \eqref{eq:tilde-t-proof},
    $\hat t^2$ in \eqref{hat_v_r_t},
    and 
    $t_* = 
    \frac{\bm e_1^T(\bm G^T\bm\hpsi + \trace[\bm V]\bm\htheta)}{n}
    =
        \frac{(\bm{G} \bm e_1)^T \bm \hpsi}{n}
        + \hat v
        \aaa_*
    $
    as defined in \Cref{thm:confidence-intervals-Sigma}.
    We have by the second line in \eqref{eq:rewrite-after-change-variable}
    and the definition of $\Gamma_8$,
    \begin{align}
    \frac{\tilde t^2 - \hat t^2}{\hat r^2}
    =
    -
       \hat v \Gamma_8,
       \qquad\qquad\qquad
    \frac{t_*^2 - \hat t^2}{\hat r^2}
    =
    \Gamma_2
    -
       \hat v \Gamma_8.
    \label{algebra-bound-t}
    \end{align}
    %
    This will justify the approximation $\hat t^2\approx t_*^2$
    in \eqref{eq:consistency-hat-t}
    when the right-hand sides are negligible.
    Using $\bm{G} \bm \htheta =
    \bm{G} \bm{P}_1^\perp\bm \htheta  +  \bm G \bm e_1 \htheta_1$
    and $\aaa_* = \htheta_1$ we find
    $$
    \frac{\bm \hpsi^T\bm G\bm \htheta}{n} - t_* \aaa_*
    = \frac{
        \bm \hpsi^T \bm G \bm P_1^\perp \bm\htheta
        + \bm \hpsi^T \bm G \bm e_1 \htheta_1
    }{n}
    - \aaa_*\Bigl(
        \frac{\bm \hpsi^T (\bm G \bm e_1)}{n}
        + \hat v \aaa_*
    \Bigr)
    = \frac{\bm \hpsi^T\bm G\bm P_1^\perp\bm \htheta}{n} - \hat v \aaa_*^2
    $$ so that by definition of $\Gamma_1$,
    \begin{equation}
        \label{Gamma_9}
        \hat v\Bigl[
            \frac{\bm{\hpsi}^T \bm G \bm\htheta}{n\hat r^2}
            + \hat v\frac{\|\bm{G} \bm \htheta - \hat\gamma\bm \hpsi \|^2}{n\hat r^2}
            -\hat \gamma
            - \frac{\aaa_* t_*}{\hat r^2}
        \Bigr]
    =
    \hat v^2\Bigl[
    \frac{\|\bm{G} \bm \htheta - \hat\gamma\bm \hpsi \|^2}{n\hat r^2}
    -
    \frac{\aaa_*^2 +\sigma_*^2}{\hat r^2}
    \Bigr]
    +
    \Gamma_1 
    \defas \Gamma_9
    \end{equation}
    with $\Gamma_9 \defas \hat v\Gamma_8 + \Gamma_1$.
    If the right-hand side is small, this means that the
    approximation
    $\frac{\bm{\hpsi}^T \bm G \bm\htheta}{n\hat r^2}
    + \hat v\frac{\|\bm{G} \bm \htheta - \hat\gamma\bm \hpsi \|^2}{n\hat r^2}
    -\hat \gamma
    \approx
    \frac{\aaa_* t_*}{\hat r^2}$ holds and one can estimate the product
    $\frac{\aaa_* t_*}{\hat r^2}$ by the left-hand side of the approximation.

    To find an estimate  for $\aaa_*^2=\hat\theta_1^2$,
    let $W=\frac{\bm{\hpsi}^T \bm G \bm\htheta}{n\hat r^2}
    + \hat v\frac{\|\bm{G} \bm \htheta - \hat\gamma\bm \hpsi \|^2}{n\hat r^2}
    -\hat \gamma$ 
    so that $\hat v(W-\frac{\aaa_*t_*}{\hat r^2})=\Gamma_9$.
    Expanding the square $(\hat v W)^2 = (\hat v \frac{\aaa_* t_*}{\hat r^2} + \Gamma_9)^2$
    and noticing that $W^2 = \frac{\hat t^2 \hat \aaa^2}{\hat r^4}$
    by definition of $(\hat t^2,\hat\aaa^2,\hat r^2)$,
    we obtain
    \begin{align*}
        \hat v^2 \frac{\hat t^2 \hat \aaa^2}{\hat r^4}  &=
      \hat v^2 \frac{\aaa_*^2 t_*^2}{\hat r^4}
                                       + 2 \hat v \frac{\aaa_* t_*}{\hat r^2} \Gamma_9 
      + \Gamma_9^2
    \end{align*}
    so that by
    subtracting $\hat v^2\frac{\aaa_*^2\hat t^2}{\hat r^4}$ on both sides ,
    \begin{align}
      \nonumber
    \hat v^2  \frac{\hat t^2(\hat\aaa^2 - \aaa_*^2)}{\hat r^4}
                                       &=
    \hat v^2 \frac{\aaa_*^2(t_*^2 - \hat t^2)}{\hat r^4}
    + 2 
    \hat v \frac{\aaa_* t_*}{\hat r^2}\Gamma_9 
    +
    \Gamma_9^2
                                       \\&=
    \hat v^2\frac{\aaa_*^2}{\hat r^2}(\Gamma_2 - \hat v\Gamma_8)
    + 2 
    \hat v \frac{\aaa_* t_*}{\hat r^2}\Gamma_9 
    +
    \Gamma_9^2
    .
    \label{algebra-bound-a}
    \end{align}
    This will justify
    the approximation $\hat \aaa^2\approx \aaa_*^2$ when $\Gamma_2,\Gamma_8,\Gamma_9$ have negligible order and
    $\hat v,\hat t^2,\hat r^2, \aaa^*$ all have constant order.


\subsubsection{All $\Gamma_i,\Gamma_i^*$ are of order at most $n^{-1/2}$}
\label{subsubsec:Gammas-sqrtn}

Controlling the terms $\Gamma_i$ relies on the two following probabilistic
propositions developed for analysing M-estimators in linear models. 

\begin{proposition}
    \label{prop26}
    [Theorem 7.2 in \cite{bellec2020out_of_sample}]
    Let $\bm{Z}\in\R^{K\times Q}$ be a matrix with iid $N(0,1)$ entries.
    Let $\bm{u}: \R^{K\times Q}\to \R^K$
    be weakly differentiable such that $\|\bm{u}(\bm Z)\|\le 1$ almost surely.
    Then,
    with $\bm{e}_q$ the $q$-th canonical basis vector in $\R^Q$,
    \begin{equation}
    \E
    \Big|
    Q \|\bm{u}(\bm Z)\|^2
    -
    \sum_{q=1}^Q
    \Bigl(\bm{e}_q^T \bm Z^T \bm u(\bm Z) 
    -
    \sum_{k=1}^K\tfrac{\partial u_k}{\partial z_{kq}}
    (\bm{Z})
    \Bigr)^2
    \Big|
    \le
    \C (\sqrt Q(1+\Xi^{1/2})
    + \Xi)
    \end{equation}
    where $\Xi = \E \sum_{k=1}^K\sum_{q=1}^Q
    \| \frac{\partial \bm{u}}{\partial z_{kq}}(\bm Z)\|^2$.
\end{proposition}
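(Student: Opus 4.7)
The plan is to decompose $\E|T|\le |\E T|+\sqrt{\Var T}$ where $T(\bm Z)\defas \sum_{q=1}^Q S_q^2 - Q\|\bm u(\bm Z)\|^2$ and $S_q\defas \bm e_q^T\bm Z^T\bm u(\bm Z)-\sum_{k=1}^K\partial_{z_{kq}} u_k(\bm Z)$, then to show separately $|\E T|\le \Xi$ and $\Var T\le \C\, Q(1+\Xi)$. Combining via $\sqrt{Q(1+\Xi)}\le \sqrt Q+\sqrt{Q\Xi}=\sqrt Q(1+\Xi^{1/2})$ yields the advertised right-hand side $\C(\sqrt Q(1+\Xi^{1/2})+\Xi)$.

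For the expectation bound, I would apply the second-order Stein identity of \cite{bellec_zhang2018second_stein} to the column $\bm z_q\defas \bm Z\bm e_q\sim N(\bm 0,\bm I_K)$, conditionally on the other columns encoded in $\bm Z(\bm I_Q-\bm e_q\bm e_q^T)$. Viewing $\bm u$ as a function of $\bm z_q$ only, this identity reads
\[
\E\bigl[S_q^2\,\big|\,\bm Z(\bm I_Q-\bm e_q\bm e_q^T)\bigr]
=\E\bigl[\|\bm u\|^2\,\big|\,\bm Z(\bm I_Q-\bm e_q\bm e_q^T)\bigr]
+\E\Bigl[\textstyle\sum_{k,l}\partial_{z_{lq}} u_k\cdot\partial_{z_{kq}} u_l\,\big|\,\bm Z(\bm I_Q-\bm e_q\bm e_q^T)\Bigr].
\]
Summing over $q$ and bounding each cross term by Cauchy--Schwarz in the $(k,l)$ indices (the transpose of the Jacobian in column $q$ has the same Frobenius norm) gives $|\E T|\le \Xi$.

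For the variance, the plan is to apply the Gaussian Poincar\'e inequality to $T$ viewed as a function of $\bm Z\in\R^{KQ}$, after first reducing to smooth $\bm u$ with bounded second derivatives by mollification (convolve with a Gaussian kernel at scale $\varepsilon\to 0$) and then passing to the limit using $L^2$-continuity of both sides in $(\bm u,\nabla\bm u)$. Differentiating yields $\partial_{z_{kq}} T= 2 S_q u_k + (\text{terms linear in }\partial\bm u\text{ and }\partial^2\bm u)$; the dominant contribution $\sum_{k,q} 4 S_q^2 u_k^2\le 4\sum_q S_q^2$ has expectation at most $4(Q\E\|\bm u\|^2+\Xi)\le \C\, Q(1+\Xi)$ by the identity already established, while the remaining contributions, after one further Gaussian integration by parts converting second partials of $\bm u$ back into bilinear forms in first partials, are absorbed by $\Xi$ together with a factor $Q$ from summing over columns.

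The main obstacle is this variance step: each $\partial_{z_{kq}} T$ a priori inherits second-order partials $\partial^2_{z_{kq},z_{k'q}} u_{k'}$ from differentiating the divergence correction inside $S_q$, as well as genuinely mixed-column contributions $2 S_{q'}\partial_{z_{kq}} S_{q'}$ for $q'\ne q$. Taming these requires careful accounting, via the extra integration by parts just mentioned or equivalently via an Efron--Stein scheme resampling one column at a time, to show that no cross term scales worse than $Q(1+\Xi)$. The mollification step is needed precisely to justify this single extra integration by parts under the mere weak-differentiability assumption on $\bm u$.
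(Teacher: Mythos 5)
This proposition is imported verbatim from \cite{bellec2020out_of_sample} (Theorem 7.2 there); the present paper gives no proof of it, so your attempt can only be judged on its own merits rather than against an in-paper argument.

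Your first step is sound. Conditioning on $\bm Z(\bm I_Q-\bm e_q\bm e_q^T)$ and applying the second-order Stein identity to the column $\bm z_q$ does give $\E[S_q^2]=\E[\|\bm u\|^2]+\E[\sum_{k,l}\partial_{z_{lq}}u_k\,\partial_{z_{kq}}u_l]$, and Cauchy--Schwarz on the $(k,l)$ sum yields $|\E T|\le \Xi$ exactly as you claim. The decomposition $\E|T|\le|\E T|+\sqrt{\Var T}$ and the final recombination $\sqrt{Q(1+\Xi)}\le \sqrt Q(1+\Xi^{1/2})$ are also fine.

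The variance step, however, contains a genuine gap that your own ``main obstacle'' paragraph does not close. Writing $\partial_{z_{kq'}}T=\sum_q 2S_q\,\partial_{z_{kq'}}S_q-2Q\,\bm u^T\partial_{z_{kq'}}\bm u$, the Poincar\'e bound $\Var T\le\E\|\nabla T\|_F^2$ produces at least three problematic contributions. First, the term $-2Q\,\bm u^T\partial_{z_{kq'}}\bm u$ alone contributes $4Q^2\,\E\sum_{k,q'}(\bm u^T\partial_{z_{kq'}}\bm u)^2\le 4Q^2\Xi$, which is a full factor of $Q$ larger than the target $\C Q(1+\Xi)$; it can only be rescued by exhibiting cancellation against the $\bm z$-weighted piece $\sum_q 2S_q\sum_{k'}z_{k'q}\partial_{z_{kq'}}u_{k'}$ (heuristically $\sum_q S_q z_{k'q}\approx Q u_{k'}$), and quantifying that cancellation requires controlling quantities like $\|\bm Z\bm Z^T\bm u-Q\bm u\|$, i.e.\ operator-norm bounds on $\bm Z$ that appear nowhere in your argument. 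Second, $\partial_{z_{kq'}}S_q$ contains the second partials $\partial^2_{z_{kq'},z_{k'q}}u_{k'}$, which are not controlled by $\Xi$; mollification makes them exist but does not make them small uniformly in the mollification parameter, so the limit $\varepsilon\to0$ cannot be taken in the bound. Third, the proposed fix --- ``one further Gaussian integration by parts converting second partials back into bilinear forms in first partials'' --- is not a legitimate operation here: the second partials sit \emph{inside the square} in $\E[(\partial_{z_{kq'}}T)^2]$, and integration by parts cannot remove a factor from inside a squared integrand without changing the quantity being bounded. As it stands, the inequality $\Var T\le\C Q(1+\Xi)$ is asserted rather than proved, and the naive accounting actually gives $Q^2\Xi$; the proof is therefore incomplete at its decisive step.
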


\begin{proposition}
    \label{prop25}
    [Proposition 6.5 in \cite{bellec2020out_of_sample}]
    Let $\bm{Z}\in\R^{K\times Q}$ be a matrix with iid $N(0,1)$ entries.
    Let $\bm{f}: \R^{K\times Q}\to \R^Q$,
    $\bm{u}:\R^{K\times Q}\to \R^K$
    be weakly differentiable. Then,
    omitting the dependence on $\bm{Z}$ in $\bm u(\bm Z),\bm f(\bm Z)$
    and their derivatives,
    \begin{equation*}
    \E\Bigl[
    \Bigl(
    \bm{u}^T \bm Z \bm f
    -
    \sum_{k=1}^K
    \sum_{q=1}^Q
    \frac{\partial\bm (f_q u_k)}{\partial z_{kq}}
    \Bigr)^2
    \Bigr]
    \le
    \E\Bigl[
    \|\bm{u}\|^2
    \|\bm{f}\|^2
    +
    \sum_{k=1}^K
    \sum_{q=1}^Q
    \Big\|
    \bm{f}
    \frac{\partial\bm{u}^T}{\partial z_{kq}}
    +
    \Bigl(
    \frac{\partial\bm{f}}{\partial z_{kq}}
    \Bigr)
    \bm{u}^T
    \Big\|_F^2
    \Bigr].
    \end{equation*}
\end{proposition}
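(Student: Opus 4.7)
The plan is to prove this second-order Stein inequality by two applications of Gaussian integration by parts. Abbreviate $T = \bm u^T \bm Z \bm f = \sum_{k,q} z_{kq} u_k f_q$ and $H = \sum_{k,q}\partial(u_k f_q)/\partial z_{kq}$, so the left-hand side is $\E[(T-H)^2]$. A first application of Stein's formula to each coordinate $z_{kq}$ (treating $\bm u,\bm f$ as weakly differentiable functions of $\bm Z$) gives $\E[T] = \E[H]$, so $T-H$ has mean zero. The basic identity $\E[(T-H)^2] = \E[(T-H)T] - \E[(T-H)H]$ reduces everything to computing these two inner products.

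First I would compute $\E[(T-H)T]$. Apply Gaussian integration by parts in each $z_{kq}$ inside $\E[(T-H)\, z_{kq}u_k f_q]$, yielding
\begin{equation*}
\E[(T-H)T]
= \sum_{k,q}\E\Big[\tfrac{\partial (T-H)}{\partial z_{kq}} \, u_k f_q\Big]
+ \E\Big[(T-H) \sum_{k,q}\tfrac{\partial(u_k f_q)}{\partial z_{kq}}\Big]
=\sum_{k,q}\E\Big[\tfrac{\partial (T-H)}{\partial z_{kq}}\, u_k f_q\Big] + \E[(T-H)H].
\end{equation*}
Rearranging, $\E[(T-H)^2] = \sum_{k,q}\E[(\partial T/\partial z_{kq} - \partial H/\partial z_{kq})\, u_k f_q]$. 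Using the product rule, $\partial T/\partial z_{kq} = u_k f_q + (\partial\bm u^T/\partial z_{kq})\bm Z\bm f + \bm u^T\bm Z(\partial \bm f/\partial z_{kq})$, which isolates the dominant "diagonal" contribution $\sum_{k,q} u_k^2 f_q^2 = \|\bm u\|^2\|\bm f\|^2$, accounting for the first term on the right-hand side of the claim.

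The remaining task is to show that all other terms (the non-diagonal part of $\partial T/\partial z_{kq}$ and $\partial H/\partial z_{kq}$ paired with $u_k f_q$) are bounded by $\sum_{k,q}\E\|\bm f(\partial\bm u^T/\partial z_{kq}) + (\partial \bm f/\partial z_{kq})\bm u^T\|_F^2$. Here I would run a second pass of Stein's identity on the factors $\bm Z$ still present in $\partial T/\partial z_{kq}$, producing expressions quadratic in first derivatives of $\bm u$ and $\bm f$. By collecting them into the outer-product form $\bm f(\partial\bm u^T/\partial z_{kq}) + (\partial\bm f/\partial z_{kq})\bm u^T$ and using Cauchy–Schwarz where needed to absorb cross-products, the squared Frobenius norm structure on the RHS emerges.

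The main obstacle is the bookkeeping in the last step: the second round of integration by parts produces many cross-terms, and one must verify that they consolidate exactly into the stated $Q\times K$ outer-product-plus-outer-product squared Frobenius norm, rather than into a strictly larger quantity. The key algebraic observation making this work is that $\bm f(\partial\bm u^T/\partial z_{kq}) + (\partial\bm f/\partial z_{kq})\bm u^T$ is precisely the derivative of the rank-one tensor $\bm f \bm u^T$ with respect to $z_{kq}$, so the second-order Stein terms naturally assemble against this matrix-valued derivative. Once that structural identification is made, the inequality follows by Cauchy–Schwarz applied entrywise to the resulting matrix inner products.
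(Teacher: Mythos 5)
The paper does not prove this proposition at all: it is imported verbatim as Proposition 6.5 of \cite{bellec2020out_of_sample}, so there is no in-paper argument to compare against. Judged on its own terms, your reconstruction is essentially the canonical derivation of that cited result and it is correct in outline. The cleanest way to phrase what you are doing is to vectorize: set $F_{kq}=u_kf_q$, i.e.\ view $\bm F=\bm f\bm u^T$ as a vector field on $\R^{KQ}$; then $\bm u^T\bm Z\bm f=\langle \bm Z,\bm F\rangle$ and $\sum_{k,q}\partial(u_kf_q)/\partial z_{kq}=\operatorname{div}\bm F$, and the claim is exactly the second-order Stein identity $\E[(\langle\bm Z,\bm F\rangle-\operatorname{div}\bm F)^2]=\E[\|\bm F\|^2+\sum \partial_{kq}F_{k'q'}\,\partial_{k'q'}F_{kq}]$ followed by Cauchy--Schwarz on the trace term, since $\|\bm F\|^2=\|\bm u\|^2\|\bm f\|^2$ and $\partial(\bm f\bm u^T)/\partial z_{kq}=\bm f\,\partial\bm u^T/\partial z_{kq}+(\partial\bm f/\partial z_{kq})\bm u^T$. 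Your two rounds of integration by parts implement precisely this: after the first round the off-diagonal part of $\partial T/\partial z_{kq}$ collapses to $\sum_{k',q'}z_{k'q'}\,\partial F_{k'q'}/\partial z_{kq}$, the second round cancels the mixed second derivatives coming from $\partial H/\partial z_{kq}$ and leaves $\sum\E[\partial_{k'q'}F_{kq}\,\partial_{kq}F_{k'q'}]$, which Cauchy--Schwarz bounds by the stated Frobenius norm. Two caveats you should make explicit if you write this up: the intermediate cancellation uses second derivatives of $\bm u,\bm f$, whereas the statement assumes only weak differentiability, so a smoothing/density argument is needed to remove that extra regularity; and Stein's identity requires integrability hypotheses (finiteness of the right-hand side and of $\E\|\bm F\|$, say) that should be stated or reduced to by truncation.
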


\begin{restatable}{lemma}{lemmaFiveEq}
    \label{lemma:fiveqs}
    Let \Cref{assum,assumStrongConvex} be fulfilled. Then
$\E[\Gamma_1^*{}^2
+\Gamma_3^2
+ \Gamma_5^*{}^2
+ \Gamma_6^*{}^2
+ \Gamma_7^2
]\le \C(\delta,\tau)/n$
and $\E[|\Gamma_2|]
\le \C(\delta,\tau)/\sqrt n$.
\end{restatable}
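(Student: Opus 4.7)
The plan is to exploit that, after conditioning on $(\bm y,\bm G\bm e_1)$, the matrix $\bm Z$ obtained from $\bm G\bm P_1^\perp$ by deleting its zero first column has iid $N(0,1)$ entries of size $n\times(p-1)$, independent of the conditioning. Each $\Gamma_i$ will then be identified as a compensated quadratic or bilinear form in $\bm Z$ and controlled either by \Cref{prop25} (an $L^2$ Stein inequality for $\bm u^T\bm Z\bm f$) or by \Cref{prop26} (a Hanson--Wright-type $L^1$ identity with sharper $\sqrt Q$ scaling). The Stein compensators will be computed in closed form using the derivative formulas \eqref{derivatives-htheta} for $\partial\bm\hpsi/\partial g_{ij}$ and $\partial\bm\htheta/\partial g_{ij}$.

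For $\Gamma_2$, I would apply \Cref{prop26} with $\bm u = \bm\hpsi/\|\bm\hpsi\|$, so that $Q\|\bm u\|^2 = p-1$. Chain-rule computation based on \eqref{derivatives-htheta} shows that for each $q\ne 1$,
\[
\bm e_q^T\bm Z^T\bm u - \sum_{k=1}^n\frac{\partial u_k}{\partial z_{kq}}
= \frac{[\bm G^T\bm\hpsi + \trace[\bm V]\bm\htheta]_q}{\|\bm\hpsi\|}
+ \Bigl(\frac{\bm\hpsi^T\bm V\bm\hpsi}{\|\bm\hpsi\|^3}\Bigr)\htheta_q,
\]
so squaring, summing over $q\ne 1$, and dividing by $n$ recovers $\Gamma_2$ up to a negligible contribution from the second correction term. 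The $\sqrt Q$ factor on the right-hand side of \Cref{prop26} is the sole source of the weaker $n^{-1/2}$ rate in the lemma.

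For $\Gamma_1^*, \Gamma_3, \Gamma_5^*, \Gamma_6^*$ I would apply \Cref{prop25} with tailored pairs $(\bm u,\bm f)$. For $\Gamma_1^*$, setting $\bm u = \bm\hpsi/\|\bm\hpsi\|^2$ and $\bm f = \bm P_1^\perp\bm\htheta$ yields $\bm u^T\bm Z\bm f = \myred$; the compensator $\sum_{k,q}\partial(f_q u_k)/\partial z_{kq}$ then decomposes as $\trace[\bm A]=\gamma_*$ (from the $\bm A\bm e_j\hpsi_i$ block of $\partial\bm\htheta/\partial g_{ij}$) minus $\hat v\sigma_*^2/\hat r^2$ (from the $\bm V\bm e_i\htheta_j$ block of $\partial\bm\hpsi/\partial g_{ij}$, whose $\bm V$-trace equals $n\hat v$), the mixed $\bm D\bm G\bm A$ cross terms cancelling by symmetry of $\bm A$. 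For $\Gamma_3$, taking $\bm u = \bm\hpsi/\|\bm\hpsi\|$ and $\bm f = \bm Z^T\bm u/n$ produces $\bm u^T\bm Z\bm f = \|\bm P_1^\perp\bm G^T\bm\hpsi\|^2/(n\|\bm\hpsi\|^2)$, and the compensator supplies $-\hat v\myred + (p-1)/n$ from the direct $\bm Z$-dependence of $\bm f$ together with $-\hat v\hat\gamma$ from the term $\trace[\bm D\bm G\bm A\bm G^T]/n = \df/n$ generated by differentiating $\bm u$ through $\bm\hpsi$. Analogous bilinear choices handle $\Gamma_5^*$ (both $\bm u$ and $\bm f$ built from $\bm P_1^\perp\bm\htheta$ to reproduce $\mybro$) and $\Gamma_6^*$ (a bilinear form in which $\bm G\bm e_1$ plays the role of a deterministic vector given the conditioning). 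Finally, $\Gamma_7$ requires no Stein identity: the chi-squared concentration $\E[(\|\bm G\bm e_1\|^2-n)^2]=2n$ combined with uniform control of $\hat v\htheta_1^2/\hat r^2$ suffices.

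The hard part is the uniform control of the right-hand sides of \Cref{prop25} and \Cref{prop26}, which involve operator and Frobenius norms of $\bm A, \bm V, \bm D$ and of $\bm G$. I would bound these using \eqref{eq:boundA} (giving $\|\bm A\|_{op}\le(n\tau)^{-1}$), the operator bound $\|\bm V\|_{op}\le\|\bm D\|_{op}+\|\bm D^{1/2}\bm G\|_{op}^2/(n\tau)$ coming out of the proof of \Cref{thm:derivatives}, the 1-Lipschitz property $\|\bm D\|_{op}\le 1$, standard moment bounds on $\|\bm G\|_{op}/\sqrt n$ under $p/n\le\delta^{-1}$, and the strong-convexity estimate \eqref{eq:upper-bound-norm-beta} on $n\|\bm\htheta\|^2/\|\bm\hpsi\|^2$. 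The delicate point is absorbing the $1/\hat r^2$ factor appearing in every $\Gamma_i$, which requires moment control of $1/\|\bm\hpsi\|^2$; this follows from the dichotomy underlying \eqref{eq:psi-ne-0-proba-1} combined with standard lower-tail bounds on $\|\bm\hpsi\|^2$ via the 1-Lipschitz loss. Assembling these ingredients for each $\Gamma_i$ yields a constant depending only on $(\delta,\tau)$, giving the claimed $\C(\delta,\tau)/n$ rate in $L^2$ for $\{\Gamma_1^*,\Gamma_3,\Gamma_5^*,\Gamma_6^*,\Gamma_7\}$ and $\C(\delta,\tau)/\sqrt n$ in $L^1$ for $\Gamma_2$.
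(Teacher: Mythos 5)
Your overall strategy coincides with the paper's: $\Gamma_7$ by $\chi^2$ concentration and Cauchy--Schwarz, $\Gamma_2$ by \Cref{prop26} applied to $\bm u=\bm\hpsi/\|\bm\hpsi\|$ conditionally on $(\bm y,\bm G\bm e_1)$ (with the $\sqrt Q$ term accounting for the weaker $n^{-1/2}$ rate), and $\Gamma_1^*,\Gamma_3,\Gamma_5^*,\Gamma_6^*$ by \Cref{prop25} with essentially the same choices of $(\bm u,\bm f)$ up to where you place the $\|\bm\hpsi\|$ normalization, followed by the norm bounds \eqref{eq:boundA}, $\|\bm D\|_{op}\le1$, the $\bm V$ operator bound, Gaussian operator-norm moments, and \eqref{eq:upper-bound-norm-beta}.

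Two points need correction. First, in your $\Gamma_1^*$ computation the mixed terms $\bm\hpsi^T\bm D\bm G\bm A\bm P_1^\perp\bm\htheta$ and $\bm\htheta^T\bm P_1^\perp\bm A\bm G^T\bm D\bm\hpsi$ do not cancel (by symmetry or otherwise): they enter the Stein compensator with the same sign. This is harmless because each is $O(n^{-1/2})$ in $L^2$ once you use $\|\bm A\|_{op}\le(n\tau)^{-1}$ and \eqref{eq:upper-bound-norm-beta}, so they should be absorbed into the remainder rather than cancelled. Second, and more importantly, your plan to ``absorb the $1/\hat r^2$ factor'' via lower-tail moment bounds on $\|\bm\hpsi\|^2$ would fail: under \Cref{assum,assumStrongConvex} alone the loss derivative is only assumed $1$-Lipschitz, which gives upper bounds on $|\ell_{y}'|$ and no quantitative lower bound on $\|\bm\hpsi\|$ (such a lower bound only becomes available under \Cref{assumExtra}). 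No such bound is needed: every occurrence of $1/\hat r^2$ in the $\Gamma_i$ is paired with either a factor $\|\bm\htheta\|^2$ or a factor $\|\bm\hpsi\|\cdot\|\bm\htheta\|$ in the numerator, and the ratio $n\|\bm\htheta\|^2/\|\bm\hpsi\|^2\le\|n^{-1/2}\bm G\|_{op}^2/\tau^2$ from strong convexity (which you already cite) controls all of these directly. Once you route the argument through that ratio instead of a separate lower bound on $\|\bm\hpsi\|^2$, the proof goes through as the paper's does.
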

Let us recall some bounds that will be useful throughout the proof:
\begin{align}
    \label{eq:bound-A-op-recap-before-proof}
    \|\bm A\|_{op} &\le (n\tau)^{-1}
    \qquad\qquad\qquad\qquad \text{[by \eqref{eq:boundA}]},
    \\
        \|\bm{V}\|_{op}=
        \|\bm D - \bm D \bm G \bm A \bm G \bm D\|_{op}
        &\le 1+\|n^{-1/2}\bm{G}\|_{op}^2/\tau
        \quad\text{[by \eqref{eq:bound-A-op-recap-before-proof} and def. of $\bm V$]},
        \label{eq:bound-V-op}
        \\
    \label{eq:7-v}
    |\hat v| = |\tfrac{\trace \bm V}{n}| &\le n (1+
            \|n^{-1/2}\bm{G}\|_{op}^2 / \tau
    )
    \quad\quad\text{[by \eqref{eq:bound-V-op} or \eqref{eq:bound-trV-D}]},\\
\tfrac{n\|\bm{\htheta}\|^2}{\|\bm{\hpsi}\|^2}
= \tfrac{\|\bm \htheta\|^2}{\hat r^2}
                          &
\le \|n^{-1/2}\bm{G}\|_{op}^2 / \tau^2
\quad\qquad
\quad\quad\text{[see \eqref{eq:upper-bound-norm-beta}]},
    \label{eq:bound-htheta}
\\
    \E[\|n^{-1/2 }\bm G \|_{op}^c] &\le \C(\delta,c)
    \label{eq:7-norm-G}
\end{align}
for any absolute constant $c\ge 1$. Here the last line follows,
for instance, from
\cite[Corollary 7.3.3]{vershynin2018high} or \cite[Theorem II.13]{DavidsonS01}.
As we explain next, the bound on $\E[|\Gamma_2|]$ follows from \Cref{prop26}
while the bounds $\E[\Gamma_1^*{}^2],\E[\Gamma_3^2],\E[\Gamma_5^*{}^2],
\E[\Gamma_6^*{}^2]$ are consequences of \Cref{prop25}. 

\begin{proof}[Proof of \Cref{lemma:fiveqs}]

\textbf{Proof of $\E[\Gamma_7^2]\le \C(\delta,\tau)/n$.}
By the Cauchy-Schwarz inequality, with $\|\bm G\bm e_1\|^2\sim \chi^2_n$ we have
$\E[\Gamma_7^2]
\le 
\E[(\chi^2_n-n)^4]^{1/2}\E[\hat v^4\hat \theta_1^8 / \hat r^{8}]^{1/2}/n^2
$.
Next,
$\E[(\chi^2_n-n)^4]^{1/4} \le \C \sqrt n$
by concentration properties of the $\chi^2_n$ distribution,
and $\E[\Gamma_7^2]\le \C(\delta,\tau)/n$ is obtained by combining
\eqref{eq:7-v}-\eqref{eq:7-norm-G}. \qed

\textbf{
        Proof of 
        $\E[|\Gamma_2|]\le \C(\delta,\tau)n^{-1/2}$.
}
    We apply
    \Cref{prop26}
    with respect to the Gaussian matrix $\bm{G}$
    with the first column removed. By construction,
    since $\bm{y}$ is independent of the submatrix of $\bm G$
    made of columns indexed in $\{2,...,p\}$,
    we are in a position to apply \Cref{prop26} 
    conditionally on $(\bm{y}, \bm G \bm e_1)$ where $\bm e_1$ is the first
    canonical basis vector in $\R^p$.
    With
    $\Xi = \E\sum_{j=2}^p\sum_{i=1}^n \|\frac{\partial \bm{u}}{\partial g_{ij}}\|^2$,
    the choice $\bm{u} = \bm{\hpsi}/\|\bm \hpsi\|$ in
    \Cref{prop26}
    yields
    \begin{equation}
        \E\Big|(p-1) - \sum_{j=2}^p
    \bigl(
    \bm{e}_j^T \bm{G}^T \bm{u} 
    - \sum_{i=1}^n \frac{\partial u_i}{\partial g_{ij}}
    \bigr)^2
    \Big|
    \le \C(1+\sqrt \Xi) \sqrt p + \C \Xi
    .
    \label{eq:chi-square-proba-result}    
    \end{equation}
    By \eqref{derivatives-htheta} we have
    $(\partial/\partial g_{ij})
    (\tfrac{\bm{\hpsi} }{\|\bm \hpsi\|})
    = (\bm{I}_n - \tfrac{\bm{\hpsi}\bm \hpsi^T}{\|\bm \hpsi\|^2})[
    - \bm{D}\bm{G}\bm{A}\bm{e}_j \hpsi_i
    - \bm{V} \bm{e}_i \htheta_j
    ]$ by the chain rule, hence
    \begin{equation}
        \sum_{i=1}^n \frac{\partial u_i}{\partial g_{ij}}
    =
        \sum_{i=1}^n \frac{\partial}{\partial g_{ij}}\Bigl(\frac{\hpsi_i}{\|\bm{\hpsi}\|}\Bigr)
        = - \frac{\trace[(\bm{I}_n - \bm{\hpsi}\bm \hpsi^T/\|\bm \hpsi\|^2) \bm{V}]}{\|\bm \hpsi\|}\htheta_j 
    \end{equation}
    in the left-hand side of \eqref{eq:chi-square-proba-result}.
    In the right-hand side of \eqref{eq:chi-square-proba-result},
    we bound $\Xi$ using
    \begin{align}
        \sum_{i=1}^n
        \sum_{j=2}^p
        \Big\|
        \frac{\partial}{\partial g_{ij}}
        \Bigl(
        \frac{\bm\hpsi}{\|\bm{\hpsi}\|}
        \Bigr)
        \Big\|^2
        &\le
        \frac{1}{\|\bm{\hpsi}\|^2}
        \sum_{i=1}^n
        \sum_{j=2}^p
        \Big\|
        \frac{\partial \bm\hpsi}{\partial g_{ij}}
        \Big\|^2
        \nonumber
      \\&\le
        2
        \sum_{i=1}^n
        \sum_{j=2}^p
        \Bigl(
        \frac{\|\bm{D}\bm{G}\bm{A }\bm{e}_j\|^2 \hpsi_i^2}{\|\bm{\hpsi}\|^2}
        +
        \frac{\|\bm{V} \bm{e}_i\|^2 \htheta_j^2}{\|\bm{\hpsi}\|^2}
        \Bigr)
        \nonumber
      \\&\le
        2 \bigl(
          \|\bm{D}\bm{G}\bm{A}\|_F^2
          + n \|\bm{V}\|_{op}^2 \|\bm{\htheta}\|^2/ \|\bm{\hpsi}\|^2
        \bigr).
        \label{eq:inequality-square-gradient-psi}
    \end{align}
    Using inequalities
    \eqref{eq:bound-V-op}-\eqref{eq:bound-htheta} we find
    $$
    \Xi \le 2 \E\bigl[n\|\bm{G}\|_{op}^2/(n\tau)^2\bigr]
    +
    2 \E\bigl[(1+\|n^{-1/2}\bm{G}\|_{op}^2/\tau)^2 \|n^{-1/2}\bm{G}\|_{op}^2/\tau^2\bigr]
    \le \C(\delta,\tau).
    $$
    Thanks to \eqref{eq:7-norm-G} this yields
    \begin{equation}
    \E\Big|
    (p-1)
    - \frac{\|\bm{G}^T\bm{\hpsi} + \trace[\bm{\tilde V}]\bm{\htheta}\|^2}{\|\bm \hpsi\|^2}
    +
    \frac{(\bm{e}_1^T\bm{G}^T\bm{\hpsi} + \trace[\bm{\tilde V}]\bm{e}_1^T\bm{\htheta})^2}{\|\bm \hpsi\|^2}
    \Big|
    \le \C(\delta,\tau) \sqrt p
    \label{conclusion-tilde-V}
    \end{equation}
    where $\bm{\tilde V} = (\bm{I}_n-\bm{\hpsi}\bm \hpsi^T/\|\bm \hpsi\|^2) \bm{V}$.
    Note that $p-1$ can be replaced by $p$ in the left-hand side
    by changing the right-hand side constant if necessary.
    It remains to show that we can replace $\trace[\bm{\tilde V}]$
    by $\trace[\bm{V}]$ in \eqref{conclusion-tilde-V}.
    Thanks to
    $\|\bm{a}\|^2 - \|\bm{b} \|^2
    = (\bm{a} - \bm{b} )^T(\bm{a}  + \bm{b} )$ we have
    with $\bm P_1^\perp = \bm{I}_p - \bm{e}_1 \bm{e}_1^T$
    and
    $\trace[\bm V - \bm{\tilde V}] = \bm \hpsi^T \bm{V} \bm \hpsi / \|\bm \hpsi\|^2$
    \begin{align*}
    \frac{
        \|\bm P_1^\perp(\bm{G}^T \bm{\hpsi} + \trace[\bm{V} ]\bm{\htheta}) \|^2
        -\| \bm P_1^\perp ( \bm{G}^T \bm{\hpsi} + \trace[\bm{\tilde V} ]\bm{\htheta} )\|^2
    }{\|\bm{\hpsi}\|^2}
    =
    \frac{
    \bm{\hpsi}^T\bm{V} \bm{\hpsi}}{\|\bm \hpsi\|^2}
    \frac{
        \bm{\htheta}^T \bm P_1^\perp
    (2\bm{G}^\top\bm{\hpsi} + (\trace[\bm{V} + \bm{\tilde V}] )\bm{\htheta})
}{
\|\bm{\hpsi}\|^2}
    \end{align*}
    which is smaller in absolute value
    than
    $2\|\bm{V} \|_{op}
    (
    \|\bm{G}\|_{op} \|\bm{\htheta}\| / \|\bm{\hpsi}\|
    + n
    \|\bm{\htheta}\|^2/ \|\bm{\hpsi}\|^2
    )$.
    Thanks to the bounds \eqref{eq:bound-V-op}-\eqref{eq:bound-htheta}
    and $\E[\|n^{-1/2}\bm{G} \|_{op}^6]\le \C(\delta)$,
    by the triangle inequality
    $\trace[\bm{\tilde V}]$ in \eqref{conclusion-tilde-V}
    can be replaced by $\trace[\bm{V}]$.
    We have thus established
    \begin{equation*}
    n\E\Big|\frac{\tilde t^2-t_*^2}{\hat r^2}\Big|
    =
    \E\Big|
    p
    - \frac{\|\bm{G}^T\bm{\hpsi} + \trace[\bm V ]\bm{\htheta}\|^2}{\|\bm \hpsi\|^2}
    +
    \frac{(\bm{e}_1^T\bm{G}^T\bm{\hpsi} + \trace[\bm V ]\bm{e}_1^T\bm{\htheta})^2}{\|\bm \hpsi\|^2}
    \Big|
    \le \C(\delta,\tau) \sqrt p
    \end{equation*}
    as well as $\E|\Gamma_2|\le \C(\delta,\tau) n^{-1/2}$.
    \qed

\textbf{
        Proof of 
        $\E[|\Gamma_6^*|^2]\le \C(\delta,\tau)/n$.
}
Let $K=n, Q=p-1$
and let $\bm{Z}\in\R^{K\times Q}$ be the matrix $\bm G$ with the first column removed.
Then $\bm{Z}$ is independent of $(\bm y, \bm G \bm e_1)$
and \Cref{prop25} is applicable conditionally on $(\bm{y}, \bm G \bm e_1)$.
Chose $\bm{f} = (\frac{\htheta_j}{\|\bm \hpsi\|})_{j=2,...,p}$
valued in $\R^{p-1}$
and $\bm{u} = \bm G \bm e_1$ valued in $\R^n$.
Here, $\bm{u}$ has zero derivatives with respect to $\bm Z$.
Using the derivatives in \eqref{derivatives-htheta},
\begin{align*}
\bm{u}^T \bm Z \bm f
    -
    \sum_{k=1}^K
    \sum_{q=1}^Q
    \frac{\partial\bm (f_q u_k)}{\partial z_{kq}}
&=
\frac{(\bm G \bm e_1)^T \bm G \bm P_1^\perp \bm\htheta}{\|\bm \hpsi\|}
-\sum_{i=1}^n
\sum_{j=2}^p
G_{i1}\Bigl(
    \frac{1}{\|\bm \hpsi\|}
\frac{\partial \htheta_j}{\partial g_{ij}}
+
\htheta_j 
\frac{\partial}{\partial g_{ij}}
\Bigl(
    \frac{1}{\|\bm \hpsi\|}
\Bigr)
\Bigr)
\\&=
\bigl[
\|\bm{\hpsi}\|^{-1}
    (\bm{G}\bm e_1)^T(\bm G \bm P_1^\perp \bm \htheta
-\trace[\bm{A}]\bm\hpsi)
\bigr]
+
\Rem_6'
+\Rem_6''
\end{align*}
where the square bracket on the last line equals 
$
\sqrt n \Gamma_6^* \tfrac{\hat r}{\hat\theta_1}  = 
$ and
\begin{align*}
\Rem_6'&=
\frac{(\bm{G}\bm e_1)^T
\trace[\bm{A}]\bm\hpsi}{\|\bm \hpsi\|}
-
\sum_{i=1}^n
\frac{G_{i1}}{\|\bm \hpsi\|}
\sum_{j=2}^p\frac{\partial\htheta_j}{\partial g_{ij}}
=
\frac{
    \bm{A}_{11}\bm (\bm G \bm e_1)^T\bm \hpsi
    + \bm\htheta^T \bm{P}_1^\perp \bm{A} \bm G^T \bm D \bm G\bm e_1
}{\|\bm{\hpsi}\|}
\\
\Rem_6''
&=- 
\|\bm{\hpsi}\|^{-3}\bigl[\bm \hpsi^T\bm D\bm G \bm{A} \bm P_1^\perp\bm \htheta
+
\bm{\hpsi}^T 
\bm{V}\bm G \bm e_1
\|\bm P_1^\perp \bm \htheta\|^2
\bigr] 
\end{align*}
thanks to
$\trace[\bm A]= \bm A_{11} + \sum_{j=2}^p \bm A_{jj}$ for $\Rem_6'$.
Here, $\Rem_6''$ comes from differentiation of $\|\bm \hpsi\|^{-1}$
thanks to
$\frac{\partial}{\partial g_{ij}}
(\|\bm{\hpsi}\|^{-1})
= - \|\bm\hpsi\|^{-3}\bm{\hpsi}^T\frac{\partial \bm\hpsi}{\partial g_{ij}}$.
Now, $\Rem_6'$ and $\Rem_6''$ both have
second moment bounded by $\C(\delta,\tau)$
thanks to \eqref{eq:bound-htheta}, \eqref{eq:bound-A-op-recap-before-proof}
and \eqref{eq:7-norm-G}.
In the right-hand side of \Cref{prop25},
$\|\bm{u}\|^2\| \bm f\|^2$ has expectation smaller than $\C(\delta,\tau)$
again thanks to
\eqref{eq:bound-htheta} and \eqref{eq:7-norm-G}.
The derivative term in the right-hand
side of \Cref{prop25} is bounded by
$\C(\delta,\tau)$ by explicitly
computing the derivatives
using \eqref{derivatives-htheta}
and using again
\eqref{eq:bound-A-op-recap-before-proof}-\eqref{eq:7-norm-G}.
\qed

Bounds on $\Gamma_1^*,\Gamma_3, \Gamma_5^*$
are obtained similarly by the following applications
of \Cref{prop25}. As the precise calculations using
\eqref{eq:bound-A-op-recap-before-proof}-\eqref{eq:7-norm-G}
follow the same arguments as for $\Gamma_2,\Gamma_6^*$ above,
we omit some details.

\textbf{
        Proof of 
        $\E[|\Gamma_1^*|^2]\le \C(\delta,\tau)/n$.
}
The bound on $\Gamma_1^*$ is obtained similarly using
\Cref{prop25}
with the same $\bm{Z}\in\R^{n\times (p-1)}$
(that is, $\bm Z$ is the matrix $\bm G$ with the first column removed),
this time with $\bm{f}$ valued in $\R^{p-1}$ with components
$f_j = \htheta_j/\|\bm{\hpsi}\|$ for each $j=2,...,p$
and $\bm{u}= \bm \hpsi/\|\bm \hpsi\|$.
The key algebra is that using the derivatives \eqref{derivatives-htheta},
\begin{align*}
    \sum_{i=1}^n
    \hpsi_i \sum_{j=2}^p \frac{\partial\htheta_j}{\partial g_{ij}}&= 
    \Bigl(\sum_{j=2}^p \bm A_{jj}\Bigr)\|\bm \hpsi\|^2
    - \bm\htheta^T\bm P_1^\perp\bm{A}\bm G^T\bm D\bm \hpsi
                                                                  &&=
    (\gamma_*-\bm A_{11}) n\hat r^2
    - \bm\htheta^T\bm P_1^\perp\bm{A}\bm G^T\bm D\bm \hpsi
    ,
    \\
    \sum_{j=2}^p
    \htheta_j \sum_{i=1}^n \frac{\partial\psi_i}{\partial g_{ij}}&=
-\bm \hpsi^T\bm D \bm G\bm{A}\bm P_1^\perp\bm\htheta
- \trace[\bm V] \|\bm P_1^\perp\bm\htheta\|^2
                                                                 &&=
-\bm \hpsi^T\bm D \bm G\bm{A}\bm P_1^\perp\bm\htheta
- n\hat v \sigma_*^2
,
\end{align*}
so that 
$\bm u^T \bm Z \bm f
- \sum_{ij} \frac{\partial}{\partial z_{ij}}(u_i f_j)$ appearing in the left-hand side of \Cref{prop25} equals
$$
\Big[
\myred
+\hat v \tfrac{\sigma_*^2}{\hat r^2} 
-
\gamma_*
\Big]
+
\Bigl[
    \bm A_{11}
    +
    \frac{
        \bm \hpsi^T \bm D \bm G \bm A \bm P_1^\perp \bm\htheta
        + \bm\htheta^T \bm P_1^\perp \bm A \bm G^T \bm D \bm \hpsi
    }{\|\bm \hpsi\|^2}
-
\sum_{i=1}^n\sum_{j=2}^p
    \hpsi_i\htheta_j
    \frac{\partial}{\partial g_{ij}}
        \frac{1}{\|\bm \hpsi\|^2}
\Bigr]
$$
where the first square bracket is exactly $\Gamma_1^*$
and the second moment of the second bracket is smaller than
$\C(\delta,\tau)/n$
using \eqref{eq:bound-A-op-recap-before-proof}-\eqref{eq:7-norm-G}.
Similarly to $\Gamma_6^*$, the derivative term in the right-hand
side of \Cref{prop25} is bounded by
$\C(\delta,\tau)$ by explicitly
computing the derivatives
using \eqref{derivatives-htheta}
and using again
\eqref{eq:bound-A-op-recap-before-proof}-\eqref{eq:7-norm-G}.

\textbf{
        Proof of 
        $\E[|\Gamma_3|^2]\le \C(\delta,\tau)/n$.
}
The bound on $\Gamma_3$ is obtained similarly using
\Cref{prop25}
with the same $\bm{Z}$,
this time with $\bm{f}$ valued in $\R^{p-1}$ with components
$f_j = \bm{e}_j^T \bm G^T \bm\hpsi/\|\bm \hpsi\|$ for each $j=2,...,p$
and $\bm{u}= n^{-1} \bm \hpsi/\|\bm \hpsi\|$. The key algebra is that
$\bm u^T \bm Z \bm f
- \sum_{ij} \frac{\partial}{\partial z_{ij}}(u_i f_j)$ appearing in the left-hand side of \Cref{prop25} is then equal to
\begin{multline*}
\Bigl[
\tfrac{\|\bm P_1^\perp \bm G^T\bm \hpsi\|^2 }{n^2 \hat r^2}
+ \hat v \myred
+ \tfrac{\df-p}{n}
\Bigr]
+
\Bigl[
\tfrac{1-(\bm G^T\bm D \bm G\bm A)_{11}}{n}
+\tfrac{\bm \htheta^T \bm P_1^\perp \bm G^T  \bm V \bm \hpsi
+ \bm \hpsi^T \bm D \bm G \bm A \bm P_1^\perp \bm G^T\bm \hpsi
}{n  \|\bm\hpsi\|^2}
\\    -
\sum_{i=1}^n\sum_{j=2}^p
    \frac{\hpsi_i \bm \hpsi^T \bm G \bm e_j}{n}
    \frac{\partial}{\partial g_{ij}}
        \frac{1}{\|\bm \hpsi\|^2}
\Bigr]
\end{multline*}
with $\df/n=\trace[\bm G^T\bm D \bm G \bm A]/n=\hat v \hat\gamma$ so that the first square bracket 
is exactly $\Gamma_3$ and the second bracket is negligible
using \eqref{eq:bound-A-op-recap-before-proof}-\eqref{eq:7-norm-G}.

\textbf{
        Proof of 
        $\E[|\Gamma_5^*|^2]\le \C(\delta,\tau)/n$.
}
The bound on $\Gamma_5^*$ is obtained similarly using
\Cref{prop25}
with the same $\bm{Z}$,
this time with $\bm{f}$ valued in $\R^{p-1}$ with components
$f_j = \htheta_j/\|\bm{\hpsi}\|$ for each $j=2,...,p$
and $\bm{u}= \bm G \bm P_1^\perp \bm \htheta/\|\bm \hpsi\|$.
The key algebra is that
$\bm u^T \bm Z \bm f
- \sum_{ij} \frac{\partial}{\partial z_{ij}}(u_i f_j)$ appearing in the left-hand side of \Cref{prop25} is then equal to
\begin{multline*}
\Bigl[
        \mybro 
        - \tfrac{\sigma_*^2}{\hat r^2}
        - \gamma_* \myred
        + \tfrac{\df}{n} \tfrac{\sigma_*^2}{\hat r^2}
\Bigr]
+
\Bigl[
    \tfrac{
        \bm\htheta^\top\bm P_1^\perp \bm A \bm G^T \bm D \bm G \bm P_1^\perp\bm\htheta
        - \bm \hpsi^T\bm G \bm P_1^\perp \bm A \bm P_1^\perp \bm\htheta
        -(\bm A \bm G^T \bm D \bm G)_{11}\|\bm P_1^\perp \bm \htheta\|^2
    }{\|\bm \hpsi\|^2}
    \\
    + \bm A_{11}\myred
    -
    \sum_{i=1}^n\sum_{j=2}^p
    \htheta_j \bm e_i^T\bm G \bm P_1^\perp \bm\htheta
    \frac{\partial}{\partial g_{ij}}
        \frac{1}{\|\bm \hpsi\|^2}
\Bigr].
\end{multline*}
The first bracket is exactly $\Gamma_5^*$ and the second bracket is negligible
using \eqref{eq:bound-A-op-recap-before-proof}-\eqref{eq:7-norm-G}.

\end{proof}

\thmAdjustments*

\begin{proof}[Proof of \Cref{thm:adjustments-approximation}]
    Using \eqref{eq:bound-A-op-recap-before-proof} and \eqref{eq:bound-V-op}-\eqref{eq:bound-htheta},
    we have almost surely
    \begin{equation}
    \label{all_bounded}
    \max \Bigl\{|\hat\gamma|,|\gamma_*|,|\hat v|,|\tfrac{\hat \theta_1^2}{\hat r^2}|,|\tfrac{\sigma_*^2}{\hat r^2}|,
        \tfrac{\|\bm G\bm\htheta - \hat\gamma\bm\hpsi\|^2}{n\hat r^2},
    |\mybro|,
    \tfrac{t_*^2+\hat t^2}{\hat r^2}
    \Bigr\}
    \le \C(\delta,\tau) (1+\|\tfrac{\bm G}{\sqrt n}\|_{op}^2)^c
    \end{equation}
    for some numerical constant $c\ge 1$.
    The event
    $\bar E = \{\|n^{-1/2}\bm G\|_{op} \le 2+\delta^{-1/2} \}$
    has exponentially large probability,
    $\P(\bar E^c) \le e^{-n/2}$, by
    \cite[Theorem II.13]{DavidsonS01}.
    Let
    $$
    \Rem =
        \big|\hat v(\hat \gamma - \gamma_*)\big|
        +
         \tfrac{1}{\hat r^2} \big|\hat t^2 - t_*^2\big| 
         +
        \tfrac{|\hat v|}{\hat r^2}
        \big| \tfrac 1 n \|\bm{X}\bm \hbeta - \hat\gamma \bm \hpsi\|^2  
        - \|\bm\htheta\|^2
        \big|
        +
            \tfrac{\hat v^2  \hat t^2  }{\hat r^4}
            \big|
                \hat \aaa^2 -  \aaa_*^2
            \big|.
    $$
    Using \eqref{algebra-bound-gamma_star},
    \eqref{algebra-bound-squared_norm},
    \eqref{algebra-bound-t} and
    \eqref{algebra-bound-a}
    for the first term and the Cauchy-Schwarz inequality for the second,
    we find
    \begin{align*}
        \E\Rem
        \le
        \E[I_{\bar E}\Rem]
        +
        \E[I_{\bar E^c}\Rem]
        \le
        \C(\gamma,\tau)\E|\tilde \Gamma|
        +
        \P(\bar E^c)^{1/2} \E[\Rem^2]^{1/2}
    \end{align*}
    where $\tilde \Gamma = \max\{|\Gamma_1^*|,|\Gamma_2|,|\Gamma_3|,|\Gamma_5^*|,|\Gamma_6^*|,|\Gamma_7|\}$.
    \Cref{lemma:fiveqs} shows that
    $\E|\tilde \Gamma|\le \C(\delta,\tau) n^{-1/2}$.
    By \eqref{all_bounded} and \eqref{eq:7-norm-G} we have
    $\E\Rem^2\le\C(\delta,\tau)$ so that the exponential small
    probability of $\bar E^c$ completes the proof
    of \eqref{eq:consistency-hat-gamma}-\eqref{eq:consistency-aaa}.

    Under the additional \Cref{assumExtra}, we have $\hat r^2 = \|\bm \hpsi\|^2/n \le 1$
    hence by \eqref{eq:bound-htheta} and in $\bar E$,
    $\|\bm \htheta\|^2\le \C(\delta,\tau)$ and
    $\|\bm G\bm \htheta\|^2/n\le \C(\delta,\tau)$.
    Hence with $u_i = \bm e_i^T \bm G \bm \hbeta$, there exists
    at least $n/2$ indices $i\in[n]$ such that
    $|u_i| \le K$ for some constant $K=\C(\delta,\tau)$.
    By \Cref{assumExtra}, continuity and compactness,
    there exists deterministic constants $c_*, m_*>0$
    depending only on $K$ and the loss $\ell$ such that
    $\inf_{y_0\in\mathcal Y}\min_{u\in\R: |u|\le K} \ell_{y_0}''(u) \ge c_*$
    and
    $\inf_{y_0\in\mathcal Y}\min_{u\in\R: |u|\le K} \ell_{y_0}'(u)^2 \ge m_*$.
    Since at least $n/2$ components $u_i$ are such that $|u_i|\le K$,
    this implies 
    $\hat r^2 = \|\bm \hpsi\|^2/n \ge m_*/2$
    and
    $\trace[\bm D] = \sum_{i=1}^n \ell_{y_i}''(u_i)
    \ge (n/2) c_*$.
    The lower bound in \eqref{eq:bound-trV-D} then yields
    $n\hat v = \trace[\bm V]
    \ge \C(\delta,\tau)\trace[\bm D] - \C(\delta,\tau)
    \ge n \C(\delta,\tau,c_*)=n \C(\delta,\tau,\ell)$
    for $n\ge \C(\delta,\tau,\ell)$.
    We have proved that
    \begin{equation}
        \text{in the event }
        \bar E = \{\|n^{-1/2}\bm G\|_{op} \le 2+\delta^{-1/2} \}
        ,~
        \max\{
            \hat r^2, \tfrac{1}{\hat r^2},
            |\hat v|, |\tfrac{1}{\hat v}|
        \}\le \C(\delta,\tau,\ell)
        \label{eq:bound-r-v-strongly-convex}
    \end{equation}
    and the proof of \eqref{thm_adjustments_additional} is complete.
\end{proof}
\subsection{Proofs:
    Proximal representation
    for predicted values}
\thmProxLoss*

\begin{proof}[Proof of \Cref{thm:loss-proximal-representation}]
    Consider the change of variable and notation
    defined in \Cref{sec:change-of-variable}.
    Then
    \eqref{eq:conclusion-proximal-loss}
    holds if and only if
    $$\E[
    \hat r^{-2}
    (\bm{g}_i^T \bm \htheta
    -\prox[\trace[\bm{A}] \ell_{y_i}(\cdot)](\aaa_* U_i + \sigma_* Z_i)
    )^2
    ]
    \le \C(\delta,\tau,\kappa)/n$$
    for independent standard normals $U_i,Z_i$ where $U_i$ is the $(i,1)$ element of the matrix $\bm{G}$.
    Furthermore the quantities
    in \eqref{hat_v_r_t}
    can be expressed in terms of
    $\bm{\htheta},\bm G$;
    we thus work with $\bm{G}$, 
    $\bm{\htheta}$ and its derivatives instead of $\bm \hbeta$. 
    Next, we have the decomposition
    $$
    \bm{x}_i^T \bm \hbeta
    =
    \bm{g}_i^T \bm \htheta
    =
    \aaa_* U_i
    +
    \hat r 
    ~\bm{z}_i^T \bm f
    $$
    where we recall
    $\aaa_* \defas \bm{e}_1^T \bm \htheta = \bm w^T\bm \Sigma \bm \hbeta$,
    the standard normal
    $\bm{z}_i\sim N(0, \bm I_{p-1})$
    as $\bm{z}_i = (g_{ik})_{k=2,...,p}$
    and $\bm{f}\in\R^{p-1}$ as 
    $\bm{f} = (\frac{\htheta_k}{\hat r})_{k=2,...,p}$.
    We apply \Cref{lemma-SOS}
    conditionally on $(\bm{y},(\bm I_n - \bm e_i \bm e_i^T) \bm G, G_{i1})$.
    Since $\bm{z}$ is independent of 
    $(\bm{y},(\bm I_n - \bm e_i \bm e_i^T) \bm G, G_{i1})$,
    the expectations in
    \Cref{lemma-SOS} are simply
    integrals with respect to the Gaussian measure of $\bm{z}$.
    Rewriting \Cref{lemma-SOS} with the notation of the present context
    yields
    \begin{equation}
    \E\Bigl[\Bigl(
    \bm{z}_i^T \bm f
    - \frac{\sigma_*}{\hat r} Z_i
    -\sum_{k=2}^p
    \frac{\partial}{\partial g_{ik}}
    \Bigl( 
    \frac{
    \htheta_k}{\|\bm{\hpsi}\|/\sqrt n}
    \Bigr)
    \Bigr)^2\Bigr]
    \le \C
    \E\sum_{k=2}^p
    \Bigl\|
    \frac{\partial}{\partial g_{ik}}
    \Bigl(
        \frac{\bm{\htheta} }{\|\bm \hpsi\|/\sqrt n}
    \Bigr)
    \Bigr\|^2
    \label{eq:SOS-applied-for-proximal-prediction}
    \end{equation}
    where
    $\sigma_*
    = \|\bm{P}_1^\perp \bm \htheta\|$
    for $\bm{P}_1^\perp \defas (\bm I_p - \bm e_1\bm e_1^T)$.
    We focus first on the sum in the left-hand side.
    By the product rule
    \begin{equation}
        \label{product-rule-htheta-hpsi}
        \frac{\partial}{\partial g_{ik}}
        \Bigl(
        \frac{\bm\htheta}{\|\bm{\hpsi}\|}
        \Bigr)
        =
        \frac{1}{\|\bm\hpsi\|}
        \Bigl[
            \frac{\partial}{\partial g_{ik}}\bm\htheta
        \Bigr]
        -
        \frac{\bm\hpsi^T }{\|\bm{\hpsi}\|^3}
        \Bigl[
            \frac{\partial}{\partial g_{ik}}\bm\hpsi
        \Bigr]
        \bm\htheta 
        ,
    \end{equation}
    the derivatives \eqref{derivatives-htheta},
    the definition of $\gamma_*,\hat r$
    and $\sum_{k=2}^p \bm{e}_k^T\bm{A} \bm e_k
    = \trace[\bm{A}] - \bm{A}_{11}$,
    \begin{align}
    \sum_{k=2}^p
    \frac{\partial}{\partial g_{ik}}
    \Bigl( 
    \frac{\sqrt n\htheta_k}{\|\bm{\hpsi}\|}
    \Bigr)
    - \frac{\gamma_*}{\hat r} \hat\psi_i
    &=
    -\frac{\bm{A}_{11}\bm\hat\psi_i}{\hat r}
    -
    \sum_{k=2}^p
    \frac{\sqrt n \htheta_k}{\|\bm{\hpsi}\|^3}
    \bm{\hpsi}^T \frac{\partial \bm\hpsi}{\partial g_{ik}}
    \nonumber
    \\&=
    -\frac{\bm{A}_{11}\hat\psi_i}{\hat r}
    +
    \sum_{k=2}^p
    \frac{\sqrt n}{\|\bm{\hpsi}\|^3}
    \bm{\hpsi}^T\Bigl(
        \bm{D} \bm G\bm{A} \bm P_1^\perp \bm \htheta \hpsi_i
        + \bm{V} \bm e_i \sigma_*^2\Bigr).
        \label{eq:previous-term-divergence-theta}
    \end{align}
    For the first term on the right-hand side,
    all observations $i=1,...,n$ are exchangeable so that
    $\E[\bm{A}_{11}^2 \frac{\hpsi_i^2}{\hat r^2}]
    = \frac 1 n 
    \sum_{l=1}^n
    \E[\bm{A}_{11}^2 \frac{\hpsi_l^2}{\hat r^2}]
    =
    \E[\bm{A}_{11}^2]$
    since $\hat r^2=\|\bm{\hpsi}\|^2/n$.
    By a similar symmetry argument for the second term on the right-hand side,
    $$
    \E\Bigl[
    \frac{n
    \sigma_*^2
    }{\|\bm{\hpsi}\|^4}
    \|\bm{D}\bm G\bm{A}\|_{op}^2
    \hpsi_i^2
    \Bigr]
    =
    \frac1n\sum_{l=1}^n
    \E\Bigl[
    \frac{n \sigma_*^2}{\|\bm{\hpsi}\|^4}
    \|\bm{D}\bm G\bm{A}\|_{op}^2
    \hpsi_l^2
    \Bigr]
    =
    \E\Bigl[
    \frac{\sigma_*^2}{\hat r^2}\|\bm{D}\bm G\bm{A}\|_{op}^2
    \Bigr]
    $$
    and the third term in the right-hand side, again by symmetry,
    satisfies
    $$
    \E[n\sigma_*^4  \|\bm{\hpsi}\|^{-6} (\bm\hpsi^T\bm V \bm e_i)^2]
    =
    \E[\sigma_*^4  \|\bm{\hpsi}\|^{-6} \|\bm V^T\bm\hpsi\|^2]
    \le
    \E[\sigma_*^4  \|\bm{\hpsi}\|^{-4} \|\bm V\|_{op}^2].
    $$
    The bounds \eqref{eq:bound-A-op-recap-before-proof}, \eqref{eq:bound-V-op}
    and \eqref{eq:bound-htheta} thus show that
    $\E[\eqref{eq:previous-term-divergence-theta}^2]
    \le \C(\delta,\tau,\kappa)/n$.
    It remains to bound from above the right-hand side
    of \eqref{eq:SOS-applied-for-proximal-prediction}.
    By exchangeability of $i=1,...,n$
    and the product rule
    \eqref{product-rule-htheta-hpsi},
    \begin{align}
    \E\sum_{k=2}^p
    \Bigl\|
    \frac{\partial}{\partial g_{ik}}
    \Bigl(
        \frac{\bm{\htheta} }{\|\bm \hpsi\|/\sqrt n}
    \Bigr)
    \Bigr\|^2
    &=
    \frac1n\sum_{l=1}^n
    \E\sum_{k=2}^p
    \Bigl\|
    \frac{\partial}{\partial g_{lk}}
    \Bigl(
        \frac{\bm{\htheta} }{\|\bm \hpsi\|/\sqrt n}
    \Bigr)
    \Bigr\|^2
    \text{(by exchangeability)}
    \nonumber
    \\&=\frac1n
    \E
    \sum_{l=1}^n
    \sum_{k=2}^p
    \Bigl\|
    \frac{\sqrt n }{\|\bm{\hpsi}\|}
    \frac{\partial\bm\htheta}{\partial g_{lk}}
    -
    \bm\htheta \frac{\sqrt n \bm\hpsi^T}{\|\bm{\hpsi}\|^3}
    \frac{\partial\bm\hpsi}{\partial g_{lk}}
    \Bigr\|^2
    \text{(chain rule)}
    \nonumber
    \\&\le
    \frac1n
    \E
    \sum_{l=1}^n
    \sum_{k=2}^p
    \frac{2n }{\|\bm{\hpsi}\|^2}
    \Bigl(
    \Bigl\|
        \frac{\partial\bm\htheta}{\partial g_{lk}}
    \Bigl\|^2
    +
    \frac{\|\bm\htheta\|^2}{\|\bm{\hpsi}\|^2}
    \Bigl\|
        \frac{\partial\bm\hpsi}{\partial g_{lk}}
    \Bigl\|^2
    \Bigr)
    \label{eq:to-bound-proximal-prediction}
    \end{align}
    by $(a+b)^2\le 2a^2+2b^2$.
    For the first term,
    using the explicit derivatives in \eqref{derivatives-htheta}
    and again $(a+b)^2\le 2a^2+2b^2$,
    \begin{equation}
    \frac 1 2 
    \sum_{l=1}^n
    \sum_{k=2}^p
    \frac{1}{\|\bm{\hpsi}\|^2}
    \Bigl\|
        \frac{\partial\bm\htheta}{\partial g_{lk}}
    \Bigl\|^2
    \le
    \|\bm{A}\|_F^2
    + \frac{\|\bm{A} \bm G^T \bm D\|_F^2 \|\bm \htheta\|^2}{\|\bm \hpsi\|^2}.
    \end{equation}
    while
    $\sum_{l=1}^n
    \sum_{k=2}^p
    \frac{1}{\|\bm{\hpsi}\|^2}
    \|
        \frac{\partial}{\partial g_{lk}} 
        \bm\hpsi
    \|^2
    $ is already bounded in \eqref{eq:inequality-square-gradient-psi}.
    The bounds
    \eqref{eq:bound-A-op-recap-before-proof}, \eqref{eq:bound-V-op} and \eqref{eq:bound-htheta}
    completes the proof of
    $
    \eqref{eq:SOS-applied-for-proximal-prediction}{}^2
    \le 
    \eqref{eq:to-bound-proximal-prediction}{}^2
    \le \C(\delta,\tau,\kappa)/n$.
\end{proof}

\subsection{Proof of \Cref{prop:sign}}

The first display in \Cref{prop:sign}
is bounded from above
by the same argument as for
as $\Gamma_6^*$:
after the change of variable detailed in \Cref{sec:change-of-variable},
the left-hand side of the first inequality of \Cref{prop:sign} equals
\begin{equation}
\E\Bigl[\Big|\frac{
 \varrho(\bm y)^T(\bm G\bm P_1^\perp \bm\htheta- \hat \gamma\bm\hpsi)
}{\|\varrho(\bm y)\| \|\bm \hpsi\|}
\Big|\Bigr].
\end{equation}
Applying \Cref{Rem_star_RHS} to the Gaussian matrix
$\bm G \bm P_1^\perp$ and to $\bm u = \varrho(\bm y)$ and $\bm f = \bm\htheta$,
we get after calculations similar
to the previous subsections
\begin{equation}
\E\Bigl[\Big|\frac{
 \varrho(\bm y)^T(\bm G\bm P_1^\perp \bm\htheta- \gamma_*\bm\hpsi)
}{\|\varrho(\bm y)\| \|\bm \hpsi\|}
\Big|^2\Bigr]^{1/2}
\le \C(\tau,\delta) / \sqrt n.
\end{equation}
Using \eqref{algebra-bound-gamma_star}
and the bounds in \Cref{subsubsec:Gammas-sqrtn}
lets us replace
$\gamma_*$ by $\hat\gamma$
which proves the first inequality
in \Cref{prop:sign}.

The left-hand side of the second display
in \Cref{prop:sign} is equal to
$\E[|\Gamma_9|]$
where $\Gamma_9$ is defined 
in \eqref{Gamma_9} which is bounded
as in the previous proofs as a 
consequence of the upper bounds in
\Cref{subsubsec:Gammas-sqrtn}.

\section{Proof in the unregularized case with $p<n$}
\label{sec:proof:unreg}

\subsection{Derivatives}
\Cref{lemma:derivative_unregularized}
is restated before its proof for convenience.

\derivativeUnregularized*
\begin{proof}[Proof of \Cref{lemma:derivative_unregularized}]
    If a minimizer $\bm \hbeta$ of \eqref{hbeta} exists
    and $\bm X^T\bm X$ is invertible,
    the optimality conditions read
    $\bm \varphi(\bm X, \bm \hbeta) = \bm 0$
    for $\bm \varphi(\bm X, \bm b) = \sum_{i=1}^n \bm x_i \ell_{y_i}'(\bm x_i^T\bm b)$ (for the derivatives in this paragraph,
    $\bm y$ is considered a constant).
    Then the Jacobian $\frac{\partial \bm \varphi}{\partial \bm b}\in\R^{p\times p}$ is the symmetric matrix
    $
    \sum_{i=1}^n \bm x_i \ell_{y_i}''(\bm x_i^T\bm b) \bm x_i^T
    $
    which is continuous and positive semi-definite in the open
    set $\{\bm X: \det(\bm X^T\bm X)>0\}$ since $\ell_{y_i}''>0$.
    By the implicit function theorem, there exists a continuously
    differentiable function $\bm b:\R^{n\times p}\to\R^p$
    in a neighborhood of $\bar{\bm X}$ such that
    $\bm \varphi(\bm X, \bm b(\bm X)) = \bm 0$ in this neighborhood.
    In other words, in this neighborhood, $\bm b(\bm X)$
    is a solution of \eqref{hbeta} and
     $\bm X\mapsto \bm \hbeta(\bm y, \bm X)$
    is continuously differentiable.
    By differentiation of $\sum_{i=1}^n
    \bm x_i \ell_{y_i}'(\bm x_i^T\bm \hbeta) = \bm 0$,
    we obtain \eqref{eq:derivatives-psi-hbeta}
    with $\bm{\hat{A}} = \Aunreg$. 
\end{proof}

\subsection{Non-separable losses}
\label{sec:thm:nonseparable-mathcalL}
\Cref{thm:nonseparable-mathcalL} is restated before its proof for convenience.

\ThmNonSeparable*
\begin{proof}[Proof of \Cref{thm:nonseparable-mathcalL}]
    By the same argument as the proof of
    \Cref{lemma:derivative_unregularized},
    the partial derivatives are given by \eqref{eq:derivatives-psi-hbeta}.
    Consider the change of variable
    in \Cref{sec:change-of-variable},
    so that $\bm G\in\R^{n\times p}$
    has iid $N(0,1)$ entries,
    and $\bm G\bm\htheta = \bm X \bm \hbeta$
    for 
    $\bm\htheta = \argmin_{\bm\theta\in\R^p}\sum_{i=1}^n \mathcal L(\bm G \bm\theta)$.
    Then
    $$
    \bm D = \nabla^2\mathcal L(\bm G\bm\htheta),
    \quad
    \bm\hpsi = -\nabla \mathcal L(\bm G\bm\htheta),
    \quad
    \bm A = (\bm G^T\bm D\bm G)^{-1},
    \quad
    \bm V = \bm D - \bm D \bm G \bm A \bm G^T \bm D
    $$
    and $\htheta_1 = \aaa_* = \bm\htheta^T\bm\Sigma \bm w$
    as well as $\sigma_*^2 = \|\bm P_1^\perp\bm\htheta\|^2$ 
    for $\bm P_1^\perp = \bm I_p - \bm e_1\bm e_1^T$.
    After the change of variable, the formula
    \eqref{derivatives-htheta} for the derivatives holds.
    Applying \Cref{prop26} conditionally on $(\bm y,\bm G\bm e_1)$
    with $K=n,Q=p-1$ to the matrix $\bm Z\in\R^{n\times (p-1)}$
    made of the last $p-1$ columns of $\bm G$ and with
    $\bm u(\bm Z) = \bm\hpsi/\sqrt n$, we obtain
    $$
    \frac1n\E\Big|(p-1)\|\bm \hpsi\|^2
    - \sum_{j=2}^p\bigl(\bm e_j^T\bm G^T\bm \hpsi - \sum_{i=1}^n \frac{\partial \hpsi_i}{\partial g_{ij}}\bigr)^2
    \Big|
    \le \C(\sqrt p (1+\Xi^{1/2})+\Xi)
    $$
    with $\Xi=\E\sum_{j=2}^p\sum_{i=1}^n\frac1n\|\frac{\partial}{\partial g_{ij}} \bm \hpsi\|^2$ as defined in \Cref{prop26}.
    Since $\bm G^T\bm \hpsi = \bm 0_p$ by the KKT conditions of $\bm\htheta$,
    the first term inside the square in the left-hand side is $0$.
    For the second term inside the square in the left-hand side,
    $
    - \sum_{i=1}^n \frac{\partial \hpsi_i}{\partial g_{ij}}
    = \trace[\bm V] \htheta_j + \bm \hpsi^T \bm D \bm G \bm A \bm e_j$.
    With the notation $\bm c\in\R^p, c_j\defas \bm\hpsi^T\bm D \bm G \bm A \bm e_j$,
    by expanding the square
    $(\trace[\bm V]\bm\htheta_j + c_j)^2$
    for each $j=2,...,p$
    using the triangle inequality, and dividing both sides by $n$ gives
    \begin{align}
        \nonumber
    &\tfrac{1}{n^2}\E\big|
    p\|\bm \hpsi\|^2
    -\trace[\bm V]^2\|\bm P_1^\perp\bm\htheta\|^2
    \big|
    \\&\le 
    \tfrac{1}{n^2}
    \E|
    2\trace \bm[\bm V]\bm \htheta^T\bm P_1^\perp \bm c
     + \|\bm P_1^\perp\bm c\|^2-\|\bm\hpsi\|^2
    |
    +
    \tfrac{1}{n}\C(\sqrt p (1+\Xi^{1/2})+\Xi).
    \label{unreg:to-bound-rhs-1}
    \end{align}
    The left-hand side reads $\E|p\hat r^2-n \hat v^2 \sigma_*^2|=
    \E|n\hat v^2(\hat\sigma^2-\sigma_*^2)|$
    thanks to $\hat\sigma^2=\frac{p}{n}\frac{\hat r^2}{\hat v^2}$,
    while right-hand side will be bounded later on.
    We now bound from above $|a_*^2 - \hat a^2|$.
    We start with
    $$\|\bm G\bm\htheta\|^2 
    = \|\bm G \bm P_1^\perp \bm\htheta + \bm G \bm e_1\htheta_1\|^2 
    = \bm \htheta^T\bm P_1^\perp \bm G^T (\bm G \bm P_1^\perp\bm\htheta + 2\bm G \bm e_1 \hat\theta_1) + \|\bm G\bm e_1\|^2\hat\theta_1^2.$$
    We wish to study the first term in the right-hand side.
    Applying \Cref{prop25} conditionally on $(\bm y,\bm G\bm e_1)$
    to $K=n, Q=p-1$,
    $\bm Z$ being the last $p-1$ columns of $\bm G$,
    $\bm u$ equal to the last $p-1$ components of $\bm\htheta$
    and $\bm f = \bm G[\bm I_p + \bm e_1\bm e_1^T]\bm\htheta$,
    we find
    $\|\bm G\bm\htheta\|^2 - \|\bm G\bm e_1\|^2\htheta_1^2
    = \bm f^T\bm Z\bm u$ so
    $$
    \E\Bigl[\Bigl(
    \|\bm G \bm \htheta\|^2-\|\bm G \bm e_1\|^2\htheta_1^2
    -\sum_{j=2}^p\sum_{i=1}^n\frac{\partial (f_i\htheta_j)}{\partial g_{ij}}
    \Bigr)^2\Bigr]
    \le \E \sum_{j=2}^p\sum_{i=1}^n \Big\| \frac{\partial(\bm \htheta \bm f^T)}{\partial g_{ij}}\Big\|^2.
    $$
    Applying the product rule for $\partial(f_i\htheta_j)$ in the left-hand side, we study each resulting term separately. First
    using $(\partial/\partial g_{ij})\htheta_j = \bm e_j^T\bm A \bm e_j \hpsi_i
    - \bm e_j^T\bm A \bm G^T\bm D \bm e_i \htheta_j$ by \eqref{derivatives-htheta} we get
    $$\sum_{i=1}^nf_i \sum_{j=2}^p \frac{\partial \htheta_j}{\partial g_{ij}}
    = \trace[\bm P_1^\perp\bm A] \bm f^T\bm \hpsi
    - \bm\htheta^T\bm P_1^\perp \bm A \bm G^T\bm D \bm f
    =
    - \bm\htheta^T\bm P_1^\perp \bm A \bm G^T\bm D \bm f
    = - \|\bm P_1^\perp\bm\htheta\|^2
    $$
    because $\bm f^T\bm \hpsi=0$ thanks to $\bm G^T\bm \hpsi=\bm 0_p$
    which are KKT conditions for the second equality,
    and using $\bm A \bm G^T\bm D \bm G=\bm I_p$ for the third equality.
    Now let us focus on the second term appearing from the product rule:
    $$
    \sum_{j=2}^p\htheta_j \sum_{i=1}^n\frac{\partial f_i}{\partial g_{ij}}
    =
    n \|\bm P_1^\perp\bm\htheta\|^2
    +
    \sum_{j=2}^p
    \htheta_j
    \sum_{i=1}^n
    \bm e_i^T\bm G (\bm I_p + \bm e_1\bm e_1^T) 
    \frac{\partial \bm \htheta}{\partial g_{ij}}.
    $$
    For the right-most double sum,
    using $(\partial/\partial g_{ij})\bm\htheta = \bm A (\bm e_j \hpsi_i-\bm G^T\bm D \bm e_i \htheta_j)$
    in \eqref{derivatives-htheta},
    the term stemming from
    $\bm A \bm e_j \hpsi_j$ equals 0
    thanks to $\sum_{i=1}^n\hpsi_i\bm e_i^T\bm G = \bm 0_p^T$ by the KKT conditions. This gives that the previous display equals
    $(n - \tilde p)\|\bm P_1^\perp\bm\htheta\|^2$
    where $\tilde p \defas
    \trace[\bm G(\bm I_p + \bm e_1\bm e_1^T)\bm A \bm G^T\bm D]=
    p+1$ since $\bm A \bm G^T\bm D \bm G = \bm I_p$.
    Combining the above identities yields
    $$
    \E\Bigl[\Bigl(
    \|\bm G \bm \htheta\|^2
    - \|\bm G \bm e_1\|^2\htheta_1^2
    - (n- p -2)\|\bm P_1^\perp\bm\htheta\|^2
    \Bigr)^2\Bigr]
    \le \E \sum_{j=2}^p\sum_{i=1}^n \Big\| \frac{\partial(\bm \htheta \bm f^T)}{\partial g_{ij}}\Big\|_F^2.
    $$
    We apply Jensen's inequality
    $\E|\cdot|\le \E[(\cdot)^2]^{1/2}$ to the left-hand side
    and replace $\|\bm G\bm e_1\|^2\htheta_1^2$ by
    $n\htheta_1^2$, incurring the error term $|\|\bm G\bm e_1\|^2-n|\htheta_1^2$ by the triangle inequality. This yields
    \begin{align}
        \nonumber
    \frac{1}{n}
    \E\Bigl|
    \|\bm G \bm \htheta\|^2
    - n\|\bm\htheta\|^2
    + (p+2) \|\bm P_1^\perp\bm\htheta\|^2
    \Bigr|
    &\le
    \E\Bigl[\Big|\frac{\|\bm G\bm e_1\|^2}{n} - 1\Big|\htheta_1^2\Bigr]
    +
    \frac 1n
    \E\Bigl[
    \sum_{j=2}^p\sum_{i=1}^n \Big\| \frac{\partial(\bm \htheta \bm f^T)}{\partial g_{ij}}\Big\|_F^2
    \Bigr]^{1/2}
    \\&
    \le
    \Bigl(
    \frac{2}{n}
    \E\Bigl[
        \htheta_1^4
    \Bigr]
    \Bigr)^{1/2}
    +
    \frac 1n
    \E\Bigl[
    \sum_{j=2}^p\sum_{i=1}^n \Big\| \frac{\partial(\bm \htheta \bm f^T)}{\partial g_{ij}}\Big\|_F^2
    \Bigr]^{1/2}
    \label{unreg:to-bound-rhs-2}
    \end{align}
    thanks to the Cauchy-Schwarz inequality
    and $\Var[\chi^2_n]=2n$ for the second inequality.
    Finally, for any vector $\bm a^2\in\R^p$ with unit norm
    orthogonal to $\bm e_1$,
    consider an orthonormal basis $\bm a^2,\bm a^3...,\bm a^{p}$ of $\{0\}\times \R^{p-1}$.
    By application of \Cref{lemma-SOS} to $\bm z = \bm G \bm a^k$
    conditionally on $(U,\bm G(\bm I_p-\bm a^k (\bm a^k)^T))$
    and $\bm f$ in \Cref{lemma-SOS} equal to $\bm \hpsi$,
    we have that for random variables $Z_k\sim N(0,1)$,
    $$
    \sum_{k=2}^p\E\Bigl[\Bigl(\sum_{i=1}^n \sum_{j=2}^p a_j^k \frac{\partial \hpsi_i}{\partial g_{ik}} - \|\bm \hpsi\|Z_k\Bigr)^2\Bigr]
    \le 15\E \sum_{k=2}^p \sum_{i=1}^n \|\sum_{j=2}^p a^k_j\frac{\partial \bm \hpsi}{\partial g_{ik}}\|^2 
    $$
    after observing that $\bm z^T\bm f = 0$.
    Furthermore, in the left-hand side
    $\sum_{i=1}^n \sum_{j=2}^p a_j^k \frac{\partial \hpsi_i}{\partial g_{ij}}
    = - \trace[\bm V] \bm\htheta^T\bm a^k
    - \bm \hpsi^T \bm D \bm G\bm A\bm a^k$.
    We further move
    $- \bm \hpsi^T \bm D \bm G\bm A\bm a^k$ to the right-hand side
    using the triangle inequality to find
    $$\sum_{k=2}^p\E\Bigl[\Bigl(
- \trace[\bm V] \bm\htheta^T\bm a^k
    - \|\bm \hpsi\|Z_k\Bigr)^2\Bigr]
    \le 30\E \sum_{k=2}^p \sum_{i=1}^n \|\sum_{j=2}^p a^k_j\frac{\partial \bm \hpsi}{\partial g_{ij}}\|^2 
    + 2\E \sum_{k=2}^p (\bm \psi^T\bm D\bm G\bm A \bm a^k)
    $$
    Using $\sum_{k=2}^p \bm a^k (\bm a^k)^T = \bm P_1^\perp$
    for the second term in the right-hand side, and
    $$\sum_{k=2}^p\|\sum_{j=2}^p \|\bm M\bm e_j \bm e_j^T\bm a^k\|^2
    = \sum_{k=2}^p\sum_{j,j'\ge 2}\trace[ \bm M^T\bm M \bm e_j\bm e_j^T\bm a^k (\bm a^k)^T \bm e_{j'} \bm e_{j'}^T]
    = \trace[\bm M^T\bm M \bm P_1^\perp]$$
    for the matrix $\bm M$ with columns $\bm M\bm e_j = \frac{\partial \bm\hpsi}{\partial g_{ij}}$
    again thanks to $\sum_{k=2}^p \bm a^k (\bm a^k)^T = \bm P_1^\perp$,
    we find that the right-hand side equals
    $30\E\sum_{i=1}^n\sum_{j=2}^p \|\frac{\partial \bm\hpsi}{\partial g_{ij}}\|^2
    + 2\E[\|\bm A\bm G\bm D \bm \hpsi\|^2]$.
    By symmetry and rotational invariance,
    in the summands of $\sum_{k=2}^p$ in the left-hand side,
    the expectations
    $\E[(
    - \trace[\bm V] \bm\htheta^T\bm a^k
    - \|\bm \hpsi\|Z_k)^2]$
    are all equal to other other,
    so that for any $\bm a\in\{0\}\times \R^{p-1}$ with $\|\bm a\|=1$,
    \begin{equation}
        \frac{ p-1 }{n^2}\E\Bigl[\Bigl( - \trace[\bm V]\bm a^T\bm\htheta - \|\bm \hpsi\|Z\Bigr)^2\Bigr]
        \le \frac{30}{n^2}\E \sum_{j=2}^p \sum_{i=1}^n \|\frac{\partial \bm \hpsi}{\partial g_{ij}}\|^2 
        + \frac{2}{n^2} \E[\|\bm A\bm G^T\bm D\bm \hpsi\|^2]
    \label{unreg:to-bound-rhs-3}
    \end{equation}
    for some $Z\sim N(0,1)$.
    Note that to obtain \eqref{unreg:to-bound-rhs-3}, we divided
    both sides by $n^2$.
    It remains to bound from above
    the right-hand sides in
    \eqref{unreg:to-bound-rhs-1},
    \eqref{unreg:to-bound-rhs-2},
    \eqref{unreg:to-bound-rhs-3}.
    We will use inequality
    \begin{equation}
    \|\bm D \bm G \bm A\|_F^2
    \le 
        \|\bm D^{1/2}\|_F^2 \|\bm A\|_{op}
        =\trace[\bm D]\|\bm A\|_{op},
    \label{bound-DGA}
    \end{equation}
    which follows from writing
    $\bm D \bm G \bm A
    = \bm D^{1/2}(\bm D^{1/2}\bm G \bm A^{1/2})\bm A^{1/2}
    $ with
    the matrix $\bm D^{1/2}\bm G \bm A^{1/2}$ inside the parenthesis
    having all singular values equal to one since $\bm A^{-1} = \bm G^T\bm D \bm G$.
    We will also use $\|\bm P_1^\perp\|_{op}\le 1$ and
    $\bm 0_{n\times n}\preceq \bm V \preceq \bm D$ in the sense of psd matrices which implies $\trace[\bm V]\le \trace[\bm D]$
    and $\|\bm V\|_F^2\le \|\bm D\|_F^2$.
    We first focus on the quantity
    $
    \Xi =   \frac 1 n\E \sum_{j=2}^p\sum_{i=1}^n \|\frac{\partial}{\partial g_{ij}} \bm \hpsi\|^2$
    appearing in the right-hand side
    of \eqref{unreg:to-bound-rhs-1} and \eqref{unreg:to-bound-rhs-3}.
    By \eqref{derivatives-htheta},
    \begin{align*}
    \tfrac 1 2 
    \Xi 
    &\le
    \tfrac{1}{n}
    \E
    \bigl[
    \|\bm\hpsi\|^2 \|\bm D \bm G \bm A \bm P_1^\perp\|_F^2
    +
    \|\bm V\|_F^2 \|\bm P_1^\perp \bm\htheta\|^2
    \bigr] 
    \qquad
    &&\text{ by } \tfrac12\|\cdot+\cdot\|^2\le \|\cdot\|^2+ \|\cdot\|^2
    \\&\le
    \E
    \bigl[
    \|\bm D \bm G \bm A \bm P_1^\perp\|_F^2
    +
    \tfrac 1 n\|\bm V\|_F^2 \|\bm P_1^\perp \bm\htheta\|^2
    \bigr] 
    \qquad
    &&
    \text{ using } \|\bm\hpsi\|^2\le n
    \\&\le
    \E
    \bigl[
    \trace[\bm D]\|\bm A\|_{op}
    +
    \tfrac 1 n\|\bm D\|_F^2 \|\bm\htheta\|^2
    \bigr] 
    \qquad
    &&
    \text{ using \eqref{bound-DGA} and } 
    \|\bm V\|_F\le \|\bm D\|_F
    \\  &\le
    \E
    \bigl[
    (\tfrac{1}{\sqrt n}\|\bm D\|_F)\|n \bm A\|_{op}
    +
    (\tfrac 1 n\|\bm D\|_F^2) \|\bm\htheta\|^2
    \bigr] 
    \qquad
    &&
    \text{ using }\trace[\bm D]\le \sqrt n \|\bm D\|_F
    \end{align*}
    We are left with only the quantities $\|\bm\htheta\|^2$,
    $\frac 1 n \|\bm D\|_F^2$ and $\|n\bm A\|_{op}$
    that appear in the right-hand side of
    \eqref{eq:conclusion_nonseparable}.
    We now bound from above the terms in the right-hand side of 
    \eqref{unreg:to-bound-rhs-1}, using the loose bound
    $\|\bm D \bm G \bm A\|_{op}\le \|\bm D \bm G \bm A\|_F$
    and \eqref{bound-DGA}:
    \begin{align*}
        \tilde\Xi &\defas 
    \tfrac{1}{n^{3/2}}
    \E\big|
    2\trace \bm[\bm V]\bm \htheta^T\bm P_1^\perp \bm c
     + \|\bm P_1^\perp\bm c\|^2
    \big|
    \\&\le
    \tfrac{1}{n^{3/2}}
    \E
    \bigl[
    2 \trace[\bm V]
    \|\bm\hpsi\|\|\bm D \bm G \bm A\|_{op}\|\bm\htheta\|
    +
    \|\bm\hpsi\|^2
    \|\bm D\bm G \bm A\|_{op}^2
    \bigr]
      &&\text{ since } \bm c \defas \bm A \bm G^T\bm D \bm \hpsi
    \text{ in \eqref{unreg:to-bound-rhs-1}}
    \\&\le
    \E[\tfrac2n
    \trace[\bm D]^{3/2}\|\bm A\|_{op}^{1/2} \|\bm\htheta\|
    + \tfrac{1}{\sqrt n}\trace[\bm D] \|\bm A\|_{op}
    ]
      &&\text{ by }\trace[\bm V]\le\trace[\bm D],
    \eqref{bound-DGA},\|\bm\hpsi\|^2\le n
    \\&\le
    \E[2
    (\tfrac{1}{\sqrt n}\|\bm D\|_F)^{3/2}
    \|n \bm A\|_{op}^{1/2} \|\bm\htheta\|
    +
    \tfrac1n\|\bm D\|_F) \|n\bm A\|_{op}
    ]
      && \text{ using }
    \trace[\bm D]\le \sqrt n \|\bm D\|_F
    \end{align*}
    Since 
    $\eqref{unreg:to-bound-rhs-1}
    \le \C (n^{-1/2}(\tilde \Xi +\Xi^{1/2}) + n^{-1}(1+\Xi))$,
    we have proved that
    $[\eqref{unreg:to-bound-rhs-3}]^{1/2}$ and
    \eqref{unreg:to-bound-rhs-1}
    are bounded from above by the right-hand side
    of \eqref{eq:conclusion_nonseparable} as desired.

    It remains to bound \eqref{unreg:to-bound-rhs-2},
    again using the derivatives in \eqref{derivatives-htheta}
    to differentiate $\bm \htheta \bm f^T$ in \eqref{unreg:to-bound-rhs-2}.
    Since $\bm f = \bm G (\bm I_p + \bm e_1\bm e_1^T)\bm \htheta$ 
    and $(\bm I_p + \bm e_1\bm e_1^T)$ has operator norm at most 2,
    denoting $\partial$ for $\frac{\partial}{\partial g_{ij}}$ for brevity,
    if $j\ge 2$,
    \begin{align*}
    \|\partial
    (\bm\htheta \bm f^T)
    \|
    &=
    \|(\partial\bm\htheta)\bm\htheta^T(\bm I_p+\bm e_1\bm e_1^T)\bm G^T
    + \bm\htheta \htheta_j \bm e_i^T + \bm\htheta (\partial \bm\htheta)^T(\bm I_p+\bm e_1\bm e_1^T)\bm G^T\|
  \\&\le
    8\|\partial \bm\htheta\| \|\bm \htheta\| \|\bm G\|_{op}
    + \|\bm\htheta\| |\htheta_j|
  \\&\le
    8\|\bm A \bm e_j\| | \psi_i| \|\bm \htheta\| \|\bm G\|_{op}
    +
    8\|\bm A \bm G^T\bm D \bm e_i\| |\htheta_j| \|\bm \htheta\| \|\bm G\|_{op}
    + \|\bm\htheta\| |\htheta_j|
    \end{align*}
    thanks to \eqref{derivatives-htheta} for the last inequality.
    Taking the square, taking the double sum
    $\sum_{i=1}^n\sum_{j=2}^p$ and using $\sum_{i=1}^n\|\bm M \bm e_i\|^2=\|\bm M\|_F^2$ for any matrix $\bm M$ we find
    $$
    \sum_{i=1}^n
    \sum_{j=2}^p
    \|\frac{\partial (\bm \htheta \bm f^T)}{\partial g_{ij}}\|^2
    \le 
    \C\Bigl[
    \|\bm \hpsi\|^2 \|\bm A\|_F^2 \|\bm G\|_{op}^2\|\bm \htheta\|^2
    +\|\bm A \bm G^T\bm D\|_F^2 \|\bm G\|_{op}^2 \|\bm \htheta\|^4
    +n \|\bm \htheta\|^4.
    \Bigr].
    $$
    We use $\|\bm \hpsi\|^2\le n$ for the first term
    and
    $\|\bm A\bm G^T\bm D\|_F^2
    \le \trace[\bm D]\|\bm A\|_{op}$ by \eqref{bound-DGA} for the second.
    We have proved that \eqref{unreg:to-bound-rhs-2}
    is bounded from above by
    $$
    \E[(\tfrac 1 n +\tfrac{1}{n^2}\|\bm G\|_{op}^2\trace[\bm D]\|\bm A\|_{op})\|\bm\htheta\|^4]^{1/2}
    +
    \E[\|\bm A\|_F^2\|\bm G\|_{op}^2\|\bm\htheta\|^2/n]^{1/2}
    .$$
    We complete the proof using $0\le \trace[\bm D]\le \sqrt n \|\bm D\|_F$
    and $\|\bm A\|_F^2\le p \|\bm A\|_{op}^2$, so that
    \eqref{unreg:to-bound-rhs-1}, \eqref{unreg:to-bound-rhs-2}
    and \eqref{unreg:to-bound-rhs-3} are all bounded from above by
    the second line in \eqref{eq:conclusion_nonseparable}.
\end{proof}

\subsection{Lemmas using the density of the smallest eigenvalue of Wishart matrices}

    \begin{lemma}
        If $\frac p n \le \delta^{-1} < (1-\alpha)$
        for constants $\delta,\alpha>0$
        and
        $\bm G\in\R^{n\times p}$ has iid $N(0,1)$ entries 
        then for $\bm P_I = \sum_{i\in I} \bm e_i \bm e_i^T$ we have
        $$\P\Bigl[
            \min_{
                I\subset [n],
                |I| = \lceil (1-\alpha)n \rceil
            }
            \lambda_{\min}\Bigl(\frac{\bm G^T\bm P_I\bm G}{n}\Bigr)
            < c_0
        \Bigr]
        \le
        \sum_{
            I\subset [n],
            |I| = \lceil(1-\alpha)n\rceil
        }
        \P\Bigl[
            \lambda_{\min}\Bigl(\frac{{\bm G^T\bm P_I\bm G}}{n}\Bigr)
            < c_0
        \Bigr]
        \to 0$$
        as $n,p\to+\infty$
        for some constant $c_0= c_0(\delta,\alpha)>0$ depending on
        $(\delta,\alpha)$ only.
    \end{lemma}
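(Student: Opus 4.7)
I would prove the bound by a union bound over the $\binom{n}{m}$ subsets $I$ with $|I| = m \defas \lceil (1-\alpha) n \rceil$, combined with a sharp small-ball estimate for the smallest singular value of an $m \times p$ Gaussian matrix. First, for fixed $I$, the matrix $\bm G^T \bm P_I \bm G$ coincides with $\bm G_I^T \bm G_I$, where $\bm G_I \in \R^{m \times p}$ denotes the submatrix of $\bm G$ made of the rows indexed by $I$; since $\bm G$ has iid $N(0,1)$ entries, so does $\bm G_I$. The hypothesis $\delta^{-1} < 1 - \alpha$ together with $p/n \le \delta^{-1}$ and $m \ge (1-\alpha)n$ yields the linear-in-$n$ rectangular margin
$$m - p \ge \Delta n - 1, \qquad \Delta \defas (1-\alpha) - \delta^{-1} \in (0,1),$$
which is the key quantitative consequence of the hypothesis.

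Next I would invoke a small-ball tail for the smallest singular value of $\bm G_I$. A convenient form, due to Rudelson--Vershynin (or, specialized to the Gaussian case, Edelman's explicit density of $\sigma_{\min}$), reads
$$\P\bigl[\sigma_{\min}(\bm G_I) \le \epsilon(\sqrt m - \sqrt{p-1})\bigr] \le (C\epsilon)^{m - p + 1} + e^{-c m},$$
for universal constants $C, c > 0$ and every $\epsilon \in (0, 1]$. For a small constant $\epsilon = \epsilon(\delta,\alpha) > 0$ and a suitable $c_0 = c_0(\delta,\alpha) > 0$, this bound translates into a per-subset estimate
$$\P\bigl[\lambda_{\min}(\bm G_I^T \bm G_I / n) \le c_0\bigr] \le e^{-\gamma n},$$
where the rate $\gamma = \gamma(\delta,\alpha,\epsilon)$ is of the form $\Delta \log(1/\epsilon) + O(1)$ and can be made arbitrarily large by choosing $\epsilon$ small enough, precisely because $m - p + 1$ is itself linear in $n$. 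The same conclusion can be obtained elementarily via an $\epsilon$-net over $\mathbb S^{p-1}$ of cardinality at most $(3/\epsilon)^p$ combined with the Chernoff tail $\P[\chi^2_m \le s] \le (s e /m)^{m/2}$; the balance between the net cardinality $\epsilon^{-p}$ and the chi-squared tail $\epsilon^m$ produces a net exponent $\epsilon^{m-p}$, again with rate growing like $\Delta \log(1/\epsilon)$.

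Finally, Stirling gives $\binom{n}{m} = \binom{n}{n-m} \le e^{n H(\alpha)(1 + o(1))}$ with the binary entropy $H(\alpha) \defas -\alpha \log \alpha - (1-\alpha)\log(1-\alpha)$. Choosing $\epsilon$ so that $\gamma > H(\alpha)$, the union bound gives
$$\sum_{|I|=m} \P\bigl[\lambda_{\min}(\bm G^T \bm P_I \bm G /n) < c_0\bigr] \le e^{-(\gamma - H(\alpha))n + o(n)} \longrightarrow 0,$$
which is the claim with constant $c_0 = c_0(\delta, \alpha) > 0$.

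The main obstacle is the matching of exponents across the union bound. A naive Davidson--Szarek concentration $\P[\sigma_{\min}(\bm G_I) \le \sqrt m - \sqrt p - t] \le e^{-t^2/2}$ only yields a Gaussian-type per-subset rate of order $(\sqrt{1-\alpha} - \delta^{-1/2})^2 n/2$, which need not exceed the entropy cost $H(\alpha)$ across the admissible range of $(\delta,\alpha)$. The essential gain comes from the polynomial small-ball behavior $\epsilon^{m-p+1}$ of the smallest singular value near zero: because $m - p$ is itself linear in $n$, this polynomial tail upgrades to an exponential rate whose prefactor $\log(1/\epsilon)$ can be tuned arbitrarily large, comfortably dominating the fixed entropy cost.
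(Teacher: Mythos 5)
Your proposal is correct and follows essentially the same route as the paper: a union bound over the $\binom{n}{m}$ subsets combined with the polynomial small-ball tail $\P(\lambda_{\min}(\bm G_I^T\bm G_I/m)\le t^2)\lesssim (Ct)^{m-p+1}$ for Gaussian Wishart matrices (the paper uses the Chen--Dongarra/Edelman density bound, which is the Gaussian specialization you cite), and the same observation that the exponent $m-p+1\gtrsim \Delta n$ lets the rate $\log(1/\epsilon)\cdot\Delta$ be tuned to dominate the entropy cost of the union bound. Your remark that a Davidson--Szarek concentration bound alone would not close the argument is accurate and is precisely why the small-ball estimate is needed.
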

    \begin{proof}
    Variants of the following argument were used previously
    to study restricted isometry properties in \cite{blanchard2011compressed}
    as a consequence of results from \cite{edelman1988eigenvalues}.
    By \cite[proof of Lemma 4.1]{chen2005condition}
    (with $\frac{A}{x}$ there equal to $t>0$ here),
    if $N=|I|$
    \begin{align*}
    \P\Bigl(
        \lambda_{\min}(\frac{ \bm G^T\bm P_I\bm G}{N})
        \le t^2
    \Bigr)
    \le  \frac{(tN)^{N-p+1}}{ \Gamma(N-p+1)}
    \le
        \bigl(\frac{etN}{N-p+1}\bigr)^{N-p+1}
        \frac{1}{(2\pi(N-p+1))^{1/2}}
    \end{align*}
    where the second inequality follows from
    the lower bound on $\Gamma(N-p+1)$ given for instance in
    \cite[p10, Proof of Theorem 4.5]{chen2005condition}.
    Taking the union bound over $\binom{n}{N}$ possible sets
    $I\subset [n]$, it is sufficient to show that for a small enough constant
    $t>0$,
    \begin{equation}
    \binom{n}{N}
    \bigl(\frac{etN}{N-p+1}\bigr)^{N-p+1}
    \bigl(2\pi(N-p+1)\bigr)^{-1/2}
    \label{eq:bound-edelman}
    \end{equation}
    converges to 0.
    First, $\binom{n}{N}\le \exp( n \frac{N}{n}\log(e\frac{n}{N}))$
    by a standard bound on binominal coefficient, so that
    $\binom{n}{M}\le \exp(n \log(e(1-\alpha)^{-1})
    \le \exp((N-p+1) C(\delta,\alpha))$ 
    where $C(\delta,\alpha)$ depends on $\delta,\alpha$ only.
    This proves that the previous display converges to 0
    exponentially fast in $n$
    if $t=c_0(\delta,\alpha)$ is a small enough constant 
    depending only on $\delta,\alpha$.
    \end{proof}

\begin{lemma}
    \label{lem:edelman_exp}
    If $\frac p n \le \delta^{-1} < (1-\alpha)$
    for constants $\delta,\alpha>0$
    and
    $\bm G\in\R^{n\times p}$ has iid $N(0,1)$ entries 
    then for $\bm P_I = \sum_{i\in I} \bm e_i \bm e_i^T$ we have for any $k\ge1$
    \begin{equation}
        \label{eq:edelman_exp}
    \E\Bigl[
        \max_{
            I\subset [n],
            |I| = \lceil (1-\alpha)n \rceil
        }
        \lambda_{\min}\Bigl(\frac{\bm G^T\bm P_I\bm G}{n}\Bigr)^{-k}
    \Bigr]
    \le
    \C(\delta,\alpha,k)
    \end{equation}
    for a constant depending on $\delta,\alpha,k$ only.
\end{lemma}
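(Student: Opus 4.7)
The plan is to upgrade the probability bound used in the preceding lemma into an expectation bound by a standard layer-cake decomposition followed by careful bookkeeping of the exponential factors. Writing $X = \min_I \lambda_{\min}(n^{-1}\bm G^T\bm P_I\bm G)$, I would use $\E[X^{-k}] = k\int_0^\infty r^{-k-1}\P(X < r)\,dr$ and split at a small threshold $r_0 = r_0(\delta,\alpha,k) > 0$ to be chosen at the end. The tail part $k\int_{r_0}^\infty r^{-k-1}\P(X<r)\,dr \le r_0^{-k}$ is trivial, so the whole effort goes into showing that the head $k\int_0^{r_0} r^{-k-1}\P(X<r)\,dr$ is bounded uniformly in $n$.

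For the head, I would reuse the bound from the proof of the previous lemma. By the union bound over the $\binom{n}{N}$ choices of $I$ with $N = \lceil(1-\alpha)n\rceil$, combined with the Chen--Dongarra estimate $\P(\lambda_{\min}(\bm G^T\bm P_I\bm G/N) \le s^2) \le (esN/m)^m/\sqrt{2\pi m}$ for $m = N - p + 1$, I obtain
\begin{equation*}
\P(X < r) \le \binom{n}{N}\Bigl(\tfrac{eN}{m}\Bigr)^{m}\Bigl(\tfrac{rn}{N}\Bigr)^{m/2}\tfrac{1}{\sqrt{2\pi m}}.
\end{equation*}
The integral $\int_0^{r_0} r^{-k-1+m/2}\,dr = r_0^{m/2-k}/(m/2-k)$ is finite as soon as $m > 2k$, which holds for $n$ large because $m \ge c_1 n + 1$ with $c_1 = (1-\alpha) - 1/\delta > 0$ (a positive constant thanks to the assumption $1/\delta < 1-\alpha$). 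Putting things together, the head is bounded by $C^m r_0^{m/2-k}/(m/2-k)$ where $C$ collects the factors $\binom{n}{N}^{1/m}$, $eN/m$, $(n/N)^{1/2}$; each of these is bounded by a constant depending only on $\delta,\alpha$ because $n - N \le \alpha n + 1 \le (\alpha/c_1)m$, $N/m \le 1/c_1$, and $n/N \le 1/(1-\alpha)$.

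The final step is then to choose $r_0$ so small that $C r_0^{1/2} \le 1/2$: this forces $C^m r_0^{m/2} \le 2^{-m}$, so the head contribution shrinks geometrically in $n$ (and is trivially bounded for the finitely many small $n$). Adding the tail contribution $r_0^{-k}$ gives the desired constant $\C(\delta,\alpha,k)$ in \eqref{eq:edelman_exp}. The only step that requires any real care is verifying that the constants stemming from $\binom{n}{N}^{1/m}$ and $(eN/m)^{m}$ are genuinely controlled by the gap $c_1 > 0$; this uses only $\binom{n}{N} \le (e/\alpha)^{\alpha n + 1}$ and the elementary bound $\alpha n \le (\alpha/c_1) m$, so no new probabilistic input is needed beyond what was already used to prove the companion probability statement.
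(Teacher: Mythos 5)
Your proof is correct and is essentially the paper's own argument: the same union bound over the $\binom{n}{N}$ subsets, the same Chen--Dongarra tail estimate for $\lambda_{\min}$ of a Wishart matrix, and the same tail-integral representation of the moment split at a threshold depending only on $(\delta,\alpha,k)$ --- your small-$r_0$ cutoff for $\P(X<r)$ is the paper's large-$u_0$ cutoff for $\P(\max_I W_I^{-k}>u)$ after the change of variable $u=r^{-k}$. One caution: your parenthetical that the head is ``trivially bounded for the finitely many small $n$'' is not right as stated, since when $N-p+1\le 2k$ the $k$-th inverse moment of $\lambda_{\min}$ is genuinely infinite and no finite constant works; the paper's proof carries the same implicit restriction (it needs $(N-p+1)/(2k)>1$ for its integral to converge), so this is not a gap relative to the paper, but neither proof handles those finitely many $(n,p)$ and the lemma should be read as requiring $n$ large enough relative to $(\delta,\alpha,k)$.
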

\begin{proof}
    Let $N=\lceil (1-\alpha)n\rceil$,
    $W_I=
    \lambda_{\min}(\frac{1}{N}\bm G^T\bm P_I\bm G)$ and $u_0>0$ a constant to be specified. Using $\E[W]=\int_0^\infty \P(W>u)du\le u_0+\int_{u_0}^\infty P(W>u)du$ for any non-negative $W$,
$$
\E\Bigl[\max_{I\subset [n]:|I|=N}
W_I^{-k}\Bigr]
\le 
u_0 +
\int_{u_0}^\infty \P\Bigl[\max_{I\subset[n]:|I|=N} W_I^{-k}>u\Bigr] du
\le u_0 +
\int_{u_0}^\infty 
\binom{n}{N}
\P(W_I^{-k}>u) du
$$
by the union bound for the second term.
The integrand is thus smaller than
\eqref{eq:bound-edelman}
for $t = u^{-1/(2k)}$.
Integrating with respect to $u$ over $[u_0,\infty]$ gives the upper bound
$$
u_0 +
\binom{n}{N}
\Bigl(\frac{e}{N-p+1}\Bigr)^{N-p+1}
\bigl(2\pi(N-p+1)\bigr)^{-1/2}
\frac{u_0^{-(N-p+1)/(2k)+1}}{(N-p+1)/(2k)-1}.
$$
The previous display is bounded from above by $\C(\delta,\alpha,k)$ if $u_0$ is a large enough constant depending only on $\delta,\alpha,k$ by the same argument as that used after \eqref{eq:bound-edelman}.
\end{proof}

\subsection{Adding a coercive term to the M-estimation objective}
\label{sec:proof:thm:unregularized_with_H}
\Cref{thm:unregularized_with_H} is restated here before its proof for convenience.
\ThmUnregularizedExtra*
\begin{proof}[Proof of \Cref{thm:unregularized_with_H}]
    Note that $\mathcal L$ is coercive thanks to the second term,
    so that $\bm{\hat b}$ always exists. 
    Let $\hat u = \frac{1}{2}(\frac{1}{n}\|\bm X \bm{\hat{b}}\|^2 - K)$.
    The KKT conditions of $\bm{\hat b}$ read
    $\bm X^T(\ell_{\bm y}'(\bm X \bm{\hat b}) + h(\hat u)\bm X \bm{\hat{b}})=0$
    Note that by construction,
    $\hat u <  1$ if and only if $\frac 1 n \|\bm X\bm{\hat b}\|^2 < K+2$
    if and only if $h(\hat u) < 1$.
    If $\frac 1n \|\bm X\bm{\hat{b}}\|^2>K+2$ then $h(\hat u)=1$ and the KKT conditions imply
    $\bm X\bm{\hat{b}} = - \bm X(\bm X^T\bm X)^{-1}\bm X^T\ell_{\bm y}'(\bm X\bm{\hat b})$ so that $\|\bm X\bm{\hat{b}}\|^2 \le \|\ell_{\bm y}'(\bm X\bm{\hat b})\|^2\le n$ because $|\ell'_{y_i}(\cdot)|\le 1$ by \Cref{assumExtra}.
    This contradicts 
    $\frac 1n \|\bm X\bm{\hat{b}}\|^2>K+2$ so that
    $\frac 1n \|\bm X\bm{\hat{b}}\|^2\le K+2$ always holds.
    We also have 
    \begin{align}
        \nonumber
        \tfrac{\partial}{\partial v_i}\mathcal L(\bm v) &= (1+\sqrt{K+2})^{-1}[\ell_{y_i}'(v_i) + h(\tfrac{1}{2}(\tfrac{1}{n}\|\bm v\|^2-K)) v_i]
    \\
    \|\nabla\mathcal L(\bm v)\|&\le (1+\sqrt{K+2})^{-1}[\sqrt n + \|\bm v\|].
    \label{eq:bound-r-mathcalL}
    \end{align}
    Since $\|\bm X\bm{\hat b}\|^2\le n(K+2)$, it follows that
    $\|\nabla\mathcal L(\bm X\bm{\hat b})\|\le \sqrt n$ always holds.
    Let $\alpha>0$ be a constant such that $\delta^{-1}\le 1-\alpha$
    (for instance, $\alpha=1-\delta^{-1/2})$. Since $\frac 1 n \|\bm X \bm{\hat b}\|^2 \le K+2$, by Markov's inequality
    there exists
    a random set $I\subset[n]$ of size $\lceil (1-\alpha)n\rceil$ such that
    $\max_{i\in I} (\bm x_i^T\bm{\hat b})^2 \le (K+2)/\alpha$.
    Let $\bm P_I = \sum_{i\in I}\bm e_i\bm e_i^T$. Since the Hessian of a convex
    function is positive, we have
    \begin{equation}
    \nabla^2 \mathcal L(\bm X\bm{\hat b})
    \succeq \sum_{i\in I} \ell_{y_i}''(\bm x_i^T\bm{\hat b}) \bm e_i \bm e_i^T
    \succeq 
    \min_{i\in I} \ell_{y_i}''(\bm x_i^T\bm{\hat b})  \bm P_I 
    \succeq 
    \bm P_I c(\alpha, K, \ell)
    \label{eq:strong_convexity_local}
    \end{equation}
    in the sense of the psd order,
    where
    $c(\alpha,K,\ell) = \min_{y\in\mathcal Y}\min_{u\in\R:|u|\le\sqrt{K+2}/\sqrt\alpha} \ell_{y}''(u)$ which is a positive constant by
    \Cref{assumExtra}.
    If $\hbA=(\bm X^T\nabla^2\mathcal L(\bm X\bm{\hat b})\bm X)^{-1}$,
    \begin{equation}
    \|n \bm \Sigma^{1/2}\hbA\bm\Sigma^{1/2}\|_{op}
    \le 
    \frac{1}{\lambda_{\min}(\frac1n \bm\Sigma^{-1/2}\bm X^T\bm D \bm X\bm\Sigma^{-1/2})}
    \le 
    \frac{c(\alpha,K,\ell)^{-1}}{
    \lambda_{\min}(\frac1n \bm\Sigma^{-1/2}\bm X^T\bm P_I \bm X\bm\Sigma^{-1/2})
    }.
    \label{eq:integrable-A-unregularized}
    \end{equation}
    The idea behind \eqref{eq:strong_convexity_local} is to ensure
    strong convexity of the loss $\mathcal L$ when restricted to an L2 ball
    of any radius, with the strong convexity constant depending on the radius;
    such local strong convexity has been used before, e.g.,
    \cite[Lemma 4]{sur2019likelihood} or \cite{alquier_cottet_lecue2019lipschitz_loss}. We now combine this local strong convexity
    with the moment bounds
    in \Cref{lem:edelman_exp}: By \eqref{eq:edelman_exp}, any finite moment of the above display
    is bounded from above by $\C(\alpha,\delta,K,\ell)$.
    Finally,
    \begin{align}
    \nabla^2 \mathcal L(\bm v)
    &= \diag(\{\ell_{y_i}''(v_i)\}_{i\in [n]})
    + h(\tfrac12(\tfrac1n\|\bm v\|^2 -K)) \bm I_n
    + {\tfrac1n}h'(\tfrac12(\tfrac1n\|\bm v\|^2 -K)) \bm v\bm v^T
    \label{eq:bound-hessian}
    \\
    \|\nabla^2\mathcal L(\bm X\bm {\hat b})\|_{op}
    &
    \le \max_{y\in\mathcal Y}\sup_{u\in\R:|u|\le\sqrt{K+2}/\sqrt \alpha}
    \ell_{y}''(u) + 1 + \sup|h'|(K+2)
    .
    \nonumber
    \end{align}
    We have proved that the right-hand side in
    \eqref{eq:conclusion_nonseparable} is bounded from above
    by $\C(\delta,K,\ell)$.

    We now show that $\max\{\hat r,\frac{1}{\hat r},\hat v,\frac{1}{\hat v}\}$
    is bounded by a constant.
    Inequality \eqref{eq:bound-r-mathcalL} gives the desired
    upper bound on $\hat r$. Since $\min_{y\in\mathcal Y}\ell_y'(\cdot)$ is positive continuous and all $(\bm x_i^T\bm b)_{i\in I}$ are bounded from above
    with $I$ of size proportional to $n$,
    this shows that $\hat r^2\ge \C(\ell,\delta,K)$.

    We have $\hat v = \trace[\bm V]$ for $\bm V=\bm D-\bm D \bm X^T\hbA\bm X \bm D$
    for $\bm D = \nabla^2 \mathcal L(\bm X \bm{\hat b})$.
    Using $\bm V \preceq \bm D$
    in the sense of the psd order 
    gives $\hat v\le \|\bm D\|_{op}$
    so that $\hat v\le \C(\ell,\delta,K)$ thanks to \eqref{eq:bound-hessian}.
    It remains to show that $\hat v^{-1}$ is bounded from above.
    Since $\bm P \defas \bm D^{1/2} \bm X^T\hbA\bm X \bm D^{1/2}$ is an orthogonal
    projector of rank $p$, by the interlacing eigenvalue theorem,
    $\trace[\bm D (\bm I_n - \bm P)]\ge
    \sum_{i=1}^{n-p} \lambda_i(\bm D)$
    where $(\lambda_i(\bm D))_{i=1,...,n-p}$ are the $n-p$ smallest eigenvalues
    of $\bm D$.
    Let $J\subset [n]$ of size $|J|=n-p$ be the set of indices of these $n-p$ smallest eigenvalues. Then $\sum_{i\in J}(\bm x_i^T\bm{\hat b})^2 \le n(K+2)$
    so at least $(1-\alpha)(n-p)$ indices $i\in J$ satisfy
    $(\bm x_i^T\bm{\hat b})^2\le \frac{n}{n-p}(K+2)/\alpha$.
    This implies that at leat $(1-\alpha)(n-p)$ eigenvalues of $\bm D$
    are at least larger than
    $\min_{y\in \mathcal Y}\min_{u\in\R:|u|\le \sqrt{\delta^{-1}(K+2)}/\sqrt \alpha}
    \ell_{y}''(u)$ and
    $\trace[\bm V]\ge \C(\alpha,\delta,\ell,K) (n-p)$ must hold.
\end{proof}

\subsection{Proof of \Cref{thm:unregularized}}
\label{sec:proof_thm_unregularized}
The result is restated for convenience.
\ThmUnregularized*
\begin{proof}[Proof of \Cref{thm:unregularized}]
By \Cref{thm:unregularized_with_H} we know that $\bm{\hat b} = \bm\hbeta$
and $\hat v^{-1} \le \C(K,\ell,\delta)$
in the event $E$ from \Cref{thm:unregularized}, and that
the right-hand side of \eqref{eq:conclusion_nonseparable}
is bounded by $\C(K,\ell,\delta)/\sqrt n$.
This the upper bound on $\E[\hat v^2|\hat \sigma^2-\sigma_*^2|]+
\E[\hat v^2|\hat\aaa^2-\aaa_*^2|]$ implies the desired upper bound
on $\E[I_E|\hat\sigma^2-\sigma_*^2|]$ and
$\E[I_E|\hat\aaa^2-\aaa_*^2|]$ stated in
\eqref{eq:bound-correlation-unreg}.

Let $\bm P =\bm I_p - \bm\Sigma \bm w \bm w^T$.
Finally, \eqref{eq:thm-Z_j-unregularized} is a direct consequence
of the upper bound \eqref{eq:conclusion_nonseparable}
on 
$\E[
    (\sqrt n \hat v \bm\hbeta^T\bm u - \hat r Z )^2
    ]^{1/2}$
by taking $\bm u = \frac{\bm P \bm e_j}{\|\bm\Sigma^{-1/2}\bm P \bm e_j\|}$,
since we have 
$\bm w^T\bm P \bm e_j = w_j - \bm w^T\bm\Sigma\bm w w_j = 0$
so that $\bm u^T\bm w=0$ on the one hand, and on the other
$$
\bm\hbeta^T\bm P\bm e_j = \hbeta_j - \aaa_* w_j,
\qquad
\|\bm\Sigma^{-1/2}\bm P\bm e_j\|^2
=
\bm e_j^T\bm P^T\bm\Sigma^{-1}\bm P \bm e_j
= \Omega_{jj} - w_j^2
$$
so that $\bm\hbeta^T\bm u = (\Omega_{jj}-w_j^2)^{-1/2}(\hbeta_j - \aaa_*w_j)$
as desired in \eqref{eq:thm-Z_j-unregularized}.
\end{proof}

\section{Proof in the case of Ridge regularization}
\label{sec:proof_easy_ridge}

The theorem is restated for convenience.

\easyRidgeProposition*

\begin{proof}[Proof of \Cref{prop:improved-ridge}]
    By rotational invariance and without loss of generality,
    assume that $\bm w=p^{1/2}\bm e_1$ where $\bm e_1$ is the first
    canonical basis vector in $\R^p$.
    Let $\bm P = \bm I_p - \frac{\bm w \bm w^T}{\|\bm w\|^2}
    = \bm I_p - \bm e_1 \bm e_1^T$.
    \Cref{prop25} applied to the Gaussian matrix $\bigl(p^{1/2}x_{ij}\bigr)_{i\in[n],j=2,...,p}$,
        $\bm u = \bm \hpsi$ and $\bm f = (\hbeta_j / \sqrt p)_{j=2,...,p}$ gives
    $$
    \E\Bigl[
    \Bigl(
    \bm\hpsi ^T \bm X \bm P \bm\hbeta
    -\sum_{j=2}^p\sum_{i=1}^n \frac{1}{p} \frac{\partial (\hbeta_j\hpsi_i)}{\partial x_{ij}}
    \Bigr)^2
    \Bigr]
    \le
    \E\Bigl[
    \|\bm \hpsi\|^2
    \frac{\|\bm P \bm \hbeta\|^2}{p}
    +
    \sum_{j=2}^p\sum_{i=1}^n
    \frac{1}{p^2}
    \Big\|
    \frac{\partial(\bm\hpsi \bm \hbeta^T\bm P)}{\partial x_{ij}}
    \Big\|_F^2
    \Bigr].
    $$
    In the left-hand side,
    $\bm \hpsi^T\bm X \bm P \bm\hbeta = \frac{\lambda n}{p} \|\bm P\bm\hbeta\|^2$
    by \eqref{eq:actualKKT_ridge}.
    By \eqref{eq:derivatives-psi-hbeta} we also have
    \begin{align*}
        \sum_{i=1}^n
        \hpsi_i \sum_{j=2}^p \frac{\partial\hbeta_j}{\partial x_{ij}}&= 
        \Bigl(\sum_{j=2}^p\hat A_{jj}\Bigr)\|\bm \hpsi\|^2
        - \bm\hbeta^T\bm P\bm{\hat{A}}\bm X^T\bm D\bm \hpsi,
        \\
        \sum_{j=2}^p
        \hbeta_j \sum_{i=1}^n \frac{\partial\psi_i}{\partial x_{ij}}&=
    -\bm \hpsi^T\bm D \bm X\bm{\hat{A}}\bm P\bm\hbeta
    - \trace[\bm V] \|\bm P\bm\hbeta\|^2.
    \end{align*}
    Since $\sum_{j=2}^p\hat A_{jj} = \trace[\bm P\bm{\hat{A}}] = \trace[\bm{\hat{A}}] - \hat{A}_{11}$,
    moving the terms involving $\hat{A}_{11}$ and
    $\bm \hpsi^T\bm D \bm X\bm{\hat{A}}\bm P\bm\hbeta$ to the right-hand
    side, this proves
    \begin{align*}
    &n^2 \E\Bigl[
    \Bigl(
        (\lambda+ \hat v)\sigma_*^2
        - \gamma_* \hat r^2
    \Bigr)^2
    \Bigr]
    =\E\Bigl[
    \Bigl(
        \frac{(\lambda n + \trace[\bm V])\|\bm P\bm\hbeta\|^2}{p}
        - \frac{\trace[\bm{\hat{A}}]}{p}
        \|\bm\hpsi\|^2
    \Bigr)^2
    \Bigr]
    \\&\le
    \C
    \E\Bigl[
    \|\bm \hpsi\|^2
    \Bigl[
\frac{\hat{A}_{11}^2\|\bm \hpsi\|^2}{p^2} + \Bigl(\frac{1}{p} + \frac{\|\bm D\bm X\bm{\hat{A}}\|_{op}^2}{p^2}\Bigr) \|\bm P \bm \hbeta\|^2
    \Bigr]
    +
    \sum_{j=2}^p\sum_{i=1}^n
    \frac{1}{p^2}
    \Big\|
    \frac{\partial(\bm\hpsi \bm \hbeta^T\bm P)}{\partial x_{ij}}
    \Big\|_F^2
    \Bigr].
    \end{align*}
    We further bound the last line using
    $\|\bm\hbeta\|  \le \frac{p}{\lambda n} \|\bm X\|_{op} \|\bm \hpsi\|$
    by \eqref{KKT-ridge},
    $\|\bm D\|_{op} \le 1$ by \Cref{assum},
    inequality $\|\bm \hpsi\|^2\le n$ granted by the additional assumption
    $\max_{y_0}|\ell_{y_0}'|\le 1$, inequality 
    $\|\bm{\hat{A}}\|_{op} \le  \frac{p}{\lambda n}$,
    the derivatives formula in \eqref{eq:derivatives-psi-hbeta}
    for the rightmost term,
    and $\E[\|\bm X\|_{op}^c] \le \C(\delta,c)$
    for any numerical constant $c>0$
    by \eqref{eq:7-norm-G} below.
    The conclusion \eqref{eq:conclusion-improved-ridge}
    is then obtained by dividing by $n^2(\lambda+\hat v)$.
\end{proof}

\section{Proof: $\hat t/\hat v$ estimates the signal strength in linear models}

\Cref{prop:hat-t-linear-model} is restated before its proof for convenience.

\PropHatTLinearModel*
\begin{proof}[Proof of \Cref{prop:hat-t-linear-model}]
    Recall that $t_* \defas \bm w^T (\bm X^T  \bm \hpsi/n + \hat v\bm \Sigma \bm \hbeta)$.
    It is easier to work with the change of variable of
    \Cref{sec:change-of-variable}, so that after the change of variable
    $t_* = \bm e_1^T(\bm G^T \bm \hpsi + \trace[\bm V] \bm \htheta)/n$
    with $\bm e_1\in\R^p$ the first canonical basis vector.
    Let also $\bm\Theta$ be the true regression vetor
    after change of variable, that is, $\bm\Theta\in\R^p$ such that
    $\bm y = \bm G \bm\Theta + \bm \eps$
    (i.e., $\bm\Theta = \bm e_1 \|\bm\Sigma^{1/2}\bm\beta^*\|)$.
    We apply \Cref{lemma-SOS} to $\bm z= \bm G\bm e_1 \sim N(\bm 0,\bm I_n)$
    conditionally on $\bm G (\bm I_p - \bm e_1 \bm e_1^T)$
    (i.e., conditionally on the last $p-1$ columns of $\bm G$),
    and to the function $\bm f(\bm z) = \frac{\bm \hpsi}{(\|\bm \hpsi\|^2 + n\Theta_1^*)^{1/2}}$.
    Here, we have to take into account
    that $\bm y$ is not independent of $\bm G \bm e_1$, and to take into account the derivatives of $\bm y$ with
    respect to $\bm G \bm e_1$ conditionally
    on the last $p-1$ columns of $\bm G$ and the noise $\bm \eps$.
    This gives
    \begin{align*}
    \frac{\partial \bm f}{\partial z_i}
    &=
    \Bigl(\bm I_n
    -
    \frac{\bm \hpsi \bm \hpsi^T}{\text{denom}^2}
    \Bigr)
    \frac{1}{\text{denom}}
    \Bigl\{
        \frac{\partial}{\partial g_{i1}}
        +
        \Theta_1
        \frac{\partial}{\partial y_i}
    \Bigr\}
    \bm \hpsi
    \\
    &=
    \Bigl(\bm I_n
    -
    \frac{\bm \hpsi \bm \hpsi^T}{\text{denom}^2}
    \Bigr)
    \frac{1}{\text{denom}}
    \Big[
        -\bm D\bm G \bm{A}\bm e_1 \hpsi_i
        + \bm V \bm e_i (\Theta_1 - \htheta_1)
    \Big]
    \end{align*}
    where $\text{denom}= (\|\bm \hpsi\|^2 + n \Theta_1^2)^{1/2}$ for brevity, and the second line follows
    from \cite[Theorem 2.1]{bellec2021derivatives}.
    The matrix in the first parenthesis has operator norm at most $1$,
    so that using $(a+b)^2/2\le a^2+b^2$ gives
    $$
    \frac 1 2
    \sum_{i=1}^n\Big\|\frac{\partial \bm f}{\partial z_i}\Big\|^2
    \le \|\bm D \bm G \bm{A} \bm e_1\|^2
    \frac{\|\bm \hpsi\|^2}{\text{denom}^2}
    + \|\bm V\|_F^2 
    \frac{(\htheta_1 - \Theta_1)^2}{\text{denom}^2}.
    $$
    By \eqref{eq:bound-V-op}, \eqref{eq:bound-htheta} and \eqref{eq:7-norm-G},
    the expectation of the previous display is at most $\C(\delta,\tau)/n$.
    We also have that
    $
    |\sum_{i=1}^n \frac{\partial f_i}{\partial z_i}
    + \frac{\trace[\bm V](\htheta_j - \Theta_j)}{\text{denom}} |
    \le \C(\tau,\delta) O_\P(n^{-1/2})
    $
    again thanks to
    \eqref{eq:bound-V-op}, \eqref{eq:bound-htheta} and \eqref{eq:7-norm-G}.
    We conclude that, ommiting constants depending only on $\tau,\delta$,
    $$
    |
    t_* - \hat v \|\bm \Sigma^{1/2}\bm\beta^*\|
    |
    =
    \tfrac{|
    \bm e_1^T\bm G^T\bm \hpsi
    + \trace[\bm V] (\hat\theta_1 - \Theta_1)
    |}{n}
    \le O_\P(n^{-3/2}) \text{denom} 
    \le O_\P(n^{-1})(\hat r + \|\bm \Sigma^{1/2}\bm\beta^*\| ).
    $$
    The bound $|\hat t- t_*| \le O_\P(n^{-1/2}) \hat r$
    is provided by \eqref{eq:consistency-hat-t}
    and completes the proof.
\end{proof}

\end{document}